\newtheorem{theorem}{Theorem}[section]
\newtheorem{conjecture}[theorem]{Conjecture}
\newtheorem{corollary}[theorem]{Corollary}
\newcounter{claims}[theorem]
\newtheorem{claim}[claims]{Claim}
\theoremstyle{definition}
\newtheorem{definition}[theorem]{Definition}
\newtheorem{question}[theorem]{Question}
\theoremstyle{remark}
\newcommand{\mc}[1]{\mathcal{#1}}
\newcommand{\la}{\langle}
\newcommand{\ra}{\rangle}
\newcommand{\comp}{\mathtt{c}}
\newcommand{\infi}{\mathtt{in}}
\newcommand{\bfSigma}{\boldsymbol{\Sigma}}
\DeclareMathOperator{\Iso}{Iso}
\DeclareMathOperator{\sr}{sr}
\DeclareMathOperator{\SR}{SR}
\DeclareMathOperator{\wf}{wf}
\DeclareMathOperator{\wfc}{wfc}
\DeclareMathOperator{\sh}{sh}
\newcommand{\bfDelta}{\mathbf{\Delta}}
\newcommand{\bfPi}{\mathbf{\Pi}}
\DeclareMathOperator{\Mod}{Mod}
\newcommand\mathcircled[1]{%
  \mathpalette\@mathcircled{#1}%
}
\newcommand\@mathcircled[2]{%
  \tikz[baseline=(math.base)] \node[draw,circle,inner sep=1pt] (math) {$\m@th#1#2$};%
}
\begin{document}

\title{An Introduction to the Scott Complexity of Countable Structures and a Survey of Recent Results}
\author{Matthew Harrison-Trainor}

\maketitle	

\begin{abstract}
	Every countable structure has a sentence of the infinitary logic $\mc{L}_{\omega_1 \omega}$ which characterizes that structure up to isomorphism among countable structures. Such a sentence is called a Scott sentence, and can be thought of as a description of the structure. The least complexity of a Scott sentence for a structure can be thought of as a measurement of the complexity of describing the structure. We begin with an introduction to the area, with short and simple proofs where possible, followed by a survey of recent advances.
\end{abstract}

\section{Introduction}

A central feature of elementary first-order logic is the compactness theorem, which has as a consequence the Ryll-Nardzewski theorem that any countably-categorical structure is relatively simple: for each $n$, there are only finitely many automorphism orbits of $n$-tuples. So most structures are not countably categorical, which means that no matter which elementary first-order sentences we write down, we cannot completely characterize the structure up to isomorphism.

If we strengthen our language, Scott \cite{Scott65} showed that we can characterize every countable structure up to isomorphism. The required strengthening of the language allows countably infinite conjunctions and disjunctions, so that for example we can express that an abelian group is torsion by saying that for each element $x$, either $x = 0$, or $x + x = 0$, or $x + x + x = 0$, and so on. More formally, we use the logic $\mc{L}_{\omega_1 \omega}$ which is defined as follows:

\begin{definition}
	Let $\tau$ be a vocabulary. The formulas of the logic $\mc{L}_{\omega_1 \omega}(\tau)$ are built inductively as follows:
	\begin{itemize}
		\item terms and atomic formulas are defined as in elementary first-order logic;
		\item if $\varphi$ is a formula, so is $\neg \varphi$;
		\item if $\bar{x}$ is a finite tuple of variables and $X$ is a countable set of formulas such that for each $\varphi \in X$, the free variables of $\varphi$ are contained within $\bar{x}$, then $\bigdoublewedge_{\varphi \in X} \varphi$ and $\bigdoublevee_{\varphi \in X} \varphi$ are formulas;
		\item if $\varphi$ is a formula, then so are $\forall x \varphi$ and $\exists x \varphi$.
	\end{itemize}
\end{definition}

\noindent $\mc{L}_{\omega_1 \omega}$ does not satisfy compactness, but it does satisfy downward (though not upward) Lowenheim-Skolem for countable fragments. The book by Marker \cite{MarkerBook} is an excellent reference on infinitary model theory.

Now that we have strengthened our language, it is possible to show that each structure is characterized up to isomorphism among countable structures by a sentence of this logic \cite{Scott65}. We call this sentence a \textit{Scott sentence}.

\begin{definition}
	Let $\mc{A}$ be a countable structure. A \textit{Scott sentence} for $\mc{A}$ is a sentence $\varphi$ of $\mc{L}_{\omega_1 \omega}$ such that, up to isomorphism, $\mc{A}$ is the only countably model of $\varphi$.
\end{definition}

\noindent The standard proof that every countable structure has a Scott sentence passes through the back-and-forth relations, showing that they must stabilize at some countable ordinal, and then extracting from this a Scott sentence. This \textit{Scott analysis} of a structure has played an important role in the study of Vaught's conjecture, e.g.\ in Morley's theorem \cite{Morley70} that the number of non-isomorphic countable models of a theory is either at most $\aleph_1$ or is exactly $2^{\aleph_0}$. There are excellent references for these results on Vaught's conjecture, e.g.\ in Marker's book \cite{MarkerBook}, and so we will only touch on them briefly.

Fixing a countable structure $\mc{A}$, there is a some countable ordinal at which the back-and-forth relations stabilize. This introduces a way of assigning an ordinal to $\mc{A}$, called its \textit{Scott rank}. There are several slightly different variations of what it means to stabilize, leading to several different definitions of Scott rank. The Scott rank can be thought of equivalently as measuring both the complexity of the automorphism orbits of the structure and the complexity of describing the structure.

Another way of measuring the complexity of $\mc{A}$ is the least complexity of a Scott sentence for $\mc{A}$. To measure the complexity of $\mc{L}_{\omega_1 \omega}$ formulas, we use the natural hierarchy of $\Sigma_\alpha$ and $\Pi_\alpha$ sentences in normal form.
\begin{itemize}
	\item An $\mc{L}_{\omega_1 \omega}$ formula is both $\Sigma_0$ and $\Pi_0$ if it is quantifier free and does not contain any infinite disjunction or conjunction. 
	\item An $\mc{L}_{\omega_1 \omega}$ formula is $\Sigma_\alpha$ if it is a countable disjunction of formulas of the form $\exists x \phi$ where each $\phi$ is $\Pi_\beta$ for some $\beta < \alpha$.
	\item An $\mc{L}_{\omega_1 \omega}$ formula is $\Pi_\alpha$ if it is a countable disjunction of formulas of the form $\forall x \phi$ where each $\phi$ is $\Sigma_\beta$ for some $\beta < \alpha$.
\end{itemize}
This approach led Montalb\'an \cite{Montalban15} to define a new notion of Scott rank as the least ordinal $\alpha$ for which $\mc{A}$ has a $\Pi_{\alpha+1}$ Scott sentence. He proved that this is robust in the sense that there are a number of different equivalent characterizations, including the complexity of defining automorphism orbits, and the complexity of computing isomorphisms between different copies of $\mc{A}$.

An even finer measurement, the \textit{Scott complexity} of $\mc{A}$, is the least complexity of a Scott sentence for $\mc{A}$. Alvir, Greenberg, Harrison-Trainor, and Turetsky \cite{AlvirGreenbergHTTuretsky} argued by means of the Wadge degrees that the only (reasonable) possible complexities are $\Sigma_\alpha$, $\Pi_\alpha$, and $\mathrm{d-}\Sigma_\alpha$.

\medskip{}

This article will begin with a more detailed look at the above background material. Following this, we will touch on more specialized areas. We begin with effective considerations and structures of high Scott rank. Here we give new elementary proofs, avoiding Barwise compactness and $\Sigma^1_1$ bounding, which we think are simpler than many of those in the literature. Next, we look at results on the Scott complexity of particular structures of interest in mathematics, e.g.\ free groups. Finally, we discuss theories, including a brief summary of connections with Vaught's conjecture.

\medskip{}

We also note that there is a work applying the Scott analysis to separable metric spaces, which we will not cover in this survey. See \cite{MelnikovNies,Doucha,YaacovDouchaNiesTsankov,ChanChen,Chan}; note that \cite{Doucha} has an erratum \cite{DouchaErratum}.

\section{Scott Sentences and Scott Rank}

Our first step is to see why every structure has a Scott sentence. The standard way to see this is to analyse the back-and-forth relations. There are a number of different ways to define the back-and-forth relations, but from a computability-theoretic perspective the most natural is the following due to its connections with $\Sigma_\alpha$ and $\Pi_\alpha$ sentences.

\begin{definition}\label{def:bfasym}
	The \textit{standard asymmetric back-and-forth relations} $\leq_\alpha$ on $\mc{A}$, for $\alpha < \omega_1$, are defined by:
	\begin{enumerate}
		\item $\bar{a} \leq_0 \bar{b}$ if $\bar{a}$ and $\bar{b}$ satisfy the same quantifier-free formulas from among the first $|\bar{a}|$-many formulas.
		\item For $\alpha > 0$, $\bar{a} \leq_\alpha \bar{b}$ if for each $\beta < \alpha$ and $\bar{d}$ there is $\bar{c}$ such that $\bar{b} \bar{d} \leq_\beta \bar{a} \bar{c}$.
	\end{enumerate}
	We define $\bar{a} \equiv_\alpha \bar{b}$ if $\bar{a} \leq_\alpha \bar{b}$ and $\bar{b} \leq_\alpha \bar{a}$. We also define $\bar{a} \equiv_\infty \bar{b}$ if $\bar{a} \equiv_\alpha \bar{b}$ for all $\alpha < \omega_1$.
\end{definition}

\noindent It is not hard to prove by induction that for $\alpha \geq 1$, $\bar{a} \leq_\alpha \bar{b}$ if and only if every $\Sigma_\alpha$ sentence true of $\bar{b}$ is true of $\bar{a}$. Similarly, $\bar{a} \equiv_\infty\bar{b}$ if and only if $\bar{a}$ and $\bar{b}$ satisfy the same $\mc{L}_{\omega_1 \omega}$ formulas.

For each pair of tuples $\bar{a}$ and $\bar{b}$ such that $\bar{a} \not\equiv_\infty \bar{b}$, choose a formula $\varphi_{\bar{a};\bar{b}}(\bar{x})$ such that $\mc{A} \models \varphi_{\bar{a};\bar{b}}(\bar{a})$ but $\mc{A} \models \neg \varphi_{\bar{a};\bar{b}}(\bar{b})$. Then define $\psi_{\bar{a}}(\bar{x}) := \bigdoublewedge_{\bar{b} \not\equiv_\infty \bar{a}} \varphi_{\bar{a};\bar{b}}(\bar{x})$. So we have that $\mc{A} \models \psi_{\bar{a}}(\bar{b})$ if and only if $\bar{a} \equiv_\infty \bar{b}$. We may also add as a conjunct to $\psi_{\bar{a}}(\bar{x})$ all of the atomic formulas true of $\bar{a}$.

Now define:
\begin{align*}
	\rho_\varnothing &:= \forall x \bigdoublevee_{b \in \mc{A}}  \psi_b(x) \\
	\rho_{\bar{a}} &:= \forall \bar{u} \left ( \psi_{\bar{a}}(\bar{u}) \longrightarrow \forall x \bigdoublevee_{b \in \mc{A}}  \psi_{\bar{a}b}(\bar{u},x) \right)\\
	\sigma_\varnothing &:= \bigdoublewedge_{b \in \mc{A}}  \exists x \psi_b(x) \\
	\sigma_{\bar{a}} &:=  \forall \bar{u} \left (\psi_{\bar{a}}(\bar{u}) \longrightarrow  \bigdoublewedge_{b \in \mc{A}} \exists x  \psi_{\bar{a}b}(\bar{u},x) \right)
\end{align*}
and
\[ \theta := \bigdoublewedge_{\bar{a} \in \mc{A}} \rho_{\bar{a}} \wedge \sigma_{\bar{a}}.\]
It is easy to argue by a standard back-and-forth argument that $\theta$ is a Scott sentence for $\mc{A}$. We call $\theta$ the \textit{canonical Scott sentence} for $\mc{A}$.

\begin{theorem}[Scott \cite{Scott65}]\label{thm:scott}
	Every countable structure has a Scott sentence.
\end{theorem}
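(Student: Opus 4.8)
The plan is to verify that the sentence $\theta$ constructed just before the theorem statement is indeed a Scott sentence for $\mc{A}$; since the excerpt has already done all the hard construction work, the remaining task is the back-and-forth argument sketched as ``easy'' in the text. First I would observe that $\mc{A} \models \theta$: each conjunct $\rho_{\bar a}$ and $\sigma_{\bar a}$ holds in $\mc{A}$ essentially by construction, using that $\mc{A} \models \psi_{\bar a}(\bar a)$ for every tuple $\bar a$ (which follows since $\psi_{\bar a}$ is a conjunction of formulas true of $\bar a$, including the atomic diagram of $\bar a$), and that for any $\bar u$ with $\mc{A}\models\psi_{\bar a}(\bar u)$ we have $\bar u \equiv_\infty \bar a$, so the witnesses $b$ ranging over $\mc{A}$ supply the required existential witnesses for $\bar a b$-extensions.

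Next I would take an arbitrary countable model $\mc{B} \models \theta$ and build an isomorphism $\mc{A} \cong \mc{B}$ by a back-and-forth construction. The key invariant maintained along the construction is: after $n$ steps we have finite tuples $\bar a \in \mc{A}$ and $\bar b \in \mc{B}$ of equal length with $\mc{B} \models \psi_{\bar a}(\bar b)$. The base case uses $\sigma_\varnothing$ (or rather its role as the $\bar a = \varnothing$ instance) together with $\rho_\varnothing$ to get started with the empty tuple, noting $\mc{B}\models\psi_\varnothing$ trivially. For the forth step, given the invariant and a new element $a \in \mc{A}$, apply the conjunct $\sigma_{\bar a}$ in $\mc{B}$: since $\mc{B}\models\psi_{\bar a}(\bar b)$, we get $\mc{B} \models \exists x\, \psi_{\bar a a}(\bar b, x)$, and any such witness $b'$ extends the invariant to $\bar a a, \bar b b'$. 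For the back step, given a new element $b \in \mc{B}$, apply $\rho_{\bar a}$ in $\mc{B}$: since $\mc{B}\models\psi_{\bar a}(\bar b)$, we get $\mc{B} \models \bigdoublevee_{a' \in \mc{A}} \psi_{\bar a a'}(\bar b, b)$, so some $a' \in \mc{A}$ satisfies $\mc{B}\models\psi_{\bar a a'}(\bar b, b)$, extending the invariant to $\bar a a', \bar b b$. Interleaving these steps and enumerating both $\mc{A}$ and $\mc{B}$ produces a bijection $f\colon \mc{A} \to \mc{B}$.

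Finally I would check that $f$ is an isomorphism: at each finite stage the invariant $\mc{B}\models\psi_{\bar a}(\bar b)$ forces $\bar a$ and $\bar b$ to satisfy the same atomic formulas, because the atomic diagram of $\bar a$ was included as a conjunct of $\psi_{\bar a}$ (and conversely, if some atomic formula held of $\bar b$ but not $\bar a$, its negation would be a conjunct of $\psi_{\bar a}$, contradicting $\mc{B}\models\psi_{\bar a}(\bar b)$). Hence $f$ preserves and reflects all atomic relations and functions, so it is an isomorphism, and $\mc{B} \cong \mc{A}$ as desired. The main thing to be careful about — the only real ``obstacle,'' though it is routine — is bookkeeping in the interleaving: one must ensure every element of $\mc{A}$ eventually appears in some $\bar a$ and every element of $\mc{B}$ eventually appears in some $\bar b$, which is handled by the standard alternation between a forth step driven by an enumeration of $\mc{A}$ and a back step driven by an enumeration of $\mc{B}$. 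One should also note that the $\rho$ and $\sigma$ sentences are genuinely $\mc{L}_{\omega_1\omega}$ sentences (the conjunctions and disjunctions are over the countable set $\mc{A}$ and over countably many tuples), so $\theta$ is a legitimate sentence of the logic.
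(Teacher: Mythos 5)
Your proposal is correct and follows essentially the same argument as the paper: verify $\mc{A} \models \theta$, then run a back-and-forth construction in any countable $\mc{B} \models \theta$ maintaining the invariant $\mc{B} \models \psi_{\bar{a}}(\bar{b})$, using the conjuncts $\sigma_{\bar{a}}$ for the forth steps and $\rho_{\bar{a}}$ for the back steps, with atomic-type agreement (from the atomic conjuncts of $\psi_{\bar{a}}$) ensuring the limit map is an isomorphism. Your version simply spells out the bookkeeping and the verification that $\mc{A} \models \theta$, which the paper leaves implicit.
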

\begin{proof}
	It is clear that $\mc{A} \models \theta$. Suppose that $\mc{B}$ is countable and $\mc{B} \models \theta$. Suppose that we have a finite partial map $\mc{A} \to \mc{B}$ mapping $\bar{a}$ to $\bar{b}$ with $\psi_{\bar{a}}(\bar{b})$. Then for such $\bar{a}$ and $\bar{b}$:
	\begin{itemize}
		\item $\bar{a}$ and $\bar{b}$ have the same atomic type;
		\item given $a' \in \mc{A}$, since $\mc{B} \models \sigma_{\bar{a}} \wedge \psi_{\bar{a}}(\bar{b})$, there is $b' \in \mc{B}$ with $\mc{B} \models \psi_{\bar{a}a'}(\bar{b}b')$; 
		\item given $b' \in \mc{B}$, since $\mc{B} \models \rho_{\bar{a}} \wedge \psi_{\bar{a}}(\bar{b})$, there is $a' \in \mc{A}$ with $\mc{B} \models \psi_{\bar{a}a'}(\bar{b}b')$; 
	\end{itemize}
	So by a back-and-forth argument we can extend our finite partial isomorphism to an isomorphism $\mc{A} \to \mc{B}$. Thus $\theta$ is a Scott sentence for $\mc{A}$.
\end{proof}

As a corollary, by adding constants to the language, we get:

\begin{corollary}
	Let $\mc{A}$ be a countable structure and $\bar{a},\bar{b} \in \mc{A}$. Then there is an automorphism of $\mc{A}$ taking $\bar{a}$ to $\bar{b}$ if and only if $\bar{a} \equiv_\infty \bar{b}$.
\end{corollary}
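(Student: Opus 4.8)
The plan is to derive the corollary from Theorem~\ref{thm:scott} by applying it to structures with finitely many named constants. Suppose $\mc{A}$ is a countable structure and $\bar{a}, \bar{b} \in \mc{A}$ are tuples of the same (finite) length $n$. Let $\tau'$ be the vocabulary $\tau$ of $\mc{A}$ augmented by $n$ new constant symbols $c_1, \dots, c_n$, and let $(\mc{A}, \bar{a})$ and $(\mc{A}, \bar{b})$ be the $\tau'$-structures interpreting these constants as the entries of $\bar{a}$ and $\bar{b}$ respectively. The key observation is that an isomorphism $(\mc{A}, \bar{a}) \to (\mc{A}, \bar{b})$ of $\tau'$-structures is precisely an automorphism of the underlying $\tau$-structure $\mc{A}$ carrying $\bar{a}$ to $\bar{b}$. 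So it suffices to show that $(\mc{A}, \bar{a}) \cong (\mc{A}, \bar{b})$ if and only if $\bar{a} \equiv_\infty \bar{b}$.

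For the forward direction, if there is an automorphism $f$ of $\mc{A}$ with $f(\bar{a}) = \bar{b}$, then $f$ is in particular an $\equiv_\infty$-preserving map (automorphisms preserve satisfaction of every $\mc{L}_{\omega_1\omega}$ formula), so $\bar{a}$ and $\bar{b}$ satisfy the same $\mc{L}_{\omega_1\omega}$ formulas in $\mc{A}$, i.e.\ $\bar{a} \equiv_\infty \bar{b}$. (This uses the characterization of $\equiv_\infty$ noted in the excerpt: $\bar{a} \equiv_\infty \bar{b}$ iff they satisfy the same $\mc{L}_{\omega_1\omega}$ formulas.) For the converse, suppose $\bar{a} \equiv_\infty \bar{b}$. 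Let $\theta_{\bar{a}}$ be the canonical Scott sentence for $(\mc{A}, \bar{a})$ — that is, run the construction of $\theta$ in the vocabulary $\tau'$. By Theorem~\ref{thm:scott}, $\theta_{\bar{a}}$ is a Scott sentence for $(\mc{A},\bar{a})$ among countable $\tau'$-structures. I claim $(\mc{A}, \bar{b}) \models \theta_{\bar{a}}$, from which the corollary follows since $(\mc{A},\bar{b})$ is then a countable model of $\theta_{\bar{a}}$ and hence isomorphic to $(\mc{A}, \bar{a})$.

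To verify the claim, note that $\theta_{\bar{a}}$ is a $\tau'$-sentence, and translating back to $\tau$ it is equivalent to $\psi_{\bar{a}}(\bar{a}) \wedge \bigdoublewedge_{\bar{e} \in \mc{A}} (\rho_{\bar{a}\bar{e}}' \wedge \sigma_{\bar{a}\bar{e}}')$ evaluated in $\mc{A}$ with the constants read as $\bar{a}$ — more to the point, each conjunct of $\theta_{\bar{a}}$ is, after replacing the constants $\bar c$ by a free variable tuple $\bar x$, an $\mc{L}_{\omega_1\omega}(\tau)$ formula $\chi(\bar x)$ with $\mc{A} \models \chi(\bar a)$. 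Since $\bar a \equiv_\infty \bar b$, we get $\mc{A} \models \chi(\bar b)$ for every such conjunct, hence $(\mc{A}, \bar b) \models \theta_{\bar a}$. The one point requiring a little care — and the main (mild) obstacle — is confirming that the back-and-forth relations and the canonical Scott sentence behave correctly under the passage between the $\tau'$-structure $(\mc{A},\bar{a})$ and the $\tau$-structure $\mc{A}$ with a distinguished tuple: formulas of $(\mc{A},\bar a)$ in free variables $\bar y$ correspond exactly to formulas of $\mc{A}$ in free variables $\bar x \bar y$, and under this correspondence $\bar a \equiv_\infty^{\mc{A}} \bar b$ is exactly what guarantees that $(\mc{A},\bar a)$ and $(\mc{A},\bar b)$ agree on all $\mc{L}_{\omega_1\omega}(\tau')$ sentences. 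Once that bookkeeping is in place the argument is immediate.
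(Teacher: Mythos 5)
Your proof is correct and follows essentially the same route as the paper, which derives the corollary from Theorem~\ref{thm:scott} precisely ``by adding constants to the language'': take the canonical Scott sentence of $(\mc{A},\bar{a})$, observe that $\bar{a}\equiv_\infty\bar{b}$ forces $(\mc{A},\bar{b})$ to satisfy it (since replacing the constants by variables yields an $\mc{L}_{\omega_1\omega}$ formula true of $\bar{a}$), and conclude $(\mc{A},\bar{a})\cong(\mc{A},\bar{b})$. You have simply spelled out the bookkeeping the paper leaves implicit, using the stated fact that $\equiv_\infty$ coincides with agreement on all $\mc{L}_{\omega_1\omega}$ formulas.
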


There are only countably many tuples $\bar{a}$ in $\mc{A}$, so there is a countable ordinal $\alpha$ such, that for $\bar{a},\bar{b} \in \mc{A}$, if $\bar{a} \not\equiv_\infty \bar{b}$ then $\bar{a} \nleq_\alpha \bar{b}$. We may choose the formulas $\varphi_{\bar{a},\bar{b}}$ to be $\Pi_\alpha$, so that $\psi_{\bar{a}}$ are also $\Pi_\alpha$. Then the formula $\Theta$ is $\Pi_{\alpha + 2}$. So there is a natural notion of rank that we can assign to $\mc{A}$ based on the least such ordinal $\alpha$. Ash and Knight \cite{AshKnight00} prefer the following ranks:

\begin{definition}\label{def:asym-sr}
	For $\bar{a} \in \mc{A}$, define $r(\bar{a})$ as the least ordinal $\alpha$ such that for all $\bar{b}$, if $\bar{a} \leq_\alpha \bar{b}$ then $\bar{a} \equiv_\infty \bar{b}$. Then define:
	\begin{itemize}
		\item $r(\mc{A}) = \sup \{r(\bar{a}) : \bar{a} \in \mc{A}\}$;
		\item $R(\mc{A}) = \sup \{r(\bar{a})+1 : \bar{a} \in \mc{A}\}$;
	\end{itemize}
\end{definition}
\noindent It is possible for a two structures $\mc{A}$ and $\mc{B}$ to have $r(\mc{A}) = r(\mc{B}) = \lambda$ a limit ordinal, but $R(\mc{A}) = \lambda$ and $R(\mc{B}) = \lambda + 1$. 

Scott's original proof of Theorem \ref{thm:scott} used symmetric back-and-forth relations $\sim_\alpha$ instead of the asymmetric ones defined above. In the definition below we extend only by single elements $c$ and $d$ rather than by tuples to show another possible variation.

\begin{definition}\label{def:bfsym}
	The \textit{standard symmetric back-and-forth relations} $\sim_\alpha$ on $\mc{A}$, for $\alpha < \omega_1$, are defined by:
	\begin{enumerate}
		\item $\bar{a} \sim_0 \bar{b}$ if $\bar{a}$ and $\bar{b}$ satisfy the same quantifier-free formulas.
		\item For $\alpha > 0$, $\bar{a} \sim_\alpha \bar{b}$ if for each $\beta < \alpha$ and $d$ there is $c$ such that $\bar{a} c \sim_\beta \bar{b} d$, and for all $c$ there is $d$ such that $\bar{a} c \sim_\beta \bar{b} d$.
	\end{enumerate}
\end{definition}
\noindent Again, one can argue that there is a countable ordinal $\alpha$ such that if $\bar{a} \sim_\alpha \bar{b}$, then $\bar{a} \sim_\beta \bar{b}$ for all $\beta$. We can define a rank based on these back-and-forth relations.
\begin{definition}
	For $\bar{a} \in \mc{A}$, define $\sr(\bar{a})$ as the least ordinal $\alpha$ such that for all $\bar{b}$, if $\bar{a} \sim_\alpha \bar{b}$ then $\bar{a} \equiv_\infty \bar{b}$. Then define:
	\begin{itemize}
		\item $\sr(\mc{A}) = \sup \{\sr(\bar{a}) : \bar{a} \in \mc{A}\}$;
		\item $\SR(\mc{A}) = \sup \{\sr(\bar{a})+1 : \bar{a} \in \mc{A}\}$;
	\end{itemize}
\end{definition}
\noindent This is what Marker \cite{MarkerBook} uses as the definition of Scott rank. It is from this connection with Scott's original proof of Theorem \ref{thm:scott} that the name \textit{Scott rank} is given to these notions of rank, though it is unclear which rank should hold the title. In general, we prefer the ranks $R$ and $\SR$ over the ranks $r$ and $\sr$, as they contain more information. These ranks $R$ and $\SR$ agree at multiples of $\omega^2$.

There are two other possible variations on the back-and-forth relations: defining the symmetric relations $\sim_\alpha$ to extend by tuples (which is a common definition) and defining the asymmetric relations $\leq_\alpha$ to extend by single elements (which is rarely seen). Each of these definitions also gives rise to a notion of rank.

\medskip

Another common notion of rank, used, e.g., by Sacks \cite{Sacks07} and in Marker's book on model theory \cite{MarkerMTBook}, is as follows.
\begin{definition}\label{def:scott-analysis}
	Let $\mc{A}$ be a countable structure. We define fragments $\mc{L}_{\alpha}^{\mc{A}}$ of $\mc{L}_{\omega_1 \omega}$ and $\mc{L}_{\alpha}^{\mc{A}}$-theories $T_{\alpha}^{\mc{A}}$ as follows:
	\begin{enumerate}
		\item $\mc{L}_0^{\mc{A}}$ is finitary first-order logic;
		\item $T_\alpha^{\mc{A}}$ is the complete theory of $\mc{A}$ in $\mc{L}_{\alpha}^{\mc{A}}$;
		\item $\mc{L}_{\lambda}^{\mc{A}}$ is the union of $\mc{L}_{\alpha}^{\mc{A}}$ for $\alpha < \lambda$ a limit ordinal; and
		\item $\mc{L}_{\alpha+1}^{\mc{A}}$ is the least fragment of $\mc{L}_{\omega_1 \omega}$ containing $\mc{L}_{\alpha}^{\mc{A}}$ and also containing, for each non-principal $n$-type $p$ of $T_\alpha^{\mc{A}}$, the conjunction $\bigwedge p$ of all the formulas in $p$.
	\end{enumerate}
\end{definition}
\noindent Essentially, $T_{\alpha+1}^{\mc{A}}$ gives a name to all of the non-principal types of $T_\alpha^{\mc{A}}$, though this may then introduce new non-principal types. However, one can show that this process stabilizes and for some $\alpha$, all of the types of $T_{\alpha}^{\mc{A}}$ realized in $\mc{A}$ are principal. One can then define the Scott rank of $\mc{A}$ to be the least $\alpha$ such that $\mc{A}$ is the atomic model of $T_\alpha^{\mc{A}}$. This rank agrees with the previous ranks at multiples of $\omega^2$. This notion of rank can be useful when one wants to apply ideas from finitary model theory.

\medskip

Gao \cite{Gao} also introduces a notion of Scott rank based on back-and-forth games, which is similar in spirit to the ranks defined using back-and-forth relations.

\medskip

A final notion of Scott rank was introduced by Montalb\'an \cite{Montalban15} based on the following list of equivalent properties:

\begin{theorem}[Montalb\'an \cite{Montalban15}]\label{thm:montalban}
	Let $\mc{A}$ be a countable structure, and $\alpha$ a countable ordinal. The following are equivalent:
	\begin{enumerate}
		\item $\mc{A}$ has a $\Pi_{\alpha+1}$ Scott sentence.
		\item Every automorphism orbit in $\mc{A}$ is $\Sigma_\alpha$-definable without parameters.
		\item $\mc{A}$ is uniformly (boldface) $\mathbf{\Delta}^0_\alpha$-categorical.
	\end{enumerate}
\end{theorem}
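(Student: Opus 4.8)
The plan is to prove the equivalences through the implications $(2)\Rightarrow(1)$, $(1)\Rightarrow(2)$, and then $(2)\Leftrightarrow(3)$; the delicate step will be $(1)\Rightarrow(2)$, where one must extract uniform semantic information about \emph{every} automorphism orbit of $\mc{A}$ from a single syntactic Scott sentence. For $(2)\Rightarrow(1)$, fix for each tuple $\bar{a}\in\mc{A}$ a $\Sigma_\alpha$ formula $\varphi_{\bar{a}}(\bar{x})$ defining the orbit of $\bar{a}$. The key point is that, since the orbits partition $\mc{A}$ into countably many pieces each $\Sigma_\alpha$-definable, the complement of the orbit of $\bar{a}$ is again $\Sigma_\alpha$-definable, by $\pi_{\bar{a}}:=\bigdoublevee_{\bar{b}\not\equiv_\infty\bar{a}}\varphi_{\bar{b}}$; hence inside $\mc{A}$ the literal $\neg\varphi_{\bar{a}}$ may be traded for a $\Sigma_\alpha$ formula. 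I would then rerun the canonical Scott sentence construction preceding Theorem~\ref{thm:scott} with $\psi_{\bar{a}}:=\varphi_{\bar{a}}$, but rewriting each implication $\psi_{\bar{a}}(\bar{u})\to(\cdots)$ as $\pi_{\bar{a}}(\bar{u})\vee(\cdots)$, distributing disjunctions over conjunctions, pulling every universal quantifier to the front, and adjoining the conjuncts $\forall\bar{x}\,(\pi_{\bar{a}}(\bar{x})\vee\ell(\bar{x}))$ for each literal $\ell$ true of $\bar{a}$ together with $\forall\bar{x}\,\neg(\varphi_{\bar{a}}(\bar{x})\wedge\pi_{\bar{a}}(\bar{x}))$. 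Every resulting conjunct is then a string of universal quantifiers over a $\Sigma_\alpha$ matrix, hence $\Pi_{\alpha+1}$, so the whole conjunction is $\Pi_{\alpha+1}$; and the back-and-forth argument of Theorem~\ref{thm:scott} still shows it is a Scott sentence, the extra conjuncts guaranteeing that in any model a tuple witnessing some $\varphi_{\bar{a}}$ fails $\pi_{\bar{a}}$ (so the ``forth'' clause fires) and has the correct atomic type.

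For $(1)\Rightarrow(2)$ I would argue by contraposition: assuming the orbit of some $\bar{a}$ is not $\Sigma_\alpha$-definable, I want to show that no $\Pi_{\alpha+1}$ sentence true in $\mc{A}$ can be a Scott sentence. The tool is the elementary preservation fact that a $\Pi_{\alpha+1}$ sentence $\varphi=\bigdoublewedge_n\forall\bar{x}\,\psi_n(\bar{x})$, with each $\psi_n$ a $\Sigma_\alpha$ formula, true in $\mc{A}$ remains true in any countable $\mc{B}$ such that every tuple $\bar{b}\in\mc{B}$ ``$\alpha$-reduces'' to some $\bar{a}'\in\mc{A}$ --- meaning every $\Sigma_\alpha$ formula true of $\bar{a}'$ in $\mc{A}$ is true of $\bar{b}$ in $\mc{B}$ --- which is immediate from $\mc{A}\models\varphi$. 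It therefore suffices to build a countable $\mc{B}\not\cong\mc{A}$ all of whose tuples $\alpha$-reduce to tuples of $\mc{A}$. Failure of $\Sigma_\alpha$-definability of the orbit of $\bar{a}$ means that for each $\Sigma_\alpha$ formula true of $\bar{a}$ there is a tuple outside the orbit satisfying it; propagating this through one-element extensions produces a tree of tuples of $\mc{A}$ along which the relations $\leq_\alpha$ never collapse to $\equiv_\infty$, and a sufficiently generic branch can be assembled by a back-and-forth construction into such a $\mc{B}$ that in addition contains a tuple $\equiv_\infty$ to no tuple of $\mc{A}$ (equivalently, one diagonalizes directly against all bijections $\mc{A}\to\mc{B}$). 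This is where I expect the real work to lie: the reason ``$\Pi_{\alpha+1}$ Scott sentence'' is more robust than the classical ranks $R(\mc{A})$ and $\SR(\mc{A})$ is precisely that one must keep the entire construction inside the class of structures to which $\varphi$ transfers while still forcing non-isomorphism.

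Finally, for $(2)\Leftrightarrow(3)$: assuming $(2)$, with $\Sigma_\alpha$ orbit formulas $\varphi_{\bar{a}}$, two copies $\mc{B}$ and $\mc{C}$ of $\mc{A}$, and an oracle deciding $\Sigma_\alpha$-facts about them (a $\mathbf{\Delta}^0_\alpha$ oracle), I would run the back-and-forth of Theorem~\ref{thm:scott} maintaining a finite partial map $\bar{b}\mapsto\bar{c}$ with $\bar{b}$ and $\bar{c}$ lying in the orbit named by the same $\varphi_{\bar{a}}$; to extend by $b'\in\mc{B}$ one locates the formula $\varphi_{\bar{a}a'}$ whose orbit contains $\bar{b}b'$ and searches $\mc{C}$ for $c'$ with $\mc{C}\models\varphi_{\bar{a}a'}(\bar{c},c')$, which exists because $\mc{A}\models\exists y\,\varphi_{\bar{a}a'}(\bar{a},y)$ is a $\Sigma_\alpha$-fact transferring to $\mc{C}$. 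The construction is uniform in $\mc{B},\mc{C}$, yielding uniform $\mathbf{\Delta}^0_\alpha$-categoricity. Conversely, given a uniform procedure $\Phi$ witnessing $(3)$, for fixed $\bar{a}$ the set of $\bar{b}\in\mc{A}$ for which $\Phi$, run on a fixed presentation of $\mc{A}$ with $\bar{a}$ marked against a presentation with $\bar{b}$ marked, outputs the graph of an isomorphism is, by the uniform $\mathbf{\Delta}^0_\alpha$-ness of $\Phi$, a $\Sigma^0_\alpha$ condition; translating via the standard correspondence between $\Sigma^0_\alpha$-definable relations and $\Sigma_\alpha$ formulas of $\mc{L}_{\omega_1 \omega}$ yields a $\Sigma_\alpha$ definition of the orbit of $\bar{a}$ --- with no parameters, precisely because $\Phi$ was uniform. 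The routine-but-technical ingredient in both directions is the bookkeeping of this effective-to-infinitary translation, keeping the complexity at exactly $\mathbf{\Delta}^0_\alpha$, respectively $\Sigma_\alpha$.
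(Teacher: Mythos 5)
Your direction $(2)\Rightarrow(1)$ is correct, and the key move --- replacing the negative occurrence $\neg\varphi_{\bar a}$ by the $\Sigma_\alpha$ formula $\pi_{\bar a}$ (the disjunction of the other orbits' defining formulas), together with the disjointness conjunct $\forall\bar x\,\neg(\varphi_{\bar a}(\bar x)\wedge\pi_{\bar a}(\bar x))$ so that the back-and-forth invariant ``$\mc{B}\models\varphi_{\bar a}(\bar b)$'' really does fire the extension clauses --- is exactly the standard argument, carried out with good attention to detail. Your sketch of $(2)\Rightarrow(3)$ is also fine: the searches only wait on positive $\Sigma_\alpha$ facts that are guaranteed to occur, which is what keeps the procedure at level $\alpha$. (For the record, this survey only states the theorem, citing Montalb\'an, so there is no in-paper proof to compare against.)

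The problem is that the two implications that make the theorem nontrivial are not actually proved. For $(1)\Rightarrow(2)$ you correctly reduce to: if some orbit is not $\Sigma_\alpha$-definable, build a countable $\mc{B}\ncong\mc{A}$ every tuple of which $\alpha$-reduces to a tuple of $\mc{A}$ (equivalently, $\mc{A}\leq_{\alpha+1}\mc{B}$). But that construction \emph{is} the heart of Montalb\'an's theorem, and your sketch does not confront its central tension: the requirement that every tuple of $\mc{B}$ sit $\leq_\alpha$-below a tuple of $\mc{A}$ pulls $\mc{B}$ toward being a copy of $\mc{A}$, while ``a tuple $\equiv_\infty$-equivalent to no tuple of $\mc{A}$'' or ``diagonalize against all bijections'' pulls the other way; the failure of $\Sigma_\alpha$-definability of a single orbit only tells you that each individual $\Sigma_\alpha$ formula true of $\bar a$ has a non-automorphic realization, and turning that local fact into a coherent construction is a genuine level-$\alpha$ omitting-types/forcing argument (one must argue that sufficient genericity preserves the reduction property \emph{and} forces the distinguished tuple out of the orbit). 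Nothing in the sketch explains why the generic object is not simply isomorphic to $\mc{A}$ via some isomorphism unrelated to your bookkeeping. Similarly, in $(3)\Rightarrow(2)$ the specific claim is wrong as stated: for a fixed pair of presentations, ``$\Phi$ outputs the graph of an isomorphism carrying $\bar a$ to $\bar b$'' (i) captures only the single isomorphism that $\Phi$ happens to produce, hence a proper subset of the orbit in general, and (ii) is not a $\Sigma^0_\alpha$ condition, since asserting that the output is a total isomorphism requires universal quantification over the domain. The actual argument quantifies existentially over (finite approximations to) presentations --- an Ash--Knight--Manasse--Slaman/Chisholm-style forcing argument at level $\alpha$ --- and it is the existential quantifier over finite forcing conditions that yields a $\Sigma_\alpha$ definition, with uniformity of $\Phi$ being what eliminates parameters. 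So the architecture of your proof is reasonable, but both hard directions currently rest on machinery (an $\alpha$-level omitting-types theorem and an $\alpha$-level forcing/genericity correspondence) that is neither supplied nor precisely cited.
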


\noindent Montalb\'an proposed the following definition of Scott rank:

\begin{definition}\label{def:Scott-rank-mon}
	The \textit{Scott rank} of a countable structure $\mc{A}$ is the least ordinal $\alpha$ such that $\mc{A}$ has a $\Pi_{\alpha + 1}$ Scott sentence.
\end{definition}

\noindent Montalb\'an argued that this is the correct notion of Scott rank because it is robust in the sense that it has many equivalent characterizations via Theorem \ref{thm:montalban}. It is this definition that we use for most of the rest of this article, except when otherwise noted.

There is a clear connection between this notion of Scott rank proposed by Montalb\'an and the rank $R$ of Definition \ref{def:asym-sr} defined using the asymmetric back-and-forth relations $\leq_\alpha$. Given $\bar{a} \in \mc{A}$, the formula $\psi_{\bar{a}}$ defines the automorphism orbit of $\bar{a}$; this is because if $\bar{a} \equiv_\infty \bar{b}$, then the structures $(\mc{A},\bar{a})$ and $(\mc{A},\bar{b})$, with $\bar{a}$ and $\bar{b}$ named as constants, are isomorphic. With $\alpha = r(\bar{a})$, the automorphism orbit of $\bar{a}$ is defined by a $\Pi_{\alpha}$ formula. Thus the Scott rank as defined by Montalb\'an and the rank $R$ differ by at most one.

\section{Scott Complexity}

There is some arbitrariness in choosing to define Scott rank using $\Pi$ formulas rather than $\Sigma$ formulas. Indeed, if we defined the Scott rank of a structure to be the least ordinal $\alpha$ such that the structure has a $\Sigma_{\alpha+2}$ Scott sentence, then we would have the desirable property that naming constants does not change Scott rank: $\mc{A}$ has the same Scott rank as $(\mc{A},\bar{c})$ for any tuple $\bar{c} \in \mc{A}$. This follows from the following fact:
\begin{theorem}[Montalb\'an]\label{thm:ex-ss}
	Let $\mc{A}$ be a countable structure. Then $\mc{A}$ has a $\Sigma_{\alpha+1}$ Scott sentence if and only if for some $\bar{c} \in \mc{A}$, $(\mc{A},\bar{c})$ has a $\Pi_{\alpha}$ Scott sentence.
\end{theorem}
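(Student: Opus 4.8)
The plan is to prove both directions by relating a Scott sentence for $\mc{A}$ to a Scott sentence for $(\mc{A},\bar{c})$, using the basic translation between sentences with extra constants and sentences with a leading block of existential quantifiers. For the easy direction, suppose $(\mc{A},\bar{c})$ has a $\Pi_\alpha$ Scott sentence $\varphi(\bar{c})$ (written with the constants displayed as free variables). Then I would take $\exists \bar{x}\, \varphi(\bar{x})$ as a candidate $\Sigma_{\alpha+1}$ Scott sentence for $\mc{A}$ — it is syntactically $\Sigma_{\alpha+1}$. That $\mc{A}$ satisfies it is clear; the content is that any countable $\mc{B} \models \exists\bar{x}\,\varphi(\bar{x})$ is isomorphic to $\mc{A}$. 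If $\mc{B}\models\varphi(\bar{b})$, then $(\mc{B},\bar{b})\models\varphi(\bar{b})$, so since $\varphi$ is a Scott sentence for $(\mc{A},\bar{c})$ we get $(\mc{B},\bar{b})\cong(\mc{A},\bar{c})$, hence in particular $\mc{B}\cong\mc{A}$. So this direction is essentially immediate.

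The other direction is the heart of the matter. Suppose $\mc{A}$ has a $\Sigma_{\alpha+1}$ Scott sentence $\psi$. Being $\Sigma_{\alpha+1}$, $\psi$ is (equivalent to) a countable disjunction $\bigdoublevee_i \exists\bar{x}_i\,\theta_i(\bar{x}_i)$ with each $\theta_i$ a $\Pi_\alpha$ formula. Since $\mc{A}\models\psi$, some disjunct holds in $\mc{A}$: there is $i$ and a tuple $\bar{c}\in\mc{A}$ with $\mc{A}\models\theta_i(\bar{c})$. Fix this $\bar{c}$ and this $\theta := \theta_i$. The natural guess is that $\theta(\bar{c})$, viewed as a sentence of $\mc{L}_{\omega_1\omega}$ in the language with constants for $\bar{c}$, is a $\Pi_\alpha$ Scott sentence for $(\mc{A},\bar{c})$. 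We must show: if $(\mc{B},\bar{b})$ is countable and $(\mc{B},\bar{b})\models\theta(\bar{b})$, then $(\mc{B},\bar{b})\cong(\mc{A},\bar{c})$. From $\mc{B}\models\exists\bar{x}\,\theta(\bar{x})$ we at least get $\mc{B}\models\psi$, hence $\mc{B}\cong\mc{A}$ — but this only gives an isomorphism not respecting the chosen tuples, so it is not enough on its own.

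The key step, and the one I expect to be the main obstacle, is to upgrade the unparametrized isomorphism $\mc{B}\cong\mc{A}$ to one sending $\bar{b}$ to $\bar{c}$. The tool for this is the characterization from Montalb\'an's Theorem \ref{thm:montalban} (or directly the fact that $\psi$ being a $\Sigma_{\alpha+1}$ Scott sentence constrains the back-and-forth relations of $\mc{A}$): I would argue that a $\Sigma_{\alpha+1}$ Scott sentence for $\mc{A}$ forces every $\Pi_\alpha$ type realized in $\mc{A}$ to pin down its automorphism orbit, so that $\theta(\bar{x})$ — a $\Pi_\alpha$ formula realized by $\bar{c}$ — defines the orbit of $\bar{c}$ in $\mc{A}$. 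Concretely: identify $\mc{B}$ with $\mc{A}$ via the isomorphism just obtained, so we may assume $\bar{b}\in\mc{A}$ and $\mc{A}\models\theta(\bar{b})$; it then suffices to show $\bar{b}\equiv_\infty\bar{c}$, since by the Corollary to Theorem \ref{thm:scott} this yields an automorphism of $\mc{A}$ taking $\bar{b}$ to $\bar{c}$, which composed with the first isomorphism gives the desired map. To see $\bar{b}\equiv_\infty\bar{c}$, suppose not; then there is a sentence (with parameters) separating them at some countable level, and one leverages the $\Sigma_{\alpha+1}$ Scott sentence $\psi$ together with the structure of $\theta$ to manufacture a countable model of $\psi$ not isomorphic to $\mc{A}$ — typically by a back-and-forth / Löwenheim–Skolem construction producing a countable $\mc{B}'\models\psi$ in which the witness tuple has the "wrong" orbit — contradicting that $\psi$ is a Scott sentence. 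This is the step where the precise bookkeeping with the $\Sigma/\Pi$ normal form and the back-and-forth relations $\leq_\alpha$ must be done carefully; everything else is routine.
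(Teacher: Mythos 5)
Your first direction is correct and is the standard argument: if $\varphi(\bar{x})$ is $\Pi_\alpha$ and $\varphi(\bar{c})$ is a Scott sentence for $(\mc{A},\bar{c})$, then $\exists \bar{x}\,\varphi(\bar{x})$ is a $\Sigma_{\alpha+1}$ Scott sentence for $\mc{A}$. Note, however, that the survey does not prove the converse at all: it explicitly warns that the statement ``is less obvious than it seems,'' was asserted without proof in \cite{Montalban15}, and was proved only later (first for $\Sigma_3$, then in general) in \cite{Montalban17} and \cite{AlvirGreenbergHTTuretsky}. So the question is whether your sketch of the hard direction supplies that missing argument, and it does not.

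The gap is that your key claim is false. You take a $\Sigma_{\alpha+1}$ Scott sentence $\psi=\bigdoublevee_i \exists\bar{x}_i\,\theta_i(\bar{x}_i)$, pick any true disjunct $\exists\bar{x}\,\theta(\bar{x})$ and any witness $\bar{c}$, and propose to show that $\theta(\bar{c})$ is a Scott sentence for $(\mc{A},\bar{c})$, i.e.\ that every $\bar{b}\in\mc{A}$ with $\mc{A}\models\theta(\bar{b})$ satisfies $\bar{b}\equiv_\infty\bar{c}$. No amount of careful bookkeeping can establish this, because it is simply not true for an arbitrary disjunct and witness: a $\Sigma_{\alpha+1}$ Scott sentence can be padded with a vacuous quantifier. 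Concretely, let $\chi$ be a $\Pi_3$ Scott sentence for the group $\mathbb{Z}$ and let $\psi$ be the $\Sigma_4$ sentence $\exists x\,(\chi\wedge x=x)$; this is a Scott sentence for $\mathbb{Z}$, its only disjunct is $\theta(x)=\chi\wedge x=x$, and every element is a witness, but $\theta(1)$ is not a Scott sentence for $(\mathbb{Z},1)$, since $(\mathbb{Z},2)\models\theta(2)$ while $1\not\equiv_\infty 2$. No contradiction with $\psi$ being a Scott sentence arises here, so the reductio you envisage --- manufacturing a countable model of $\psi$ not isomorphic to $\mc{A}$ --- has nothing to feed on. For the same reason, your supporting assertion that a $\Sigma_{\alpha+1}$ Scott sentence forces every realized $\Pi_\alpha$ formula to define an automorphism orbit is false; and even if the full $\Pi_\alpha$-type of $\bar{c}$ did determine its orbit, the single conjunct $\theta$ need not. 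The real content of the theorem is exactly the step you leave open: one must choose the tuple $\bar{c}$ (and the $\Pi_\alpha$ sentence, which need not be a disjunct of $\psi$) so that its level-$\alpha$ back-and-forth data already pins down its automorphism orbit, and proving that such a tuple exists from the mere existence of a $\Sigma_{\alpha+1}$ Scott sentence is the nontrivial argument carried out in \cite{AlvirGreenbergHTTuretsky}.
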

\noindent The proof of this fact is less obvious than it seems; indeed it was stated as an obvious fact without proof in \cite{Montalban15} but a proof was first provided (for $\Sigma_3$ sentences) in \cite{Montalban17}. A full proof appears in \cite{AlvirGreenbergHTTuretsky}.

Moreover, defining Scott rank in this way is still robust; combining Theorems \ref{thm:montalban} and \ref{thm:ex-ss}, we get:
\begin{theorem}\label{thm:montalban2}
	Let $\mc{A}$ be a countable structure, and $\alpha$ a countable ordinal. The following are equivalent:
	\begin{enumerate}
		\item $\mc{A}$ has a $\Sigma_{\alpha+2}$ Scott sentence.
		\item There are parameters $\bar{c}$ such that every automorphism orbit in $\mc{A}$ is $\Sigma_\alpha$-definable over $\bar{c}$.
		\item $\mc{A}$ is (boldface) $\mathbf{\Delta}^0_\alpha$-categorical.
	\end{enumerate}
\end{theorem}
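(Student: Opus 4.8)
The plan is to derive all three equivalences from Theorems~\ref{thm:montalban} and~\ref{thm:ex-ss} by passing to the expansion $(\mc{A},\bar{c})$ obtained by naming finitely many constants. First, applying Theorem~\ref{thm:ex-ss} with $\alpha+1$ in place of $\alpha$, condition~(1) --- that $\mc{A}$ has a $\Sigma_{\alpha+2}$ Scott sentence --- is equivalent to: for \emph{some} tuple $\bar{c}\in\mc{A}$, the structure $(\mc{A},\bar{c})$ has a $\Pi_{\alpha+1}$ Scott sentence. Applying Theorem~\ref{thm:montalban} to $(\mc{A},\bar{c})$ in place of $\mc{A}$ (with the same ordinal $\alpha$), having a $\Pi_{\alpha+1}$ Scott sentence is equivalent both to every automorphism orbit of $(\mc{A},\bar{c})$ being $\Sigma_\alpha$-definable without parameters, and to $(\mc{A},\bar{c})$ being uniformly (boldface) $\mathbf{\Delta}^0_\alpha$-categorical. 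Hence condition~(1) is equivalent to each of the statements ``for some $\bar{c}$, every orbit of $(\mc{A},\bar{c})$ is $\Sigma_\alpha$-definable without parameters'' and ``for some $\bar{c}$, $(\mc{A},\bar{c})$ is uniformly boldface $\mathbf{\Delta}^0_\alpha$-categorical'', and it remains to recognize these as conditions~(2) and~(3).

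For~(2), the work is bookkeeping about the hierarchy. Replacing each constant $c_i$ of the vocabulary $\tau\cup\{\bar{c}\}$ by a fresh variable turns a parameter-free formula into a $\tau$-formula with parameters $\bar{c}$ of the same $\Sigma_\alpha$/$\Pi_\alpha$ rank, and conversely; so ``$\Sigma_\alpha$-definable without parameters in $(\mc{A},\bar{c})$'' means exactly ``$\Sigma_\alpha$-definable over $\bar{c}$ in $\mc{A}$''. Moreover the orbits of $\mathrm{Aut}(\mc{A},\bar{c})$, the pointwise stabilizer of $\bar{c}$, and those of $\mathrm{Aut}(\mc{A})$ are mutually $\Sigma_\alpha$-over-$\bar{c}$ definable: the $\mathrm{Aut}(\mc{A},\bar{c})$-orbit of $\bar{a}$ equals $\{\bar{d}:\bar{c}\bar{a}\equiv_\infty\bar{c}\bar{d}\}$, obtained from the formula defining the $\mathrm{Aut}(\mc{A})$-orbit of the tuple $\bar{c}\bar{a}$ by substituting $\bar{c}$; and conversely each $\mathrm{Aut}(\mc{A})$-orbit is a countable union of $\mathrm{Aut}(\mc{A},\bar{c})$-orbits, hence $\Sigma_\alpha$ over $\bar{c}$ as soon as each of those is, using closure of $\Sigma_\alpha$ under countable disjunctions. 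This gives (1)$\Leftrightarrow$(2).

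For~(3), what remains is the lemma that $\mc{A}$ is boldface $\mathbf{\Delta}^0_\alpha$-categorical if and only if $(\mc{A},\bar{c})$ is uniformly boldface $\mathbf{\Delta}^0_\alpha$-categorical for some $\bar{c}$. The direction from uniform categoricity of $(\mc{A},\bar{c})$ to categoricity of $\mc{A}$ is routine: given presentations $\mc{B},\mc{C}$ of $\mc{A}$ on $\omega$, pick tuples $\bar{b}\in\mc{B}$, $\bar{d}\in\mc{C}$ with $(\mc{B},\bar{b})\cong(\mc{A},\bar{c})\cong(\mc{C},\bar{d})$ --- possible since $\mc{B}$ and $\mc{C}$ realize the $\equiv_\infty$-type of $\bar{c}$ --- feed $(\mc{B},\bar{b})$ and $(\mc{C},\bar{d})$ to the uniform procedure, and note that adjoining the finite tuples $\bar{b},\bar{d}$ to the oracle does not change $\mathbf{\Delta}^0_\alpha$, so one fixed real parameter still works. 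The converse --- extracting, from boldface categoricity of $\mc{A}$ alone, both a finite tuple $\bar{c}$ and a \emph{uniform} $\Delta^0_\alpha$ procedure for $(\mc{A},\bar{c})$ --- is the only genuinely nontrivial step, and I expect it to be the main obstacle. It is the boldface, level-$\alpha$ form of the relative $\mathbf{\Delta}^0_\alpha$-categoricity theorem of Ash--Knight (and Montalb\'an), proved by forcing with finite partial presentations of $\mc{A}$: a sufficiently generic copy $\mc{G}$ admits, by hypothesis, a $\Delta^0_\alpha(\mc{G}\oplus\mc{A}\oplus X)$-computable isomorphism onto $\mc{A}$, and a condition forcing a particular operator to compute such an isomorphism pins down both the parameters $\bar{c}$ (the images of that condition's domain) and the uniform procedure. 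Combined with the previous paragraphs this yields (1)$\Leftrightarrow$(3), completing the proof.
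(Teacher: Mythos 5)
Your proof is correct and follows essentially the same route as the paper, which obtains this theorem precisely by combining Theorems \ref{thm:ex-ss} and \ref{thm:montalban}. The only difference is that you spell out the bridging facts the paper leaves implicit---the translation between $\Sigma_\alpha$-definability of orbits over $\bar{c}$ in $\mc{A}$ and parameter-free definability of orbits in $(\mc{A},\bar{c})$, and the equivalence (via the standard forcing argument) of boldface $\mathbf{\Delta}^0_\alpha$-categoricity of $\mc{A}$ with uniform boldface $\mathbf{\Delta}^0_\alpha$-categoricity of some expansion $(\mc{A},\bar{c})$---and you handle these correctly.
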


\noindent In \cite{MontalbanBook2}, Montalb\'an refers to the Scott rank of Definition \ref{def:Scott-rank-mon} as \textit{parameterless Scott rank} and the Scott rank defined using $\Sigma$ formulas as \textit{parametrized Scott rank}, and uses the latter as the definition.

Rather than choosing between these two possibilities, one can give up on assigning a single ordinal rank, and rather say that the \textit{Scott complexity} (or \textit{Scott sentence complexity}) of a structure is the least complexity of a Scott sentence for that structure, e.g., if a structure has a $\Pi_3$ Scott sentence but no $\Sigma_3$ Scott sentence, then its Scott complexity is $\Pi_3$. (This is not yet a formal definition, as we have not specified the possible complexities of sentences.)

Now these complexities are not totally ordered; $\Pi_\alpha$ and $\Sigma_\alpha$ are incomparable. So a natural question is: Can a structure have both a $\Sigma_{\alpha+1}$ Scott sentence and a $\Pi_{\alpha+1}$ Scott sentence, but no $\Sigma_\alpha$ or $\Pi_\alpha$ Scott sentence? The answer turns out to be yes---for example, the group $\mathbb{Z}$ has a $\Sigma_3$ and a $\Pi_3$ Scott sentence, but no $\Sigma_2$ or $\Pi_2$ Scott sentence---but the following theorem of A. Miller says that the structure must then have a Scott sentence which is a conjunction of a $\Sigma_\alpha$ sentence and a $\Pi_\alpha$ sentence. We call such a sentence a $\mathrm{d-}\Sigma_\alpha$ sentence, where d stands for \textit{difference}, because it is the difference of two $\Sigma_\alpha$ sentences.
\begin{theorem}[A. Miller \cite{AMiller}]\label{thm:akm}
	Let $\mc{A}$ be a countable structure. Then if $\mc{A}$ has both a $\Sigma_\alpha$ and a $\Pi_\alpha$ Scott sentence, it has a $\mathrm{d-}\Sigma_\beta$ Scott sentence for some $\beta < \alpha$.
\end{theorem}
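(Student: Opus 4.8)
The plan is to put both hypotheses into normal form, extract from the $\Sigma_\alpha$ Scott sentence a $\Pi_{<\alpha}$ formula $\phi(\bar x)$ whose existential closure is already a Scott sentence for $\mc{A}$, and then use the $\Pi_\alpha$ Scott sentence — via Theorem~\ref{thm:montalban} — to trade the offending leading existential quantifier for a universally quantified implication of bounded complexity. First I would dispose of the case that $\alpha$ is a limit, which needs only the first hypothesis: a $\Sigma_\alpha$ Scott sentence has the form $\bigdoublevee_i \exists \bar x_i\, \phi_i$ with each $\phi_i$ in $\Pi_{\beta_i}$ for some $\beta_i < \alpha$; since $\mc{A}$ satisfies it, $\mc{A} \models \exists \bar x_j\, \phi_j$ for some $j$, and any countable model of $\exists \bar x_j\, \phi_j$ models the whole disjunction and so is isomorphic to $\mc{A}$. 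Thus $\exists \bar x_j\, \phi_j$ is itself a Scott sentence for $\mc{A}$, and it is $\Sigma_{\beta_j+1}$ with $\beta_j+1 < \alpha$, hence in particular $\mathrm{d-}\Sigma_{\beta_j+1}$. So from now on I may assume $\alpha = \alpha_0 + 1$.

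Reduce the $\Sigma_{\alpha_0+1}$ Scott sentence in exactly this way to a Scott sentence $\exists \bar x\, \phi(\bar x)$ with $\phi \in \Pi_{\alpha_0}$, and fix $\bar a \in \mc{A}$ with $\mc{A} \models \phi(\bar a)$. Since $\mc{A}$ has a $\Pi_{\alpha_0+1}$ Scott sentence, Theorem~\ref{thm:montalban} furnishes a $\Sigma_{\alpha_0}$ formula $\chi(\bar x)$ defining the automorphism orbit $O$ of $\bar a$. Note that $O \subseteq \phi(\mc{A})$: if $\bar b \equiv_\infty \bar a$ then $(\mc{A},\bar b) \cong (\mc{A},\bar a) \models \phi$, so $\mc{A} \models \phi(\bar b)$. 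Now put
\[
  \Theta \ :=\ \exists \bar x\, \chi(\bar x)\ \wedge\ \forall \bar x\,\bigl(\chi(\bar x) \longrightarrow \phi(\bar x)\bigr).
\]
I would verify that $\Theta$ is a Scott sentence for $\mc{A}$: $\mc{A} \models \Theta$ because $\bar a$ witnesses the first conjunct and $\chi(\mc{A}) = O \subseteq \phi(\mc{A})$ validates the second; and if $\mc{B}$ is countable with $\mc{B} \models \Theta$, then choosing $\bar b$ witnessing the first conjunct and applying the second gives $\mc{B} \models \phi(\bar b)$, so $\mc{B} \models \exists \bar x\, \phi(\bar x)$ and hence $\mc{B} \cong \mc{A}$.

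It remains to bound the complexity of $\Theta$. The first conjunct is $\Sigma_{\alpha_0}$, by closure of $\Sigma_{\alpha_0}$ under existential quantification. For the second, $\neg\chi$ is $\Pi_{\alpha_0}$ and $\phi$ is $\Pi_{\alpha_0}$, so $\neg\chi \vee \phi$ is $\Pi_{\alpha_0}$; and — the one genuinely syntactic point — $\Pi_{\alpha_0}$ is closed under universal quantification, since a $\Pi_{\alpha_0}$ formula in normal form is a countable conjunction of blocks $\forall \bar y\, \sigma$ with $\sigma \in \Sigma_{<\alpha_0}$, and a further universal quantifier is absorbed into each block. Hence $\forall \bar x(\chi \longrightarrow \phi)$ is $\Pi_{\alpha_0}$, so $\Theta$ is a conjunction of a $\Sigma_{\alpha_0}$ and a $\Pi_{\alpha_0}$ sentence, i.e.\ $\mathrm{d-}\Sigma_{\alpha_0}$, with $\alpha_0 < \alpha$, as required. (The case $\alpha_0 = 0$ is degenerate and is checked by hand.)

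The step I expect to be the crux is the appeal to Theorem~\ref{thm:montalban}. If one only knew, from the $\Pi_{\alpha_0+1}$ Scott sentence, that the orbit of $\bar a$ is $\Pi_{\alpha_0}$-definable — which is all the asymmetric back-and-forth relations give directly, since $r(\bar a) \le \alpha_0$ only yields a $\Pi_{\alpha_0}$ orbit formula — then taking $\chi \in \Pi_{\alpha_0}$ would make the first conjunct $\Sigma_{\alpha_0+1}$ and the second $\Pi_{\alpha_0+1}$, so $\Theta$ would be merely $\mathrm{d-}\Sigma_{\alpha_0+1} = \mathrm{d-}\Sigma_\alpha$, and nothing is gained. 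It is precisely the extra content of Montalbán's theorem — that the automorphism orbits of a structure with a $\Pi_{\alpha+1}$ Scott sentence are $\Sigma_\alpha$-definable, not just $\Pi_\alpha$-definable — that buys the single level we need, and I would expect the routine work to lie in confirming the closure bookkeeping above and in treating the $\alpha = 1$ edge case.
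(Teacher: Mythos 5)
Your argument is correct, and it is worth noting that the survey itself states this theorem without proof (citing A.~Miller), so there is no in-paper proof to match against; your route is the modern model-theoretic one rather than Miller's original descriptive-set-theoretic argument on the space of models. Concretely, you combine the hard direction of Theorem~\ref{thm:montalban} (a $\Pi_{\alpha_0+1}$ Scott sentence gives $\Sigma_{\alpha_0}$-definable orbits) with the observation that a $\Sigma_{\alpha_0+1}$ Scott sentence in normal form can be replaced by a single true disjunct $\exists \bar{x}\,\phi(\bar{x})$, $\phi \in \Pi_{\alpha_0}$, which is again a Scott sentence; the sentence $\exists \bar{x}\,\chi(\bar{x}) \wedge \forall \bar{x}(\chi \rightarrow \phi)$ then does the job, and your closure bookkeeping ($\Sigma_{\alpha_0}$ under $\exists$, $\Pi_{\alpha_0}$ under finite disjunction and $\forall$, all for $\alpha_0 \geq 1$) is standard and correct, as is the limit-ordinal reduction (which, as you note, uses only the $\Sigma_\alpha$ hypothesis). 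A pleasant feature of your version is that the disjunct-picking trick lets you avoid Theorem~\ref{thm:ex-ss} entirely: you never need $\phi$ to be a Scott sentence for $(\mc{A},\bar{a})$, only that every countable model of $\exists\bar{x}\,\phi$ is a copy of $\mc{A}$, whereas the route suggested by the later d-$\Sigma_\alpha$ characterization of Alvir--Greenberg--Harrison-Trainor--Turetsky passes through that (nontrivial) parameter theorem. The trade-off is that you lean on the difficult implication of Montalb\'an's theorem, which historically postdates Miller's result; this is legitimate here (Montalb\'an's proof does not use Theorem~\ref{thm:akm}, so there is no circularity), but it means your proof is not ``more elementary'' than the original, just differently organized. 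Two small caveats: you should say explicitly that you invoke only the direction (1)$\Rightarrow$(2) of Theorem~\ref{thm:montalban} and only for $\alpha_0 \geq 1$; and the $\alpha = 1$ case is more than ``degenerate, checked by hand'' --- in a nonempty relational language no structure has a $\Sigma_1$ Scott sentence at all (any finite quantifier-free configuration extends to non-isomorphic structures), and in the remaining trivial situations the existence of a $\mathrm{d-}\Sigma_0$ sentence depends on conventions about quantifier-free sentences, so the honest statement is that this case is vacuous or convention-dependent rather than ``checked.''
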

	
\noindent So for example the group $\mathbb{Z}$ has a $\mathrm{d-}\Sigma_2$ sentence but no $\Sigma_2$ or $\Pi_2$ sentence. Thus when we say that the Scott complexity of a structure is the least complexity of a Scott sentence for that structure, we have to include not just $\Sigma_\alpha$ and $\Pi_\alpha$ as complexities, but $\mathrm{d-}\Sigma_2$. But among these complexities, there is always a single least complexity of a Scott sentence.

\[
\xymatrix@=7pt{
	& \Sigma_1 \ar[dr]                                      &                                                      &\Sigma_2\ar[dr]&& \Sigma_3 \ar[dr] & && \Sigma_\omega\ar[dr] \\
	\Sigma_0 \ar[ur]\ar[dr] &&\text{d-}\Sigma_1 \ar[ur]\ar[dr]&&\text{d-}\Sigma_2 \ar[ur]\ar[dr] & & \text{d-}\Sigma_3 \ar[r] & \cdots&&\cdots\\
	& \Pi_1 \ar[ur]                                            &                                                     &\Pi_2\ar[ur]&& \Pi_3    \ar[ur] & && \Pi_\omega\ar[ur]
}
\]

Are there further possible least complexities of Scott sentences? For all of the other complexity classes of sentences we can think of, any structure with a Scott sentence of that complexity also has a simpler Scott sentence. Let us give two examples. First, if a structure has a Scott sentence which is a negation of a $\mathrm{d-}\Sigma_\alpha$ sentence, that is, of the form $\varphi \vee \psi$ where $\varphi$ is $\Sigma_\alpha$ and $\psi$ is $\Pi_\alpha$, then one of either $\varphi$ or $\psi$ is a Scott sentence on its own, and so the structure has either a $\Sigma_\alpha$ or $\Pi_\alpha$ Scott sentence. Second, there are complexities $n\mathrm{-}\Sigma_\alpha$ corresponding to the difference hierarchy. Suppose that a structure has a Scott sentence which is $3\mathrm{-}\Sigma_\alpha$, that is, of the form $(\varphi \wedge \neg \psi) \vee \theta$ where these sentences are all $\Sigma_\alpha$. Then either the $\mathrm{d-}\Sigma_\alpha$ sentence $\varphi \wedge \neg \psi$ is a Scott sentence, or the $\Sigma_\alpha$ sentence $\psi$ is a Scott sentence. A similar argument works for the rest of the difference hierarchy. Empirically through arguments such as these, and by checking examples, it appears that $\Sigma_\alpha$, $\Pi_\alpha$, and $\mathrm{d-}\Sigma_\alpha$ are the only possible Scott complexities. Thus we define:

\begin{definition}\label{def:scott-sentence-complexity}
	The \emph{Scott sentence complexity} of a structure $\mc{A}$ is the least complexity, from among $\Sigma_\alpha$, $\Pi_\alpha$, and $\mathrm{d-}\Sigma_\alpha$, of a Scott sentence for $\mc{A}$.
\end{definition} 

It is desirable to have some kind of more rigorous argument that these should be the possible complexities, and that it is not just that we cannot think of any other possibilities. For this, we use a perspective that began in \cite{AMiller} of looking at the Borel complexity of the set of copies of a structure. Given a language $\mc{L}$, there is a Polish space $\Mod(\mc{L})$ of all $\mc{L}$-structures with domain $\omega$. Given a structure $\mc{A} \in \Mod(\mc{L})$, let $\Iso(\mc{A})$ be the set of all isomorphic copies of $\mc{A}$ in $\Mod(\mc{L})$. There is a correspondence between Scott sentences of $\mc{A}$ and the complexity of $\Iso(\mc{A})$ via the Lopez-Escobar theorem, and its strengthenings by Vaught and D. Miller.
\begin{theorem}[Lopez-Escobar \cite{LE}, Vaught \cite{Vaught}, D. Miller \cite{DMiller}]
	Let $\mathbb{K}$ be a subclass of $\Mod(\mc{L})$ which is closed under isomorphism. Then $\mathbb{K}$ is $\bfSigma^0_\alpha$ if and only if $\mathbb{K}$ is axiomatized by an infinitary $\Sigma_\alpha$ sentence. The same is true for $\bfPi_\alpha$ and $\Pi_\alpha$ sentences, and for the difference hierarchy.
\end{theorem}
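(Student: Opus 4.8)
The plan is to prove the Lopez--Escobar theorem and its refinements by an induction on the complexity class $\alpha$, exploiting the tight correspondence between the syntactic hierarchy of $\mc{L}_{\omega_1\omega}$ formulas and the Borel hierarchy on $\Mod(\mc{L})$. First I would set up the relevant semantics: for a $\tau$-formula $\varphi(\bar x)$ and a tuple $\bar n \in \omega^{<\omega}$, the set $\{\mc{M} \in \Mod(\mc{L}) : \mc{M} \models \varphi(\bar n)\}$ should be observed to be Borel, with a complexity bound matching the $\Sigma_\alpha / \Pi_\alpha$ level of $\varphi$; this is the "easy direction" and is proved by a straightforward structural induction on $\varphi$, using that atomic formulas give clopen sets, countable conjunctions/disjunctions give countable intersections/unions, and the existential quantifier $\exists x\, \phi(x,\bar n)$ gives $\bigcup_{m} \{\mc{M} : \mc{M} \models \phi(m, \bar n)\}$, which bumps the complexity by at most one as needed. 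This immediately yields one direction of each equivalence in the statement, including for the difference hierarchy since a $\mathrm{d\text{-}}\Sigma_\alpha$ sentence defines the intersection of a $\bfSigma^0_\alpha$ set with a $\bfPi^0_\alpha$ set.

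The substantive direction is the converse: given an isomorphism-invariant $\bfSigma^0_\alpha$ class $\mathbb{K}$, produce an infinitary $\Sigma_\alpha$ sentence axiomatizing it. Here I would use the standard device of coding: the logic action of $S_\infty$ (the group of permutations of $\omega$) on $\Mod(\mc{L})$ by relabeling elements has the property that $\mathbb{K}$ is invariant under this action exactly when it is closed under isomorphism. The key technical input is a \emph{definability-from-invariance} principle --- essentially the Vaught transform machinery: for an invariant Borel set $B$, one shows by induction on the Borel rank of $B$ that $B$ is definable by an infinitary formula of the corresponding syntactic complexity, simultaneously with the dual statement for open-in-a-topology-refinement sets. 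More concretely, one fixes an enumeration and associates to each basic open set of $\Mod(\mc{L})$ (determined by a finite partial diagram) a quantifier-free formula; then the Vaught transforms $A^{*U}$ and $A^{\triangle U}$ of a set $A$ along open $U \subseteq S_\infty$ are shown to preserve invariance and to correspond to the infinitary $\forall$ and $\exists$ quantifiers, with the rank of the transform controlled by the rank of $A$. Running the induction through $\alpha$ levels converts a $\bfSigma^0_\alpha$ invariant set into a $\Sigma_\alpha$ infinitary sentence.

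For the difference hierarchy refinement, I would note that a $\mathrm{d\text{-}}\Sigma_\alpha$ class is the intersection of a $\bfSigma^0_\alpha$ invariant set with a $\bfPi^0_\alpha$ invariant set, each of which is handled by the $\Sigma_\alpha$ and $\Pi_\alpha$ cases already established; conjoining the two resulting sentences gives a $\mathrm{d\text{-}}\Sigma_\alpha$ axiomatization, and conversely. The same remark applies more generally to the finite-level difference classes $n\text{-}\Sigma_\alpha$. The main obstacle, and the heart of the proof, is the inductive step establishing that invariance plus Borel complexity $\alpha$ forces definability by a formula of syntactic complexity $\alpha$ --- in particular getting the ranks to match \emph{exactly} rather than just up to a bounded error, which requires care with the base case (quantifier-free formulas versus clopen sets), with limit ordinals, and with the bookkeeping of which Vaught transform corresponds to which quantifier. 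I would structure this as a lemma proved by a double induction, stated uniformly for formulas with free variables so that the quantifier steps go through, and I expect the verification that the Vaught transform of a $\bfSigma^0_\beta$ set along an open set is $\bfSigma^0_\beta$ (not $\bfSigma^0_{\beta+1}$) to be the delicate point that makes the exact correspondence work.
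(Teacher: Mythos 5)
The survey itself does not prove this theorem---it is quoted from the cited sources---so the relevant comparison is with the classical arguments of Lopez-Escobar, Vaught, and D.~Miller, and your proposal follows that standard route: the easy direction by structural induction on formulas (atomic formulas clopen, countable connectives, $\exists$ as a countable union over witnesses), and the substantive direction via the Vaught transform machinery for the logic action of $S_\infty$, with the key lemma, proved by simultaneous induction and stated for sets/formulas with finitely many parameters (finite partial injections), that the transforms of a $\bfSigma^0_\beta$ (resp.\ $\bfPi^0_\beta$) set along basic open subsets of $S_\infty$ stay at level $\beta$ and are defined by formulas of matching syntactic complexity. You correctly isolate that complexity-preservation computation as the delicate point, so on the main equivalences ($\bfSigma^0_\alpha$/$\Sigma_\alpha$ and $\bfPi^0_\alpha$/$\Pi_\alpha$) the plan is sound and is essentially Vaught's proof.

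The one understated step is the difference hierarchy. You ``note'' that an invariant $\mathrm{d-}\bfSigma^0_\alpha$ class is the intersection of an \emph{invariant} $\bfSigma^0_\alpha$ set with an \emph{invariant} $\bfPi^0_\alpha$ set, but the given decomposition $\mathbb{K}=A\cap B$ has no reason to have invariant pieces, so this needs an argument. It does follow from the machinery you already invoke: take the transforms over the whole group, $A^{\triangle}$ and $B^{*}$; these are invariant, remain $\bfSigma^0_\alpha$ and $\bfPi^0_\alpha$ respectively, and $A^{\triangle}\cap B^{*}=\mathbb{K}$, since for $x$ in the intersection the non-meager set $\{g: g\cdot x\in A\}$ meets the comeager set $\{g: g\cdot x\in B\}$, producing $g$ with $g\cdot x\in A\cap B=\mathbb{K}$, whence $x\in\mathbb{K}$ by invariance. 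This should be said explicitly. Also, your closing remark that ``the same remark applies'' to $n$-$\Sigma_\alpha$ and the rest of the difference hierarchy is a genuine gloss rather than a formal consequence of the $\Sigma_\alpha$ and $\Pi_\alpha$ cases: invariantizing a longer difference requires invariant reduction/separation-type principles, which is precisely the content of D.~Miller's contribution. For the uses made in this survey (Scott complexities $\Sigma_\alpha$, $\Pi_\alpha$, $\mathrm{d-}\Sigma_\alpha$), your argument with the one-line fix above suffices.
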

\noindent Since each structure has a Scott sentence, $\Iso(\mc{A})$ is always Borel.

Intuitively, the Lopez-Escobar result should extend to any natural and robust notion of complexity for sentences. So in place of looking at the least complexity of a Scott sentence for $\mc{A}$, we look at the complexity of $\Iso(\mc{A})$. This gives us access to additional structure via Wadge reducibility.
\begin{definition}[Wadge]
	Let $A$ and $B$ be subsets of $\Mod(\mc{L})$. We say that $A$ is \emph{Wadge reducible} to $B$, and write $A \leq_W B$, if there is a continuous function $f$ with $A = f^{-1}[B]$, i.e.
	\[x \in A \Longleftrightarrow f(x) \in B .\]
\end{definition}

\noindent The equivalence classes under this pre-order are called the Wadge degrees; we write $[A]_W$ for the Wadge degree of $A$. The Wadge hierarchy is the set of Wadge degrees under continuous reductions.

With enough determinacy, the Wadge hierarchy is very well-behaved; it is well-founded and almost totally ordered (in the sense that any anti-chain has size at most two). 
\begin{theorem}[Martin and Monk, AD]
	The Wadge order is well-founded.
\end{theorem}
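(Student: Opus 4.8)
The plan is to derive well-foundedness from the single combinatorial fact known as \emph{Wadge's Lemma}, which is where determinacy enters; everything else is bookkeeping. Since every $\Iso(\mc{A})$ is Borel, for the purposes of this article it is enough to run the whole argument for Borel sets, and then the only set-theoretic input is Martin's Borel determinacy theorem, which is a theorem of $\mathsf{ZFC}$; the statement for arbitrary subsets of $\Mod(\mc{L})$ is proved verbatim under $\mathsf{AD}$. So fix the ambient zero-dimensional Polish space $X$ (for definiteness one may transport everything to $\omega^\omega$), and for $A,B \subseteq X$ consider the \emph{Wadge game} $G_W(A,B)$: players I and II alternate, with I producing $x \in X$ and II producing $y \in X$, where II is allowed to pass but loses unless infinitely many of II's moves are genuine (so that $y$ is a genuine element of $X$), and II wins iff $x \in A \leftrightarrow y \in B$. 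When $A$ and $B$ are Borel this is a Borel game, hence determined. The first step is the observation that a winning strategy for II \emph{is} a continuous function $f$ with $x \in A \leftrightarrow f(x) \in B$, i.e.\ a witness for $A \leq_W B$, while a winning strategy for I is a continuous function $g$ with $y \in B \leftrightarrow g(y) \notin A$, i.e.\ a witness for $B \leq_W X \setminus A$. Hence: for all Borel $A,B$, either $A \leq_W B$ or $B \leq_W X \setminus A$.

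Next I would assume, toward a contradiction, that there is an infinite strictly $\leq_W$-decreasing sequence $A_0 >_W A_1 >_W A_2 >_W \cdots$ of Borel sets. Applying Wadge's Lemma to $A_n$ and $A_{n+1}$, and using $A_n \not\leq_W A_{n+1}$, one obtains for each $n$ a continuous $f_n \colon X \to X$ with
\[ x \in A_{n+1} \iff f_n(x) \notin A_n \qquad \text{for all } x \in X. \]
The final step is the Martin--Monk diagonalization: one threads the $f_n$ together (via a homeomorphism $X \cong X^{\omega}$) and considers an auxiliary Borel game in which the players cooperatively build a sequence $\langle x_n : n \in \omega\rangle$ subject to the closed condition $x_n = f_n(x_{n+1})$ for all $n$, set up so that a winning strategy produces a \emph{single} point whose pattern of memberships $x_n \in A_n$ is forced to obey incompatible parity constraints. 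Determinacy of this game then manufactures a genuine point witnessing the contradiction, so no such descending sequence can exist and $\leq_W$ is well-founded.

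The subtle point, and the reason this is not elementary, is precisely that last step. A naive attempt to build the offending point by hand fails: a bare "thread" with $x_n = f_n(x_{n+1})$ only forces the truth values $t_n := [\,x_n \in A_n\,]$ to satisfy $t_n = \neg t_{n+1}$, which is perfectly consistent (the alternating sequence works). The point of determinacy is to \emph{produce} a point that cannot be built directly: one that simultaneously plays the thread and decides the "initial parity," so that the two demands collide. Designing this auxiliary game correctly, and checking that it is Borel so Borel determinacy applies, is the heart of the argument; the remainder (coding sequences of reals as reals, and handling II's pass moves in $G_W$ so that strategies genuinely yield continuous reductions) is routine. Finally, I would record in passing that well-foundedness, together with the semi-linear ordering consequence of Wadge's Lemma — any $\leq_W$-antichain of Borel sets has size at most two — is exactly what allows one to attach an ordinal rank to each Borel Wadge degree, and hence to pin down the possible Scott sentence complexities as $\Sigma_\alpha$, $\Pi_\alpha$, and $\mathrm{d-}\Sigma_\alpha$.
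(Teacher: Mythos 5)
Your first paragraph is fine: the Wadge game, its determinacy for Borel payoff, and the reading of the two kinds of winning strategies as reductions is exactly the standard route to Wadge's Lemma (the paper itself states the well-foundedness theorem without proof, so the comparison here is with the standard Martin--Monk argument). The problem is the "final step," which you yourself flag as the heart of the matter and then do not supply. What you sketch in its place would not work: the functions $f_n$ witnessing $A_{n+1} \leq_W \neg A_n$ point the wrong way, so there is no seed from which to build a thread $\langle x_n \rangle$ with $x_n = f_n(x_{n+1})$ (such an inverse limit may be empty), and, as you concede, even a genuine thread only forces the truth values $t_n = [x_n \in A_n]$ to alternate, which is consistent. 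More fundamentally, no "auxiliary game manufacturing a single contradictory point" can exist, because the contradiction in the Martin--Monk proof is not carried by a point at all: it is carried by a \emph{set} that fails a regularity property.

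The actual argument runs as follows. From $A_{n+1} <_W A_n$ one checks both $A_n \not\leq_W A_{n+1}$ and $\neg A_n \not\leq_W A_{n+1}$ (the latter needs a one-line argument: $\neg A_n \leq_W A_{n+1} \leq_W A_n$ would give $A_n \leq_W \neg A_n \leq_W A_{n+1}$, a contradiction), so by determinacy player I has winning strategies in the Wadge games $W(A_n, A_{n+1})$ and $W(\neg A_n, A_{n+1})$. For each $\varepsilon \in 2^\omega$ one runs all these games simultaneously, letting I follow the strategy for $W(A_n, A_{n+1})$ or $W(\neg A_n, A_{n+1})$ according to $\varepsilon(n)$, and letting II in the $n$-th game copy I's moves from the $(n+1)$-st game; because I moves at every turn, every run $x_n(\varepsilon)$ is completely determined, continuously in $\varepsilon$, with no need for a seed "at infinity" --- this interactive use of the strategies, rather than the bare reductions $f_n$, is precisely what makes the diagonalization go through. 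The winning conditions then show that flipping a single bit of $\varepsilon$ flips the membership $x_0(\varepsilon) \in A_0$, so $D = \{\varepsilon : x_0(\varepsilon) \in A_0\}$ is a flip set; but $D$ is a continuous preimage of $A_0$, hence Borel, hence has the Baire property, and no flip set has the Baire property (under AD one instead quotes that every set has the Baire property). None of these ingredients --- the strategies for player I, the $2^\omega$-indexed simultaneous runs with copying, the flip set, and the Baire-property (or measure) contradiction --- appears in your proposal, so the proof has a genuine gap exactly where the determinacy hypothesis is supposed to do its work.
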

\begin{theorem}[Wadge's Lemma, AD, \cite{Wadge}]
	Given $A,B \subseteq \omega^\omega$, either $A \leq_W B$ or $B \leq_W \omega^\omega - A$.
\end{theorem}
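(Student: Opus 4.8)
The plan is to play the \emph{Wadge game} $G_W(A,B)$ and appeal to determinacy, reading off a continuous reduction from whichever player has a winning strategy. In this game players~I and~II alternate. Player~I moves first and builds a point $x \in \omega^\omega$ one coordinate at a time; player~II likewise plays natural numbers, building a point $y$, but is additionally permitted to ``pass'' on any turn, with the stipulation that if she passes on all but finitely many turns then she immediately loses. The moves are interleaved so that player~II commits her $n$-th move only after seeing $x(0),\dots,x(n)$, while player~I commits $x(n)$ having seen only player~II's first $n$ moves. In a run in which~II does not pass cofinitely often we obtain a genuine $y \in \omega^\omega$, and we declare player~II the winner of that run exactly when $x \in A \Longleftrightarrow y \in B$. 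The set of runs won by~I is obtained from $A$ and $B$ by Boolean operations and continuous substitutions together with the (Borel) set of runs in which~II passes cofinitely often, so it is Borel whenever $A$ and $B$ are---in which case Borel determinacy applies in $\mathsf{ZFC}$---and in general $\mathsf{AD}$ makes it determined. Either way, one of the two players has a winning strategy.

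Suppose first that player~II has a winning strategy $\tau$. For $x \in \omega^\omega$, let $f(x)$ be player~II's output when player~I plays $x$ and player~II follows $\tau$. Because $\tau$ is winning, II cannot have passed cofinitely often against any $x$, so $f(x) \in \omega^\omega$, and moreover $x \in A \Leftrightarrow f(x) \in B$. I would then verify that $f$ is continuous: the first $k+1$ coordinates of $f(x)$ are determined once~II has made her first $k+1$ non-passing moves, which she does by some finite stage $m$, and her play through stage $m$ depends only on $x(0),\dots,x(m)$; hence $f(x)$ and $f(x')$ agree on their first $k+1$ coordinates whenever $x \res (m+1) = x' \res (m+1)$. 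Thus $f$ witnesses $A \leq_W B$.

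Suppose instead that player~I has a winning strategy $\sigma$. For $y \in \omega^\omega$, let $g(y)$ be player~I's output when player~II plays the coordinates of $y$, never passing, and player~I follows $\sigma$. Then $g(y)(n)$ depends only on $y(0),\dots,y(n-1)$, so $g$ is continuous. Since $\sigma$ wins against this line of play, $g(y) \in A \Leftrightarrow y \notin B$ for every $y$, which rearranges to $y \in B \Leftrightarrow g(y) \in \omega^\omega - A$. Thus $g$ witnesses $B \leq_W \omega^\omega - A$. Combining the two cases yields the stated dichotomy.

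The step that I expect to demand the most care is the bookkeeping around passing. One must check that $G_W(A,B)$ is a legitimate game on a countable alphabet with a payoff set of the complexity claimed, so that the appropriate determinacy theorem applies, and---more importantly---that a winning strategy for~II produces a \emph{total} continuous function rather than merely a partial one; this is precisely the role of the clause penalizing cofinitely many passes, which forces $f(x)$ to be infinite for every $x$. Everything after that is a routine unwinding of the definitions of the two strategies.
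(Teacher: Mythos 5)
Your argument is correct and is precisely the standard proof of Wadge's Lemma via the Wadge game with passing and determinacy; the survey states the lemma without proof, citing Wadge, and this is exactly the argument behind that citation. Your treatment of the delicate points is right: penalizing II for passing cofinitely often guarantees that a winning strategy for II yields a total continuous reduction witnessing $A \leq_W B$, while a winning strategy for I yields a Lipschitz (hence continuous) map witnessing $B \leq_W \omega^\omega - A$, with determinacy supplied by AD in general and by Borel determinacy in ZFC when $A$ and $B$ are Borel, which is the case the paper actually needs.
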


\noindent Since determinacy for Borel sets is provable in ZFC, this theorem holds in ZFC for such sets. Since $\Iso(\mc{A})$ is Borel, we will not need to use any determinacy assumptions.

In general, for each of the pointclasses $\Gamma$ from among $\bfSigma^0_\alpha$, $\bfPi^0_\alpha$, $\bfDelta^0_\alpha$, $\mathrm{d-}\bfSigma^0_\alpha$, and other pointclasses arising from the Borel or difference hierarchies, if $A$ is Wadge-reducible to a set in $\Gamma$, then $A$ itself is in $\Gamma$; and moreover, there is a $\Gamma$-complete set. We denote by $\Gamma$ the Wadge degree of a $\Gamma$-complete set. So, for example, $\bfSigma^0_1$ is the Wadge degree of open, but not clopen, sets.

\medskip{}

Now we define:
\begin{definition}\label{def:scott-complexity}
	The \emph{Scott complexity} of a structure $\mc{A}$ is the Wadge degree of $\Iso(\mc{A})$.
\end{definition} 

\noindent Of course we want to show that this is the same as Definition \ref{def:scott-sentence-complexity}. (We have made a slight differentiation between the two by calling one Scott sentence complexity and the other Scott complexity; they are technically different types of objects, the former being a complexity class of sentences and the latter being a Wadge degree, though by the Lopez-Escobar theorem the two are in correspondence.) The possible Scott complexities are exactly what we expect given the discussion above: $\bfPi^0_\alpha$, $\bfSigma^0_\alpha$, and $\mathrm{d-}\bfSigma^0_\alpha$. There are certain values of $\alpha$ for which some of these are not possible, e.g.\ $\Sigma_2$ is not the Scott complexity of any structure. A complete list is as follows:

\begin{theorem}[A. Miller \cite{AMiller}, Alvir, Greenberg, Harrison-Trainor, and Turetsky \cite{AlvirGreenbergHTTuretsky}]
	The possible Scott complexities of countable structures $\mc{A}$ are:
	\begin{enumerate}
		\item $\bfPi^0_\alpha$ for $\alpha \geq 1$,
		\item $\bfSigma^0_\alpha$ for $\alpha \geq 3$ a successor ordinal,
		\item $\mathrm{d-}\bfSigma^0_\alpha$ for $\alpha \geq 1$ a successor ordinal.
	\end{enumerate}
	There is a countable structure with each of these Wadge degrees.
\end{theorem}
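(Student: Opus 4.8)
The statement has two halves: \textbf{completeness} of the list (every $\Iso(\mc{A})$ has one of the named Wadge degrees) and \textbf{realizability} (each named degree occurs). I would begin by cashing in the dictionary assembled in the text. Since $\mc{A}$ has a Scott sentence, $\Iso(\mc{A})$ is Borel, so Wadge's Lemma and well-foundedness apply to it in ZFC; and by the Lopez-Escobar theorem with the Vaught and D.\ Miller refinements, $\Iso(\mc{A})$ lies in $\bfSigma^0_\alpha$, in $\bfPi^0_\alpha$, or in $\mathrm{d-}\bfSigma^0_\alpha$ exactly when $\mc{A}$ has a Scott sentence of that syntactic shape. Hence the Wadge degree of $\Iso(\mc{A})$ \emph{is} the Scott sentence complexity of Definition~\ref{def:scott-sentence-complexity}, and one may argue interchangeably with Scott sentences or with the set $\Iso(\mc{A})$. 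The task becomes: determine which degrees of the well-founded, almost-linearly-ordered Borel Wadge hierarchy are realized as orbits $\Iso(\mc{A})$. Recall that every Borel Wadge degree is either $\bfSigma^0_\alpha$ or $\bfPi^0_\alpha$-complete for some $\alpha$, or the self-dual $\bfDelta^0_\alpha$, or a finite or transfinite level of the difference hierarchy over some $\bfSigma^0_\beta$ (with its dual $\mathrm{d-}\bfPi^0_\beta$), so it suffices to run through these cases.

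For completeness, the first and cheapest reduction is A.\ Miller's Theorem~\ref{thm:akm}, read through the dictionary: if $\Iso(\mc{A}) \in \bfDelta^0_\alpha$, then $\mc{A}$ already has a $\mathrm{d-}\bfSigma^0_\beta$ Scott sentence for some $\beta < \alpha$. This excludes every self-dual degree $\bfDelta^0_\alpha$, and in fact pins $\alpha$ to a successor with $\beta = \alpha - 1$, pushing the Wadge degree of any orbit into the difference hierarchy over some $\bfSigma^0_\beta$ and strictly below that hierarchy's self-dual ceiling. The harder step is to collapse that difference hierarchy: one shows that if $\Iso(\mc{A})$ sits at a level $n \geq 3$ (finite) or $\geq \omega$ (transfinite) of the difference hierarchy over $\bfSigma^0_\beta$, or in the dual class $\mathrm{d-}\bfPi^0_\beta$, then it is already $\bfSigma^0_\beta$, $\bfPi^0_\beta$, or $\mathrm{d-}\bfSigma^0_\beta$. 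This is the rigorous form of the ``peel off a disjunct'' remarks in the text; the engine is a homogeneity dichotomy for orbits --- for the relevant pointclasses $\bfGamma$, the orbit $\Iso(\mc{A})$ is either in $\bfGamma$ or $\bfGamma$-hard, with nothing in between --- because if some piece of a candidate decomposition of $\Iso(\mc{A})$ were comeager in a relatively open piece of the orbit, transitivity of the $S_\infty$-action would spread that simplicity over all of $\Iso(\mc{A})$. Applied at a limit $\lambda$, the same dichotomy lets one replace a $\Sigma_\lambda$ or $\mathrm{d-}\Sigma_\lambda$ Scott sentence --- a countable join of lower pieces --- by one living strictly below $\lambda$, ruling out $\bfSigma^0_\lambda$ and $\mathrm{d-}\bfSigma^0_\lambda$ for limit $\lambda$. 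The low levels $\bfSigma^0_1$ and $\bfSigma^0_2$ come from the same circle of ideas together with two special facts about orbits: an open orbit is clopen, and no orbit is $\bfSigma^0_2$-complete (a $\Sigma_2$ Scott sentence can always be improved via Theorem~\ref{thm:ex-ss}, by analysing the $\Pi_1$ description of the structure-with-parameters, so that such a $\Iso(\mc{A})$ is locally closed hence $\bfPi^0_2$ hence $\bfDelta^0_2$, and Theorem~\ref{thm:akm} applies). The degenerate clopen case is set aside.

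For realizability I would build every example by lifting a small stock of base structures. The bottom cases are immediate: a structure whose isomorphism class is a single closed point (say, a unary predicate forced to hold of everything) realizes $\bfPi^0_1$, and a finite structure realizes $\mathrm{d-}\bfSigma^0_1$, its Scott sentence being the conjunction of ``contains the correct finite diagram'' ($\Sigma_1$) and ``has no further elements'' ($\Pi_1$). Higher up, for a successor $\alpha$ one invokes Theorem~\ref{thm:ex-ss}: starting from a structure $(\mc{B},\bar{c})$ whose parametrized isomorphism class is $\bfPi^0_{\alpha-1}$-complete, forgetting $\bar{c}$ produces a structure with a $\Sigma_\alpha$ Scott sentence, and with care the result is genuinely $\bfSigma^0_\alpha$-complete and not accidentally simpler --- this is precisely where the constraint $\alpha \geq 3$ enters, $\bfPi^0_0$ and $\bfPi^0_1$ being too degenerate to survive the operation. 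Adjoining instead to such a $(\mc{B},\bar{c})$ a distinguished finite configuration that may be ``forgotten'' makes the isomorphism class the intersection of a $\bfSigma^0_\alpha$ condition (the underlying structure appears) with a $\bfPi^0_\alpha$ condition (the configuration is not superfluous), realizing $\mathrm{d-}\bfSigma^0_\alpha$. Finally the $\bfPi^0_\alpha$ examples for all $\alpha \geq 1$ come from structures --- suitable linear orders or trees of prescribed back-and-forth rank, as in the constructions of structures of high Scott rank --- directly coding a $\bfPi^0_\alpha$-complete set; in each case completeness is certified by exhibiting a continuous reduction of a standard $\bfGamma$-complete set into $\Iso(\mc{A})$.

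The step I expect to be the main obstacle is the homogeneity dichotomy that collapses each difference hierarchy to its first three levels and that, at limits, reduces $\Sigma_\lambda$ Scott sentences. Unlike Theorem~\ref{thm:akm}, which we may quote, it genuinely uses that $\Iso(\mc{A})$ is a single orbit of the $S_\infty$-action --- it relies on Vaught-transform style arguments showing that the topological behaviour of an orbit is the same ``everywhere'' --- and the limit-stage bookkeeping (verifying that the simpler Scott sentence produced always lands on an admissible form and respects the side conditions ``successor'' and ``$\geq 3$'') is delicate. By contrast the realizations, though laborious, are essentially routine once the correct template structures and the parameter-shifting of Theorem~\ref{thm:ex-ss} are in hand.
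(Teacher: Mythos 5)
The paper itself does not prove this theorem---it is a survey statement cited to \cite{AMiller} and \cite{AlvirGreenbergHTTuretsky}, accompanied only by the informal ``peel off a disjunct'' discussion---so your proposal has to be measured against the cited proofs, whose general architecture (the Lopez-Escobar dictionary, Theorem~\ref{thm:akm} to collapse the $\bfDelta$ levels, special treatment of the low and limit levels, plus constructions for realizability) you do reproduce. But the two engines your completeness argument runs on are, respectively, false as stated and unproved-and-insufficient. Your taxonomy of Borel Wadge degrees is wrong: by Hausdorff--Kuratowski every Borel set does lie in some class $D_\eta(\bfSigma^0_\beta)$, but the non-self-dual Wadge \emph{degrees} are not exhausted by these classes and their duals; already strictly between the pair $\bfSigma^0_\alpha,\bfPi^0_\alpha$ and the pair $\mathrm{d-}\bfSigma^0_\alpha,\mathrm{d-}\bfPi^0_\alpha$ there are further degrees (for instance the self-dual join, realized by a clopen disjoint sum of a $\bfSigma^0_\alpha$-complete and a $\bfPi^0_\alpha$-complete set, which lies in the class $\mathrm{d-}\bfSigma^0_\alpha$ without being complete for it). Consequently, knowing that $\Iso(\mc{A})$ belongs to the class $\mathrm{d-}\bfSigma^0_\beta$---which is all Theorem~\ref{thm:akm} gives---does not determine its Wadge degree. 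What the theorem actually requires are hardness lemmas: if $\mc{A}$ has no $\Sigma_\alpha$ Scott sentence then $\Iso(\mc{A})$ is $\bfPi^0_\alpha$-hard (and dually), if it has neither a $\Sigma_\alpha$ nor a $\Pi_\alpha$ Scott sentence then $\Iso(\mc{A})$ is $\mathrm{d-}\bfSigma^0_\alpha$-hard, and at limit $\lambda$ that a $\Sigma_\lambda$ or $\mathrm{d-}\Sigma_\lambda$ Scott sentence can be replaced by one of complexity below $\lambda$. Your ``homogeneity dichotomy'' (``in $\Gamma$ or $\Gamma$-hard''), even if granted, does not yield these: being $\bfSigma^0_\alpha$-hard and $\bfPi^0_\alpha$-hard does not imply $\mathrm{d-}\bfSigma^0_\alpha$-hardness, and a set of intermediate degree is already \emph{in} the class $\mathrm{d-}\bfSigma^0_\alpha$, so the dichotomy for $\Gamma=\mathrm{d-}\bfSigma^0_\alpha$ says nothing. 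Moreover the dichotomy itself is only asserted, with a one-line comeagerness-plus-transitivity heuristic; in \cite{AlvirGreenbergHTTuretsky} the corresponding facts are proved by explicit continuous reductions built from the back-and-forth relations, and they are the real content of the completeness half.

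On realizability, the genuinely difficult case is Scott complexity $\bfSigma^0_{\lambda+1}$ for $\lambda$ a limit ordinal---precisely the case A.~Miller left open and the main new construction of \cite{AlvirGreenbergHTTuretsky}---and your sketch (``with care the result is genuinely $\bfSigma^0_\alpha$-complete'' after applying Theorem~\ref{thm:ex-ss}) does not engage with it: one must produce $(\mc{B},\bar{c})$ with a $\Pi_\lambda$ Scott sentence such that, after forgetting $\bar{c}$, the orbit is actually $\bfSigma^0_{\lambda+1}$-hard, and that hardness again needs a bespoke reduction argument, not just the parameter-shifting of Theorem~\ref{thm:ex-ss}. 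In short, your proposal is a reasonable map of what must be proved, but the load-bearing lemmas---the hardness results at successor, low, and limit levels, and the limit-successor construction---are exactly what is missing.
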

\noindent A. Miller constructed a number of examples of structures with various Scott complexities, and also ruled out some possibilities, such as $\bfSigma^0_2$ (for relational languages). He left open the case of constructing a structure of Scott complexity $\bfSigma^0_{\alpha+1}$ for $\alpha$ a limit ordinal. Alvir, Greenberg, Harrison-Trainor, and Turetsky found such examples, and showed that these are the only possibilities.

As a consequence of this theorem, \textit{Scott sentence complexity} as in Definition \ref{def:scott-sentence-complexity} and \textit{Scott complexity} as in Definition \ref{def:scott-complexity} are really the same thing, under the natural correspondence between Wadge degrees of $\Iso(\mc{A})$ and Scott sentences for $\mc{A}$, that is, identifying $\Sigma_\alpha$ and $\bfSigma^0_\alpha$, etc.

\section{Structures of High Scott Rank}

We now turn to an effective analysis of Scott complexity. We say that an ordinal $\alpha$ is $X$-computable if there is an $X$-computable linear order isomorphic to $\alpha$. As there are only countably many $X$-computable ordinals, there must be a least non-$X$-computable ordinal, which we call $\omega_1^{X}$ (or $\omega_1^{CK}$ when $X$ is computable). Since any initial segment of an $X$-computable well-founded linear order has a $X$-computable presentation, the $X$-computable ordinals are closed downwards; the $X$-computable ordinals are exactly the ordinals below $\omega_1^X$.

We will give simpler and more elementary proofs than are usual in the literature, avoiding the use of $\Sigma^1_1$ bounding, ordinal notations, or the Barwise compactness theorem. (See, e.g., \cite{AshKnight00} for proofs of some of these results using the Barwise compactness theorem.) Instead, our main tool will be the fact that if an $X$-computable tree is well-founded, then its tree rank is an $X$-computable ordinal. Recall that the tree rank is defined as follows:
\begin{itemize}
	\item $rk(x) = 0$ if $x$ is a leaf.
	\item $rk(x)$ is otherwise the least ordinal (or possibly $\infty$) greater than the ranks of the children of $x$.
\end{itemize}
The rank of a tree is the rank of its root node.

\begin{theorem}
	Let $T$ be an $X$-computable well-founded tree. Then the tree rank of $T$ is an $X$-computable ordinal.
\end{theorem}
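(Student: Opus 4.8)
The plan is to bound $rk(T)$ from above by the order type of the Kleene--Brouwer ordering of $T$, which will manifestly be an $X$-computable well-order. Recall the Kleene--Brouwer order $<_{KB}$ on finite sequences: $\sigma <_{KB} \tau$ iff either $\sigma$ properly extends $\tau$, or $\sigma$ and $\tau$ first differ at some coordinate $i$ with $\sigma(i) < \tau(i)$. Restricted to the nodes of $T$ this is a linear order, and it is $X$-computable because $T$ is: membership of codes in $T$ is $X$-computable, and comparing two given codes under $<_{KB}$ is a finite decidable computation. (One could instead try to build an $X$-computable linear order of order type \emph{exactly} $rk(T)$, but directly computing tree ranks is awkward, so the detour through $<_{KB}$ is cleaner.)

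First I would verify that $<_{KB}$ is in fact a well-order on $T$. Given an infinite $<_{KB}$-descending sequence of nodes of $T$, one extracts an infinite path through $T$ in the standard way: along the descending sequence, once a given coordinate strictly decreases it can do so only finitely often, so the nodes stabilize on longer and longer initial segments, and the union of these is an infinite branch of $T$ — contradicting that $T$ is well-founded. Hence $<_{KB}$ is an $X$-computable presentation of a well-order, so its order type $\gamma$ is an $X$-computable ordinal. (Also, well-foundedness of $T$ is precisely what makes the tree rank an ordinal rather than $\infty$, and it legitimizes the transfinite induction used next.)

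The key step is the inequality $rk(T) \le \gamma$. I would prove, by transfinite induction on the tree rank of a node $\sigma \in T$, that $rk(\sigma)$ is at most the ordinal position of $\sigma$ in $<_{KB}$, i.e.\ at most the order type of $\{\tau \in T : \tau <_{KB} \sigma\}$. If $\sigma$ is a leaf this is immediate since $rk(\sigma)=0$. Otherwise, for each child $\sigma\conc\langle n\rangle \in T$: it properly extends $\sigma$, hence $\sigma\conc\langle n\rangle <_{KB} \sigma$, so $\{\tau \in T : \tau <_{KB} \sigma\conc\langle n\rangle\}$ is a proper initial segment of $\{\tau \in T : \tau <_{KB} \sigma\}$ and its order type is strictly smaller; by the induction hypothesis $rk(\sigma\conc\langle n\rangle)$ is at most that smaller position, so $rk(\sigma\conc\langle n\rangle) + 1$ is at most the position of $\sigma$. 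Taking the supremum over all children gives $rk(\sigma) \le {}$position of $\sigma$. Applying this to the root $\langle\rangle$ yields $rk(T) \le \gamma$.

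Finally, since $\gamma$ is $X$-computable and the $X$-computable ordinals are closed downward (they are exactly the ordinals below $\omega_1^X$, as noted above), the ordinal $rk(T) \le \gamma$ is $X$-computable, which is what we wanted. I expect the main — though still short and entirely standard — content to be the induction establishing $rk(T) \le \gamma$; the facts that $<_{KB}$ is a well-order and that it is $X$-computable are routine.
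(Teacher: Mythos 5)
Your proposal is correct and follows exactly the route the paper sketches: pass to the Kleene--Brouwer order on $T$, note it is an $X$-computable well-order when $T$ is well-founded, bound $rk(T)$ by its order type, and conclude by downward closure of the $X$-computable ordinals. Your transfinite induction showing $rk(\sigma)$ is at most the $<_{KB}$-position of $\sigma$ is precisely the detail the paper leaves to the reader, and it is carried out correctly.
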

\begin{proof}[Proof sketch]
	Given a tree $T \subseteq \omega^{< \omega}$ we will define computably in $T$ a linear order, called the Kleene-Brouwer order. If $T$ is well-founded then the Kleene-Brouwer order will be as well; and in this case, the order type of the Kleene-Brouwer order will be an ordinal greater than the tree rank of $T$. The Kleene-Brouwer order is defined on the nodes of $T$ by $s \leq_{KB} t$ if and only if
	\begin{itemize}
		\item $t \preceq s$ ($s$ extends $t$) or 
		\item $s(n) < t(n)$ and $t \upharpoonright n = s \upharpoonright n$ ($s$ is to the left of $t$).
	\end{itemize}
	If an $X$-computable tree $T$ is well-founded with tree rank $\alpha$, the Kleene-Brouwer order of $T$ can be computed from $T$ and hence is an $X$-computable presentation of an ordinal $\beta \geq \alpha$. Thus $\alpha$ itself is $X$-computable. We leave the details to the reader; they are not difficult.
\end{proof}

\subsection{Bounds on Scott complexity}

There are countably many computable structures, and hence by a simple counting argument there must be some ordinal $\alpha < \omega_1$ which is an upper bound on the Scott rank of a computable structure. Our goal in this section is to compute this bound, as well as to prove some other results along the way.

For this section, it is helpful to extend the back-and-forth relations to tuples from different structures.
\begin{definition}
	Let $\mc{A}$ and $\mc{B}$ be structures. The \textit{standard asymmetric back-and-forth relations} $\leq_\alpha$, for $\alpha < \omega_1$, are defined for $\bar{a} \in \mc{A}$ and $\bar{b} \in \mc{B}$ of the same length by:
	\begin{enumerate}
		\item $(\mc{A},\bar{a}) \leq_0 (\mc{B},\bar{b})$ if $\bar{a}$ and $\bar{b}$ satisfy the same quantifier-free formulas from among the first $|\bar{a}|$-many formulas.
		\item For $\alpha > 0$, $(\mc{A},\bar{a}) \leq_\alpha (\mc{B},\bar{b})$ if for each $\beta < \alpha$ and $\bar{d} \in \mc{B}$ there is $\bar{c} \in \mc{A}$ such that $(\mc{B},\bar{b} \bar{d}) \leq_\beta (\mc{A},\bar{a} \bar{c})$.
	\end{enumerate}
	We define $\equiv_\infty$ etc.\ as before.
\end{definition}

To begin, suppose that we have a computable structure $\mc{A}$. 

\begin{theorem}[Nadel \cite{Nadel}]\label{thm:comp-bf}
	Let $\mc{A}$ and $\mc{B}$ be $X$-computable structures. If $\mc{A} \equiv_\alpha \mc{B}$ for every $\alpha < \omega_1^X$, then $\mc{A} \cong \mc{B}$.
\end{theorem}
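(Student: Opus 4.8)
The plan is to show directly that the hypothesis $\mc{A} \equiv_\alpha \mc{B}$ for all $\alpha < \omega_1^X$ already forces $\mc{A} \equiv_\infty \mc{B}$, and then invoke the fact (a relativized version of Theorem~\ref{thm:scott}, via the canonical Scott sentence) that $\mc{A} \equiv_\infty \mc{B}$ implies $\mc{A} \cong \mc{B}$ for countable structures. So the real content is that the back-and-forth relations between two $X$-computable structures must stabilize below $\omega_1^X$. The key idea is that failure of stabilization at a countable ordinal $\gamma$ can be witnessed by an $X$-computable ill-founded tree whose well-founded part has rank $\gamma$, so that if $\gamma \geq \omega_1^X$ we contradict the preceding theorem relating tree rank to $X$-computable ordinals.

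First I would set up the appropriate tree. For tuples $\bar a \in \mc{A}$ and $\bar b \in \mc{B}$ of the same length with $\bar a \not\equiv_\infty \bar b$ — or rather, I want to track the asymmetric relation — consider the tree $T$ of ``attempts to show $(\mc{A},\bar a) \not\leq (\mc{B},\bar b)$'': a node is a finite sequence recording alternating choices $\bar d_0 \in \mc{B}$ (the spoiler's challenge), $\bar c_0 \in \mc{A}$ (the duplicator's response), $\bar d_1 \in \mc{A}$, $\bar c_1 \in \mc{B}$, and so on, where at each stage the partial maps fail to be a $\leq_0$-match only at the terminal node. More precisely, $T$ branches over all of $\mc{B}$ (resp.\ $\mc{A}$) at spoiler nodes and we cut off a branch once a quantifier-free discrepancy among the first $|\bar a|$-many formulas appears; since $\mc{A}$ and $\mc{B}$ are $X$-computable, $\leq_0$ is $X$-computable, and the whole tree $T$ is $X$-computable. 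The standard unwinding of Definition~\ref{def:bfasym} gives: $(\mc{A},\bar a) \leq_\alpha (\mc{B},\bar b)$ holds iff the spoiler has no strategy to reach a discrepancy within $\alpha$ alternations, which translates to: the subtree of $T$ below the node $\la \bar a, \bar b\ra$ has tree rank $\geq \alpha$ (on its well-founded part), or equivalently $(\mc{A},\bar a) \not\leq_\alpha (\mc{B},\bar b)$ iff that subtree, restricted suitably, is well-founded of rank $< \alpha$ at the relevant node. One has to be a little careful about which player moves first and about the asymmetry, but the bookkeeping is routine.

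Next, the dichotomy: either the relevant subtree of $T$ (at the root, ranging over all tuples) contains an infinite branch, in which case that branch yields, by König-style extraction and induction on $\alpha$, that $(\mc{A},\bar a) \leq_\alpha (\mc{B},\bar b)$ for all $\alpha$ and hence $(\mc{A},\bar a) \equiv_\infty (\mc{B},\bar b)$ for all the tuples in play; or the relevant part of $T$ is well-founded, and then by the preceding theorem its tree rank $\beta$ is an $X$-computable ordinal, so $\beta < \omega_1^X$, and at level $\beta+1$ the back-and-forth relation already detects the failure. Applying this with empty tuples: if $\mc{A} \not\equiv_\infty \mc{B}$, then some $\leq_\alpha$ fails with $\alpha < \omega_1^X$, contradicting the hypothesis. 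Hence $\mc{A} \equiv_\infty \mc{B}$, and so $\mc{A} \cong \mc{B}$.

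I expect the main obstacle to be the precise correspondence between the back-and-forth relations $\leq_\alpha$ (with their asymmetry and their extension-by-tuples flavor) and the tree rank of an explicitly $X$-computable tree, making sure the tree is genuinely $X$-computable (this uses that $\leq_0$ between $X$-computable structures is $X$-computable, which in turn uses that we only compare finitely many quantifier-free formulas) and that ``well-founded of rank $\beta$'' matches ``$\leq_\beta$ but not $\leq_{\beta+1}$'' at the relevant node. Once that dictionary is in place, the argument is just: ill-founded gives $\equiv_\infty$; well-founded gives an $X$-computable bound; combine with the hypothesis and the relativized canonical Scott sentence. One should also note that the same argument, run symmetrically, gives a bound on $\SR(\mc{A})$ itself: if $\mc{A}$ is $X$-computable then $\bar a \not\equiv_\infty \bar b$ is always witnessed below $\omega_1^X$, so the Scott rank of an $X$-computable structure is at most $\omega_1^X$.
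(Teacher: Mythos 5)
Your overall strategy---replace Barwise compactness by an explicit $X$-computable tree and the theorem that a well-founded $X$-computable tree has $X$-computable rank---is the same as the paper's, but the tree you build and the dichotomy you run on it have a genuine gap. The claimed dictionary, that $(\mc{A},\bar a)\leq_\alpha(\mc{B},\bar b)$ holds iff the play-tree below $\la\bar a,\bar b\ra$ has tree rank $\geq\alpha$, is false: tree rank treats all children of a node uniformly, whereas the back-and-forth relations have a $\forall\exists$ shape at every step (\emph{every} challenge must admit \emph{some} good response). Correspondingly, the horn of your dichotomy that does the real work---``an infinite branch yields, by K\"onig-style extraction, $(\mc{A},\bar a)\leq_\alpha(\mc{B},\bar b)$ for all $\alpha$, hence $\equiv_\infty$''---is not valid: an infinite branch is a single discrepancy-free play, which says nothing about the challenges not made along that play and need not enumerate either structure. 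Concretely, take $\mc{A}=\omega$ and $\mc{B}=\mathbb{Q}$ as linear orders: the play in which the spoiler keeps offering increasing elements and the duplicator answers with increasing elements on the other side never produces a quantifier-free discrepancy, so your tree is ill-founded, yet $\omega\not\equiv_3\mathbb{Q}$. So ill-foundedness of your tree does not give $\equiv_\infty$, and the reduction of the theorem to ``the relations stabilize below $\omega_1^X$'' is left unproved. (The other horn, well-foundedness giving an $X$-computable level at which $\leq$ fails, is fine but is not where the content lies.)

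The repair is essentially the paper's proof: use the tree of finite partial isomorphisms with a covering requirement---a node at level $n$ must contain the first $n$ elements of $\mc{A}$ in its domain and the first $n$ elements of $\mc{B}$ in its range. This has two effects. First, an infinite path is automatically a total, surjective map preserving all quantifier-free formulas (the ``first $|\bar a|$-many formulas'' bookkeeping is exhausted along the path), i.e.\ an isomorphism, so the ill-founded case yields exactly the conclusion with no detour through $\equiv_\infty$ or a Scott sentence. Second, one then needs only the one-directional claim, proved by induction, that if a node $\bar a\mapsto\bar b$ satisfies $(\mc{A},\bar a)\equiv_{\alpha\cdot 4}(\mc{B},\bar b)$ then its tree rank is at least $\alpha$: here the forced challenges are just the next elements of each structure, and the back-and-forth hypothesis supplies a child of high rank, so the quantifier alternation is used in the harmless direction. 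Since $\omega_1^X$ is closed under $\alpha\mapsto\alpha\cdot 4$, the hypothesis forces the root's rank to be at least $\omega_1^X$; by the tree-rank theorem the tree cannot be well-founded, and any path is the desired isomorphism. If you insist on a game-tree formulation as in your proposal, you would need a genuine game rank (alternating infima at spoiler nodes and suprema at duplicator nodes), to which the quoted theorem about tree ranks does not directly apply.
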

\begin{proof}
	A finite partial isomorphism maps an element $\bar{a}$ of $\mc{A}$ to a tuple $\bar{b}$ of $\mc{B}$ such that $(\mc{A},\bar{a}) \equiv_0 (\mc{B},\bar{b})$: $\bar{a}$ and $\bar{b}$ satisfy the same atomic formulas from among the first $|\bar{a}|$-many. We can form a tree of finite partial isomorphisms; the root node is the empty partial isomorphism, and the children of a node are the finite partial isomorphisms extending that node. 
	
	Let $T$ be the subtree of the tree of finite partial isomorphisms such that at the $n$th level, all of the finite partial isomorphism contain the first $n$ elements of $\mc{A}$ in their domain, and the first $n$ elements of $\mc{B}$ in their range. So $T$ is $X$-computable. A path through $T$ corresponds to a (total) isomorphism $\mc{A} \to \mc{B}$.
	
	\begin{claim}
		Consider a node $\sigma$ of $T$ corresponding to a finite partial isomorphism $\bar{a} \mapsto \bar{b}$. If $(\mc{A},\bar{a}) \equiv_{\alpha \cdot 4} (\mc{B},\bar{b})$, then the tree rank of $\sigma$ is at least $\alpha$.
	\end{claim}
	\begin{proof}
		We argue by induction on $\alpha$. We must show that for each $\beta < \alpha$, there is a child of $\sigma$ of tree rank at least $\beta$. Suppose without loss of generality that $\sigma$ is at the $n$th level of the tree.
		
		First, let $d$ be the $n+1$st element of $\mc{B}$. (If $d$ is already contained in $\bar{b}$, then do nothing here; we assume that $d$ is not in $\bar{b}$.) Since $(\mc{A},\bar{a}) \equiv_{\alpha \cdot 4} (\mc{B},\bar{b})$, there is $c \in \mc{A}$ such that $(\mc{A},\bar{a}c) \equiv_{\beta \cdot 4 + 2} (\mc{B},\bar{b}d)$. Now let $c' \in \mc{A}$ be the $n+1$st element of $\mc{A}$. (Again, if $c'$ is already contained in $\bar{a}c$, then do nothing here; we assume that $c'$ is not in $\bar{a}c$.) Let $d' \in \mc{B}$ be such that $(\mc{A},\bar{a}cc') \equiv_{\beta \cdot 4} (\mc{B},\bar{b}dd')$. Then $\bar{a}cc' \mapsto \bar{b}dd'$ is a finite partial isomorphism which is a child of $\sigma$, and by the induction hypothesis, it has tree rank at least $\beta$.
	\end{proof}

	Now $(\mc{A},\varnothing) \equiv_{\omega_1^X} (\mc{B},\varnothing)$ and so the tree rank of $T$ is at least $\omega_1^X$. Since $T$ is $X$-computable, $T$ must have a path, which is an isomorphism $\mc{A} \to \mc{B}$.
\end{proof}

We can easily apply this to tuples in a single structure.

\begin{corollary}[Nadel \cite{Nadel}]\label{cor:comp-bf}
	Let $\mc{A}$ be an $X$-computable structures and $\bar{a},\bar{b} \in \mc{A}$. If $\bar{a} \equiv_\alpha \bar{b}$ for every $\alpha < \omega_1^X$, then there is an automorphism of $\mc{A}$ taking $\bar{a}$ to $\bar{b}$.
\end{corollary}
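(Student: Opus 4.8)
The plan is to reduce to Theorem \ref{thm:comp-bf} by naming constants. Fix $\bar{a} = (a_1,\dots,a_n)$ and $\bar{b} = (b_1,\dots,b_n)$ and work in the expanded vocabulary obtained from that of $\mc{A}$ by adding new constant symbols $c_1,\dots,c_n$. Let $\mc{A}_{\bar{a}}$ denote $\mc{A}$ with $c_i$ interpreted as $a_i$, and let $\mc{A}_{\bar{b}}$ denote $\mc{A}$ with $c_i$ interpreted as $b_i$. Both are $X$-computable structures in this expanded vocabulary, since $\mc{A}$ is $X$-computable and the interpretations of the new constants are particular (hence $X$-computable) elements of the domain. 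Moreover, adjoining finitely many constants does not change the least noncomputable ordinal, so $\omega_1^{\mc{A}_{\bar{a}}} = \omega_1^{\mc{A}_{\bar{b}}} = \omega_1^X$.

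Next I would check the routine fact that, for every $\alpha < \omega_1$, we have $\mc{A}_{\bar{a}} \equiv_\alpha \mc{A}_{\bar{b}}$ (as structures in the expanded vocabulary) if and only if $\bar{a} \equiv_\alpha \bar{b}$ in $\mc{A}$ in the sense of Definition \ref{def:bfasym}. This is an induction on $\alpha$: at level $0$ the atomic sentences true in $\mc{A}_{\bar{a}}$ correspond, under the coding of formulas, to the quantifier-free formulas of $\mc{A}$ in the variables $\bar{x}$ evaluated at $\bar{a}$, and the inductive clause matches because quantifying in $\mc{A}_{\bar{a}}$ over a new element is exactly quantifying in $\mc{A}$ while keeping $\bar{a}$ pinned in the first coordinates of the tuple. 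The one small wrinkle is the phrase ``among the first $|\bar{a}|$-many formulas'' in the base case; since this bound is a fixed finite number it causes no difficulty, and if desired one can simply prove the corollary with that finite bound increased to absorb the constants, which only weakens the hypothesis harmlessly.

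Granting this, the hypothesis of the corollary says precisely that $\mc{A}_{\bar{a}} \equiv_\alpha \mc{A}_{\bar{b}}$ for every $\alpha < \omega_1^X$. Applying Theorem \ref{thm:comp-bf} to the $X$-computable structures $\mc{A}_{\bar{a}}$ and $\mc{A}_{\bar{b}}$ yields an isomorphism $\mc{A}_{\bar{a}} \cong \mc{A}_{\bar{b}}$. Such an isomorphism is, by definition, a bijection of the domain of $\mc{A}$ that respects all symbols of the original vocabulary and sends the interpretation of each $c_i$ in $\mc{A}_{\bar{a}}$ to its interpretation in $\mc{A}_{\bar{b}}$; that is, it is an automorphism of $\mc{A}$ carrying $a_i$ to $b_i$ for each $i$, which is exactly the desired automorphism taking $\bar{a}$ to $\bar{b}$.

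I do not expect a genuine obstacle here. The only points requiring care are the bookkeeping behind the equivalence $\mc{A}_{\bar{a}} \equiv_\alpha \mc{A}_{\bar{b}} \Longleftrightarrow \bar{a} \equiv_\alpha \bar{b}$ and the observation that naming finitely many constants preserves both $X$-computability and the value $\omega_1^X$; once these are in hand, the corollary is an immediate application of Theorem \ref{thm:comp-bf}.
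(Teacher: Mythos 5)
Your proposal is correct and matches the paper's intended argument: the paper presents this as an immediate application of Theorem \ref{thm:comp-bf} to tuples in a single structure (just as the earlier corollary to Scott's theorem is obtained ``by adding constants to the language''), which is exactly your reduction via naming $\bar{a}$ and $\bar{b}$ as constants. Your extra care about the level-$0$ indexing and the preservation of $X$-computability and $\omega_1^X$ is fine and harmless, since any finite offset in the back-and-forth levels is absorbed below the limit ordinal $\omega_1^X$.
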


We also get an upper bound on the Scott complexity of a computable structure.

\begin{corollary}[Nadel \cite{Nadel}]\label{cor:upper-bound}
	An $X$-computable structure has a $\Pi_{\omega_1^{CK}+2}$ Scott sentence, and hence has Scott rank at most $\omega_1^{CK}+1$.
\end{corollary}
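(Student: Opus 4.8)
The plan is to reuse the canonical Scott sentence $\theta = \bigdoublewedge_{\bar a \in \mc A} \rho_{\bar a} \wedge \sigma_{\bar a}$ built in Section 2, and to observe that for an $X$-computable $\mc A$ every separating formula can be chosen of rank below $\omega_1^X$. Recall that $\psi_{\bar a}$ is the conjunction of the formulas $\varphi_{\bar a;\bar b}$ over the (countably many) $\bar b \not\equiv_\infty \bar a$ together with the atomic type of $\bar a$, and that the discussion there shows: if every $\psi_{\bar a}$ is $\Pi_\alpha$, then $\theta$ is $\Pi_{\alpha+2}$. So it is enough to show that each $\varphi_{\bar a;\bar b}$ may be chosen to be $\Pi_\gamma$ for some $\gamma < \omega_1^X$, where $\gamma$ is allowed to depend on the pair $(\bar a, \bar b)$. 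Then $\psi_{\bar a}$ is a countable conjunction of formulas each lying in $\Pi_\beta$ for some $\beta < \omega_1^X$, and by the very definition of the hierarchy a countable conjunction of such formulas is $\Pi_{\omega_1^X}$; hence $\theta$ is $\Pi_{\omega_1^X + 2}$.

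To produce these formulas, fix $\bar a, \bar b \in \mc A$ with $\bar a \not\equiv_\infty \bar b$. By Corollary~\ref{cor:comp-bf} it cannot be that $\bar a \equiv_\gamma \bar b$ for every $\gamma < \omega_1^X$, for otherwise there would be an automorphism of $\mc A$ taking $\bar a$ to $\bar b$, forcing $\bar a \equiv_\infty \bar b$. So there is some $\gamma < \omega_1^X$ with $\bar a \not\equiv_\gamma \bar b$, i.e.\ $\bar a \not\leq_\gamma \bar b$ or $\bar b \not\leq_\gamma \bar a$. Using the characterization that for $\gamma \geq 1$ the relation $\bar a \leq_\gamma \bar b$ holds exactly when every $\Sigma_\gamma$ formula true of $\bar b$ is true of $\bar a$ (the case $\gamma = 0$ being handled directly by a quantifier-free formula): in the first case we get a $\Sigma_\gamma$ formula true of $\bar b$ but not $\bar a$, whose negation is a $\Pi_\gamma$ formula true of $\bar a$ but not $\bar b$; in the second case we get a $\Sigma_\gamma$ formula true of $\bar a$ but not $\bar b$, and a $\Sigma_\gamma$ formula is $\Pi_{\gamma+1}$. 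Either way $\varphi_{\bar a;\bar b}$ can be taken in $\Pi_{\gamma+1}$, and since the $X$-computable ordinals are closed under successor, $\omega_1^X$ is a limit ordinal, so $\gamma + 1 < \omega_1^X$.

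Combining the two paragraphs, $\theta$ is a $\Pi_{\omega_1^X + 2}$ Scott sentence for any $X$-computable structure $\mc A$; specializing to computable $X$, where $\omega_1^X = \omega_1^{CK}$, gives the asserted $\Pi_{\omega_1^{CK}+2}$ Scott sentence. The Scott rank bound is then immediate from Definition~\ref{def:Scott-rank-mon}: a $\Pi_{\omega_1^{CK}+2}$ Scott sentence is a $\Pi_{(\omega_1^{CK}+1)+1}$ Scott sentence, so the Scott rank of $\mc A$ is at most $\omega_1^{CK}+1$.

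I do not expect a real obstacle here, since the entire content is the appeal to Corollary~\ref{cor:comp-bf}. The one point to be careful about is that $\omega_1^X$ has countable cofinality, so one should \emph{not} hope to find a single $\gamma < \omega_1^X$ separating all non-automorphic pairs of tuples simultaneously (this is why the more naive "choose all $\varphi_{\bar a;\bar b}$ to be $\Pi_\alpha$ for one fixed $\alpha$" from Section 2 does not literally apply). It suffices that each pair is separated at some level below $\omega_1^X$, because the class $\Pi_{\omega_1^X}$ is by definition closed under countable conjunctions of formulas drawn from the lower levels, so no uniformity in the choice of $\gamma$ is required.
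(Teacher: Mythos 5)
Your proof is correct and takes essentially the same route as the paper's: the key input is Corollary~\ref{cor:comp-bf}, fed into the canonical Scott sentence construction so that each $\psi_{\bar{a}}$ is $\Pi_{\omega_1^X}$ and $\theta$ is $\Pi_{\omega_1^X+2}$. The only difference is bookkeeping: the paper just takes the single level $\alpha=\omega_1^X$ (itself a countable ordinal, so the Section~2 recipe does literally apply, choosing each $\varphi_{\bar{a};\bar{b}}$ to be $\Pi_{\omega_1^X}$), whereas you separate each pair at its own level $\gamma<\omega_1^X$ and use closure of $\Pi_{\omega_1^X}$ under countable conjunctions of lower-level formulas---your non-uniformity caveat only rules out a single level strictly below $\omega_1^X$, not the level $\omega_1^X$ itself.
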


This is true for all of the different definitions of Scott rank.

\begin{proof}
	By the previous corollary, if $\bar{a} \leq_{\omega_1^X} \bar{b}$, then $\bar{a}\equiv_\infty \bar{b}$. Then as in the discussion preceding Definition \ref{def:asym-sr}, we can choose the formulas $\varphi_{\bar{a},\bar{b}}$ in the definition of the canonical Scott sentence to be $\Pi_{\omega_1^X}$, so that the resulting Scott sentence is $\Pi_{\omega_1^X + 2}$.
\end{proof}

Next we will consider the computability of the Scott sentences of a computable structure. We give a slightly informal definition of the computable formulas of $\mc{L}_{\omega_1 \omega}$. For the fully formal definition, see \cite{AshKnight00}. 

\begin{definition}{\ }
	\begin{itemize}
		\item A computable $\Sigma_0$ or $\Pi_0$ formula is just a finitary quantifier-free formula.
		\item A computable $\Sigma_\alpha$ formula is a disjunction of formulas $\exists \bar{x} \psi(\bar{x})$ where the formulas $\psi(\bar{x})$ are computable $\Pi_\beta$ for some $\beta < \alpha$, and the there is a c.e.\ set of codes for the formulas over which the disjunction is being taken.
		\item A computable $\Pi_\alpha$ formula is a conjunction of formulas $\exists \bar{x} \psi(\bar{x})$ where the formulas $\psi(\bar{x})$ are computable $\Sigma_\beta$ for some $\beta < \alpha$, and the there is a c.e.\ set of codes for the formulas over which the conjunction is being taken.
	\end{itemize}
\end{definition}

Let $\mc{A}$ be an $X$-computable structure. We will define, recursively for $\alpha < \omega_1^X$ and tuples $\bar{a} \in \mc{A}$, formulas
\begin{itemize}
	\item $\varphi_{\bar{a},\alpha}$ such that $\mc{B} \models \varphi_{\bar{a},\alpha}(\bar{b})$ if and only if $(\mc{A},\bar{a}) \leq_\alpha (\mc{B},\bar{b})$; and
	\item $\psi_{\bar{a},\alpha}$ such that $\mc{B} \models \psi_{\bar{a},\alpha}(\bar{b})$ if and only if $(\mc{B},\bar{b}) \leq_\alpha (\mc{A},\bar{a})$.
\end{itemize}
We put $\varphi_{\bar{a},0}(\bar{x}) = \psi_{\bar{a},0}(\bar{x})$ to be the conjunction of the first $|\bar{x}|$-many atomic and negated atomic formulas about $\bar{x}$. For $\alpha > 0$, we put 
\[ \varphi_{\bar{a},\alpha}(\bar{x}) = \bigwedge_{\beta < \alpha} \; \forall \bar{y} \; \bigvee_{\bar{c} \in \mc{A}} \psi_{\bar{a}\bar{c},\beta}(\bar{x},\bar{y})  \]
and
\[ \psi_{\bar{a},\alpha}(\bar{x}) = \bigwedge_{\beta < \alpha} \; \bigwedge_{\bar{c} \in \mc{A}} \exists \bar{y} \; \varphi_{\bar{a}\bar{c},\beta}(\bar{x},\bar{y}).\]
Both $\varphi_{\bar{a},\alpha}$ and $\psi_{\bar{a},\alpha}$ are computable $\Pi_{\alpha \cdot 2}$ formulas.

Suppose that $\alpha$ is an $X$-computable ordinal such that for all $\bar{a},\bar{b} \in \mc{A}$, if $\bar{a} \leq_\alpha \bar{b}$ then $\bar{a} \equiv_\infty \bar{b}$. Then $\mc{A} \models \psi_{\bar{a},\alpha}(\bar{b})$ if and only if $\bar{a} \equiv_\infty \bar{b}$. In the definition of the canonical Scott sentence, we may use $\psi_{\bar{a},\alpha}$ as the formula $\psi_{\bar{a}}$. We get that the resulting Scott sentence is a computable $\Pi_{\alpha \cdot 2 + 2}$ sentence. (Note that this argument only works if the ordinal $\alpha$ is $X$-computable.) Thus we get:

\begin{theorem}[Nadel \cite{Nadel}]
	Let $\mc{A}$ be an $X$-computable structure.  Then $\mc{A}$ has Scott rank $< \omega_1^X$ if and only if $\mc{A}$ has an $X$-computable Scott sentence.
\end{theorem}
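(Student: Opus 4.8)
The plan is to prove the two implications separately. The forward implication repackages the construction carried out in the paragraphs immediately preceding the theorem; the backward implication rests on a single structural fact about the $X$-computable infinitary hierarchy, namely that its levels are exactly the ordinals below $\omega_1^X$. Note that by Corollary~\ref{cor:upper-bound} the Scott rank of an $X$-computable structure is at most $\omega_1^X+1$, so the theorem pins down the ``structures of high Scott rank'' (those of Scott rank $\omega_1^X$ or $\omega_1^X+1$) as precisely those without an $X$-computable Scott sentence.

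\emph{From small Scott rank to a computable Scott sentence.} Suppose $\mc{A}$ has Scott rank $<\omega_1^X$. I would first note that this is unambiguous for the notions of rank discussed above: the ranks $r$ and $R$ of Definition~\ref{def:asym-sr} and Montalb\'an's rank (Definition~\ref{def:Scott-rank-mon}) differ from one another by at most a bounded finite amount, and the remaining notions are similarly tied to them, so since $\omega_1^X$ is admissible and in particular a limit ordinal, whether the Scott rank is $<\omega_1^X$ does not depend on which notion is meant. In particular $r(\mc{A})$ equals some ordinal $\lambda<\omega_1^X$, and $\lambda$, being below $\omega_1^X$, is $X$-computable. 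Since $\bar{a}\leq_\alpha\bar{b}$ implies $\bar{a}\leq_\beta\bar{b}$ whenever $\beta\le\alpha$, and $r(\bar{a})\le\lambda$ for every tuple $\bar{a}$, we get that $\bar{a}\leq_\lambda\bar{b}$ implies $\bar{a}\equiv_\infty\bar{b}$ for all $\bar{a},\bar{b}\in\mc{A}$. This is exactly the hypothesis needed to run the construction just before the theorem with $\alpha=\lambda$: the formulas $\psi_{\bar{a},\lambda}$ are computable $\Pi_{\lambda\cdot 2}$, they satisfy $\mc{A}\models\psi_{\bar{a},\lambda}(\bar{b})$ if and only if $\bar{a}\equiv_\infty\bar{b}$, and substituting them for the $\psi_{\bar{a}}$ in the canonical Scott sentence yields a computable $\Pi_{\lambda\cdot 2+2}$ Scott sentence for $\mc{A}$. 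As the whole construction is uniform in $X$, this is an $X$-computable Scott sentence.

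\emph{From a computable Scott sentence to small Scott rank.} Conversely, suppose $\mc{A}$ has an $X$-computable Scott sentence $\varphi$; that is, $\varphi$ is an $X$-computable infinitary sentence. Here I would invoke the standard fact (see \cite{AshKnight00}) that every $X$-computable infinitary formula is a computable $\Sigma_\gamma$ or $\Pi_\gamma$ formula for some $X$-computable ordinal $\gamma$ --- the levels of the relativized computable infinitary hierarchy are exactly the ordinals below $\omega_1^X$. So $\varphi$ is $\Sigma_\gamma$ or $\Pi_\gamma$ for some $\gamma<\omega_1^X$. Now every $\Sigma_\gamma$ and every $\Pi_\gamma$ sentence is in particular a $\Pi_{\gamma+1}$ sentence: a $\Pi_\gamma$ sentence already has the shape of a $\Pi_{\gamma+1}$ sentence in normal form, and a $\Sigma_\gamma$ sentence becomes one after prefixing a vacuous universal quantifier; moreover $\gamma+1<\omega_1^X$ since $\omega_1^X$ is a limit ordinal. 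Hence $\mc{A}$ has a $\Pi_{\gamma+1}$ Scott sentence, so by Definition~\ref{def:Scott-rank-mon} the Scott rank of $\mc{A}$ is at most $\gamma<\omega_1^X$ (and hence $<\omega_1^X$ for the other notions as well, by the remark in the previous paragraph).

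\emph{Main obstacle.} The forward direction is routine once the computable back-and-forth formulas $\varphi_{\bar{a},\alpha},\psi_{\bar{a},\alpha}$ from before the theorem are in hand. The substance lies in the converse, whose only non-trivial ingredient is that $X$-computable infinitary formulas occupy only levels below $\omega_1^X$. Proving this from first principles amounts to fixing a formal syntax for computable infinitary formulas --- coding each formula by a well-founded $X$-computable tree (or by an $X$-computable ordinal notation) --- and observing that a well-founded $X$-computable tree has $X$-computable tree rank, which is exactly the fact established at the start of this section. Everything else --- the coincidence of the various Scott ranks below an admissible ordinal, and the elementary passage between $\Sigma_\gamma$, $\Pi_\gamma$, and $\Pi_{\gamma+1}$ --- is bookkeeping.
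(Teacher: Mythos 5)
Your proof is correct and follows essentially the same route as the paper: the forward direction is exactly the paper's argument, substituting the computable formulas $\psi_{\bar{a},\lambda}$ into the canonical Scott sentence to get a computable $\Pi_{\lambda\cdot 2+2}$ Scott sentence for an $X$-computable ordinal $\lambda$ bounding the ranks $r(\bar{a})$. The converse, which the paper leaves implicit, you handle with the standard fact that $X$-computable infinitary sentences sit at levels $\gamma<\omega_1^X$ of the hierarchy and hence yield $\Pi_{\gamma+1}$ Scott sentences, which is exactly the intended bookkeeping.
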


\noindent An interesting line of investigation is to study the difference between the least-complexity Scott sentence and the least-complexity computable Scott sentence for a structure. Not much is known, though for example Ho constructed a subgroup of $\mathbb{Q}$ with a $\mathrm{d-}\Sigma_2$ Scott sentence but no computable $\mathrm{d-}\Sigma_2$ Scott sentence \cite{Ho}. (The structure has computable $\Sigma_3$ and $\Pi_3$ structures, so that Theorem \ref{thm:akm} is not true for computable Scott sentences.)

Finally, we will prove effective versions of Theorem \ref{thm:comp-bf} and Corollary \ref{cor:comp-bf}.

\begin{theorem}[Nadel \cite{Nadel}]\label{thm:comp-formulas-cat}
	Let $\mc{A}$ and $\mc{B}$ be $X$-computable structures. If $\mc{A}$ and $\mc{B}$ satisfy the same $X$-computable infinitary formulas, then they are isomorphic.
\end{theorem}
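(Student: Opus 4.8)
The plan is to mimic the proof of Theorem~\ref{thm:comp-bf}, replacing the hypothesis ``$\mc{A} \equiv_\alpha \mc{B}$ for all $\alpha < \omega_1^X$'' by the hypothesis that $\mc{A}$ and $\mc{B}$ satisfy the same $X$-computable infinitary formulas. The first step is to observe that the latter hypothesis actually implies the former. Indeed, for each $X$-computable ordinal $\alpha$ and each tuple $\bar a \in \mc{A}$, the formulas $\varphi_{\bar a, \alpha}$ and $\psi_{\bar a, \alpha}$ constructed just above (recursively for $\alpha < \omega_1^X$) are \emph{$X$-computable} infinitary $\Pi_{\alpha\cdot 2}$ formulas with the property that $\mc{B} \models \varphi_{\bar a,\alpha}(\bar b)$ iff $(\mc{A},\bar a) \leq_\alpha (\mc{B},\bar b)$, and symmetrically for $\psi$. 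So from the assumption that $\mc{A}$ and $\mc{B}$ satisfy the same $X$-computable sentences, applied to the sentences obtained by existentially quantifying these formulas (or by working with the versions of $\varphi,\psi$ relative to the empty tuple), we get $\mc{A} \equiv_\alpha \mc{B}$ for every $\alpha < \omega_1^X$.

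Once that reduction is in hand, Theorem~\ref{thm:comp-bf} applies directly and yields $\mc{A} \cong \mc{B}$. So the only real content is the first step: verifying that the $\varphi_{\bar a,\alpha}$ and $\psi_{\bar a,\alpha}$ are genuinely $X$-computable formulas and that they correctly capture the back-and-forth relations $\leq_\alpha$ between the two structures. The computability is where the hypothesis that $\alpha < \omega_1^X$ is essential: one needs an $X$-computable system of notations for the ordinals below $\omega_1^X$ in order to run the recursion that produces the c.e.\ index sets of conjuncts/disjuncts, and one must check that at each stage the disjunction $\bigvee_{\bar c \in \mc{A}} \psi_{\bar a \bar c,\beta}$ ranges over a c.e.-indexed family (which it does, since $\mc{A}$ is $X$-computable, so its tuples are $X$-computably enumerable, and the map $(\bar c,\beta) \mapsto$ (index of $\psi_{\bar a\bar c,\beta}$) is $X$-computable by the recursion). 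The correctness—that $\mc{B} \models \varphi_{\bar a,\alpha}(\bar b)$ iff $(\mc{A},\bar a) \leq_\alpha (\mc{B},\bar b)$—is a straightforward induction on $\alpha$ unwinding Definition~\ref{def:bfasym} (in its two-structure form), exactly parallel to the non-effective remark that $\bar a \leq_\alpha \bar b$ iff every $\Sigma_\alpha$ sentence true of $\bar b$ is true of $\bar a$.

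The main obstacle, such as it is, is purely bookkeeping: carrying the recursion on ordinal notations cleanly and confirming the index sets are c.e.\ uniformly. There is no new idea needed beyond Theorem~\ref{thm:comp-bf} and the already-displayed formulas $\varphi_{\bar a,\alpha}, \psi_{\bar a,\alpha}$. One subtlety worth flagging: the formulas are built relative to $\mc{A}$, so ``$\mc{A}$ and $\mc{B}$ satisfy the same $X$-computable formulas'' must be invoked in the direction that every $X$-computable sentence true in $\mc{A}$ is true in $\mc{B}$ and conversely; applying the $\varphi$-formulas tells us $\leq_\alpha$ holds one way and the $\psi$-formulas the other way, which together give $\equiv_\alpha$, as needed.
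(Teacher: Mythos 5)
Your proposal is correct and follows essentially the same route as the paper: the paper's proof also transfers the $X$-computable formulas $\varphi_{\varnothing,\alpha}$ (true in $\mc{A}$) to $\mc{B}$ to conclude $\mc{A} \leq_\alpha \mc{B}$ for all $\alpha < \omega_1^X$ and then invokes Theorem~\ref{thm:comp-bf}. Your use of both $\varphi_{\varnothing,\alpha}$ and $\psi_{\varnothing,\alpha}$ to get $\equiv_\alpha$ directly is only a slightly more explicit version of the same argument.
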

\begin{proof}
	For each $\alpha < \omega_1^X$, we have that $\mc{B} \models \varphi_{\varnothing,\alpha}$ where this is the $X$-computable formula defined above. This implies that $\mc{A} \leq_\alpha \mc{B}$ for each $\alpha$. Thus $\mc{A}$ and $\mc{B}$ are isomorphic by Theorem \ref{thm:comp-bf}.
\end{proof}

\begin{corollary}[Nadel \cite{Nadel}]
	Let $\mc{A}$ be an $X$-computable structure. If $\bar{a}$ and $\bar{b} \in \mc{A}$ satisfy the same $X$-computable formulas, then there is an automorphism of $\mc{A}$ taking $\bar{a}$ to $\bar{b}$.
\end{corollary}

\subsection{Construction of structures of high Scott complexity}

Corollary \ref{cor:upper-bound} is the best possible bound on the Scott complexity of an $X$-computable structure. Harrison \cite{Harrison68} constructed a computable linear order of order type $\omega_1^{CK} \cdot (1 + \mathbb{Q})$, which has Scott complexity $\Pi_{\omega_1^{CK}+2}$. It is a \textit{pseudo-well-ordering} in the sense that it has no hyperarithmetic descending sequence. More generally, we can construct the Harrison linear order $\omega_1^{X} \cdot (1 + \mathbb{Q})$ relative to any set $X$, which has Scott complexity $\Pi_{\omega_1^{X}+2}$. Often, the Barwise compactness theorem is used to construct the Harrison linear order, but we will give a more natural construction found in \cite{Kleene55} and which is explained in Lemma III.2.1 of \cite{SacksBook}. Our account comes from \cite{Chan17}. We will give the general overview of the argument giving most but not all of the details.

The relation ``$Y$ is not hyperarithmetic in $X$'' is $\Sigma^1_1(X)$ and so there is an $X$-computable tree $T$ whose paths are pairs $\langle Y,f \rangle$ where $f$ witnesses that $Y$ is not hyperarithmetic in $X$. $T$ is $X$-computable uniformly in $X$ and has no $X$-hyperarithmetic path. Take the Kleene-Brouwer order of $T$. We get an $X$-computable linear order $L$ with no $X$-hyperarithmetic descending sequence, as given a descending sequence in $L$, with one more jump we can compute a path through $T$. Then we argue that $L$ has order type $\omega_1^X \cdot (1 + \mathbb{Q}) + \alpha$ for some $\alpha < \omega_1^X$. This takes several steps:
\begin{itemize}
	\item Let $\gamma$ be the order type of the well-founded initial segment of $L$. Write $L = L_1 + L_2$ with $L_1$ the well-founded initial segment of $L$. If $\gamma$ were a computable ordinal, then we could hyperarithmetically split $L$ as $L_1 + L_2$, and $L_2$ has no least element, giving a hyperarithmetic descending sequence in $L$. Also, we cannot have $\gamma > \omega_1^X$ as then we could fix some element $u$ such that $\{ v \in L : v < u\}$ is a computable linear ordering of order type $\omega_1^X$. Thus $\gamma = \omega_1^X$.
	\item Similarly, given any $u \in L$, either $\{ v \in L : v \geq u\}$ is well-founded or the well-founded initial segment of $\{ v \in L : v \geq u\}$ has order type $\omega_1^X$. Thus $L \cong \omega_1^X \cdot (1+L^*) + \alpha$ for some linear order $L^*$ and ordinal $\alpha$.
	\item $\alpha$ must be computable, as if $u$ is the initial element of $\alpha$, $\{v \in L : v \geq u\}$ is a computable linear order.
	\item We claim that $L^*$ is dense and without endpoints. If $L^*$ had a left endpoint, then $\omega_1^X + \omega_1^X$ would be an initial segment of $L$, which we have already argued is not the case. If it had a right endpoint, then $\omega_1^X + \alpha$ would be a final segment of $L$, which cannot happen because $\omega_1^X + \alpha$ is not $X$-computable. And if $L^*$ was not dense, then we could write $L \cong \cdots + \omega_1^X + \omega_1^X + \cdots$, and this cannot happen as $\omega_1^X$ is not $X$-computable.
\end{itemize}
Finally, we use a trick to remove the $\alpha$: we replace $L$ by $L \cdot \omega$ which has order type $\omega_1^X \cdot (1 + \mathbb{Q})$.

Now we have to argue that the Scott complexity of $\omega_1^X \cdot (1 + \mathbb{Q})$ is $\Pi_{\omega_1^{X}+2}$. We must show that it has no $\Sigma_{\omega_1^X+2}$ Scott sentence. Using Theorem \ref{thm:montalban2}, it suffices to show that there is no tuple $\bar{c}$ such that the automorphism orbits are definable by $\Sigma_{\alpha}$ formulas, $\alpha < \omega_1^X$. (Note that if an orbit is definable by a $\Sigma_{\omega_1^X}$ formula, then it is definable by a $\Sigma_{\alpha}$ formula for some $\alpha < \omega_1^X$.) The parameters $\bar{c}$ do not help  us define the automorphism orbits because, given $\bar{c} = (c_1,\ldots,c_n)$ with $c_1 < c_2 < \cdots < c_n$, between some two of the parameters is a linear order of order type $\omega_1^X \cdot (1 + \mathbb{Q}) + \beta$ for some $\beta < \omega_1^X$. So let us ignore the parameters. (See, for example, Section 15.3.3 of \cite{AshKnight00} for similar arguments.) Let $u$ be an element of the non-well-founded part of the Harrison linear order, and suppose to the contrary that $\varphi(\bar{x})$ is a $\Sigma_\alpha$ formula defining the automorphism type of $u$, with $\alpha < \omega_1^X$. Then (using the formula $\varphi_{u,\alpha}$ from the previous section) there is an $X$-computable $\Pi_{\alpha \cdot 2}$ formula defining the automorphism orbit of $u$. So we may assume, after increasing $\alpha$, that $\varphi$ is $X$-computable. So the automorphism orbit $\{v \in L : L \models \varphi(v)\}$ of $u$ is $X$-hyperarithmetic, and has no least element. Then we can find a hyperarithmetic descending sequence in $L$.

Thus we obtain:

\begin{theorem}
	Given $X \in 2^\omega$, there is an $X$-computable linear order of Scott rank $\omega_1^{X}+1$ and Scott complexity $\Pi_{\omega_1^{X}+2}$. Moreover, there is a uniform operator $\Phi$ such that $\Phi(X)$ has order type $\omega_1^X \cdot (1 + \mathbb{Q})$, so that $\omega_1^X = \omega_1^Y$ if and only if $\Phi^X \cong \Phi^Y$.
\end{theorem}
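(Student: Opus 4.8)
The plan is to take $\Phi$ to be precisely the operator implicit in the construction carried out just before the theorem. From the $\Sigma^1_1(X)$ tree $T$ whose paths witness non-$X$-hyperarithmeticity one forms the Kleene--Brouwer order $L = L(X)$, which is $X$-computable uniformly in $X$ and has no $X$-hyperarithmetic descending sequence; then one sets $\Phi(X) := L(X) \cdot \omega$. The order-type claim is exactly what the bulleted analysis established: $L(X) \cong \omega_1^X \cdot (1+\mathbb{Q}) + \alpha$ for some computable $\alpha < \omega_1^X$, and passing to $L(X)\cdot\omega$ gives order type $\omega_1^X\cdot(1+\mathbb{Q})$. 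The one new piece of arithmetic to record is the absorption of the trailing $\alpha$: one has $\alpha + \omega_1^X = \omega_1^X$ because $\omega_1^X$ is additively indecomposable, and $(1+\mathbb{Q})\cdot\omega \cong 1+\mathbb{Q}$ because, by Cantor's theorem, $1+\mathbb{Q}$ is the unique countable dense linear order with a least but no greatest element and $\omega$ consecutive copies of it still have that shape; hence $L(X)\cdot\omega$ has order type $\omega_1^X \cdot \bigl((1+\mathbb{Q})\cdot\omega\bigr) = \omega_1^X\cdot(1+\mathbb{Q})$.

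For the equivalence $\omega_1^X = \omega_1^Y \iff \Phi(X) \cong \Phi(Y)$, the forward direction is immediate: if $\omega_1^X = \omega_1^Y = \kappa$, then $\Phi(X)$ and $\Phi(Y)$ both have order type $\kappa\cdot(1+\mathbb{Q})$ and so are isomorphic. For the converse I would use that the order type of the longest well-founded initial segment is an isomorphism invariant of a linear order, and that the well-founded initial segment of $\mu\cdot(1+\mathbb{Q})$ is exactly the first copy of $\mu$: any element lying in one of the $\mathbb{Q}$-indexed copies of $\mu$ has below it copies of $\mu$ indexed by densely many rationals, hence an infinite descending chain. So $\Phi(X) \cong \Phi(Y)$ forces $\omega_1^X \cong \omega_1^Y$ as ordinals, i.e.\ $\omega_1^X = \omega_1^Y$.

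It remains to pin down the Scott rank and Scott complexity of $\Phi(X)$, and here again most of the argument has already appeared before the theorem. The upper bound $\Pi_{\omega_1^X+2}$ is the relativization of Corollary \ref{cor:upper-bound}. For the matching lower bound I would show $\Phi(X)$ has no $\Sigma_{\omega_1^X+2}$ Scott sentence: by Theorem \ref{thm:montalban2}, together with the observation that a $\Sigma_{\omega_1^X}$-definable orbit is already $\Sigma_\alpha$-definable for some $\alpha<\omega_1^X$, it suffices to show that no finite tuple $\bar c$ makes every automorphism orbit $\Sigma_\alpha$-definable over $\bar c$ for some $\alpha<\omega_1^X$. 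Cutting the order at the finitely many points of $\bar c$ leaves one piece which still contains a suborder of type $\omega_1^X\cdot(1+\mathbb{Q})+\beta$ with $\beta<\omega_1^X$, so the parameters may be ignored; then for an element $u$ of the non-well-founded part, if the orbit of $u$ were $\Sigma_\alpha$-definable with $\alpha<\omega_1^X$, then the $X$-computable $\Pi_{\alpha\cdot 2}$ formula $\psi_{u,\alpha}$ from the previous subsection would already define it (using that $v\equiv_\infty u \iff v \leq_\alpha u$ once $\alpha$ is past the relevant back-and-forth rank of $u$), making the orbit $X$-hyperarithmetic; but the orbit of $u$ is order-isomorphic to $\mathbb{Q}$, hence has no least element, and an $X$-hyperarithmetic suborder of $L(X)\cdot\omega$ with no least element yields an $X$-hyperarithmetic descending sequence in $L(X)\cdot\omega$, hence (since such a sequence eventually stays inside one copy of $L(X)$) in $L(X)$ --- a contradiction. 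This rules out not only $\Sigma_{\omega_1^X+2}$ but every possible Scott complexity strictly Wadge-below $\Pi_{\omega_1^X+2}$, since each such --- $\mathrm{d-}\Sigma_\beta$, $\Sigma_\beta$, or $\Pi_\beta$ with $\beta\leq\omega_1^X+1$ --- yields a $\Sigma_{\omega_1^X+2}$ Scott sentence a fortiori. Hence the Scott complexity is exactly $\Pi_{\omega_1^X+2}$; and since under Definition \ref{def:Scott-rank-mon} a $\Pi_{\gamma+1}$ Scott sentence witnesses Scott rank $\leq\gamma$ while a $\Pi_{\omega_1^X+1}$ Scott sentence is in particular $\Sigma_{\omega_1^X+2}$ and so does not exist here, the Scott rank is exactly $\omega_1^X+1$.

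The only step demanding genuine care rather than bookkeeping is this lower bound on Scott complexity, and within it the elimination of parameters: one must check that deleting finitely many points from $\Phi(X)$ leaves an order still carrying the pathology ``no $X$-hyperarithmetic subset without a least element'', which is where the specific architecture of the Harrison order --- densely many copies of $\omega_1^X$ stacked above an initial copy of $\omega_1^X$, with a computable tail absorbed by the $\cdot\,\omega$ --- does the work. The reduction of an arbitrary $\Sigma_\alpha$-definable orbit to the $X$-computable formula $\psi_{u,\alpha}$ also deserves a precise statement of the equivalence $v\equiv_\infty u \iff v\leq_\alpha u$, but given the machinery already in place this is routine.
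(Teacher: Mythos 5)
Your proposal is correct and follows essentially the same route as the paper: the operator is the one implicit in the preceding construction ($\Phi(X) = L(X)\cdot\omega$ from the Kleene--Brouwer order), the upper bound is the relativized Corollary \ref{cor:upper-bound}, and the lower bound eliminates parameters and derives an $X$-hyperarithmetic automorphism orbit with no least element, contradicting the absence of $X$-hyperarithmetic descending sequences. The extra details you record (absorption of the trailing $\alpha$, $(1+\mathbb{Q})\cdot\omega \cong 1+\mathbb{Q}$, and the well-founded initial segment as an isomorphism invariant) are exactly the bookkeeping the paper leaves implicit.
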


In general, we say that a structure $\mc{A}$ has \textit{high Scott rank} / \textit{high Scott complexity} if satisfies one of the following equivalent conditions:
\begin{itemize}
	\item it does not have an $X$-computable Scott sentence;
	\item its Scott rank is not $X$-computable, i.e.\ at least $\omega_1^{\mc{A}}$;
	\item it has Scott complexity at least $\Pi_{\omega_1^{\mc{A}}}$.
\end{itemize}
All of the different possible definitions of Scott rank agree on whether a structure has high Scott rank, and if it does have high Scott rank, they agree on whether the Scott rank is $\omega_1^X$ or $\omega_1^X + 1$.

In terms of Scott complexity, there are five possible Scott complexities for a structure of high Scott complexity:
\[
\xymatrix@=7pt{
	& {\color{lightgray} \Sigma_{\omega_1^{CK}} \ar@[lightgray][dr]}                                      &                                                      &\Sigma_{\omega_1^{CK}+1}\ar[dr] \\
	&&{\color{lightgray} \text{d-}\Sigma_{\omega_1^{CK}}} \ar[ur]\ar[dr]&&\text{d-}\Sigma_{\omega_1^{CK}+1} \ar[dr] \\
	& \Pi_{\omega_1^{CK}} \ar[ur]                                            &                                                     &\Pi_{\omega_1^{CK}+1}\ar[ur]&& \Pi_{\omega_1^{CK}+2} 
}
\]
Structures of Scott complexity $\Pi_{\omega_1^{CK}}$ and $\Pi_{\omega_1^{CK}}+1$ have Scott rank $\omega_1^{CK}$, and structures of the other three Scott complexities have Scott rank $\omega_1^{CK}+1$.

While a computable structure with Scott rank $\omega_1^{CK}+1$ was known in the 1960's, it was not until the 2000's that a computable structure of Scott rank $\omega_1^{CK}$ was constructed. \cite{Makkai} gave a complicated construction of a $\Delta^0_2$ structure with Scott rank $\omega_1^{CK}$, and later Knight and Millar gave a simpler construction while also obtaining a computable structure \cite{KnightMillar}. The structures they build have Scott complexity Scott complexity $\Pi_{\omega_1^{CK}}$.

\begin{theorem}[Knight and Millar \cite{KnightMillar}]\label{thm:knight-millar}
	There is a computable structure of Scott rank $\omega_1^{CK}$ and Scott complexity $\Pi_{\omega_1^{CK}}$. Moreover, given $X$, there is an $X$-computable structure of Scott rank $\omega_1^{X}$ and Scott complexity $\Pi_{\omega_1^{X}}$.
\end{theorem}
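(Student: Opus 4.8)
\emph{Proof plan.} The plan is to reduce the theorem to constructing a single computable structure $\mc{A}$ with two properties: (i) for every finite tuple $\bar{a}\in\mc{A}$ the automorphism orbit of $\bar{a}$ is defined by a $\Pi_\beta$ formula for some \emph{computable} $\beta=\beta(\bar{a})$ --- equivalently $r(\bar{a})<\omega_1^{CK}$ for all $\bar{a}$; and (ii) these ordinals are cofinal in $\omega_1^{CK}$, i.e.\ $\sup_{\bar{a}} r(\bar{a})=\omega_1^{CK}$. Granting (i), I run the canonical Scott sentence construction of Section 2 with each $\psi_{\bar{a}}$ taken to be $\Pi_{r(\bar{a})}$; since every $r(\bar{a})<\omega_1^{CK}$ and $\omega_1^{CK}$ is a limit ordinal, a bookkeeping check on the shapes of $\rho_{\bar{a}},\sigma_{\bar{a}}$ shows the resulting Scott sentence $\theta$ is $\Pi_{\omega_1^{CK}}$; so by the Lopez--Escobar theorem $\Iso(\mc{A})\in\bfPi^0_{\omega_1^{CK}}$ and the Scott complexity of $\mc{A}$ is at most $\Pi_{\omega_1^{CK}}$. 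Granting (ii), $\mc{A}$ has no $\Pi_{\beta+1}$ Scott sentence for $\beta<\omega_1^{CK}$ (such a sentence would make every orbit $\Sigma_\beta$-definable, forcing $\sup_{\bar{a}} r(\bar{a})<\omega_1^{CK}$), and hence no $\Sigma_\gamma$, $\Pi_\gamma$, or $\mathrm{d-}\Sigma_\gamma$ Scott sentence with $\gamma<\omega_1^{CK}$; as these exhaust the admissible Scott complexities strictly below $\bfPi^0_{\omega_1^{CK}}$, the Scott complexity of $\mc{A}$ is exactly $\Pi_{\omega_1^{CK}}$ and its Scott rank is exactly $\omega_1^{CK}$ (Corollary~\ref{cor:upper-bound} supplies the matching sanity-check bound $\omega_1^{CK}+1$). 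The relativized statement is obtained by carrying out the whole argument relative to an arbitrary $X\in 2^\omega$.

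To build such an $\mc{A}$, note first that the naive idea of a disjoint union $\bigsqcup_n G_{\beta_n}$ of computable structures of Scott complexity $\Pi_{\beta_n}$ with $\beta_n\nearrow\omega_1^{CK}$ fails: there is no uniformly computable sequence of computable structures with Scott ranks cofinal in $\omega_1^{CK}$, by the same argument as the tree-rank theorem above. Following Knight and Millar, the remedy is to replace the index set by a computable Harrison-type pseudo-well-ordering $H$ --- a computable linear order with no hyperarithmetic descending sequence whose well-founded initial segment realizes every computable ordinal, exactly as constructed earlier in this section --- and, uniformly in $H$, to attach to each point $p$ of $H$ a gadget $G(p)$ built from the segment of $H$ below $p$, so that when $p$ lies in the well-founded part $G(p)$ has Scott complexity exactly $\Pi_{\operatorname{rk}(p)}$, while over the non-well-founded part $G(p)$ is forced to be a fixed generic object. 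The structure $\mc{A}$ is then essentially the disjoint union of the $G(p)$ together with enough relations to recover the attachment, coded so as to be computable. Two competing demands govern the design: the well-founded part of $H$ must carry genuinely unbounded complexity, which yields (ii); while the non-well-founded region --- its order dense without endpoints, its gadgets generic and mutually interchangeable --- must be made so homogeneous that a finite tuple of $\mc{A}$ never needs to distinguish ``I lie over a well-founded point of $H$'' from ``I lie over a non-well-founded point of $H$'', because that distinction is precisely the non-$\Pi_{<\omega_1^{CK}}$ statement ``$\{y : y\le p\}$ is well-ordered'', and tuples whose orbit depends on it are exactly what drives the Scott rank of the plain Harrison linear order up to $\omega_1^{CK}+1$.

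The verification then splits along these demands. Property (ii) is the routine half: for each computable $\alpha$ pick a well-founded $p\in H$ with $\operatorname{rk}(p)\ge\alpha$ and a tuple inside $G(p)$ whose orbit is properly $\Pi_\alpha$, so $r(\mc{A})\ge\alpha$, and then let $\alpha\to\omega_1^{CK}$. Property (i) is the delicate half: given a finite tuple $\bar{a}$, meeting finitely many gadgets $G(p_1),\dots,G(p_k)$, one exhibits a single computable ordinal $\beta=\beta(\bar{a})$ such that $(\mc{A},\bar{a})\le_\beta(\mc{A},\bar{b})$ already implies $(\mc{A},\bar{a})\equiv_\infty(\mc{A},\bar{b})$ --- using that each well-founded gadget has a computable Scott sentence and so is pinned down at a computable back-and-forth level, that the generic gadgets over the non-well-founded part are interchangeable, and that the homogeneity of the non-well-founded region lets one extend a bounded-rank back-and-forth matching to a total isomorphism without ever certifying well-foundedness of any $p_i$. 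The computable formulas $\varphi_{\bar{a},\beta},\psi_{\bar{a},\beta}$ of Section 4.1 are the natural bookkeeping tool here.

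The main obstacle is exactly this last point. Property (ii) forces $\mc{A}$ to encode complexity cofinal in $\omega_1^{CK}$, which can only be packaged computably through a pseudo-well-ordering; but a pseudo-well-ordering's non-well-founded part naturally imitates the Harrison linear order's and threatens to create tuples whose orbits can only be defined using the non-computable predicate ``well-founded below me'', which would violate (i) and push the Scott rank up to $\omega_1^{CK}+1$. The heart of the Knight--Millar construction is the combinatorial design that keeps the non-well-founded region generic and homogeneous while still letting the well-founded region carry unbounded genuine complexity; the remaining work --- a uniform construction of $\Pi_\beta$-hard gadgets attaching coherently along $H$, and the bookkeeping that the orbit analysis of an arbitrary finite tuple really does terminate at a computable level --- is comparatively routine.
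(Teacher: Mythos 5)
There is a genuine gap, and it sits exactly at the point your plan treats as bookkeeping. Your reduction claims that (i) every orbit is pinned at a computable level ($r(\bar{a})<\omega_1^{CK}$ for all $\bar{a}$) together with (ii) cofinality of these ranks yields Scott complexity exactly $\Pi_{\omega_1^{CK}}$. But (i)+(ii) only say that the Scott rank is $\omega_1^{CK}$, and that is compatible with Scott complexity $\Pi_{\omega_1^{CK}+1}$ as well as $\Pi_{\omega_1^{CK}}$: Theorem~\ref{thm:comp-inf} produces a computable structure satisfying (i) and (ii) whose Scott complexity is $\Pi_{\omega_1^{CK}+1}$, so the implication you rely on is false, and in fact separating these two complexities (equivalently, whether the computable infinitary theory is countably categorical) was precisely the Millar--Sacks problem. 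Concretely, the bookkeeping fails in the canonical Scott sentence: the conjunct $\rho_{\bar{a}}$ contains the disjunction $\bigdoublevee_{b \in \mc{A}} \psi_{\bar{a}b}$, and even if each $r(\bar{a}b)$ is computable, these levels can be cofinal in $\omega_1^{CK}$ for a \emph{fixed} $\bar{a}$ (already for $\bar{a}=\varnothing$), in which case $\rho_{\bar{a}}$ is $\Pi_{\omega_1^{CK}+1}$ and $\theta$ is only $\Pi_{\omega_1^{CK}+1}$. What you actually need is a uniform, level-by-level bound --- for each $\bar{a}$, the ranks of its one-point extensions bounded by a computable ordinal --- and nothing in your conditions or your construction sketch guarantees this. (Your lower-bound half, via Theorem~\ref{thm:montalban}, is fine; the problem is entirely the upper bound. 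The side remark that no uniformly computable sequence of structures can have Scott ranks cofinal in $\omega_1^{CK}$ ``by the same argument as the tree-rank theorem'' is also not justified as stated.)

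This missing uniformity is exactly what the paper's construction is engineered to provide. It builds a computable \emph{thin} homogeneous tree: thinness means there is a computable ordinal bound on the tree ranks at each level (while the ranks are unbounded below $\omega_1^{CK}$ across levels), and homogeneity means the isomorphism type is determined by which ranks occur at each level. The Scott sentence is then a conjunction, one conjunct per level, each conjunct of computable complexity because of the per-level bound --- hence $\Pi_{\omega_1^{CK}}$ --- while the unboundedness across levels gives Scott rank $\omega_1^{CK}$. Your sketch of gadgets attached to a Harrison-type order with a ``generic, homogeneous'' ill-founded part is in the same general spirit (the paper's tree consists of descending sequences through the Harrison order restricted to carefully chosen sets $S_n$), but the property ``tuples never need to certify well-foundedness'' does not by itself separate $\Pi_{\omega_1^{CK}}$ from $\Pi_{\omega_1^{CK}+1}$ --- it holds of the structure in Theorem~\ref{thm:comp-inf} too --- so without an explicit analogue of thinness your verification of the upper bound cannot go through.
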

\begin{proof}[Proof sketch]
	The construction of Knight and Millar uses the Barwise compactness theorem to build a computable thin homogeneous tree, where these terms are defined below. We will give a proof along the same lines, building a computable thin homogeneous tree, but we will build this tree in a more constructive and avoid using the Barwise compactness theorem. We will give a computable construction but the construction relativizes to any $X$.
	
	\begin{definition}
		A tree $T$ is \textit{thin} if there is a computable ordinal bound on the ordinal tree ranks at each level of the tree.
	\end{definition}

	\noindent Note that there can be nodes on the tree of rank $\infty$; the bound is just on the nodes with ordinal rank.

	\begin{definition}
		A tree $T$ is \textit{homogeneous} if:
		\begin{itemize}
			\item Whenever $x$ has a successor of rank $\alpha$, it has infinitely many successors of rank $\alpha$.
			\item If some element at level $n$ has a successor of rank $\alpha$, every element at level $n$ with rank $> \alpha$ has a successor of rank $\alpha$.
		\end{itemize}
	\end{definition}

	\noindent Such a tree is called homogeneous because any two nodes at the same level of the tree and with the same rank are automorphic, and indeed the isomorphism type of the tree is determined by which tree ranks appear at each level. One can show that a computable thin homogeneous tree has a $\Pi_{\omega_1^{CK}}$ Scott sentence; one writes down the conjunctions of the sentences, one for each level, which say which tree ranks show up at that level. Each of these sentences is $\Pi_{\omega_1^{CK}}$ because there is a bound on the tree ranks at that level. Though the tree ranks at each level are bounded, if they are unbounded below $\omega_1^{CK}$ overall, the tree has Scott complexity $\Pi_{\omega_1^{CK}}$. See \cite{KnightMillar} for details.
	
	\medskip
	
	So we want to construct a thin homogeneous tree with unbounded ranks. To construct such a tree, first fix a computable presentation $\mc{H}$ of the Harrison linear order. By the trick of replacing each element of $\mc{H}$ by a copy of $\omega$, we may assume that we can compute successors, predecessors, and limits in $\mc{H}$.	We construct for each $n = 1,2,\ldots$ a set $S_n \subseteq \mc{H}$ such that:
	\begin{enumerate}
		\item $S_1 \subseteq S_2 \subseteq S_3 \subseteq \cdots$,
		\item $\bigcup_i S_i = \mc{H}$,
		\item $S_1$ is cofinal in $\mc{H}$,
		\item each $S_n$ is bounded above inside the well-founded part of $\mc{H}$,
		\item for each $a \in S_n$ which is a limit element of $\mc{H}$, $\{b \in S_{n+1} : b < a\}$ is unbounded below $a$,
		\item for each $a \in S_n$ which is a successor element of $\mc{H}$, the predecessor of $a$ is in $S_{n+1}$.
	\end{enumerate}
	To construct $S_1$, consider the elements of $\mc{H}$ in order, and put each element which is larger than all of the previous elements into $S_1$. For example, if the elements of $\mc{H}_1$ are $h_1,h_2,h_3,\ldots$, and the first nine elements are ordered as
	\[ h_9 < h_2 < h_5 < h_1 < h_6 < h_7 < h_3 < h_4 < h_8 \]
	then we put $h_1$ in $S_1$, omit $h_2$ because it is smaller than $h_2$, put $h_3$ and $h_4$ in $S_1$, then omit $h_5$, $h_6$, and $h_7$ because they are lower than $h_4$, put $h_8$ in $S_1$, and omit $h_9$. So we get $S_1 = \{h_1,h_3,h_4,h_8,\ldots\}$.  Note that the order type of $S_1$ is $\omega$. Only finitely many of the elements of $S_1$ are in the well-founded part of $\mc{H}$; this is for (4).
	
	Given $S_1,\ldots,S_n$, we construct $S_{n+1}$ as follows. Each element of $S_n$ will be an element of $S_{n+1}$, satisfying (1). For each element $y$ of $S_{n} - S_{n-1}$ which is a limit element of $\mc{H}$, we can find $x \in S_{n}$ such that there are no element $z$ of $S_{n+1}$ with $x < z < y$. Then, in the open interval $\{ z \in \mc{H} : x < z < y\}$ we can do the same construction as we did for $S_1$, listing out the elements in this interval and putting each element which is greater than the previous elements into $S_{n+1}$. Thus we satisfy (5). For each successor element of $S_n$, we put its predecessor into $S_{n+1}$, satisfying (6). To satisfy (2), we also have to add to $S_{n+1}$ the first element of $\mc{H}$ which does not yet appear in $S_n$. (4) is easy to verify, and (3) is just about $S_1$.
	
	\medskip
	
	Let $T$ a subtree of the tree of all descending sequences in $\mc{H}$, with the nodes at level $n$ being from $S_n$; i.e., the path $\bar{h} = \la h_1,\ldots,h_n \ra$ has $h_1 \in S_1, h_2 \in S_2,\ldots,h_n \in S_n$. Given a node corresponding to the sequence $\bar{h} = \la h_1,\ldots,h_n \ra$, it is easy to argue using (5) and (6) that the tree rank of $\bar{h}$ is the order type of $\{x \in \mc{H} : x < h_n\}$ if $h_n$ is in the well-founded part of $\mc{H}$, and $\infty$ otherwise. By choice of the $S_n$---namely (4)---$T$ is a thin tree. To make it homogeneous, we just need to duplicate each node infinitely many times. By (3), the root node---which is the empty sequence---has tree rank $\infty$, but by (2), there is no bound on the tree ranks at all levels.
\end{proof}

The construction of the Harrison linear order was uniform in the sense that given $X$, one could effectively construct the Harrison linear order relative to $X$. Moreover, if $\omega_1^X = \omega_1^Y$, then the Harrison linear order relative to $X$ is the same as the Harrison linear order relative to $Y$. One can view this as saying that the construction produces a natural/canonical structure, \textit{the} Harrison linear order. On the other hand, the construction just given of an $X$-computable structure of Scott rank $\omega_1^X$ is not uniform in this way. The structure one gets depends on the sets $S_1,S_2,\ldots$, and these in turn depend on the particular presentation of the Harrison linear order $\mc{H}$.

Chan \cite{Chan17} asked whether there is some other uniform construction of a computable structure of Scott rank $\omega_1^{CK}$. Becker \cite{Becker17} and Chan, Harrison-Trainor, and Marks (unpublished) independently showed that there is no uniform construction.
\begin{theorem}[Becker \cite{Becker17}; Chan, Harrison-Trainor, and Marks]
	There is no Borel operator $\Phi$ such that:
	\begin{itemize}
		\item for all $X,Y \in 2^\omega$, $\Phi(X) \cong \Phi(Y) \Longleftrightarrow \omega_1^X = \omega_1^Y$
		\item for all $X \in 2^\omega$, $\Phi(X)$ is a computable structure of Scott complexity strictly less than $\Pi_{\omega_1^X+2}$.
	\end{itemize}
\end{theorem}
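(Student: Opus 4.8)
The plan is a proof by contradiction: assuming such a $\Phi$, I would produce a single real $X$ with $\omega_1^X>\omega_1^{CK}$ for which nonetheless $\Phi(X)\cong\Phi(\varnothing)$, contradicting the first bullet.

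Set $\mc A:=\Phi(\varnothing)$. By hypothesis $\mc A$ is computable, for every $X$ one has $\Phi(X)\cong\mc A\iff\omega_1^X=\omega_1^{CK}$, and $\mc A$ has Scott complexity strictly below $\Pi_{\omega_1^{CK}+2}$. Reading off the list of possible high Scott complexities, this forces $\mc A$ to have a $\Sigma_{\omega_1^{CK}+2}$ Scott sentence, so by Theorem~\ref{thm:montalban2} there is a (necessarily computable) tuple $\bar c\in\mc A$ over which every automorphism orbit of $\mc A$ is $\Sigma_{\omega_1^{CK}}$-definable; equivalently $\mc B:=(\mc A,\bar c)$ has a $\Pi_{\omega_1^{CK}+1}$ Scott sentence assembled entirely out of computable infinitary formulas of levels $<\omega_1^{CK}$. (If $\mc A$ has Scott rank genuinely below $\omega_1^{CK}$ one is in an even easier degenerate case, where $\mc A$ itself has a computable Scott sentence.) The point of this step is that the isomorphism type of $\mc A$ is ``pinned down below $\omega_1^{CK}$'': ``$\mc C\cong\mc A$'' is equivalent to the satisfaction by $\mc C$ of a fixed sentence mentioning only $\Sigma_{<\omega_1^{CK}}$ information --- in contrast with the Harrison linear order, where pinning down the isomorphism type genuinely requires a $\Pi_{\omega_1^{CK}+2}$ sentence.

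The contradiction then comes from exploiting the countability of $\omega_1^{CK}$. Let $\mathbb Q=\mathrm{Coll}(\omega,\omega_1^{CK})$; since $\omega_1^{CK}$ is a countable ordinal, $\mathbb Q$ is a countable forcing, so by Rasiowa--Sikorski a filter meeting any prescribed countable family of dense sets exists already in the ground model. A filter $G$ meeting the (countably many, as $\omega_1^{CK}$ is countable) dense sets forcing the induced real $X_G$ to code a cofinal function $\omega\to\omega_1^{CK}$ yields $\omega_1^{X_G}>\omega_1^{CK}$. The heart of the argument is then to meet enough further dense sets so that $\Phi(X_G)\cong\mc A$ --- that is, to show $\mathbb Q\Vdash\Phi(\dot X_G)\cong\mc A$. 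The mechanism is: $\Phi$ is Borel, hence absolutely and ``boundedly'' definable, so the isomorphism type of $\Phi(X_G)$ is computed from a bounded amount of $X_G$ in a way that can be carried out inside the admissible sets $L_{\omega_1^{CK}}[\cdot]$; and since $\Phi$ already maps every $\omega_1^{CK}$-preserving real --- e.g.\ every real Cohen-generic over $L_{\omega_1^{CK}}$ --- into $\Iso(\mc A)$, while the defining sentence of $\Iso(\mc A)$ from the previous paragraph again lives at levels below $\omega_1^{CK}$, a density/absoluteness argument shows that the collapse of $\omega_1^{CK}$ is invisible both to the computation defining $\Phi(X_G)$ and to the sentence testing membership in $\Iso(\mc A)$. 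Hence $\Phi(X_G)\cong\mc A$, and $X_G$ is the desired witness. (The corresponding claim for the Harrison operator, $\mathbb Q\Vdash\Phi_H(\dot X_G)\cong\mc H$, is \emph{false}, since $\Phi_H(X_G)$ has order type $\omega_1^{X_G}\cdot(1+\mathbb Q)\not\cong\mc H$; so any correct proof of the forcing claim must genuinely use Scott complexity $<\Pi_{\omega_1^{CK}+2}$.)

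The main obstacle is exactly this forcing claim $\mathbb Q\Vdash\Phi(\dot X_G)\cong\mc A$: one must show that replacing an $\omega_1^{CK}$-preserving generic by one that collapses $\omega_1^{CK}$ does not move the output outside the isomorphism type of $\mc A$, and to do this one has to extract precisely what ``Scott complexity $<\Pi_{\omega_1^{CK}+2}$'' provides --- namely that the output's isomorphism type depends on only a $\Sigma_{<\omega_1^{CK}}$ amount of information about the generic, information blind to whether the generic codes a long well-order. I would expect a pure Wadge- or Borel-complexity count to be unavailable here, because a Borel preimage of a set of small Borel rank can have arbitrarily large Borel rank; the argument has to be a genuine forcing (or effective-recursion-theoretic) reflection, with the collapsibility of the countable ordinal $\omega_1^{CK}$ by a ground-model forcing doing the essential work.
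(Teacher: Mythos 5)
There is a genuine gap, and you have located it yourself: the claim $\mathbb{Q}\Vdash\Phi(\dot X_G)\cong\mc{A}$ is never proved, and the mechanism you sketch for it does not work as stated. Two specific problems. First, you anchor the argument at $X=\varnothing$ and at the admissible level $L_{\omega_1^{CK}}$, but an arbitrary Borel operator $\Phi$ has a Borel code whose rank can vastly exceed $\omega_1^{CK}$; there is no reason at all that ``the isomorphism type of $\Phi(X_G)$ is computed from a bounded amount of $X_G$ in a way that can be carried out inside $L_{\omega_1^{CK}}[\cdot]$,'' nor that $\Phi$ is absolute for models of that size. Any repair must first pass to a base real $X$ whose $\omega_1^X$ dominates the rank of (a code for) $\Phi$ --- the hypothesis only lets you work ``relative to $X$,'' not at the lightface level. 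Second, the assertion that the isomorphism type of $\mc{A}$ is ``pinned down below $\omega_1^{CK}$'' overstates what a $\Sigma_{\omega_1^{CK}+2}$ (equivalently, via Theorem \ref{thm:montalban2}, a parametrized $\Pi_{\omega_1^{CK}+1}$) Scott sentence provides: such a sentence still has quantifier/conjunction layers at levels $\omega_1^{CK}$ and $\omega_1^{CK}+1$, and whether the structure $\Phi(X_G)$ built from a collapsing generic satisfies those top layers is exactly the point in question. Your parenthetical about the Harrison operator shows you know the top layers must be exploited, but the proposal contains no argument that does so; as it stands the ``density/absoluteness argument'' is a placeholder for the entire proof.

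Separately, your closing claim that ``a pure Wadge- or Borel-complexity count'' is unavailable is mistaken, and in fact that count is the paper's whole proof. The rank increase under preimages by a \emph{fixed} Borel function is bounded: $\Phi$ is $\bfSigma^0_\beta$-measurable for some countable $\beta$, so preimages of $\bfSigma^0_\gamma$ sets are $\bfSigma^0_\gamma$ once $\gamma$ is sufficiently large relative to $\beta$. Choosing $X$ with $\omega_1^X>\beta$ and setting $\mc{A}=\Phi(X)$, the second bullet makes $\Iso(\mc{A})$ a $\bfSigma^0_{\omega_1^X+2}$ set, so $\Phi^{-1}[\Iso(\mc{A})]=\{Y:\omega_1^Y=\omega_1^X\}$ would be $\bfSigma^0_{\omega_1^X+2}$, contradicting Marker's theorem (Theorem 2.3 of \cite{Marker88}) that this set is not $\bfSigma^0_{\omega_1^X+2}$. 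Note how this proof uses precisely the two degrees of freedom your plan gives up: the bound $\beta$ attached to $\Phi$, and the freedom to choose the base real $X$ after seeing $\beta$. A forcing/admissibility proof in the spirit of yours may well be possible (and is closer to the original arguments of Becker and of Chan--Harrison-Trainor--Marks), but it would have to incorporate both of these ingredients, which your proposal currently lacks.
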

\begin{proof}
	The proof we give is due to an anonymous referee. Suppose to the contrary that there was such a $\Phi$. Since $\Phi$ is Borel, for sufficiently large $\beta$, the inverse image of a $\bfSigma^0_\beta$ set under $\Phi$ is $\bfSigma^0_\beta$. Choose $X$ such that $\omega_1^X > \beta$. Let $\mc{A} = \Phi(X)$. If $\Iso(\mc{A})$ is $\bfSigma^0_{\omega_1^X + 2}$, then so is $\Phi^{-1}[\Iso(\mc{A})]$. But by Theorem 2.3 of \cite{Marker88}, the set $\Phi^{-1}[\Iso(\mc{A})] = \{Y : \omega_1^Y = \omega_1^X\}$ is not $\bfSigma^0_{\omega_1^X + 2}$.
\end{proof}

\noindent Becker \cite{Becker20} later proved a number of strengthenings of this result.

\bigskip{}

Theorem \ref{thm:comp-formulas-cat} says that two computable structures that satisfy the same computable infinitary formulas are isomorphic. Given a computable structure $\mc{A}$, the computable infinitary theory of $\mc{A}$ is the collection of all the computable $\mc{L}_{\omega_1 \omega}$ sentences true of $\mc{A}$. The structure of Scott rank $\omega_1^{CK}$ constructed by Knight and Millar (Theorem \ref{thm:knight-millar}) is the only model, computable or not, of its computable infinitary theory; we say that its computable infinitary theory is countably categorical. On the other hand the Harrison linear order, which has Scott rank $\omega_1^{CK}+1$, is not the only model of its computable infinitary theory. Millar and Sacks \cite{MillarSacks08} asked whether there is a computable structure of Scott rank $\omega_1^{CK}$ such that the computable infinitary theory is not countably categorical. This is equivalent to finding a structure of Scott complexity $\Pi_{\omega_1^{CK}+1}$. (In one direction, if the computable infinitary theory of a computable structure is countably categorical, then the structure has a $\Pi_{\omega_1^{CK}}$ Scott sentence, namely the conjunction of all the computable sentences true of it. The converse is slightly more difficult; see \cite{AlvirGreenbergHTTuretsky}.)

Millar and Sacks \cite{MillarSacks08} produced such a structure $\mc{A}$ which is not hyperarithmetic, but which does have $\omega_1^{\mc{A}} = \omega_1^{CK}$. Freer \cite{Freer08} proved the analogous result higher up: for any admissible $\alpha$, there is a structure $\mc{A}$ with $\omega_1^{\mc{A}} = \alpha$ and $\mc{A}$ has Scott complexity $\Pi_{\omega_1^{\mc{A}}+1}$. Finally, Harrison-Trainor, Igusa, and Knight \cite{HTIgusaKnight18} constructed a computable example.

\begin{theorem}[Harrison-Trainor, Igusa, and Knight \cite{HTIgusaKnight18}]\label{thm:comp-inf}
	There is a computable structure with Scott complexity $\Pi_{\omega_1^{CK}+1}$.
\end{theorem}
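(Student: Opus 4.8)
The plan is to reduce, via the equivalence noted just before the statement, to constructing a \emph{computable} structure $\mc{A}$ which (i) has Scott rank $\omega_1^{CK}$ and (ii) has a computable infinitary theory that is not countably categorical. Granting such an $\mc{A}$: by (i), $\mc{A}$ has a $\Pi_{\omega_1^{CK}+1}$ Scott sentence (this is exactly what Scott rank $\omega_1^{CK}$ means), so by the classification of the high Scott complexities its Scott complexity is either $\Pi_{\omega_1^{CK}}$ or $\Pi_{\omega_1^{CK}+1}$. To exclude $\Pi_{\omega_1^{CK}}$, suppose $\mc{A}$ had a $\Pi_{\omega_1^{CK}}$ Scott sentence; it would be a conjunction $\bigdoublewedge_i \psi_i$ in which each $\psi_i$ is a $\Pi_{\beta_i}$ sentence for some $\beta_i < \omega_1^{CK}$. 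Any model $\mc{B}$ of the computable infinitary theory of $\mc{A}$ satisfies the computable sentences $\varphi_{\varnothing,\beta_i}$ and $\psi_{\varnothing,\beta_i}$ from earlier in this section, so $\mc{A} \equiv_{\beta_i} \mc{B}$, so $\mc{B}$ satisfies each $\psi_i$, so $\mc{B}$ satisfies the Scott sentence and $\mc{B} \cong \mc{A}$ --- contradicting (ii). Thus the Scott complexity of $\mc{A}$ is exactly $\Pi_{\omega_1^{CK}+1}$, and everything comes down to building $\mc{A}$.

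For the construction I would follow the template of Theorem~\ref{thm:knight-millar} --- a computable \emph{thin} structure assembled with a computable presentation of the Harrison linear order as scaffolding, so that one never has to enumerate the computable ordinals directly --- but depart from it at the one place where that argument forces categoricity. Recall that the Knight--Millar thin homogeneous tree achieves Scott complexity only $\Pi_{\omega_1^{CK}}$ precisely because it is pinned down by its level-by-level rank data, each level being controlled by a computable ordinal. The extra ingredient needed here is an aspect of $\mc{A}$ whose correct value is an unbounded conjunction of bounded-complexity facts --- hence genuinely $\Pi_{\omega_1^{CK}}$ and therefore \emph{not} in the computable infinitary theory. Concretely, the construction should arrange elements or substructures which, from the point of view of every sentence of $\mc{L}_{\omega_1 \omega}$ of complexity below $\omega_1^{CK}$, behave like ``rank $\infty$'' objects, but which in a companion model receive instead a genuine (non-computable) ordinal rank just past $\omega_1^{CK}$: since for a computable structure no sentence of complexity below $\omega_1^{CK}$ distinguishes ``rank $\infty$'' from ``ordinal rank $\geq \omega_1^{CK}$'', such a swap is invisible at every level $\beta < \omega_1^{CK}$. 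At the same time the design must keep every automorphism orbit of $\mc{A}$ itself $\Sigma_\beta$-definable for some $\beta < \omega_1^{CK}$ (so, by Theorem~\ref{thm:montalban}, $\mc{A}$ has a $\Pi_{\omega_1^{CK}+1}$ Scott sentence and Scott rank $\le \omega_1^{CK}$) while still realizing ordinal data unbounded below $\omega_1^{CK}$ (so $\mc{A}$ has no computable Scott sentence and hence Scott rank $\ge \omega_1^{CK}$, by Nadel's theorems recalled above); together these pin the Scott rank at exactly $\omega_1^{CK}$.

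With $\mc{A}$ in hand, non-categoricity is witnessed by the companion $\mc{B}$. One checks $\mc{A} \equiv_\beta \mc{B}$ for all $\beta < \omega_1^{CK}$ by a bounded back-and-forth argument: each time the opponent plays inside a ``high-rank'' piece of one structure, answer inside the corresponding piece of the other, using that an object of rank $\infty$ and an object of ordinal rank $\delta$ are $\equiv_\beta$-equivalent whenever $\delta \ge \beta$. By the formulas $\varphi_{\varnothing,\beta}$, $\psi_{\varnothing,\beta}$, this equivalence for all $\beta < \omega_1^{CK}$ is exactly the statement that $\mc{B}$ satisfies the same computable infinitary sentences as $\mc{A}$; and since $\mc{A}$ is computable, Theorem~\ref{thm:comp-bf} forces $\mc{B}$ to be non-computable, so there is no obstruction in principle to $\mc{B} \not\cong \mc{A}$. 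Finally $\mc{B} \not\cong \mc{A}$ is witnessed by the presence in $\mc{B}$ of a piece of genuine ordinal rank $\ge \omega_1^{CK}$, which $\mc{A}$ --- having only ordinal ranks below $\omega_1^{CK}$ together with rank $\infty$ --- does not contain.

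The hard part is the tension at the heart of the construction. Keeping the Scott rank at $\omega_1^{CK}$ rather than $\omega_1^{CK}+1$ requires that every piece of $\mc{A}$ be ``thin,'' so that ``rank $\infty$'' is always equivalent to a bounded statement ``rank exceeds some computable ordinal'' and no automorphism orbit needs an unbounded conjunction to define; but thinness is also precisely what lets the computable infinitary theory pin a structure down, which would yield a $\Pi_{\omega_1^{CK}}$ Scott sentence. Reconciling these --- engineering a computable structure that is everywhere thin enough for all orbits to be $\Sigma_\beta$-definable with $\beta < \omega_1^{CK}$, yet globally possesses a genuine non-computable companion for its computable infinitary theory --- is the delicate point; once it is in place, the absence of a $\Sigma_{\omega_1^{CK}+1}$ or $\mathrm{d-}\Sigma_{\omega_1^{CK}+1}$ Scott sentence is automatic, since those complexities occur only for structures of Scott rank $\omega_1^{CK}+1$.
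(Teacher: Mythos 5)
Your first paragraph is fine: the reduction you set up is exactly the equivalence the survey itself records just before the statement (Scott complexity $\Pi_{\omega_1^{CK}+1}$ for a computable structure is the same as Scott rank $\omega_1^{CK}$ together with a non-countably-categorical computable infinitary theory), and your derivation of the easy direction --- a $\Pi_{\omega_1^{CK}}$ Scott sentence is a conjunction of $\Pi_{\beta_i}$ sentences with $\beta_i<\omega_1^{CK}$, and any model of the computable infinitary theory satisfies $\varphi_{\varnothing,\beta_i}$ and $\psi_{\varnothing,\beta_i}$, hence is $\equiv_{\beta_i}$ to $\mc{A}$ and satisfies each conjunct --- is correct, as is the observation that Scott rank $\omega_1^{CK}$ rules out the $\Sigma_{\omega_1^{CK}+1}$, $\mathrm{d-}\Sigma_{\omega_1^{CK}+1}$, and $\Pi_{\omega_1^{CK}+2}$ complexities.

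But after that reduction, nothing is actually proved. Your second and third paragraphs are a wish-list, not a construction: you state that one must build a computable $\mc{A}$ all of whose automorphism orbits are $\Sigma_\beta$-definable for computable $\beta$ (unboundedly in $\beta$), together with a non-isomorphic companion $\mc{B}$ satisfying the same computable infinitary sentences, and you yourself flag the ``tension at the heart of the construction'' as the unresolved ``delicate point.'' That tension \emph{is} the theorem. The equivalence you start from was already known to Millar and Sacks, who asked the question precisely because they could not produce a computable example (they got a non-hyperarithmetic one, and Freer handled admissible ordinals); the content of the Harrison-Trainor--Igusa--Knight result is the construction you leave unspecified. In particular, the suggested mechanism --- pieces that ``look like rank $\infty$'' below $\omega_1^{CK}$ but get an ordinal rank past $\omega_1^{CK}$ in a companion --- is exactly what fails for the thin homogeneous trees of Theorem \ref{thm:knight-millar}: thinness is what makes each orbit definable at a computable level, and it is simultaneously what makes the computable infinitary theory categorical, and you give no device for breaking that implication while preserving orbit definability. (For what it is worth, the actual proof is not a modification of the Knight--Millar scaffolding at all; as noted after the Calvert--Knight--Millar theorem in this survey, it leans on the existence of a computable, strongly computably approximable tree of Scott rank $\omega_1^{CK}$, used inside a further construction.) So the proposal reformulates the statement in an equivalent form and stops where the real work begins; as it stands there is a genuine gap, namely the entire construction of $\mc{A}$ and the verification that its Scott rank is exactly $\omega_1^{CK}$ while its computable infinitary theory has another (necessarily non-computable) model.
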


There are two remaining complexities, $\Sigma_{\omega_1^{CK}+1}$ and $\mathrm{d-}\Pi_{\omega_1^{CK}+1}$. These are exactly the structures with the property that they have Scott rank $\omega_1^{CK}+1$, but after naming a constant, they have Scott rank $\omega_1^{CK}$. This follows from Theorem \ref{thm:montalban2} and the following corresponding result for $\mathrm{d-}\Sigma_\alpha$ sentences:
\begin{theorem}[Alvir, Greenberg, Harrison-Trainor, and Turetsky \cite{AlvirGreenbergHTTuretsky}]
	Let $\mc{A}$ be a countable structure. Then $\mc{A}$ has a $\mathrm{d-}\Sigma_{\alpha}$ Scott sentence if and only if for some $\bar{c} \in \mc{A}$, $(\mc{A},\bar{c})$ has a $\Pi_{\alpha}$ Scott sentence and the automorphism orbit of $\bar{c}$ in $\mc{A}$ is $\Sigma_{\alpha}$-definable.
\end{theorem}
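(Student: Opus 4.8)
The plan is to prove both directions by relating a $\mathrm{d}\text{-}\Sigma_\alpha$ Scott sentence for $\mc{A}$ to a $\Pi_\alpha$ Scott sentence for an expansion $(\mc{A},\bar{c})$ together with a $\Sigma_\alpha$ definition of the orbit of $\bar{c}$. Recall that a $\mathrm{d}\text{-}\Sigma_\alpha$ sentence has the form $\varphi \wedge \psi$ with $\varphi$ a $\Sigma_\alpha$ sentence and $\psi$ a $\Pi_\alpha$ sentence; equivalently $\varphi \wedge \neg\varphi'$ with $\varphi,\varphi'$ both $\Sigma_\alpha$.

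For the reverse direction, suppose $\bar{c} \in \mc{A}$ is such that $(\mc{A},\bar{c})$ has a $\Pi_\alpha$ Scott sentence $\theta(\bar{c})$ — here $\theta(\bar{x})$ is a $\Pi_\alpha$ formula in the language of $\mc{A}$ — and the orbit of $\bar{c}$ is defined by a $\Sigma_\alpha$ formula $\chi(\bar{x})$. I would take as a candidate Scott sentence for $\mc{A}$ the conjunction $\exists \bar{x}\,\bigl(\chi(\bar{x}) \wedge \theta(\bar{x})\bigr) \;\wedge\; \forall \bar{x}\,\bigl(\chi(\bar{x}) \rightarrow \theta(\bar{x})\bigr)$. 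The first conjunct is $\Sigma_\alpha$ (an existential over a conjunction of a $\Sigma_\alpha$ and a $\Pi_\alpha$ formula; one checks this stays $\Sigma_\alpha$ since $\Pi_\alpha \subseteq \Sigma_{\alpha+1}$ is too lossy — instead observe $\chi$ is a disjunction of $\exists$-over-$\Pi_{<\alpha}$ formulas and $\theta$ is a conjunction of $\forall$-over-$\Sigma_{<\alpha}$, so $\exists\bar{x}(\chi\wedge\theta)$ needs care; one can absorb $\theta$ by noting $\chi(\bar{x})$ already pins the orbit so $\exists\bar{x}\,\chi(\bar{x})$ alone together with the second conjunct suffices, and $\exists\bar{x}\,\chi(\bar{x})$ is genuinely $\Sigma_\alpha$). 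The second conjunct $\forall\bar{x}(\chi(\bar{x})\rightarrow\theta(\bar{x}))$ rewrites as $\forall\bar{x}(\neg\chi(\bar{x}) \vee \theta(\bar{x}))$, a $\Pi_\alpha$ formula since $\neg\chi$ is $\Pi_\alpha$ and $\theta$ is $\Pi_\alpha$. So the conjunction is $\mathrm{d}\text{-}\Sigma_\alpha$. To see it is a Scott sentence: any model $\mc{B}$ satisfies $\exists\bar{x}\,\chi(\bar{x})$, pick witness $\bar{b}$; since $\chi$ defines the orbit of $\bar{c}$ in $\mc{A}$ (and $\chi$ is $\Sigma_\alpha$, hence its truth is a back-and-forth invariant at level $\alpha$ — this needs the standard fact that the orbit-defining property of $\chi$ transfers, which holds because $\chi$ being a Scott-type formula for the orbit means $\mc{B}\models\chi(\bar{b})$ forces $(\mc{B},\bar{b})$ to satisfy the same $\Sigma_\alpha$ facts as $(\mc{A},\bar{c})$), and since $\mc{B}\models\theta(\bar{b})$ and $\theta$ is a $\Pi_\alpha$ Scott sentence for $(\mc{A},\bar{c})$, we get $(\mc{B},\bar{b})\cong(\mc{A},\bar{c})$, hence $\mc{B}\cong\mc{A}$.

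For the forward direction, suppose $\mc{A}$ has a $\mathrm{d}\text{-}\Sigma_\alpha$ Scott sentence $\varphi \wedge \neg\varphi'$ with $\varphi,\varphi'$ both $\Sigma_\alpha$. The idea is that $\mc{A}$ satisfies $\varphi$ but is the \emph{unique} model of $\varphi$ not satisfying $\varphi'$. Writing $\varphi$ as $\bigvee_i \exists\bar{x}\,\pi_i(\bar{x})$ with each $\pi_i$ of bounded complexity below $\alpha$, some disjunct is witnessed in $\mc{A}$: there is $\bar{c}\in\mc{A}$ and an index $i$ with $\mc{A}\models\pi_i(\bar{c})$. Then I would argue $(\mc{A},\bar{c})$ has a $\Pi_\alpha$ Scott sentence, namely something like $\pi_i(\bar{c}) \wedge \neg\varphi'$ relativized appropriately (both conjuncts here need to land in $\Pi_\alpha$ over the expanded language — $\pi_i$ is $\Pi_{<\alpha}$ hence $\Pi_\alpha$, and $\neg\varphi'$ is $\Pi_\alpha$); uniqueness of $(\mc{A},\bar{c})$ among models follows because such a model $\mc{B}$ with witness $\bar{b}$ gives $\mc{B}\models\varphi\wedge\neg\varphi'$ via the witness, so $\mc{B}\cong\mc{A}$, and then one must check the isomorphism can be arranged to send $\bar{b}$ to $\bar{c}$ — this is where one uses that $\bar{c}$ is essentially canonical, or one weakens to: $(\mc{A},\bar{c})$ has a $\Pi_\alpha$ Scott sentence for \emph{some} choice within a controlled set of tuples. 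Finally, for the orbit of $\bar{c}$ being $\Sigma_\alpha$-definable: combining the $\Pi_\alpha$ Scott sentence for $(\mc{A},\bar{c})$ with Theorem~\ref{thm:ex-ss} applied in reverse, or more directly, using that $\mc{A}$ itself has the $\mathrm{d}\text{-}\Sigma_\alpha$ Scott sentence to extract a $\Sigma_\alpha$ orbit definition.

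\medskip

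The main obstacle I anticipate is the forward direction's bookkeeping: ensuring that the chosen witness tuple $\bar{c}$ genuinely gives a $\Pi_\alpha$ Scott sentence for $(\mc{A},\bar{c})$ — one must be careful that $\neg\varphi'$ (a $\Pi_\alpha$ sentence) together with enough of $\varphi$'s witnessing data actually characterizes $(\mc{A},\bar{c})$ up to isomorphism and not just $\mc{A}$, which requires either choosing $\bar{c}$ to have a sufficiently simple (indeed $\Sigma_\alpha$-definable) orbit to begin with, or an argument that the relevant disjunct $\pi_i$ together with its back-and-forth consequences pins down $\bar{c}$'s orbit. This is precisely the subtlety noted in the remark after Theorem~\ref{thm:ex-ss} that the analogous fact for $\Sigma_{\alpha+1}$ Scott sentences was harder to prove than it looked. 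In the reverse direction the only delicate point is verifying the $\Sigma_\alpha$/$\Pi_\alpha$ complexity bookkeeping of the constructed sentence, which is routine once one is careful not to let a $\Pi_\alpha$ formula leak into a position forcing $\Sigma_{\alpha+1}$.
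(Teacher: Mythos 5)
Your right-to-left direction is essentially correct once cleaned up: the sentence $\exists\bar{x}\,\chi(\bar{x}) \wedge \forall\bar{x}\,(\neg\chi(\bar{x})\vee\theta(\bar{x}))$ is $\mathrm{d-}\Sigma_\alpha$ (the universal conjunct is $\Pi_\alpha$ because a disjunction of two $\Pi_\alpha$ formulas prenexes to a single $\Pi_\alpha$ formula), and any countable model $\mc{B}$ of it with a witness $\bar{b}$ for $\chi$ satisfies $\theta(\bar{b})$, so $(\mc{B},\bar{b})\cong(\mc{A},\bar{c})$ and $\mc{B}\cong\mc{A}$. Two small repairs: you never check that $\mc{A}$ itself satisfies the universal conjunct---this is exactly where the hypothesis that $\chi$ defines the \emph{orbit} of $\bar{c}$ is used (any $\bar{b}$ with $\mc{A}\models\chi(\bar{b})$ is automorphic to $\bar{c}$, hence satisfies $\theta$)---and the parenthetical appeal to ``$\Sigma_\alpha$ truth transfers along back-and-forth'' is not needed anywhere.

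The left-to-right direction has a genuine gap, and it sits exactly where you flag it. Writing the Scott sentence as $\varphi\wedge\neg\varphi'$ and picking an arbitrary $\bar{c}$ witnessing a disjunct $\exists\bar{x}\,\pi_i(\bar{x})$ of $\varphi$ does not yield a $\Pi_\alpha$ Scott sentence for $(\mc{A},\bar{c})$: a model $(\mc{B},\bar{b})$ of $\pi_i(\bar{b})\wedge\neg\varphi'$ gives only $\mc{B}\cong\mc{A}$ together with the fact that $\bar{b}$ satisfies $\pi_i$, and $\pi_i$ need not isolate an orbit at all---its witnesses in $\mc{A}$ can meet many automorphism orbits, so no isomorphism need carry $\bar{b}$ into the orbit of $\bar{c}$. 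The entire content of the theorem is that \emph{some} tuple $\bar{c}$ can be chosen whose orbit is $\Sigma_\alpha$-definable and such that $(\mc{A},\bar{c})$ is pinned down by a $\Pi_\alpha$ sentence; your sketch assumes precisely this (``$\bar{c}$ is essentially canonical'') rather than proving it, and it gives no argument at all for the $\Sigma_\alpha$-definability of the orbit (invoking Theorem \ref{thm:ex-ss} ``in reverse'' does not supply it, since that theorem concerns $\Sigma_{\alpha+1}$ Scott sentences and says nothing about orbit definability at level $\alpha$). This is the same subtlety the survey highlights right after Theorem \ref{thm:ex-ss}: these parameter-extraction statements look obvious but are not, and the survey itself gives no proof of the present theorem, citing \cite{AlvirGreenbergHTTuretsky}, where the argument requires a finer analysis---roughly, one must use the back-and-forth relations (or a boldface argument on $\Iso(\mc{A})$) to show that if no tuple had a $\Sigma_\alpha$-definable orbit together with a $\Pi_\alpha$ Scott sentence for the expansion, then one could construct a countable model of $\varphi\wedge\neg\varphi'$ not isomorphic to $\mc{A}$. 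None of that machinery appears in your proposal, so the forward implication is not established.
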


\noindent Turetsky asked whether such structures exist. Alvir, Greenberg, Harrison-Trainor, and Turetsky \cite{AlvirGreenbergHTTuretsky} showed that they do. Thus there are computable structures of all five possible high Scott complexities.

\begin{theorem}[Alvir, Greenberg, Harrison-Trainor, and Turetsky \cite{AlvirGreenbergHTTuretsky}]\label{thm:other-possibilities}
	There are computable structures of all possible high Scott complexities: $\Pi_{\omega_1^{CK}}$,  $\Pi_{\omega_1^{CK}+1}$,
	$\Sigma_{\omega_1^{CK}+1}$, $\mathrm{d-}\Pi_{\omega_1^{CK}+1}$, and $\Pi_{\omega_1^{CK}+2}$.
\end{theorem}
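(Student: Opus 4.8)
Three of the five complexities are already in hand: $\Pi_{\omega_1^{CK}}$ by Theorem~\ref{thm:knight-millar}, $\Pi_{\omega_1^{CK}+1}$ by Theorem~\ref{thm:comp-inf}, and $\Pi_{\omega_1^{CK}+2}$ by the Harrison linear order. The plan is to produce computable structures of the two remaining complexities, $\Sigma_{\omega_1^{CK}+1}$ and $\mathrm{d-}\Sigma_{\omega_1^{CK}+1}$, and to do so I would first rephrase each target as a statement about \emph{parametrized} Scott complexity. By Theorem~\ref{thm:ex-ss} (with $\alpha=\omega_1^{CK}$), a structure $\mc A$ has a $\Sigma_{\omega_1^{CK}+1}$ Scott sentence precisely when $(\mc A,\bar c)$ has a $\Pi_{\omega_1^{CK}}$ Scott sentence for some tuple $\bar c$; and by the characterization of $\mathrm{d-}\Sigma_\alpha$ Scott sentences stated just above (with $\alpha=\omega_1^{CK}+1$), $\mc A$ has a $\mathrm{d-}\Sigma_{\omega_1^{CK}+1}$ Scott sentence precisely when for some $\bar c$ the structure $(\mc A,\bar c)$ has a $\Pi_{\omega_1^{CK}+1}$ Scott sentence \emph{and} the automorphism orbit of $\bar c$ in $\mc A$ is $\Sigma_{\omega_1^{CK}+1}$-definable. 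Combined with the known list of achievable high Scott complexities, realizing $\Sigma_{\omega_1^{CK}+1}$ amounts to building a computable $\mc A$ that has such a $\bar c$ but has no $\Pi_{\omega_1^{CK}}$ Scott sentence---which by Theorem~\ref{thm:montalban} (with $\alpha=\omega_1^{CK}$) means some automorphism orbit of $\mc A$ is not $\Sigma_\beta$-definable without parameters for any $\beta<\omega_1^{CK}$---since the presence of a $\Sigma_{\omega_1^{CK}+1}$ Scott sentence already excludes $\Pi_{\omega_1^{CK}+2}$ and $\mathrm{d-}\Sigma_{\omega_1^{CK}+1}$. Realizing $\mathrm{d-}\Sigma_{\omega_1^{CK}+1}$ amounts to building a computable $\mc A$ with such a $\bar c$ but with no $\Pi_{\omega_1^{CK}+1}$ Scott sentence (again, some orbit not $\Sigma_{<\omega_1^{CK}}$-definable without parameters) and no $\Sigma_{\omega_1^{CK}+1}$ Scott sentence, i.e.\ by Theorem~\ref{thm:ex-ss} no tuple at all reduces $\mc A$ to a $\Pi_{\omega_1^{CK}}$ Scott sentence.

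For the $\Sigma_{\omega_1^{CK}+1}$ example I would start from the Knight--Millar thin homogeneous tree, which already carries a $\Pi_{\omega_1^{CK}}$ Scott sentence (the conjunction over levels of the $\Pi_{\omega_1^{CK}}$ sentences recording which tree ranks occur at each level), and augment the construction with a distinguished element $c$ so that $(\mc A,c)$ still has a $\Pi_{\omega_1^{CK}}$ Scott sentence of this flavour, while an extra layer makes $c$ indistinguishable from many other elements at every computable level---so its orbit is not $\Sigma_\beta$-definable for computable $\beta$---yet $c$ is recoverable by existentially quantifying a single witness tuple and then checking a $\Pi_{\omega_1^{CK}}$ condition, which gives the required $\Sigma_{\omega_1^{CK}+1}$-definition of its orbit. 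For the $\mathrm{d-}\Sigma_{\omega_1^{CK}+1}$ example I would do the analogous thing starting from the Harrison--Trainor--Igusa--Knight structure of complexity $\Pi_{\omega_1^{CK}+1}$: attach a distinguished $c$ so that $(\mc A,c)$ retains a $\Pi_{\omega_1^{CK}+1}$ Scott sentence and the orbit of $c$ is $\Sigma_{\omega_1^{CK}+1}$-definable (yielding a $\mathrm{d-}\Sigma_{\omega_1^{CK}+1}$ Scott sentence), but so engineered that naming any tuple whatsoever still leaves an automorphism orbit of back-and-forth rank $\geq\omega_1^{CK}$, so that no tuple brings $\mc A$ down to a $\Pi_{\omega_1^{CK}}$ Scott sentence. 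Both constructions should relativize verbatim to an arbitrary oracle $X$, giving the $X$-computable versions as well.

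In each case the easy half is the upper bound: once the parametrized structure is in place and the orbit of $c$ has been arranged to be $\Sigma_{\omega_1^{CK}+1}$-definable, the $\Sigma_{\omega_1^{CK}+1}$ (resp.\ $\mathrm{d-}\Sigma_{\omega_1^{CK}+1}$) Scott sentence for $\mc A$ is handed to us by Theorem~\ref{thm:ex-ss} (resp.\ the $\mathrm{d-}\Sigma_\alpha$ characterization). The main obstacle is the lower bounds---showing $\mc A$ has no Scott sentence of the next-simplest complexity---and each of these reduces to showing that a particular orbit of $\mc A$ is not definable at a given level \emph{without} the distinguished parameter. The tool I would use is a relativized back-and-forth / hyperarithmeticity argument in the spirit of the lower-bound argument for the Harrison linear order given above: by the effective back-and-forth formulas $\varphi_{\bar a,\alpha}$ and $\psi_{\bar a,\alpha}$ introduced earlier, a parameterless definition of the orbit at a computable level could be replaced by a hyperarithmetic one, and one builds a pseudo-well-ordering into $\mc A$ so that a hyperarithmetic such definition would compute a hyperarithmetic descending sequence through it, a contradiction. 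The most delicate point is the $\mathrm{d-}\Sigma_{\omega_1^{CK}+1}$ case, where one must rule out not a single bad parameter but \emph{every} tuple that could reduce $\mc A$ to a $\Pi_{\omega_1^{CK}}$ Scott sentence, exactly as naming finitely many points of the Harrison linear order still leaves a copy of $\omega_1^{CK}\cdot(1+\mathbb{Q})+\beta$ between two of them; engineering the construction so that the ``hard'' part of $\mc A$ is spread out across the structure in this way is the crux.
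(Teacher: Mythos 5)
Your overall frame is sound and matches how the survey itself motivates the result: the three complexities $\Pi_{\omega_1^{CK}}$, $\Pi_{\omega_1^{CK}+1}$, $\Pi_{\omega_1^{CK}+2}$ come from Theorem~\ref{thm:knight-millar}, Theorem~\ref{thm:comp-inf}, and the Harrison order, and the remaining two cases are correctly translated into parametrized statements via Theorem~\ref{thm:ex-ss} and the $\mathrm{d-}\Sigma_\alpha$ characterization stated just before the theorem. Bear in mind, though, that the survey gives no proof of this theorem at all --- it is cited from \cite{AlvirGreenbergHTTuretsky}, and the survey notes that that proof uses the Calvert--Knight--Millar strongly computably approximable tree of Scott rank $\omega_1^{CK}$ (Theorem of \cite{CalvertKnightMillar06}), a tool your sketch never invokes. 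Two smaller points: to pin the complexity at exactly $\Sigma_{\omega_1^{CK}+1}$ you must exclude a $\Pi_{\omega_1^{CK}+1}$ Scott sentence, not merely a $\Pi_{\omega_1^{CK}}$ one; Theorem~\ref{thm:montalban} at $\alpha=\omega_1^{CK}$ characterizes the former, so your orbit condition is the right one but is attached to the wrong level. And in the $\Sigma_{\omega_1^{CK}+1}$ case no definability of the orbit of $c$ is needed at all --- Theorem~\ref{thm:ex-ss} only requires that $(\mc{A},c)$ have a $\Pi_{\omega_1^{CK}}$ Scott sentence.

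The genuine gap is that the two constructions that constitute the content of the theorem are never actually produced. ``Augment Knight--Millar (resp.\ Harrison-Trainor--Igusa--Knight) with a distinguished element $c$ so that $(\mc{A},c)$ stays $\Pi_{\omega_1^{CK}}$ (resp.\ $\Pi_{\omega_1^{CK}+1}$) while $c$ is indistinguishable at every computable level yet recoverable by one existential witness'' is a wish list, not a mechanism: the whole point of Turetsky's question is whether a single named constant can collapse Scott rank from $\omega_1^{CK}+1$ to $\omega_1^{CK}$, i.e.\ every orbit of $(\mc{A},c)$ must be $\Sigma_\beta$-definable over $c$ for computable $\beta$ (uniformly enough to assemble the parametrized Scott sentence) while some orbit of $\mc{A}$ is not $\Sigma_\beta$-definable without parameters for any computable $\beta$. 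Moreover, your proposed lower-bound device --- build a pseudo-well-ordering into $\mc{A}$ so that a parameter-free hyperarithmetic definition of the bad orbit yields a hyperarithmetic descending sequence --- pulls directly against the required upper bound: as the survey's own discussion of the Harrison order shows, orbits made hard this way typically remain hard after naming any finite tuple, so the very same argument threatens to show that $(\mc{A},c)$ also has no $\Pi_{\omega_1^{CK}}$ (resp.\ $\Pi_{\omega_1^{CK}+1}$) Scott sentence, destroying the $\Sigma_{\omega_1^{CK}+1}$ (resp.\ $\mathrm{d-}\Sigma_{\omega_1^{CK}+1}$) upper bound. Resolving exactly that tension is the crux of \cite{AlvirGreenbergHTTuretsky}, and the sketch offers no device for it; as written, the proposal reduces the theorem to the assertion that suitable structures exist, which is the theorem itself.
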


\subsection{Computably approximable structures}

Theorem \ref{thm:comp-formulas-cat} says that two computable structures that satisfy the same computable infinitary formulas are isomorphic. Thus the computable infinitary theory of a computable structure characterizes the structure among computable structures. (If a computable structure does not have a countably categorical computably infinitary theory, for example the Harrison linear order or the structure of Theorem \ref{thm:comp-inf}, then the other models of its computable infinitary theory are necessarily non-computable.) If we restrict our attention to computable structures only, is there a single sentence characterizing every computable structure? We call such a sentence a \textit{pseudo-Scott sentence}.
\begin{definition}
	Let $\mc{A}$ be a computable structure. A pseudo-Scott sentence for $\mc{A}$ is a sentence whose computable models are just the computable copies of $\mc{A}$.
\end{definition}
\noindent Calvert and Knight \cite{CalvertKnight06} asked whether every computable structure has a computable infinitary pseudo-Scott sentence. Such a structure would have to be of high Scott rank, as structures of computable Scott rank have computable Scott sentences, and any Scott sentence is a pseudo-Scott sentence.
\begin{question}[Calvert and Knight \cite{CalvertKnight06}]
	Is there a computable structure of noncomputable Scott
	rank with a computable infinitary pseudo-Scott sentence?
\end{question}
This question is still open. A structure without a computable Scott sentence is said to be weakly computably approximable.
\begin{definition}
	A computable structure $\mc{A}$ of non-computable rank is \textit{weakly computably approximable} if every computable infinitary sentence $\varphi$ true in $\mc{A}$ is also true in some computable $\mc{B} \ncong \mc{A}$.
\end{definition}
The Harrison linear order has the property that any computable sentence true of it is also true of some other structure which is not just computable, but which has computable Scott rank. Moreover, the Harrison order has the following stronger property:
\begin{definition}
	A computable structure $\mc{A}$ of high Scott rank is \textit{strongly computably approximable} if for all $\Sigma^1_1$ sets $S$, there is a uniformly computable sequence $(\mc{C}_n)$ such that if $n \in S$, then $\mc{C}_n \cong \mc{A}$, and if $n \notin S$, then $\mc{C}_n$ has computable Scott rank.
\end{definition}
\noindent One can see that the Harrison order is strongly computably approximable by a simple modification of the construction given earlier.

Calvert, Knight, and Millar also showed that there is also such a structure of Scott rank $\omega_1^{CK}$. This can also be seen using our construction for Theorem \ref{thm:knight-millar} combined with the fact that the Harrison order is strongly computably approximable.

\begin{theorem}[Calvert, Knight, and Millar \cite{CalvertKnightMillar06}]
	There is a computable tree $T$ of Scott rank $\omega_1^{CK}$ that is strongly computably approximable.
\end{theorem}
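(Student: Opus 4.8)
The plan is to combine the \emph{uniform} version of the construction in Theorem~\ref{thm:knight-millar} with the fact, noted just above, that the Harrison linear order is strongly computably approximable. Fix the operator $\mathcal{H} \mapsto T(\mathcal{H})$ which, from a presentation of a linear order $\mathcal{H}$ in which successors, predecessors and limits are computable (arranged, as before, by replacing each point with a copy of $\omega$), produces the thin homogeneous tree of descending sequences built in the proof of Theorem~\ref{thm:knight-millar}, with nodes duplicated infinitely often. When $\mathcal{H}$ is a computable presentation of the Harrison order, $T(\mathcal{H})$ is the desired computable tree of Scott rank $\omega_1^{CK}$ and Scott complexity $\Pi_{\omega_1^{CK}}$. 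The key additional observation I would make is that the \emph{same recipe}, applied instead to a computable well-ordering of some $\alpha < \omega_1^{CK}$, still makes sense: conditions (1)--(6) on the sets $S_n$ are purely order-theoretic, so $T$ applied to a well-order yields a computable tree which is again thin, homogeneous, and now well-founded, with all tree ranks bounded by a computable ordinal. Such a tree has a $\Pi_\gamma$ Scott sentence for some computable $\gamma$ (the conjunction over levels of the sentences saying which ranks occur), hence computable Scott rank.

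Now, given a $\Sigma^1_1$ set $S$, I would apply strong computable approximability of the Harrison order to get a uniformly computable sequence $(\mathcal{L}_n)$ of linear orders with $\mathcal{L}_n \cong \mathcal{H}$ whenever $n \in S$ and $\mathcal{L}_n$ a computable ordinal whenever $n \notin S$, and set $\mathcal{C}_n := T(\mathcal{L}_n)$. Since $T(\cdot)$ is uniform, $(\mathcal{C}_n)$ is uniformly computable. If $n \notin S$, then $\mathcal{L}_n$ is a computable ordinal, so by the observation above $\mathcal{C}_n$ has computable Scott rank. If $n \in S$, then $\mathcal{L}_n \cong \mathcal{H}$, and here I need $T(\mathcal{L}_n)$ to have a single isomorphism type independent of the presentation, i.e.\ that any two computable presentations of the Harrison order yield isomorphic thin homogeneous trees. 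This is where the precise bookkeeping in the choice of the $S_n$ matters: the isomorphism type of a thin homogeneous tree is determined by which tree ranks appear at each level, so I would arrange the $S_n$ (hence the $T$-operator) so that this level-by-level profile depends only on the order type of the input. That fixes a single tree $\mathcal{A} := T(\mathcal{H})$ with $\mathcal{C}_n \cong \mathcal{A}$ for all $n \in S$, and the three requirements in the definition of strong computable approximability all hold.

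I expect the main obstacle to be exactly this last point: making the construction of the $S_n$ canonical enough that the output tree depends only on the isomorphism type of $\mathcal{H}$ and not on its presentation, while keeping the construction uniform and preserving properties (1)--(6). Everything else is recall of earlier arguments: that $T(\mathcal{H})$ has Scott rank $\omega_1^{CK}$ when $\mathcal{H}$ is the Harrison order, that a computable thin homogeneous well-founded tree has computable Scott rank, and that the Harrison order is strongly computably approximable via the modification of the Kleene--Brouwer construction described earlier.
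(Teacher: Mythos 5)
Your high-level plan is the same one the paper gestures at (the paper gives no detailed proof, only the remark that the result follows from the construction for Theorem \ref{thm:knight-millar} together with strong computable approximability of the Harrison order), and your treatment of the $n \notin S$ side is fine: applying the construction to a computable well-order yields a well-founded thin homogeneous tree with a computable bound on its ranks, hence computable Scott rank. The problem is the $n \in S$ side, and it is exactly the point you flag but do not resolve. The isomorphism type of the thin homogeneous tree produced by the construction is determined by which tree ranks appear at each level, and in the construction of Theorem \ref{thm:knight-millar} this level-by-level profile is read off from the sets $S_1 \subseteq S_2 \subseteq \cdots$, which are chosen greedily from the \emph{presentation} of the input order. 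As the paper itself stresses immediately after that construction, the resulting structure genuinely depends on the presentation of $\mathcal{H}$, not just on its order type. So for $n \in S$ there is no reason that $T(\mathcal{L}_n)$ is isomorphic to the fixed tree $\mathcal{A} = T(\mathcal{H})$: already the finitely many well-founded elements of $S_1$, and hence the ordinal ranks realized at level $1$, will typically differ between the two presentations. Since strong computable approximability demands a single structure $\mathcal{A}$ fixed in advance of $S$, this is not a cosmetic issue.

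Your proposed repair --- ``arrange the $S_n$ so that the profile depends only on the order type of the input'' --- is precisely where all the work lies, and no method is offered. Note that the profile cannot simply be fixed in advance by fiat: the rank bounds at the levels must be cofinal in $\omega_1^{CK}$, and no computable (indeed no hyperarithmetic) sequence of computable ordinals is cofinal in $\omega_1^{CK}$, so any uniform procedure must generate the profile from the given presentation, which is exactly the source of the presentation-dependence. What is actually needed is not canonicity over all presentations of the Harrison order, but an argument that the approximating structures are built in a way that is \emph{coordinated} with the fixed $\mathcal{A}$: one must control the set of ranks realized at each level of $\mathcal{C}_n$ directly, so that when $n \in S$ the profile provably agrees with that of $\mathcal{A}$, while when $n \notin S$ all ranks collapse below a computable bound. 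This is how the result of Calvert, Knight, and Millar is actually obtained (their argument controls the rank profiles of the approximating trees, using Barwise--Kreisel-compactness-style machinery, rather than re-running a greedy construction on a fresh copy of the Harrison order). So the proposal follows the paper's one-line derivation in outline, but the key step that makes ``combined with'' legitimate is missing.
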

\noindent This theorem is very useful for other constructions; for example it was used in the proofs of Theorems \ref{thm:comp-inf} and \ref{thm:other-possibilities}.

Not every computable structure of high Scott rank is strongly computable approximable; indeed, there is a single computable sentence all of whose models have high Scott rank.

\begin{theorem}[Harrison-Trainor \cite{HT18}]\label{thm:comp}
	For $\alpha = \omega_1^{CK}$ or $\alpha = \omega_1^{CK} + 1$ : There is a computable model $\mc{A}$ of Scott rank $\alpha$ and a $\Pi^{\comp}_2$ sentence $\psi$ such that $\mc{A} \models \psi$, and whenever $\mc{B}$ is any structure and $\mc{B} \models \psi$, $\mc{B}$ has Scott rank $\alpha$.
\end{theorem}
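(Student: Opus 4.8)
The plan is to give an explicit construction, carried out uniformly for the two values of $\alpha$. I would build a computable structure $\mc{A}$ of Scott rank $\alpha$ that is \emph{homogeneous} in a strong enough sense that its whole $\mc{L}_{\omega_1\omega}$-theory is pinned down by finitary, locally checkable amalgamation data, and take $\psi$ to be the conjunction of exactly that data. A natural starting point is the thin homogeneous tree of Theorem~\ref{thm:knight-millar} (for $\alpha=\omega_1^{CK}$): a tree whose isomorphism type is determined by which tree ranks occur at each level, those ranks being bounded by a computable ordinal at each level but unbounded below $\omega_1^{CK}$ overall. I would decorate this tree with colours and a ``rank-certificate'' relation so that the homogeneity and thinness conditions become expressible by $\forall\bar x\,\exists\bar y$-statements over a fixed computable set of colour labels --- labels drawn from a chosen presentation $\mc{H}$ of the Harrison order, whose well-founded part supplies one label per computable ordinal and whose ill-founded part an ``$\infty$''-label. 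Then $\psi$ would assert: (i) the structure is a forest carrying the declared colouring, locally consistently, with every node having a child; (ii) a homogeneity schema, one clause $\forall\bar x\,\exists\bar y\,\theta(\bar x,\bar y)$ with $\theta$ finitary quantifier-free for each one-node coloured extension of a coloured configuration occurring in $\mc{A}$; and (iii) a fullness schema saying every finite coloured configuration occurring in $\mc{A}$ occurs. The index set of these clauses is c.e., so $\psi$ is a single computable $\Pi_2$ sentence, and $\mc{A}\models\psi$ is immediate. For $\alpha=\omega_1^{CK}+1$ I would run the same construction and graft on a distinguished defect, with analogous clauses added, arranged to raise the Scott rank by exactly one.

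\textbf{Upper bound.} For any $\mc{B}\models\psi$ I would argue, exactly as in the proof that a thin homogeneous tree has a $\Pi_{\omega_1^{CK}}$ Scott sentence, that the homogeneity schema forces two tuples of $\mc{B}$ at the same levels, with the same colours, and spanning the same induced finite configuration, to be $\equiv_\infty$; since the colour labels lie in a fixed computable set, the standard back-and-forth relations of $\mc{B}$ stabilise by stage $\omega_1^{CK}$ (one stage later once the defect is accounted for), so the canonical Scott sentence of $\mc{B}$ is $\Pi_\alpha$ and its Scott rank is at most $\alpha$.

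\textbf{Lower bound --- the crux.} By Montalb\'an's characterisations (Theorems~\ref{thm:montalban} and~\ref{thm:montalban2}) it suffices to produce, in an arbitrary $\mc{B}\models\psi$, an automorphism orbit that is not $\Sigma_\beta$-definable, even with parameters, for any computable $\beta$. The point is that $\psi$ transplants the pseudo-well-ordering property of $\mc{H}$ into $\mc{B}$: the clause ``every node has a child'' together with the homogeneity schema forces $\mc{B}$'s tree to be ill-founded and forces some ``$\infty$''-coloured node $x$ to have, below it, coloured nodes realising every computable-ordinal rank label, so the back-and-forth relations below $x$ cannot stabilise before $\omega_1^{CK}$. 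If the orbit of $x$ were $\Sigma_\beta$-definable for a fixed computable $\beta$, then, unwinding the computable formulas $\varphi_{\bar a,\beta}$ and $\psi_{\bar a,\beta}$ defined earlier, which express the relations $\leq_\beta$ relative to a computable structure, one would extract a hyperarithmetic descending sequence inside $\mc{H}$, contradicting the choice of presentation. Hence $\mc{B}$ has no $\Pi_{\beta+1}$ Scott sentence for any computable $\beta$, so its Scott rank is at least $\omega_1^{CK}$; the defect gadget supplies the extra unit when $\alpha=\omega_1^{CK}+1$.

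\textbf{Main obstacle.} All the difficulty is in the lower bound. A $\Pi^{\comp}_2$ sentence is so weak that it is a priori satisfiable by structures of computable Scott rank; this is precisely why the Harrison order itself, though of Scott rank $\omega_1^{CK}+1$, cannot serve as $\mc{A}$: being strongly computably approximable, it satisfies no computable sentence that forces high Scott rank. The design problem --- and the step I expect to be the hardest --- is to choose the colours and the rank-certificate relation so that the finitary $\forall\exists$ amalgamation rules of $\psi$ nonetheless force, in \emph{every} model, a configuration whose rank labels are cofinal in $\omega_1^{CK}$, and to verify rigorously that this cannot be faked by any structure of computable Scott rank.
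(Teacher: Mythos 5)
Your proposal founders on the quantifier ``whenever $\mc{B}$ is \emph{any} structure'': the lower bound has to hold for every model of $\psi$, including models far above hyperarithmetic, and your lower-bound argument only works for hyperarithmetic ones. The descending-sequence trick for the Harrison order uses essentially that the structure is ($X$-)computable, so that a $\Sigma_\beta$-definable orbit is a hyperarithmetic subset of a computable linear order. In an arbitrary $\mc{B} \models \psi$ the orbit is a subset of $\mc{B}$, and the colour sequences you can read off an infinite path of $\mc{B}$'s tree are descending sequences in $\mc{H}$ computable only from $\mc{B}$ --- which is no contradiction, since $\mc{H}$ merely has no \emph{hyperarithmetic} descending sequence. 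So nothing in your sketch rules out models of $\psi$ of low Scott rank; the same problem infects your upper bound, since for a non-hyperarithmetic $\mc{B}$ there is no reason its back-and-forth relations stabilize by $\omega_1^{CK}$ just because the colour set is computable.

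Worse, the colour-certification device is self-defeating, and in fact your own $\mc{A}$ would fail to have high Scott rank. If the local clauses are strong enough to transplant ill-foundedness --- each node's children carry strictly smaller colours, and each node coloured $g$ has (infinitely many) children of each colour $g' < g$ and no others --- then a routine back-and-forth shows the coloured tree below a node is determined up to isomorphism by its colour, and the automorphism orbit of any tuple is determined by the finite coloured configuration of the tuple together with its ancestors, which is $\Sigma_2$-definable (``child of'' is $\Pi_1$). So every model of such a $\psi$, including the intended $\mc{A}$, has Scott rank bounded by a small finite number: the colours give away exactly the rank information whose inaccessibility makes the uncoloured tree of Theorem \ref{thm:knight-millar} have Scott rank $\omega_1^{CK}$. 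If you instead weaken the clauses to avoid this collapse (e.g.\ drop the requirement that only smaller colours occur below a node), then $\psi$ acquires cheap homogeneous models such as the tree in which every node has infinitely many children of every colour, again of low Scott rank. Threading between ``so strong that the rank collapses'' and ``so weak that low-rank models satisfy $\psi$'', uniformly over models of arbitrary complexity, is precisely the hard content of the theorem; the survey gives no proof but cites \cite{HT18}, where this is done with different machinery (models are forced to be built from linear orders ranging over a $\Sigma^1_1$ class of pseudo-well-orders, and the Scott rank of each model, computable or not, is computed from the well-founded collapse of its associated order), not by decorating a thin homogeneous tree with rank labels.
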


Another application of the existence of strongly computably approximable structures is a computation of the index set complexity of being of high Scott rank. 
\begin{theorem}[Calvert, Fokina, Goncharov, Knight, Kudinov, Morozov, and Puzarenko \cite{CFGKKMP}]
	The index set
	\[ \{ i : \text{$\mc{A}_i$ has high Scott rank}\} \]
	of computable structures with high Scott rank is $\Pi^1_1$ $m$-complete.
\end{theorem}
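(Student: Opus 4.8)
The plan is to prove the two halves of $m$-completeness separately: that the index set lies in $\Pi^1_1$, via the back-and-forth machinery and Nadel's theorems of this section, and that it is $\Pi^1_1$-hard, via the flexible constructions of structures of high Scott rank.

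For the upper bound I would start from the characterization that a computable structure $\mc{A}$ has high Scott rank exactly when the back-and-forth relations on $\mc{A}$ do not stabilize below $\omega_1^{CK}$: for every computable ordinal $\alpha$ there are tuples $\bar a,\bar b$ with $\bar a\equiv_\alpha\bar b$ but no automorphism of $\mc{A}$ carrying $\bar a$ to $\bar b$ (Corollary~\ref{cor:comp-bf}). The rest is a quantifier count. Working with Kleene's $\mathcal{O}$, the relation ``$\bar a\leq_{|e|}\bar b$ in $\mc{A}_i$'' is uniformly $\Delta^1_1$ on $\{e\in\mathcal{O}\}$ — it is computed from the jump hierarchy $H_e$, and its natural $\exists H$ and $\forall H$ descriptions agree there — while ``there is no automorphism of $\mc{A}_i$ taking $\bar a$ to $\bar b$'' is $\Pi^1_1$. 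Substituting these into ``$\forall e\in\mathcal{O}\;\exists\bar a\,\bar b\,(\cdots)$'', and using that $\Pi^1_1$ is closed under number quantifiers and countable conjunctions, one checks the index set is $\Pi^1_1$. The only delicate point here is to phrase each occurrence of a back-and-forth relation with the correct one of its $\Sigma^1_1$/$\Pi^1_1$ forms (positive versus negative occurrences) so the final bookkeeping comes out on the right side.

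For $\Pi^1_1$-hardness I would reduce a fixed $\Pi^1_1$-complete set, the cleanest packaging being via a strongly computably approximable structure: by the theorem of Calvert, Knight, and Millar there is a computable tree $T$ of Scott rank $\omega_1^{CK}$ such that for every $\Sigma^1_1$ set $S$ one has a uniformly computable sequence $(\mc{C}_n)$ with $\mc{C}_n\cong T$ when $n\in S$ and $\mc{C}_n$ of computable Scott rank otherwise (and the Harrison linear order works equally well). Applying this to a suitable $\Sigma^1_1$ set built from the $\Pi^1_1$-complete instance and passing an index for $\mc{C}_n$ through the numbering of computable structures, $n\mapsto(\text{index of }\mc{C}_n)$ is the desired computable reduction; one has to arrange which side of the reduction gets the ``complicated'' structure, but that is just a matter of taking complements of the trees involved. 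Together with the upper bound this gives $\Pi^1_1$ $m$-completeness.

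The substantive half is the hardness reduction, and essentially all of its weight is in the construction of the strongly computably approximable structure of high Scott rank. Concretely this is the construction sketched for Theorem~\ref{thm:knight-millar} — build a thin homogeneous tree with unbounded ranks — carried out while monitoring an auxiliary computable tree $R_n$ and capping the ranks at each level whenever a branch of $R_n$ threatens to become infinite, so that the resulting structure is the complicated thin homogeneous tree when $R_n$ is well-founded and collapses to something of computable Scott rank otherwise. The main obstacle is making this ``cap when threatened'' strategy robust: it must succeed even when the monitored tree may carry a low-complexity path, which is handled by combining it with the Harrison-order construction, and I would spell out that combination in full. The upper bound, by contrast, is essentially just a careful application of Nadel's results from earlier in this section.
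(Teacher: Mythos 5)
The paper states this result without proof (it is a cited survey item), so there is nothing to compare line-by-line; judged on its own terms, however, your proposal has a genuine gap, and in fact both halves land on the wrong side of the $\Sigma^1_1$/$\Pi^1_1$ duality. Take the hardness half first. Strong computable approximability, as defined here and in Calvert--Knight--Millar, puts the isomorphic-to-$\mc{A}$ outcome on the $\Sigma^1_1$ side: for a $\Sigma^1_1$ set $S$ one gets uniformly computable structures $\mc{C}_n$ of high Scott rank when $n\in S$ and of computable Scott rank when $n\notin S$. The map $n\mapsto$ (an index for $\mc{C}_n$) is therefore an $m$-reduction of $S$ to the displayed index set; this yields $\Sigma^1_1$-hardness, not $\Pi^1_1$-hardness. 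Taking $S$ to be the complement of your $\Pi^1_1$-complete instance only reduces that instance to the \emph{complementary} index set (structures of computable Scott rank), and no ``taking complements of the trees'' can move the high-rank outcome onto the $\Pi^1_1$ side: a uniformly computable family with $\mc{C}_n$ of high rank exactly when $n$ lies in a $\Pi^1_1$-complete set would, by the membership computation below, make that set $\Sigma^1_1$ and hence $\Delta^1_1$, a contradiction.

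The membership half fails for the matching reason, and it is not a matter of delicate bookkeeping. In $\forall e\,(e\in\mathcal{O}\rightarrow\cdots)$ the guard occurs negatively, so it contributes a $\Sigma^1_1$ disjunct ``$e\notin\mathcal{O}$'' that no choice of $\Sigma^1_1$/$\Pi^1_1$ representatives for the back-and-forth relations can absorb into a $\Pi^1_1$ form. Done correctly --- replacing the $\Pi^1_1$ clause ``no automorphism takes $\bar a$ to $\bar b$'' by the level-by-level witness ``$\bar a\leq_{|e|}\bar b$ but $\bar a\nleq_{|e|+1}\bar b$,'' which is what keeps everything $\Delta^1_1$ on $\mathcal{O}$ --- the count comes out $\Sigma^1_1$: a computable structure has computable Scott rank iff there is a notation $e\in\mathcal{O}$ at which $\leq_{|e|}$ and $\leq_{|e|+1}$ coincide on all tuples; since ``$e\in\mathcal{O}$ and $\bar a\leq_{|e|}\bar b$'' and ``$e\in\mathcal{O}$ and $\bar a\nleq_{|e|}\bar b$'' are both uniformly $\Pi^1_1$ and $\Pi^1_1$ is closed under number quantification, the computable-rank index set is $\Pi^1_1$, so its complement among total computable structures is $\Sigma^1_1$. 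Since a $\Sigma^1_1$-hard set cannot be $\Pi^1_1$, the two halves of your plan are incompatible as stated. Assembled with the duality straightened out, your ingredients (Nadel's results via Corollary~\ref{cor:comp-bf}, Kleene's $\mathcal{O}$, and the Harrison/Calvert--Knight--Millar approximations) do prove a clean completeness fact, namely $\Sigma^1_1$ $m$-completeness of the set of indices of structures of high Scott rank, equivalently $\Pi^1_1$ $m$-completeness of the index set of structures of computable Scott rank; that is the statement your argument should be aimed at, and you should rewrite both directions accordingly rather than hoping the bookkeeping or a complementation trick rescues the $\Pi^1_1$ reading of the displayed set.
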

\noindent They also computed the complexity of the index sets of computable structures of Scott rank $\omega_1^{CK}$ and $\omega_1^{CK}+1$; these are complete for differences of $\Pi^1_1$ sets and differences of $\Sigma^1_1$ sets respectively.

\subsection{Computable categoricity and high Scott complexity}

A computable structure $\mc{A}$ is said to be \textit{computably categorical} if whenever $\mc{B} \cong \mc{A}$ is a computable copy of $\mc{A}$, there is a computable isomorphism from $\mc{A}$ to $\mc{B}$. For a long time, an important problem was to characterize the structures which are computably categorical. It turned out that the more well-behaved notion was that of \textit{relative computable categoricity}: a computable structure $\mc{A}$ is relatively computably categorical if for every copy $\mc{B}$ of $\mc{A}$, not necessarily computable, there is an isomorphism from $\mc{A}$ to $\mc{B}$ computable from $\mc{B}$. Relative computable categoricity is characterised by the following theorem:
\begin{theorem}[Ash, Knight, Mannasse, and Slaman \cite{AshKnightManasseSlaman}; Chisholm \cite{Chisholm}]
	A computable structure $\mc{A}$ is relatively computably categorical if and only if it has a computable $\Sigma_1$ \textit{Scott family}: there is a tuple $\bar{c}$ of constants and a c.e.\ collection $\Phi$ of computable $\Sigma_1$ formulas such that
	\begin{enumerate}
		\item each tuple $\bar{a}$ in $\mc{A}$ satisfies $\varphi(\bar{a},\bar{c})$ for some formula $\varphi \in \Phi$; and
		\item whenever two tuples $\bar{a}$ and $\bar{b}$ in $\mc{A}$ satisfy the same formula $\varphi(\cdot,\bar{c})$ from $\Phi$, there is an automorphism of $\mc{A}$ taking $\bar{a}$ to $\bar{b}$.
	\end{enumerate}
\end{theorem}
\noindent It is not hard to see that if $\mc{A}$ has a computable $\Sigma_1$ Scott family, then it is relatively computably categorical; the other direction is a forcing argument. So in some sense if a structure is relatively computably categorical, then there is a good structural reason for this.

As for (plain) computable categoricity, in certain classes there are good structural ways to determine whether a structure is computably categorical---e.g., a linear order is computably categorical if and only if it has finitely many adjacencies \cite{GoncharovDzgoev80}---but there is no good characterization in general. Goncharov \cite{Goncharov77} showed that there are structures that are computably categorical but not relatively computably categorical. Later, Downey, Kach, Lempp,
Lewis, Montalb\'an, and Turetsky \cite{DKLLMT} showed that the set of (indices for) computably categorical structures is $\Pi^1_1$ $m$-complete; this can be interpreted as meaning that there is no good characterization of computable categoricity that is simpler than the naive definition. As part of this proof, they constructed for each computable ordinal $\alpha < \omega_1^{CK}$ a computably categorical structure $\mc{A}$ which is not \textit{relatively $\Delta^0_\alpha$-categorical}, i.e., it is not the case that for every $\mc{B} \cong \mc{A}$ there is an isomorphism from $\mc{A}$ to $\mc{B}$ which is $\Delta^0_\alpha$ in $\mc{B}$.

This left a gap: Is every computably categorical structure $\Delta^0_\alpha$-categorical for some computable ordinal $\alpha < \omega_1^{CK}$? Or even, is every computably categorical structure $\Delta^1_1$-categorical? (A structure is relatively $\Delta^1_1$-categorical if for every $\mc{B} \cong \mc{A}$ there is an isomorphism from $\mc{A}$ to $\mc{B}$ which is $\Delta^1_1$ in $\mc{B}$.) If $\mc{A}$ is a computable structure of computable Scott rank $\alpha < \omega_1^{CK}$, then it is relatively $\Delta^0_{\alpha \cdot 2}$-categorical; this follows from a similar argument as Theorem \ref{thm:comp-formulas-cat}. In the other direction, one can argue that a computable structure which is relatively $\Delta^1_1$-categorical must have computable Scott rank. (See Proposition 1 of \cite{Turetsky}.) So a computable structure is relatively $\Delta^1_1$-categorical if and only if it has computable Scott rank. Turetsky \cite{Turetsky} showed:

\begin{theorem}[Turetsky \cite{Turetsky}]
	There is a computably categorical structure which has high Scott complexity and thus is not relatively $\Delta^1_1$-categorical.
\end{theorem}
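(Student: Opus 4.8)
The plan is to build, by a priority construction, a computable structure $\mc{A}$ that is computably categorical but has non-computable Scott rank; once this is done the word ``thus'' in the statement is immediate, since (as recalled just before the statement) a computable structure is relatively $\Delta^1_1$-categorical exactly when its Scott rank is computable. There is no contradiction with that equivalence: plain computable categoricity is strictly weaker than relative computable categoricity, hence weaker than relative $\Delta^1_1$-categoricity, so the point is precisely to exploit that gap. The tension to manage is that computable categoricity pushes one to pin down where elements must be sent, which, if done by infinitary formulas, forces a low (computable) Scott rank --- whereas high Scott rank demands that the orbit of some fixed element resist definition by every computable $\Sigma_\alpha$ formula.

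For the Scott-rank side I would re-use the engine behind Theorem~\ref{thm:knight-millar}: make $\mc{A}$ contain a computable ``core'' that behaves like a thin homogeneous tree whose tree ranks are bounded at each level but unbounded below $\omega_1^{CK}$, with a designated non-well-founded node $u$. The argument sketched there, together with the lower-bound argument for the Harrison order, shows that the orbit of $u$ cannot be isolated by any $X$-computable (hence hyperarithmetic) infinitary formula --- otherwise one extracts a hyperarithmetic descending sequence in an associated pseudo-well-order --- so $\mc{A}$ has no computable Scott sentence, i.e.\ high Scott complexity (in fact Scott complexity at least $\Pi_{\omega_1^{CK}}$), regardless of what else is glued onto the core.

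For the computable-categoricity side, enumerate the partial computable structures as $\mc{B}_0,\mc{B}_1,\dots$ and, for each $e$, impose
$$R_e:\qquad \mc{B}_e\cong\mc{A}\ \Longrightarrow\ \text{some computable isomorphism }f_e\colon\mc{A}\to\mc{B}_e\ \text{is produced.}$$
The strategy for $R_e$ is ``watch and match'': keep a finite partial map $f_e$; so long as the finite atomic diagram of $\mc{B}_e$ remains consistent with $\mc{B}_e\cong\mc{A}$, extend $f_e$ by one more element of $\mc{A}$ and one more of $\mc{B}_e$, using the \emph{freedom still available in the core} of $\mc{A}$ (which we are building) together with the \emph{totality} of the computable atomic diagram of $\mc{B}_e$ to guarantee that a matching continuation always exists; if $\mc{B}_e$ ever reveals $\mc{B}_e\not\cong\mc{A}$, declare $R_e$ satisfied vacuously. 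In the limit, $f_e$ is a computable isomorphism whenever $\mc{B}_e\cong\mc{A}$. Crucially this uses the running atomic diagram of $\mc{B}_e$ rather than a fixed Turing reduction with $\mc{B}_e$ as oracle, which is exactly why it is compatible with $\mc{A}$ failing to be relatively ($\Delta^1_1$-)categorical.

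The hard part is the interaction: an action taken for $R_e$ must not create an ``automorphism shortcut'' that collapses the core's ranks and thereby lowers the Scott rank. I would resolve this by giving the Scott-rank requirement absolute priority and confining every $R_e$-action to ``fresh'' regions of the core placed far above whatever ranks are currently being protected, so that the material glued on for $R_e$ is automorphism-invariant over, and hyperarithmetically inert relative to, the non-well-founded skeleton witnessing high rank. I expect the main obstacle to be the combined verification at the end: that after infinitely many $R_e$-interventions the orbit of $u$ is still not hyperarithmetically definable (so the Scott rank genuinely stayed non-computable), while simultaneously every $R_e$ with $\mc{B}_e\cong\mc{A}$ really was met. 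Balancing these --- enough malleability in the core to satisfy all the $R_e$, but enough rigidity in the relevant orbit to defeat all hyperarithmetic formulas --- is the technical heart of the construction.
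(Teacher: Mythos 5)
There is a genuine gap: what you have written is a plan whose central step is explicitly deferred. The entire content of Turetsky's theorem is the interaction you postpone to ``the technical heart'' --- how to meet every requirement $R_e$ while keeping the Scott rank non-computable --- and the paper itself only cites the result (no proof is given in this survey), so the burden is entirely on your sketch. As stated, the ``watch and match'' strategy for $R_e$ is unsound. Because $\mc{A}$ is to have high Scott complexity, the automorphism orbits of its tuples are not definable by hyperarithmetic formulas of bounded complexity, so there is no effective (indeed no hyperarithmetic) test telling you, from a finite portion of the atomic diagram of $\mc{B}_e$, that your finite partial map extends to an isomorphism; the totality of $\mc{B}_e$'s diagram gives no such guarantee, and a computable $\mc{B}_e \cong \mc{A}$ can arrange its enumeration so that every greedy extension commits to a pair of elements lying in different orbits. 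This is exactly why relative ($\Delta^1_1$-)categoricity fails for such $\mc{A}$, so any successful $f_e$ must exploit non-uniform coding built into the construction of $\mc{A}$ itself --- the opponent must be forced to reveal orbit information --- and your proposal supplies no such mechanism. The fallback you offer, ``using the freedom still available in the core'' to make the committed match correct after the fact, is precisely the move that homogenizes the structure and collapses the rank; naming this tension and assigning the rank requirement ``absolute priority'' does not resolve it, since the rank requirement is a global $\Pi^1_1$-style condition, not a finitary one that can simply be protected by restraint.

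There is also a technical slip on the Scott-rank side. In a Knight--Millar-style thin homogeneous tree (Theorem \ref{thm:knight-millar}), the orbit of a non-well-founded node at level $n$ \emph{is} hyperarithmetically definable: since the ordinal tree ranks at level $n$ are bounded by a computable ordinal $\beta_n$, ``rank $\infty$'' at that level is equivalent to ``rank $> \beta_n$'', which is a computable infinitary condition. The high Scott rank of that structure comes from the unboundedness below $\omega_1^{CK}$ of the complexities needed across \emph{all} tuples, not from a single undefinable orbit; the single-orbit argument you invoke belongs to the Harrison order (rank $\omega_1^{CK}+1$), and in neither case does the lower bound persist ``regardless of what else is glued onto the core,'' since gluing can enlarge orbits and lower the complexity needed to define them. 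So both halves of the verification --- that categoricity is achieved and that high Scott complexity survives the $R_e$-actions --- remain unproved, and the second is not even correctly set up for the core you chose.
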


\noindent Turetsky was also able to adapt this construction to build a structure of computable dimension 2:

\begin{theorem}[Turetsky \cite{Turetsky}]
	There is a structure of computable dimension 2 with two computable copies which are not hyperarithmetically isomorphic. This structure has high Scott complexity.
\end{theorem}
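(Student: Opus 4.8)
The plan is to adapt the construction of the computably categorical structure of high Scott complexity above: instead of arranging that every computable copy is computably isomorphic to a single fixed one, we split the computable copies into exactly two computable-isomorphism classes, represented by computable structures $\mc{A}_0 \cong \mc{A}_1$, and engineer the split so violently that $\mc{A}_0$ and $\mc{A}_1$ are not even hyperarithmetically isomorphic. Two preliminary observations make this the right target. First, by the argument behind Theorem \ref{thm:comp-formulas-cat} (as noted in the discussion of relative $\Delta^1_1$-categoricity above), a computable structure of computable Scott rank $\alpha < \omega_1^{CK}$ is relatively $\Delta^0_{\alpha \cdot 2}$-categorical, so any two of its computable copies are $\Delta^0_{\alpha\cdot 2}$-isomorphic and hence hyperarithmetically isomorphic; thus any structure with the desired property automatically has high Scott rank, and in fact high Scott complexity, so clause (c) of the claim is free once we have the rest. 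Second, the failure of a $\Delta^1_1$ isomorphism already forces computable dimension $\geq 2$, so the genuinely new work is (a) keeping the computable dimension $\leq 2$ and (b) making the isomorphism problem between the two distinguished copies $\Delta^1_1$-hard.

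The key device for (b) is to build $\mc{A}_0$ and $\mc{A}_1$ out of countably many \emph{coding components}, each a copy of a parametrized version of the high-Scott-rank gadget from the earlier construction---for concreteness, a thin homogeneous tree as in Theorem \ref{thm:knight-millar}---carrying one of countably many \emph{shapes}, determined by which ordinal tree ranks occur at which levels. Because the structure has high Scott rank, distinct shapes have automorphism orbits that are \emph{not} hyperarithmetically separable, so no hyperarithmetic function can sort a list of components by shape. We make $\mc{A}_0$ and $\mc{A}_1$ present exactly the same multiset of shapes---hence $\mc{A}_0 \cong \mc{A}_1$---but list that multiset along two different enumerations, the relation between which is driven by a $\Pi^1_1$ coding: fix, as in the construction of the Harrison linear order, an $X$-computable tree with no hyperarithmetic path, and use its Kleene--Brouwer order to determine, uniformly and computably, the two enumerations. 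Any isomorphism $\mc{A}_0 \to \mc{A}_1$ must then reveal how the two enumerations are matched, and from that one recovers a path through the tree, which is not hyperarithmetic; so no such isomorphism is hyperarithmetic.

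Carrying this out, the steps are: (1) isolate from the earlier construction a uniformly computable family of high-Scott-rank components with a computable index for their possible shapes, and verify that distinct shapes are not $\Delta^1_1$-separable (this is essentially the high-Scott-rank property, localized to a single component); (2) set up the $\Pi^1_1$ coding and the resulting two enumerations as above; (3) run a priority/forcing construction with positive requirements that build $\mc{A}_0$, $\mc{A}_1$ and, for each computable copy, a computable isomorphism onto whichever of $\mc{A}_0, \mc{A}_1$ that copy ``commits to'', negative requirements that diagonalize Goncharov-style against all candidate computable isomorphisms $\mc{A}_0 \to \mc{A}_1$ (giving dimension $\geq 2$), and a gluing module forcing every computable copy into one of the two classes (giving dimension $\leq 2$), all while respecting the $\Pi^1_1$ coding; (4) read off high Scott complexity: since each component's tree ranks at level $n$ are bounded by a computable ordinal, exactly as in the proof of Theorem \ref{thm:knight-millar}, the whole structure admits the level-by-level $\Pi_{\omega_1^{CK}}$-style analysis.

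The main obstacle is step (3): reconciling the ``dimension $\leq 2$'' requirement---which asks that an arbitrary computable copy be \emph{computably} recognized as ``type $0$'' or ``type $1$'' and then \emph{computably} matched to $\mc{A}_0$ or $\mc{A}_1$---with the fact that the component shapes are $\Delta^1_1$-opaque, so no such recognition can proceed by identifying shapes. The resolution is to make the ``type $0$ versus type $1$'' distinction depend only on coarse, computable features of a copy, namely which of the two global enumeration patterns its already-enumerated components conform to, and never on the fine shapes of individual components; producing, uniformly from this coarse bit and the atomic diagram, a genuine computable isomorphism onto $\mc{A}_0$ or $\mc{A}_1$ while the shapes themselves remain non-hyperarithmetically recognizable is the technical heart of the argument. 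This is precisely the point at which the adaptation of the earlier computably-categorical construction is needed: there, the same mechanism produces a single computable matching; here it must produce one of two, parametrized by the coding bit, and the coding bit must be exactly delicate enough to block all hyperarithmetic isomorphisms without secretly splitting the computable copies into more than two classes.
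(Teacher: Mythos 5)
The paper you are working from does not prove this theorem at all: it is a survey statement citing Turetsky's paper, so your proposal has to stand on its own, and as it stands it is a research plan rather than a proof. Your two preliminary observations are fine (non-$\Delta^1_1$-isomorphic computable copies force computable dimension $\geq 2$, and by the relative $\Delta^0_{\alpha\cdot 2}$-categoricity of structures of computable Scott rank the high Scott complexity clause is automatic), but everything that is actually hard is deferred. The claim that any isomorphism $\mc{A}_0 \to \mc{A}_1$ ``reveals how the two enumerations are matched, and from that one recovers a path through the tree'' is never substantiated: since the two structures present the same multiset of component shapes and the components are highly non-rigid homogeneous trees, an isomorphism is only obliged to match isomorphic components, and extracting the intended enumeration-correspondence from such a matching is exactly the shape-identification problem that you have deliberately made $\Delta^1_1$-opaque. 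You need a precise lemma of the form ``every isomorphism between the two copies computes, modulo finitely many jumps, a path through a tree with no hyperarithmetic path,'' together with a uniformly computable family of components whose pairwise isomorphism relation is genuinely non-hyperarithmetic (something in the spirit of the strong computable approximability of Calvert--Knight--Millar), and neither is stated, let alone proved.

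The deeper gap is your step (3), which you yourself label the technical heart and then do not carry out. The tension you identify is real and is the whole content of the theorem: to get computable dimension exactly $2$ you must, for an \emph{arbitrary} computable copy (including adversarial copies that delay revealing any ``coarse enumeration pattern''), computably commit to one of $\mc{A}_0,\mc{A}_1$ and build a computable isomorphism onto it, while simultaneously no hyperarithmetic procedure may relate $\mc{A}_0$ to $\mc{A}_1$. Asserting that the type-$0$/type-$1$ distinction can be made to ``depend only on coarse, computable features'' does not resolve this; one has to exhibit the priority/forcing construction, specify the requirements and their injuries, and verify that meeting the gluing requirements for all computable copies does not leak enough information to produce a hyperarithmetic (or even $\Delta^0_3$, as in the Goncharov-style special component constructions you want to avoid) isomorphism between the two distinguished copies. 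Until that construction is given, the proposal identifies the right target and the right obstruction but does not constitute a proof of the theorem.
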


\noindent This is of particular interest because up until Turetsky's construction the only existing constructions \cite{Goncharov,CholakGoncharovKhoussainov,HirschfeldtKhoussainovShore} of a structure of computable dimension 2, using a method called the special component technique, build two computable copies which are not computably isomorphic, but which are $\Delta^0_3$ isomorphic.

\section{Scott Complexity for Certain Structures}

Scott complexity gives a good measuring stick for studying particular structures that we are interested in. Scott complexity measures the difficulty of characterizing a structure up to isomorphism, as well as (using Theorem \ref{thm:montalban}) the difficulty of understanding the automorphisms of the structure, and computing isomorphisms between different copies of the structure. By finding the Scott complexity of a particular structure, we come to understand the simplest way of characterizing it. Moreover, the problem of computing the Scott complexity of a structure gives us a way of testing our understanding that structure; indeed, one of the longstanding open questions is:
\begin{question}
	What is the Scott complexity of the pure transcendental field $\mathbb{Q}(x_1,x_2,\ldots)$?
\end{question}
\noindent This highlights the fact that we do not have a good understanding of \textit{pure transcendence bases}, i.e.\ transcendence bases which also generate the field extension. In contrast, Nielson transformations give us an excellent understanding of bases for free groups, allowing us to compute their Scott complexity; see Theorem \ref{thm:free-gp} below.

\medskip

To compute the Scott complexity of a structure, one usually writes down a Scott sentence of a particular complexity, and then one proves a lower bound showing that that complexity is optimal. So, for example, one might write down a $\Sigma_4$ Scott sentence for a structure $\mc{A}$, and then give a Wadge reduction $f$ from a $\bfSigma^0_4$-complete set $S$ to $\Iso(\mc{A})$, i.e.\ a continuous operator $f$ such that if $x \in S$ then $f(X) \cong \mc{A}$, and if $x \notin S$ then $f(x) \ncong \mc{A}$.

Much of the literature takes the approach of using index sets rather than Wadge reductions.

\begin{definition}
Fix a listing $\mc{M}_i$ of the partial computable structures in the language of $\mc{A}$. Then
\[ \mc{I}_{\mc{A}} = \{ i \in \omega : \mc{M}_i \cong \mc{A} \} \]
is the index set of $\mc{A}$.
\end{definition}

\noindent If $\mc{A}$ has a Scott sentence which is a computable $\Sigma_\alpha$ sentence then $\mc{I}_{\mc{A}}$ will be of complexity $\Sigma^0_\alpha$ as a subset of $\omega$. The same is true for $\Pi_\alpha$ and $\Pi^0_\alpha$, and $\mathrm{d-}\Sigma_\alpha$ and $\mathrm{d-}\Sigma^0_\alpha$ as well. So if we can prove, for example, that $\Iso(\mc{A})$ is $\Sigma^0_\alpha$ $m$-complete, then $\mc{A}$ cannot have a computable $\Pi_\alpha$ Scott sentence.

This strategy does not always work. Knight and McCoy \cite{KnightMcCoy14} showed that there is a structure (a particular subgroup of $\mathbb{Q}$) for which the index set is $m$-complete $\mathrm{d-}\Sigma^0_\alpha$, but there is no computable $\mathrm{d-}\Sigma_\alpha$ Scott sentence. The structure does however have an $X$-computable $\mathrm{d-}\Sigma_\alpha$ Scott sentence, for a particular low c.e.\ set $X$ used to build the subgroup. So at the computable level, the index set complexity and the Scott complexity do not always match up. To fix this, we must relativize.

\begin{definition}
	Fix a listing $\mc{M}^X_i$ of the partial $X$-computable structures in the language of $\mc{A}$. Then
	\[ \mc{I}^X_{\mc{A}} = \{ i \in \omega : \mc{M}^X_i \cong \mc{A} \} \]
	is the index set of $\mc{A}$ relative to $X$.
\end{definition}

\noindent So for example to show that $\mc{A}$ does not have any Scott sentence which is $\Pi_\alpha$ or simpler, we must show that there is a set $X$ such that for every $Y \geq_T X$, the index set $\mc{I}^Y_{\mc{A}}$ relative to $Y$ is $\Sigma^Y_\alpha$ $m$-complete. This essentially works out to the same things as giving a Wadge reduction.

\medskip

\begin{theorem}[Calvert, Harizanov, Knight, and Miller \cite{CalvertHarizanovKnightMiller06}; Calvert                                                                                                                                                                                                                                                     \cite{Calvert04}]\label{thm:vector}
	Let $\mc{A}$ be a $\mathbb{Q}$-vector space. Then:
	\begin{enumerate}
		\item If $\dim(\mc{A}) = 1$, then $\mc{A}$ has Scott complexity $\Pi_2$.
		\item If $\infty > \dim(\mc{A}) > 1$, then $\mc{A}$ has Scott complexity $\mathrm{d-}\Sigma_2$.
		\item If $\dim(\mc{A}) = \infty$, then $\mc{A}$ has Scott complexity $\Pi_3$.
	\end{enumerate}
\end{theorem}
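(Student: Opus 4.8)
The plan is to prove matching upper and lower bounds in each of the three cases, using Theorems~\ref{thm:montalban} and \ref{thm:ex-ss}, the $\mathrm{d-}\Sigma_\alpha$ analogue of Theorem~\ref{thm:ex-ss} stated above, the Lopez--Escobar theorem, and the Hasse diagram of Wadge degrees near the bottom of the hierarchy. Since scalar multiplication by each rational is quantifier-free definable from $+$ on any $\mathbb{Q}$-vector space (and conversely is a term for integer scalars), we may compute $\Iso(\mc{A})$ in the language of abelian groups; this does not change the Scott complexity, and it makes available the remark that every finitely generated subgroup of $\mathbb{Q}^n$ (for $1\le n\le\infty$) is isomorphic to some $\mathbb{Z}^k$, hence not isomorphic to $\mc{A}$.

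For the upper bounds I would write down a Scott sentence of the claimed complexity. Being a torsion-free divisible abelian group is a conjunction of universal sentences together with the divisibility scheme $\bigdoublewedge_n \forall x\,\exists y\ ny=x$, hence $\Pi_2$. For $\dim=1$, conjoin to this the $\Sigma_1$ sentence $\exists x\,(x\ne 0)$ and the $\Pi_1$ sentence $\forall x\,\forall y\ \bigdoublevee_{(m,k)\ne(0,0)} mx=ky$ expressing $\dim\le 1$; every conjunct is $\Pi_2$, so the sentence is $\Pi_2$, and it is a Scott sentence because a countable $\mathbb{Q}$-vector space is determined by its dimension. For $1<n<\infty$, conjoin instead ``$\dim\ge n$'', namely $\exists v_1\cdots v_n\ \bigdoublewedge_{\bar m\ne 0}(\sum_i m_i v_i\ne 0)$ (which is $\Sigma_2$), and ``$\dim\le n$'' (a $\Pi_1$ sentence), obtaining a conjunction of a $\Sigma_2$ and a $\Pi_2$ sentence, i.e.\ $\mathrm{d-}\Sigma_2$. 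For $\dim=\infty$, conjoin the countably many $\Sigma_2$ sentences ``$\dim\ge n$'', obtaining a $\Pi_3$ sentence.

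For the lower bounds, the uniform ingredient ruling out $\Sigma_2$ Scott sentences (needed in cases 1 and 2, and in the parametrized form in case 3) is: for any tuple $\bar c$ from $\mc{A}$, the substructure $\la\bar c\ra$ generated by $\bar c$ is some $\mathbb{Z}^k$; since any $\Pi_1$ (universal, infinitary) sentence true of $(\mc{A},\bar c)$ is preserved into this substructure, $(\mc{A},\bar c)$ has no $\Pi_1$ Scott sentence, so by Theorem~\ref{thm:ex-ss} $\mc{A}$ has no $\Sigma_2$ Scott sentence, and hence by Lopez--Escobar $\Iso(\mc{A})$ is not $\bfSigma^0_2$ (which in particular rules out $\mathrm{d-}\Sigma_1$ and everything simpler). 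In case~1 this, together with the $\Pi_2$ Scott sentence, forces Scott complexity exactly $\bfPi^0_2$. For cases 2 and 3 I also need to rule out $\Pi_2$ Scott sentences: in case~2 directly, and in case~3 because, by the $\mathrm{d-}\Sigma$ analogue of Theorem~\ref{thm:ex-ss}, a $\mathrm{d-}\Sigma_2$ Scott sentence for $\mc{A}$ would force $(\mc{A},\bar c)$ to have a $\Pi_2$ Scott sentence for some $\bar c$. By Theorem~\ref{thm:montalban} applied to $(\mc{A},\bar c)$, it suffices to exhibit one automorphism orbit that is not $\Sigma_1$-definable over the parameters: for $\mathbb{Q}^n$ ($2\le n<\infty$) the orbit of a linearly independent pair, and for $\mathbb{Q}^{(\omega)}$ with parameters $\bar c$ the orbit over $\bar c$ of a vector outside $\mathbb{Q}\la\bar c\ra$. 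Combining this with the $\mathrm{d-}\Sigma_2$, resp.\ $\Pi_3$, Scott sentence and the shape of the Wadge hierarchy gives Scott complexity exactly $\mathrm{d-}\Sigma_2$, resp.\ $\Pi_3$.

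The main obstacle is the non-$\Sigma_1$-definability of the orbit of a linearly independent pair. I would argue by contradiction: a $\Sigma_1$ formula defining it is witnessed at the standard pair $(e_1,e_2)$ by a finitary quantifier-free $\theta(e_1,e_2,\bar u^\ast)$, which amounts to a finite set of $\mathbb{Z}$-linear relations that must hold among $(e_1,e_2,\bar u^\ast)$ and a finite set $N$ of relations that must fail. The plan is to realize inside $\mathbb{Q}^n$ a new tuple $(e_1',e_2',\bar u')$ -- as the image of a finitely generated subgroup -- whose relation lattice is saturated, contains all the required relations, contains a relation forcing $e_1'$ and $e_2'$ to be linearly \emph{dependent}, yet omits every member of $N$. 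The key point is that inside the finite-dimensional quotient space $\mathbb{Q}^{|\bar u|+2}/\la\text{required relations}\ra_{\mathbb{Q}}$ the forbidden relations of $N$ cut out only finitely many lines (or lower-dimensional cosets), so over the infinite field $\mathbb{Q}$ there is room to choose the extra relation avoiding all of them; then $\theta$ still holds of the new tuple, and $\Sigma_1$-persistence upward into $\mathbb{Q}^n$ makes the defining formula true of a dependent pair, a contradiction. The technical heart is the linear-algebra bookkeeping: checking that the enlarged lattice stays saturated and that, in the parametrized version, the quantifier-free type of $\bar c$ itself is unchanged so that the new parameter tuple really is automorphic to $\bar c$.
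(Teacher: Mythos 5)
This theorem is stated in the survey without proof (it is quoted from Calvert--Harizanov--Knight--Miller and Calvert), so there is no in-paper argument to compare against; what you have written is essentially a correct self-contained proof, and it takes a more modern route than the original papers. The cited sources work with optimal (computable) Scott sentences and $m$-completeness of index sets, giving explicit reductions witnessing hardness; you instead pass through the structural characterizations of the survey: Theorem~\ref{thm:ex-ss} plus downward persistence of $\Pi_1$ sentences into the finitely generated (hence $\cong \mathbb{Z}^k$, non-divisible) substructure $\la \bar c\ra$ to kill $\Sigma_2$ Scott sentences, and Theorem~\ref{thm:montalban} (with the $\mathrm{d-}\Sigma_2$ analogue of Theorem~\ref{thm:ex-ss}) reducing the remaining lower bounds to non-$\Sigma_1$-definability of a single orbit, which you settle by a generic-kernel linear-algebra argument. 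That linear-algebra step is sound: the only constraints are finitely many forbidden relations, each of which excludes a proper affine subspace (a line or a point) of choices over the infinite field $\mathbb{Q}$, and your worry about preserving the quantifier-free type of $\bar c$ in the parametrized case in fact resolves automatically, since any relation supported on the $w$- and $\bar c$-coordinates with nonzero $w$-coefficient would already force $w \in \mathbb{Q}\la\bar c\ra$. Your approach buys a uniform, relativization-free argument at the level of boldface Scott complexity, at the cost of leaning on Theorems~\ref{thm:montalban}, \ref{thm:ex-ss} and (for the final identification of the Wadge degree) either the Alvir--Greenberg--Harrison-Trainor--Turetsky classification or simply Definition~\ref{def:scott-sentence-complexity}.

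Three small repairs. First, your sentences ``$\dim \le 1$'' and ``$\dim \le n$'' are $\Pi_2$, not $\Pi_1$, in the normal form used here (a universal quantifier over an infinitary disjunction); this is harmless, since $\Pi_2$ conjuncts are all you need for the claimed $\Pi_2$ and $\mathrm{d-}\Sigma_2$ upper bounds. Second, in case~3 your chosen witness orbit fails in the degenerate case where $\bar c$ is empty or spans $\{0\}$: there the orbit of a vector outside $\mathbb{Q}\la\bar c\ra$ is just the set of nonzero vectors, which \emph{is} $\Sigma_1$-definable; for such $\bar c$ use instead the orbit of a linearly independent pair, which your case~2 argument handles verbatim in infinite dimension. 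Third, when concluding ``exactly $\mathrm{d-}\Sigma_2$'' you should say explicitly that you are invoking either the list of possible Scott complexities (to exclude the intermediate self-dual Wadge degrees, which do contain sets that are $\mathrm{d-}\bfSigma^0_2$ but neither $\bfSigma^0_2$ nor $\bfPi^0_2$) or the sentence-level Definition~\ref{def:scott-sentence-complexity}, under which ruling out $\Sigma_2$ and $\Pi_2$ Scott sentences already suffices.
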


\begin{theorem}[Calvert \cite{Calvert04}]
	Let $\mc{A}$ be an class of algebraically closed field. Then:
	\begin{enumerate}
		\item If $\mc{A}$ has finite transcendence degree, then $\mc{A}$ has Scott complexity $\mathrm{d-}\Sigma_2$.
		\item If $\mc{A}$ has infinite transcendence degree, then $\mc{A}$ has Scott complexity $\Pi_3$.
	\end{enumerate}
\end{theorem}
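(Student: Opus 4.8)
The plan is to produce explicit Scott sentences of the stated complexities and match them with lower bounds drawn from Theorem~\ref{thm:montalban} (the orbit characterization of $\Pi_2$ Scott sentences) and Theorem~\ref{thm:ex-ss}, together with quantifier elimination for algebraically closed fields. Throughout I use Steinitz's theorem that a countable algebraically closed field is determined up to isomorphism by its characteristic and its transcendence degree, and I handle the two characteristics uniformly. (In part~(1) I take the transcendence degree to be $\geq 1$; the case of transcendence degree $0$, i.e.\ $\overline{\mathbb{Q}}$ or $\overline{\mathbb{F}_p}$, is handled analogously but yields $\Pi_2$ rather than $\mathrm{d-}\Sigma_2$, since ``transcendence degree $\leq 0$'' is already $\Pi_2$.)

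\noindent\textbf{Upper bounds.} The axioms for being an algebraically closed field of a fixed characteristic form a $\Pi_2$ sentence $\chi$: the field axioms and the root axioms ``every monic polynomial of degree $n$ has a zero'' are $\forall\exists$, and ``characteristic $p$'' (resp.\ ``characteristic $0$'') is $\Pi_1$. The sentence ``transcendence degree $\geq n$'', namely $\exists x_1\cdots x_n \bigwedge_{0 \neq p} p(x_1,\ldots,x_n)\neq 0$ with $p$ ranging over nonzero integer polynomials in $n$ variables, is $\Sigma_2$ (existential quantifiers over a $\Pi_1$ matrix), while ``transcendence degree $\leq n$'', namely $\forall x_1\cdots x_{n+1} \bigvee_{0 \neq p} p(x_1,\ldots,x_{n+1}) = 0$, is $\Pi_2$. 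Hence for finite transcendence degree $n$ the conjunction of $\chi$, ``$\geq n$'', and ``$\leq n$'' is the conjunction of a $\Sigma_2$ and a $\Pi_2$ sentence, i.e.\ $\mathrm{d-}\Sigma_2$; and for infinite transcendence degree the conjunction of $\chi$ with $\bigwedge_n$``$\geq n$'' is $\Pi_2 \wedge \Pi_3 = \Pi_3$. By Steinitz's classification these are Scott sentences.

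\noindent\textbf{Lower bounds.} The key computation, used in both cases, is that \emph{the automorphism orbit of an element transcendental over $\mathbb{Q}(\bar c)$ is not $\Sigma_1$-definable over $\bar c$}, for any finite tuple $\bar c$. Any two elements transcendental over $\mathbb{Q}(\bar c)$ are conjugate over $\bar c$ (the obvious isomorphism of rational function fields extends to an isomorphism of algebraic closures and then, by equality of transcendence degrees, to an automorphism of $\mc{A}$ fixing $\bar c$), so this orbit is exactly the set of elements of $\mc{A}$ transcendental over $\mathbb{Q}(\bar c)$. By quantifier elimination, a finitary quantifier-free formula in one variable over parameters $\bar c$ defines a finite or cofinite subset of $\mc{A}$, with the finite pieces consisting of elements algebraic over $\mathbb{Q}(\bar c)$; hence any subset of $\mc{A}$ that is $\Sigma_1$-definable over $\bar c$ is either cofinite or contained in the relative algebraic closure of $\mathbb{Q}(\bar c)$. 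The orbit in question is neither: its complement (that relative algebraic closure) is infinite, and the orbit itself is nonempty and consists of transcendental elements. For part~(1), taking $\bar c = \varnothing$, this and Theorem~\ref{thm:montalban} rule out a $\Pi_2$ Scott sentence; and a $\Sigma_2$ Scott sentence is ruled out because, by Theorem~\ref{thm:ex-ss}, it would give a finite $\bar c$ with $(\mc{A},\bar c)$ having a $\Pi_1$ Scott sentence, while any $\Pi_1$ sentence true in $(\mc{A},\bar c)$ also holds in the substructure $(\mathbb{Z}[\bar c],\bar c)$, which is not a field (a finitely generated $\mathbb{Z}$-algebra that is a field is finite, whereas $\mc{A}$ is infinite) and so is not isomorphic to $(\mc{A},\bar c)$. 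Since every complexity strictly below $\mathrm{d-}\Sigma_2$ lies below $\Sigma_2$ or below $\Pi_2$, this forces Scott complexity exactly $\mathrm{d-}\Sigma_2$. For part~(2), every complexity strictly below $\Pi_3$ lies at or below $\mathrm{d-}\Sigma_2 \subseteq \Sigma_3$, so it suffices to rule out a $\Sigma_3$ Scott sentence; by Theorem~\ref{thm:ex-ss} such a sentence gives a finite $\bar c$ with $(\mc{A},\bar c)$ having a $\Pi_2$ Scott sentence, hence (Theorem~\ref{thm:montalban}) with every orbit $\Sigma_1$-definable over $\bar c$. But $\mc{A}$ has infinite transcendence degree over $\mathbb{Q}(\bar c)$ for \emph{every} finite $\bar c$, so the key computation applies and gives a contradiction. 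Hence the Scott complexity is exactly $\Pi_3$.

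\noindent\textbf{Main obstacle.} The step requiring the most care is the quantifier-elimination argument that the orbit of a transcendental element is not $\Sigma_1$-definable — the ``finite-or-cofinite'' dichotomy for definable subsets in one variable and the bookkeeping of which constants and parameters appear. Conceptually, the crux is seeing precisely \emph{why} the finite and infinite cases differ: for finite transcendence degree a transcendence basis is a \emph{finite} parameter tuple over which every orbit becomes (quantifier-free, hence) $\Sigma_1$-definable, so $(\mc{A},\bar c)$ has a $\Pi_2$ Scott sentence and $\mc{A}$ thereby acquires a $\Sigma_3$ and in fact $\mathrm{d-}\Sigma_2$ Scott sentence, whereas for infinite transcendence degree no finite tuple can serve as a transcendence basis, and it is exactly this that costs the extra level and produces $\Pi_3$.
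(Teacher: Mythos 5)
Your proof is correct, but it is worth saying up front that the survey contains no proof of this theorem: it is quoted from Calvert \cite{Calvert04}, whose original argument runs through index sets, i.e.\ one writes down computable infinitary Scott sentences for the upper bounds and then proves $m$-completeness of $\mc{I}_{\mc{A}}$ at the matching level ($\mathrm{d-}\Sigma^0_2$, resp.\ $\Pi^0_3$) by explicit reductions, which is the relativizable analogue of a Wadge reduction described in the survey's methodological discussion. Your route to the lower bounds is genuinely different and, given the survey's toolkit, cleaner: you use Theorem \ref{thm:ex-ss} to push everything down to the question of whether orbits are $\Sigma_1$-definable over a finite tuple $\bar c$ (Theorem \ref{thm:montalban}), and then settle that by quantifier elimination — a $\Sigma_1$-definable subset of $\mc{A}$ in one variable over $\bar c$ is cofinite or contained in the relative algebraic closure of the prime field with $\bar c$ adjoined, while the orbit of a transcendental over $\bar c$ is neither; the $\Sigma_2$ case is killed by downward preservation of $\Pi_1$ sentences to the finitely generated subring. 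This is essentially the ``pregeometry'' explanation of the hardness that the survey attributes to Csima, Harrison-Trainor, and Mohammad, so what your approach buys is a structural, boldface explanation of where the complexity comes from, at the cost of relying on Theorems \ref{thm:montalban} and \ref{thm:ex-ss} (the latter being the nontrivial parameter-absorption result), whereas Calvert's completeness computations give the finer effective (lightface/index-set) information directly. Two small points: your caveat about transcendence degree $0$ is right — $\overline{\mathbb{Q}}$ and $\overline{\mathbb{F}_p}$ have $\Pi_2$ Scott sentences, so statement (1) as displayed should be read as positive finite degree, exactly parallel to the dimension-one case in Theorem \ref{thm:vector} — and in characteristic $p$ the base should of course be the prime field $\mathbb{F}_p(\bar c)$ rather than $\mathbb{Q}(\bar c)$, which is the uniform reading you intended and affects nothing.
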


\noindent Both of these classes are examples of strongly minimal theories and hence admit a pregeometry of algebraic closure. Csima, Harrison-Trainor, and Mohammad (unpublished) 
have shown that the hardness portion of the results above are consequences of the existence of this pregeometry. Other classes of structures also admit a pregeometry with the right properties; another example is the class of real closed ordered fields. Calvert, Harizanov, Knight, and Miller \cite{CalvertHarizanovKnightMiller06} first computed the complexity of Archimedean real closed ordered fields.

\begin{theorem}[Calvert, Harizanov, Knight, and Miller \cite{CalvertHarizanovKnightMiller06}]
	Let $\mc{A}$ be an Archimedean real closed ordered field.
	\begin{enumerate}
	 	\item If the transcendence degree is $0$, then $\mc{A}$ has Scott complexity $\Pi_2$.
	 	\item If the transcendence degree of $\mc{A}$ is finite but not $0$, then $\mc{A}$ has Scott complexity $\mathrm{d-}\Sigma_2$.
	 	\item If the transcendence degree of $\mc{A}$ is infinite, then $\mc{A}$ has Scott complexity $\Pi_3$.
	\end{enumerate}
\end{theorem}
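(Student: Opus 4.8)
The plan is to reduce the entire computation to the algebraic structure of Archimedean real closed fields, fed through Montalbán's characterisations of the complexity levels (Theorems~\ref{thm:montalban} and~\ref{thm:ex-ss} and the $\mathrm{d-}\Sigma_\alpha$ criterion of Alvir, Greenberg, Harrison-Trainor, and Turetsky). Two standard facts carry the argument. First, \emph{rigidity}: an Archimedean ordered field has a unique order-preserving embedding into $\mathbb{R}$, so it has no nontrivial automorphism; hence every automorphism orbit in $\mc{A}$ is a singleton, and defining orbits means defining individual elements. Second, \emph{o-minimality of $\mathrm{RCF}$}: every first-order formula over parameters $\bar{c}$ defines in $\mc{A}$ a finite union of points and intervals with endpoints algebraic over $\mathbb{Q}(\bar{c})$, so a set definable over $\bar{c}$ that contains a point transcendental over $\mathbb{Q}(\bar{c})$ must contain an open interval around it. I will also use that, $\mc{A}$ being Archimedean, it is isomorphic as an ordered field to the real closure $\mathrm{rc}(\mathbb{Q}(B))$ for some $B\subseteq\mathbb{R}$ algebraically independent over $\mathbb{Q}$ with $|B|$ the transcendence degree; when that degree is finite, \emph{every} element of $\mc{A}$ is algebraic over $\mathbb{Q}(B)$.

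For the upper bounds, rigidity reduces everything to defining singletons. If the transcendence degree is $0$, then $\mc{A}\cong\mathrm{rc}(\mathbb{Q})=\mathbb{R}_{\mathrm{alg}}$ and each element is quantifier-free definable over $\varnothing$ --- isolate an algebraic number by its minimal polynomial over $\mathbb{Z}$ and two rationals separating it from its conjugates --- so every orbit is $\Sigma_1$ over $\varnothing$ and Theorem~\ref{thm:montalban} ($\alpha=1$) yields a $\Pi_2$ Scott sentence; concretely the $\Pi_2$ first-order axioms of $\mathrm{RCF}$ together with $\forall x\bigdoublevee_{p\in\mathbb{Z}[t]\setminus\{0\}}p(x)=0$ already axiomatise $\mathbb{R}_{\mathrm{alg}}$. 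If the transcendence degree is finite and positive, fix a transcendence basis $\bar{b}\in\mc{A}$: the same argument relativised to $\bar{b}$ shows $(\mc{A},\bar{b})$ has a $\Pi_2$ Scott sentence, and the orbit of $\bar{b}$ in $\mc{A}$ is $\Pi_1$ over $\varnothing$ --- it is the set of algebraically independent tuples with the same sign pattern over $\mathbb{Z}$ as $\bar{b}$, and any such tuple is again a transcendence basis of $\mc{A}$ whose real closure is all of $\mc{A}$, hence equals $\bar{b}$ by rigidity. As $\Pi_1\subseteq\Sigma_2$, the $\mathrm{d-}\Sigma_\alpha$ criterion ($\alpha=2$) gives a $\mathrm{d-}\Sigma_2$ Scott sentence. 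If the transcendence degree is infinite, each $a\in\mc{A}$ is $\Pi_1$ over $\varnothing$ via its cut over $\mathbb{Q}$ (distinct elements of an Archimedean field have distinct cuts over $\mathbb{Q}$, so $\bigdoublewedge_{q\in\mathbb{Q},\,q<a}(q<x)\wedge\bigdoublewedge_{q\in\mathbb{Q},\,q>a}(x<q)$ isolates $a$), so every orbit is $\Sigma_2$ over $\varnothing$ and Theorem~\ref{thm:montalban} ($\alpha=2$) gives a $\Pi_3$ Scott sentence.

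For the lower bounds, I would derive each one from the absence of a Scott sentence one level down for a named expansion. First, for any finite $\bar{c}$ the substructure of $(\mc{A},\bar{c})$ generated by the constants is the ordered ring $\mathbb{Z}[\bar{c}]$, which satisfies the same $\Pi_1$ sentences but is not a field; hence no $(\mc{A},\bar{c})$ has a $\Pi_1$ Scott sentence, so by Theorem~\ref{thm:ex-ss} ($\alpha=1$) $\mc{A}$ has no $\Sigma_2$ Scott sentence in any of the three cases (when the degree is $0$ one can also note directly that the ordered field $\mathbb{Q}$ models the $\Pi_1$-theory of $\mathbb{R}_{\mathrm{alg}}$, so $\mathbb{R}_{\mathrm{alg}}$ has no $\Pi_1$ Scott sentence, and, having a $\Pi_2$ one, its complexity is exactly $\Pi_2$). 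Second, for positive transcendence degree o-minimality gives the key negative fact: if $d\in\mc{A}$ is transcendental over $\mathbb{Q}(\bar{c})$ then $\{d\}$ is not $\Sigma_1$-definable over $\bar{c}$, because any existential-over-$\bar{c}$ disjunct whose solution set contains $d$ already contains an open interval about $d$. When the degree is finite and positive, applying this with $\bar{c}=\varnothing$ to a basis element shows (Theorem~\ref{thm:montalban}, $\alpha=1$) that $\mc{A}$ has no $\Pi_2$ Scott sentence; having already excluded $\Sigma_2$, and since $\mc{A}$ has a $\mathrm{d-}\Sigma_2$ Scott sentence, its complexity is exactly $\mathrm{d-}\Sigma_2$. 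When the degree is infinite, such a $d$ exists over \emph{every} finite $\bar{c}$, so no $(\mc{A},\bar{c})$ has a $\Pi_2$ Scott sentence; by the $\mathrm{d-}\Sigma_\alpha$ criterion $\mc{A}$ then has no $\mathrm{d-}\Sigma_2$ Scott sentence (a fortiori no $\Sigma_2$ or $\Pi_2$ one), and since it does have a $\Pi_3$ Scott sentence the classification of possible Scott complexities forces the complexity to be exactly $\Pi_3$.

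The hard part --- the only point where a substantive, non-formal fact about $\mathrm{RCF}$ enters --- is the o-minimality observation that an element transcendental over the available parameters is never isolated by an existential formula over those parameters; this is what forces the jump from $\Pi_2$ to $\mathrm{d-}\Sigma_2$ at finite positive transcendence degree and from $\mathrm{d-}\Sigma_2$ to $\Pi_3$ at infinite transcendence degree. Everything else --- rigidity, the cut description of elements, the $\mathbb{Z}[\bar{c}]$ substructure, and keeping straight the $\Sigma$/$\Pi$ off-by-ones in Montalbán's theorems --- is routine.
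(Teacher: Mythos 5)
Your proposal is correct, and it is worth noting that it does not follow the route of the original source: this survey states the theorem without proof, citing Calvert, Harizanov, Knight, and Miller \cite{CalvertHarizanovKnightMiller06}, whose arguments (in the style described in the section on computing Scott complexities) write down explicit Scott sentences and prove optimality via index-set hardness, i.e.\ $m$-completeness computations for $\Pi^0_2$, $\mathrm{d-}\Sigma^0_2$, and $\Pi^0_3$ (relativized, this amounts to Wadge reductions). You instead obtain both the upper and lower bounds from the structural characterizations surveyed here --- Theorem \ref{thm:montalban}, Theorem \ref{thm:ex-ss}, and the $\mathrm{d-}\Sigma_\alpha$ criterion of Alvir, Greenberg, Harrison-Trainor, and Turetsky --- combined with exactly two field-theoretic inputs: rigidity of Archimedean ordered fields (so orbits are singletons) and quantifier elimination/o-minimality for real closed fields (so a point transcendental over the available parameters is never cut out by an existential formula, since the defined set is a finite union of points and intervals with endpoints algebraic over the parameters). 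This buys a shorter, more conceptual proof that requires no completeness constructions, at the cost of leaning on the parameter/orbit theorems; the original approach yields the finer effective information (computable Scott sentences, index-set completeness) that your argument does not address. Two small points you should make explicit: the substructure $\mathbb{Z}[\bar{c}]$ only needs to satisfy every $\Pi_1$ sentence \emph{true in} $(\mc{A},\bar{c})$ (downward persistence of universal sentences), not literally the same $\Pi_1$ theory, and the fact that it is not isomorphic to $(\mc{A},\bar{c})$ rests on the standard fact that a finitely generated $\mathbb{Z}$-algebra which is a field is finite (and if the language included multiplicative inverse, the generated substructure would be $\mathbb{Q}(\bar{c})$, which is still not real closed, so the argument survives). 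Also, in the infinite transcendence degree case, the step ruling out a $\Sigma_3$ Scott sentence is cleanest if you cite Theorem \ref{thm:akm}: having both $\Sigma_3$ and $\Pi_3$ Scott sentences would force a $\mathrm{d-}\Sigma_2$ one, which you have excluded.
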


\noindent They also prove a number of results on reduced abelian $p$-groups.

\medskip

Groups have been a very fruitful area for proving interesting results about Scott complexity. This began with free groups, where Carson, Harizanov, Knight, Lange, McCoy, Morozov, Quinn, Safranski, and Wallbaum \cite{CHKLMMMQW} and McCoy and Wallbaum \cite{McCoyWallbaum} used Nielson transformations to compute the Scott complexities of free groups on various generators.

\begin{theorem}[\cite{CHKLMMMQW}, \cite{McCoyWallbaum}]\label{thm:free-gp}{\ }
	\begin{enumerate}
		\item For $n \geq 1$, the free group $F_n$ on $n$ generators has Scott complexity $\mathrm{d-}\Sigma_2$.
		\item The free group $F_\infty$ on infinitely many generators has Scott complexity $\Pi_4$.
	\end{enumerate}
\end{theorem}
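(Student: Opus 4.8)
The plan is to handle the two parts separately, and in each case to establish the upper bound by writing down a Scott sentence of the claimed complexity and the matching lower bound by a Wadge reduction from a complete set in the corresponding pointclass. The engine throughout is the Nielsen theory of free groups: that every generating $n$-tuple of $F_n$ is a basis (Hopficity), that any two bases of $F_n$ are connected by a finite chain of elementary Nielsen transformations, and the cancellation combinatorics of Nielsen-reduced tuples.

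For part (1), with $n$ finite, I would first observe that $(F_n,\bar c)$, for $\bar c$ a basis, has a $\Pi_2$ Scott sentence: the conjunction of the group axioms, the $\Pi_1$ assertion that $\bar c$ freely generates a free subgroup (no nontrivial reduced word in $\bar c$ equals the identity), and the $\Pi_2$ assertion $\forall x\, \bigvee_w x = w(\bar c)$ that every element is a word in $\bar c$. By the characterization of $\mathrm{d-}\Sigma_\alpha$ Scott sentences stated earlier, it then suffices to show that the automorphism orbit of $\bar c$ --- the set of bases of $F_n$ --- is $\Sigma_2$-definable without parameters; this is where Nielsen theory does the real work, certifying that a tuple $\bar h$ is a basis by the existence of a finite Nielsen reduction carrying it to a tuple whose reducedness, together with a $\Pi_1$ genericity condition, forces it to be a basis. (Equivalently, one may write the $\mathrm{d-}\Sigma_2$ sentence directly as a conjunction of a $\Sigma_2$ sentence asserting that a basis exists and a $\Pi_2$ sentence constraining $G$ --- e.g.\ bounding the rank of $G^{\mathrm{ab}}$ and excluding the obstructions that a naive ``every rank-$n$ free subgroup is everything'' misses, such as $\langle a^2,b^2\rangle\le F_2$.) For the lower bound I would produce two Wadge reductions into $\Iso(F_n)$, one from a $\bfSigma^0_2$-complete set and one from a $\bfPi^0_2$-complete set, each building uniformly from the oracle input $x$ a group isomorphic to $F_n$ exactly when the relevant arithmetic event holds, by adding and then ``neutralizing'' auxiliary generators according to $x$ and using Nielsen moves to verify the isomorphism type in the intended cases. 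Together with the $\mathrm{d-}\Sigma_2$ Scott sentence, this pins the Scott complexity at $\mathrm{d-}\Sigma_2$.

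For part (2), the upper bound for $F_\infty$ is a $\Pi_4$ Scott sentence expressing that $G$ is locally free, not finitely generated, and that every finite tuple is contained in a finitely generated free subgroup which is a free factor of $G$; via Stallings/Nielsen theory this exactly characterizes the free group of countable rank, since it forces $G$ to be the union of an increasing chain of finite-rank free factors. The delicate point is the complexity count: ``$H$ is a free factor of $G$'' unwinds as ``$H$ is a free factor of every finitely generated subgroup containing it'', i.e.\ a statement of the form $\forall\bar y\,\exists\bar y'\,(\dots)$ nested inside the outer $\forall\bar x\,\exists\bar g$, which done naively lands one quantifier-level too high, so the bookkeeping must be carried out carefully --- discarding conjuncts already implied by the inner clauses, and choosing the economical certificate for ``extends to a basis'' --- in order to land in $\Pi_4$ rather than $\Pi_5$. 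For the lower bound I would give a Wadge reduction from a $\bfPi^0_4$-complete set into $\Iso(F_\infty)$, building a group from the oracle that is free of countable rank iff the corresponding $\Pi^0_4$ event holds, nesting the $F_n$-type constructions from part (1) so that recognizing $F_\infty$ genuinely consumes four alternating quantifiers' worth of information.

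The main obstacle in both parts is matching the lower bound to the upper bound: pushing the upper-bound Scott sentences down to exactly the claimed complexity, rather than one level worse, is precisely what requires the fine combinatorial structure of bases of free groups supplied by Nielsen transformations, and the lower-bound constructions have to be engineered so that no simpler sentence can serve as a Scott sentence.
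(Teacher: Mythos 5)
You cannot be checked against ``the paper's own proof'' here, because the survey gives none: the theorem is quoted from \cite{CHKLMMMQW} and \cite{McCoyWallbaum} with only the remark that Nielsen transformations are the engine. On its own terms, your proposal has the right architecture (Scott sentences for the upper bounds, reductions from complete sets for the lower bounds, which matches the cited papers up to trading index-set completeness for Wadge reductions), and several ingredients are correct: the $\Pi_2$ Scott sentence for $(F_n,\bar c)$ with $\bar c$ a basis, the appeal to the $\mathrm{d-}\Sigma_\alpha$ characterization of Alvir--Greenberg--Harrison-Trainor--Turetsky, and the characterization of $F_\infty$ as a non-finitely-generated group in which every finite tuple lies in a finitely generated free factor. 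But at both points where you say Nielsen theory ``does the real work,'' the work is not done, and the hints you give for how it would go do not survive inspection. For part (1), the whole content of the upper bound is the parameter-free $\Sigma_2$ (in fact $\Pi_1$; compare Theorem \ref{thm:char}) definition of the set of bases of $F_n$. Your proposed certificate --- existence of a finite Nielsen reduction carrying $\bar h$ to a tuple whose ``reducedness,'' plus an unspecified $\Pi_1$ genericity condition, forces it to be a basis --- is circular as stated: Nielsen-reducedness and word length are defined relative to a basis, which is exactly what you are trying to define without parameters, and a Nielsen-reduced tuple need not be a basis (e.g.\ $(a^2,b^2)$ in $F_2$), so the unnamed genericity condition is carrying the entire theorem.

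For part (2) you concede that the direct translation of your characterization is one level too high, and indeed it is: ``$\bar y$ generates a free factor'' unwinds as $\forall \bar z\,\exists \bar w\,(\bar y\bar w \text{ freely generates} \wedge \bar z \subseteq \langle \bar y\bar w\rangle)$, which is $\Pi_3$, so the full sentence is $\Pi_5$. To land in $\Pi_4$ the clause following the existential quantifier over $\bar y$ must be $\Pi_2$, so what is needed is not ``bookkeeping'' but a genuinely different, lower-complexity Nielsen-theoretic certificate for being (a basis of) a free factor, and producing a correct one is the main technical content of \cite{CHKLMMMQW}. That this is delicate is shown by the tempting economical test ``for every $z$, either $z\in\langle \bar y\rangle$ or $\langle \bar y,z\rangle$ is free of rank $|\bar y|+1$'': it is necessary but not sufficient, since $y=[a,b]$ in $F_2$ passes it while $\langle [a,b]\rangle$ is not a free factor. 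Finally, both lower bounds are only named, not constructed: ``build a group isomorphic to $F_n$ exactly when the $\Sigma^0_2$ (resp.\ $\Pi^0_2$, $\Pi^0_4$) event holds'' restates the definition of a reduction, and the $\bfPi^0_4$-hardness of $\Iso(F_\infty)$ is the subject of the entire companion paper \cite{McCoyWallbaum}. So what you have is a sensible plan consistent with the literature, but the key lemmas it invokes --- the low-complexity definability of bases and the $\Pi_4$ certificate for free factors, plus the hardness constructions --- are precisely the parts left unproved.
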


Knight and Saraph \cite{KnightSaraph} continued the study of the Scott complexity of groups by looking at torsion-free abelian groups of finite rank and finitely generated groups. For groups of finite rank, Knight and Saraph show that there is always a $\Sigma_3$ Scott sentence.

\begin{theorem}[Knight and Saraph \cite{KnightSaraph}]
	Let $G$ be a torsion-free abelian group of finite rank. Then $G$ has a $\Sigma_3$ Scott sentence.
\end{theorem}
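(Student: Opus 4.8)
The plan is to combine Montalb\'an's theorems: by Theorem~\ref{thm:ex-ss} with $\alpha = 2$, the group $G$ has a $\Sigma_3$ Scott sentence as soon as we can produce a tuple $\bar c \in G$ such that $(G,\bar c)$ has a $\Pi_2$ Scott sentence, so the whole problem reduces to a good choice of parameters. Write $n = \operatorname{rank}(G)$ and let $V := \mathbb{Q} \otimes_{\mathbb{Z}} G$ be the divisible hull, an $n$-dimensional $\mathbb{Q}$-vector space into which $G$ embeds (using torsion-freeness). Fix $\bar c = (c_1,\dots,c_n) \in G$ a maximal $\mathbb{Z}$-linearly independent tuple; then $\bar c$ is a $\mathbb{Q}$-basis of $V$, and for every $g \in G$ there are integers $m \geq 1$ and $k_1,\dots,k_n$ with $m g = k_1 c_1 + \cdots + k_n c_n$ (this equation holds in $V$, hence in $G$).

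The key point is that $(G,\bar c)$ is rigid. Any automorphism of $G$ fixing each $c_i$ induces a $\mathbb{Q}$-linear automorphism of $V$ fixing a basis, hence is the identity on $V$ and so on $G$. Thus every automorphism orbit of $(G,\bar c)$ is a singleton, and moreover the orbit of $g$ is defined by the quantifier-free formula $\varphi_g(x) :\equiv (m x = k_1 c_1 + \cdots + k_n c_n)$: such an $x$ exists (namely $g$) and is unique by torsion-freeness. Since every orbit of $(G,\bar c)$ is therefore $\Sigma_1$-definable without further parameters, Theorem~\ref{thm:montalban} gives that $(G,\bar c)$ has a $\Pi_2$ Scott sentence, and we are done.

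It is instructive to unwind this into an explicit sentence. Let $S \subseteq \mathbb{Q}^n$ be the set of coordinate vectors of elements of $G$ with respect to $\bar c$ (a subgroup of $\mathbb{Q}^n$ containing $\mathbb{Z}^n$), and for $v = (k_1/m,\dots,k_n/m) \in S$ let $\varphi_v(x)$ be the cleared-denominator equation $m x = k_1 c_1 + \cdots + k_n c_n$. Then
\[
\begin{aligned}
\exists \bar c\, \Big[\ \Phi_{\mathrm{grp}} \ &\wedge\ \bigwedge_{\bar k \in \mathbb{Z}^n\setminus\{\bar 0\}} \neg\big(k_1 c_1 + \cdots + k_n c_n = 0\big) \\
&\wedge\ \forall x \bigvee_{v\in S}\varphi_v(x)\ \wedge\ \bigwedge_{v \in S}\exists x\,\varphi_v(x)\ \Big]
\end{aligned}
\]
is a Scott sentence for $G$, where $\Phi_{\mathrm{grp}}$ is the conjunction of the (finitely many, $\Pi_1$ or $\Pi_2$) axioms for a torsion-free abelian group. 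The bracketed sentence is $\Pi_2$: the frame axioms and the independence clause are $\Pi_1$; $\forall x \bigvee_v \varphi_v(x)$ is $\Pi_2$; and the countable conjunction $\bigwedge_v \exists x\,\varphi_v(x)$ of $\Sigma_1$ sentences is again $\Pi_2$ because $\Sigma_1 \subseteq \Pi_2$ and $\Pi_2$ is closed under countable conjunction. Prefixing $\exists \bar c$ then makes the whole sentence $\Sigma_3$.

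The remaining work is the verification that the displayed sentence is a Scott sentence. Given a countable model $\mc{B}$ with witnesses $\bar d$, one defines $G \to \mc{B}$ by sending $g$ with coordinate vector $v$ to the unique element satisfying $\varphi_v(\cdot,\bar d)$; existence comes from the $\bigwedge_v \exists x$ clause, uniqueness from torsion-freeness of $\mc{B}$, surjectivity from the $\forall x \bigvee_v$ clause, and the homomorphism property from the fact that $\varphi_{v+v'}$ is obtained from $\varphi_v$ and $\varphi_{v'}$ by clearing denominators. I expect this last stage --- in particular, checking that $\mathbb{Z}$-independence of $\bar d$ in $\mc{B}$ really forces distinct coordinate vectors to name distinct elements --- to be the fiddliest point, but there is no serious obstruction: finite rank makes the coordinatization of $G$ over $\bar c$ entirely explicit, and everything reduces to linear algebra over $\mathbb{Q}$.
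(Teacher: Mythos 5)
Your proof is correct, and it is essentially the standard argument for this result (the survey itself only cites \cite{KnightSaraph} without proof): existentially quantify over a maximal $\mathbb{Z}$-linearly independent tuple $\bar c$ and give a $\Pi_2$ description of the coordinatization of the group over $\bar c$, which is exactly your displayed sentence; the packaging via Theorems \ref{thm:ex-ss} and \ref{thm:montalban} is an equivalent way of organizing the same idea. The only small point to note is that Theorem \ref{thm:montalban} requires all orbits of finite tuples (not just single elements) to be $\Sigma_1$-definable in $(G,\bar c)$, but rigidity makes these singletons definable by finite conjunctions of your quantifier-free formulas $\varphi_g$, so nothing is lost.
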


\noindent Moreover, they show that there are computable torsion-free abelian groups of rank one which have Scott complexity $\Sigma_3$.

\begin{theorem}[Knight and Saraph \cite{KnightSaraph}]
	If $G$ is a subgroup of $\mathbb{Q}$ in which $1$ is not infinitely divisible by any prime, but in which $1$ is finitely divisible by infinitely many primes, then $G$ has Scott complexity $\Sigma_3$.
\end{theorem}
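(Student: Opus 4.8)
The plan is to establish a $\Sigma_3$ upper bound and a matching $\Sigma_3$ lower bound on the Scott complexity of $G$.

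For the upper bound, note that $G$, being a subgroup of $\mathbb{Q}$, is torsion-free abelian of rank $1$, so by the preceding theorem of Knight and Saraph it has a $\Sigma_3$ Scott sentence. Concretely, writing $k_p\in\mathbb{N}$ for the $p$-height of a generator of $G$ (a natural number for every $p$ precisely because $1$ is not infinitely divisible by any prime), one may take the Scott sentence to assert ``$G$ is torsion-free abelian of rank $1$, and there is an $a\neq 0$ whose $p$-height is exactly $k_p$ for every prime $p$''. Here the clause ``$p$-height $\geq k_p$ for all $p$'' is an infinite conjunction of $\Sigma_1$ formulas, the clause ``$p$-height $\leq k_p$ for all $p$'' is $\Pi_1$, and torsion-freeness and rank $1$ are $\Pi_1$ and $\Pi_2$ respectively; so everything in the scope of the leading $\exists a$ is $\Pi_2$, and the whole sentence is $\Sigma_3$.

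For the lower bound I would show that $\Iso(G)$ is $\bfSigma^0_3$-hard; since it is already $\bfSigma^0_3$ by the Scott sentence above, it is then $\bfSigma^0_3$-complete, so its Wadge degree is exactly $\bfSigma^0_3$, i.e. the Scott complexity of $G$ is $\Sigma_3$. I would prove hardness by the usual device of reducing the $\Sigma^0_3$-complete set $\mathrm{Cof}=\{e:W_e\text{ is cofinite}\}$ to the index set of $G$ (and, relativizing to a suitable oracle $X$, showing $\mathrm{Cof}^Y\leq_m\mc{I}^Y_G$ for all $Y\geq_T X$, which is the form of lower bound used in this survey). Enumerate the infinitely many primes $q$ with $k_q\geq 1$ as $q_0<q_1<\cdots$; each $k_{q_n}$ is a positive integer. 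Given $e$, build a rank-$1$ torsion-free abelian group $\mc{A}_e$ whose generator has $p$-height $k_p$ at every prime $p\notin\{q_n\}$, and whose $q_n$-height is $k_{q_n}$ if $n\in W_e$ and $k_{q_n}-1$ if $n\notin W_e$. This is built computably by stages: start from the group in which every $q_n$-height is $k_{q_n}-1$ (a legitimate finite height since $k_{q_n}\geq 1$), and when $n$ enters $W_e$ adjoin an element witnessing that the generator becomes divisible one more time by $q_n$. Since one only ever \emph{adds} divisibility, this is a genuine (uniformly) computable construction. Now every height of $\mc{A}_e$ is finite, so by the classification of rank-$1$ torsion-free abelian groups $\mc{A}_e\cong G$ if and only if the two sequences of heights differ at only finitely many primes, i.e. if and only if $\{n:n\notin W_e\}$ is finite, i.e. if and only if $e\in\mathrm{Cof}$; this is the desired reduction.

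The step I expect to demand the most care is making the lower-bound argument precise: checking that the stagewise construction genuinely produces a computable structure with the claimed isomorphism type, and that its relativizations are uniform enough that $e\mapsto\mc{A}_e$ is a legitimate $m$-reduction witnessing the Wadge lower bound. It is also here that both hypotheses on $G$ are used essentially: ``$1$ is finitely divisible by infinitely many primes'' provides the infinite coding set $\{q_n\}$ with all $k_{q_n}\geq 1$, so that $k_{q_n}-1$ is an available height; and ``$1$ is not infinitely divisible by any prime'' makes all heights of $G$ and of every $\mc{A}_e$ finite, so that isomorphism among these rank-$1$ groups is governed purely by agreement of the height sequences off a finite set — which is exactly what permits the $\Sigma^0_3$ condition of cofiniteness to be coded rather than merely the $\Sigma^0_2$ condition of finiteness.
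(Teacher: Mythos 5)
Your proposal is correct, and it follows exactly the approach this survey describes in general (and that Knight--Saraph use): exhibit a $\Sigma_3$ Scott sentence via the height sequence of a rank-one torsion-free group, then get the matching lower bound by coding the $\Sigma^0_3$-complete cofiniteness problem into the heights at the infinitely many primes $q$ with $1\le k_q<\infty$, relativizing the index-set reduction to an oracle computing $(k_p)_p$ so that it yields the Wadge/boldface lower bound. The survey itself states this theorem without proof, so there is nothing further to compare against.
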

\noindent One can also analyse the computability of these Scott sentences; see \cite{KnightSaraph} as well as \cite{Ho}.

Knight and Saraph also show that every finitely generated group has a $\Sigma_3$ Scott sentence, and indeed the same proof works for structures of any language.

\begin{theorem}[Knight and Saraph \cite{KnightSaraph}]\label{thm:fg-upper-bound}
	Every finitely generated structure has a $\Sigma_3$ Scott sentence.
\end{theorem}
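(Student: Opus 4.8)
The plan is to produce a $\Sigma_3$ Scott sentence directly from a finite generating tuple. Let $\mc{A}$ be generated by a finite tuple $\bar{a} = (a_1,\ldots,a_n)$. Every element of $\mc{A}$ is a term $t(\bar{a})$ in the language, and there are only countably many terms, so fix an enumeration of them. The key observation is that the isomorphism type of $\mc{A}$ is determined by two pieces of data: first, the \emph{positive atomic diagram} of $\bar{a}$, i.e. which atomic facts $R(t_1(\bar{a}),\ldots,t_k(\bar{a}))$ and equalities $t_i(\bar{a}) = t_j(\bar{a})$ hold; and second, the assertion that $\bar{a}$ generates the structure, so that nothing ``extra'' is present. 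The first of these is expressible by a countable conjunction of $\Sigma_1$ facts after prefixing $\exists \bar{x}$, and the subtle point is that \emph{negative} atomic information about $\bar{a}$ (which pairs of terms are unequal, which tuples fail a relation) need not be asserted explicitly: in a structure generated by the images of $\bar{x}$, if the positive diagram is at least as large as that of $\bar{a}$ and no proper quotient collapse is allowed, the map is forced to be an isomorphism.

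First I would write the candidate Scott sentence $\varphi$ as the conjunction of:
\begin{itemize}
	\item an existential sentence $\exists \bar{x}\,\bigl(\bigdoublewedge \Delta^+(\bar{x})\bigr)$, where $\Delta^+(\bar{x})$ lists the atomic formulas (including equalities) in the finitely-many-free-variable terms over $\bar{x}$ that hold of $\bar{a}$ — this is $\Sigma_2$ (in fact the conjunction inside is $\Pi_1$, the whole thing $\Sigma_2$);
	\item a sentence $\forall y\,\bigdoublevee_{t} (y = t(\bar{a}))$ asserting that the generators generate — but to make this work without parameters we must say $\exists \bar{x}\,\forall y\,\bigl(\Delta^+(\bar{x}) \wedge \bigdoublevee_t y = t(\bar{x})\bigr)$; the inner part is $\Pi_2$, giving $\Sigma_3$ overall;
	\item a $\Pi_2$ conjunct ruling out unwanted collapses and extra relations: for each pair of terms $t_i, t_j$ with $a_i \ne a_j$ in $\mc{A}$, and each atomic relation that \emph{fails} at a term-tuple in $\mc{A}$, one quantifies $\forall \bar{x}\,(\Delta^+(\bar{x}) \to \neg(\cdots))$.
\end{itemize}
The cleanest route, which I would follow, is to package everything as: $\varphi := \exists \bar{x}\bigl[\,\bigdoublewedge\Delta^+(\bar{x}) \;\wedge\; \forall y \bigdoublevee_t y = t(\bar{x})\,\bigr] \;\wedge\; \chi$, where $\chi$ is a $\Pi_3$ (indeed $\Pi_2$) conjunct that says: for every tuple $\bar{x}$ realizing $\Delta^+$, every negated atomic fact true of $\bar{a}$ in the terms over $\bar{x}$ also holds. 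One then checks the two conjuncts together are equivalent to a single $\Sigma_3$ sentence, or simply observes $\Sigma_3 \wedge \Pi_2$ collapses appropriately into $\Sigma_3$ after absorbing the $\Pi_2$ part inside the existential block.

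The verification that $\varphi$ is a Scott sentence is a back-and-forth-free argument: if $\mc{B} \models \varphi$, pick witnesses $\bar{b}$ for $\bar{x}$; the map $t(\bar{a}) \mapsto t(\bar{b})$ is well-defined and injective because $\Delta^+$ plus the negated-atomic conjunct pins down exactly which term-equalities hold, it is surjective because $\bar{b}$ generates $\mc{B}$, and it is a homomorphism in both directions because the full atomic diagram (positive from $\Delta^+$, negative from $\chi$) is preserved; hence it is an isomorphism. Conversely $\mc{A} \models \varphi$ trivially with $\bar{x} = \bar{a}$.

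The main obstacle is the bookkeeping to confirm the complexity is genuinely $\Sigma_3$ and not higher: the ``generators generate'' clause $\forall y \bigdoublevee_t y = t(\bar{x})$ sits under an existential quantifier, making a $\Pi_2$-under-$\exists$ block which is $\Sigma_3$, and one must check that the negated-atomic conjunct $\chi$ — naively $\forall\bar{x}(\Pi_1 \to \Pi_1)$, i.e. $\Pi_2$ — can be folded in without pushing past $\Sigma_3$; this is fine since $\Sigma_3$ is closed under conjunction with $\Pi_2$. I expect the only real care needed is verifying that no negative atomic information about $\bar{a}$ is \emph{needed} beyond what $\chi$ provides, and that $\chi$ itself only needs to range over the countably many term-tuples rather than over all of $\mc{B}$, so that it stays $\Pi_2$ rather than becoming $\Pi_3$ or worse.
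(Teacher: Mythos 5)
There is a genuine gap, and it is not in the complexity bookkeeping but in the truth of your sentence: the conjunct $\chi$, as you state it (``for every tuple $\bar{x}$ realizing $\Delta^+$, every negated atomic fact true of $\bar{a}$ also holds''), is in general \emph{false in $\mc{A}$ itself}, so $\varphi$ is not a Scott sentence for $\mc{A}$ because $\mc{A} \not\models \varphi$. The point is that the positive atomic diagram of a generating tuple is satisfied by every homomorphic image of that tuple inside $\mc{A}$, not just by its automorphic images. Concretely, take $\mc{A} = (\mathbb{Z},+)$ with generator $a = 1$: the only positive atomic facts about $1$ are the trivially valid term equalities $mx = nx$ with $m=n$, so the element $0$ satisfies $\Delta^+$, yet $0$ violates $\neg(x+x=x) \in \Delta^-$; hence $\mathbb{Z} \not\models \chi$. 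The same happens for the free group $F_2$ (any pair of elements satisfies $\Delta^+$ of a basis) and indeed for essentially any structure admitting a non-injective endomorphism-type collapse of the generators. Weakening $\chi$ to quantify only over tuples that both satisfy $\Delta^+$ and generate does not rescue the argument either: that version of $\chi$ asserts (roughly) that every surjective endomorphism fixing the positive diagram of $\bar{a}$ is injective, which fails for non-Hopfian finitely generated structures such as the Baumslag--Solitar group $B(2,3)$, and it also pushes the antecedent to $\Pi_2$, spoiling your count of $\chi$ as $\Pi_2$.

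The underlying misconception is your ``key observation'' that negative atomic information must be kept outside the existential block. It costs nothing to put it inside: let $D(\bar{x})$ be the conjunction of \emph{all} atomic and negated atomic formulas about terms in $\bar{x}$ that are true of $\bar{a}$; this is still a countable conjunction of finitary quantifier-free formulas, hence $\Pi_1$. The sentence $\exists \bar{x}\,\bigl[\, D(\bar{x}) \wedge \forall y \bigdoublevee_{t} \, y = t(\bar{x}) \,\bigr]$ is then $\exists(\Pi_1 \wedge \Pi_2)$, i.e.\ $\Sigma_3$, it clearly holds in $\mc{A}$ with $\bar{x} = \bar{a}$, and in any countable model $\mc{B}$ with witnesses $\bar{b}$ the map $t(\bar{a}) \mapsto t(\bar{b})$ is a well-defined bijective isomorphism exactly by the argument you sketch (well-defined and a homomorphism by the positive part, injective and relation-reflecting by the negative part, surjective by the generation clause). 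This is the standard Knight--Saraph proof; your remaining verification goes through once the negative diagram is moved back inside the existential quantifier, where it belongs.
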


\noindent By Theorem \ref{thm:free-gp} above, every finitely generated free group has a $\mathrm{d-}\Sigma_2$ Scott sentence. Moreover, the same is true of any finitely generated abelian group and of the dihedral group.

\begin{theorem}[Knight and Saraph \cite{KnightSaraph}]
	Every finitely generated abelian group has a $\mathrm{d-}\Sigma_2$ Scott sentence.
\end{theorem}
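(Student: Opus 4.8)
I would use the structure theorem for finitely generated abelian groups together with the characterization of $\mathrm{d-}\Sigma_\alpha$ Scott sentences of Alvir, Greenberg, Harrison-Trainor and Turetsky stated above: a countable $\mc{A}$ has a $\mathrm{d-}\Sigma_2$ Scott sentence exactly when for some tuple $\bar{c}$, $(\mc{A},\bar{c})$ has a $\Pi_2$ Scott sentence and the automorphism orbit of $\bar{c}$ in $\mc{A}$ is $\Sigma_2$-definable. Writing a finitely generated abelian group $G$ as $\mathbb{Z}^n \oplus \mathbb{Z}/d_1\mathbb{Z} \oplus \cdots \oplus \mathbb{Z}/d_k\mathbb{Z}$ with $d_1 \mid \cdots \mid d_k$, I would take $\bar{a} = (a_1,\dots,a_n,b_1,\dots,b_k)$ to be the standard generating tuple (a basis of the free part together with generators of the cyclic summands) and verify the two conditions for $\bar{c}=\bar{a}$.

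The first condition is essentially the content of Theorem~\ref{thm:fg-upper-bound}: since $\bar{a}$ generates $G$, every element of $(G,\bar{a})$ is a group term in $\bar{a}$, so $(G,\bar{a})$ is rigid and every orbit is quantifier-free definable over the named constants; by Montalbán's Theorem~\ref{thm:montalban} (or directly, as the conjunction of the atomic diagram of $\bar a$ with $\forall y\,\bigdoublevee_{t}(y=t(\bar a))$) this gives a $\Pi_2$ Scott sentence for $(G,\bar a)$. The heart of the argument is the second condition, for which I would in fact show the orbit of $\bar a$ in $G$ is $\Pi_1$-definable (hence $\Sigma_2$-definable). I would exhibit an explicit $\Pi_1$ formula $\alpha(\bar{x},\bar{y})$ that is the conjunction of: (i) the atomic type of $\bar{a}$ — this is $\Pi_1$ since over abelian groups its positive part is generated by the finitely many relations $d_j y_j = 0$ coming from the structure theorem, and its negative part is a countable conjunction of negated atomic formulas saying there are no further relations; (ii) the assertion that $\bar{y}$ generates the torsion subgroup $\{z : d_k z = 0\}$, written as $\forall z\,\bigl(d_k z = 0 \to \bigdoublevee_{\bar{r}}(z = \sum_j r_j y_j)\bigr)$ with $\bar{r}$ ranging over the finite box $\prod_j\{0,\dots,d_j-1\}$, so that the matrix is quantifier-free and the whole formula is $\Pi_1$ (this works precisely because the torsion subgroup is finite); and (iii) for each prime $p$, a $\Pi_1$ formula asserting that the free-part coordinates $\bar{x}$ still span modulo $p$, which stays $\Pi_1$ because one only needs to test finitely many linear combinations with coefficients in $\{0,\dots,p-1\}$.

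I would then check that in $G$ the solution set of $\alpha$ is exactly the orbit of $\bar{a}$. The inclusion ``orbit $\subseteq$ solutions'' follows by pushing $\bar{a}$ forward along an automorphism, using that the torsion subgroup is characteristic and that automorphisms induce automorphisms of $G/(\text{torsion})\cong\mathbb{Z}^n$. For the reverse inclusion, a tuple $\bar{a}'$ satisfying (i) has the same atomic type as $\bar{a}$; in particular its free coordinates project to a $\mathbb{Z}$-independent tuple in $G/(\text{torsion})\cong\mathbb{Z}^n$, so they generate a finite-index subgroup, and condition (iii) forces that index to have no prime divisors via the elementary fact that a subgroup $H\leq\mathbb{Z}^n$ with $H+p\mathbb{Z}^n=\mathbb{Z}^n$ for every prime $p$ equals $\mathbb{Z}^n$; combined with (ii) this yields $\langle\bar{a}'\rangle=G$, and a generating tuple with the atomic type of the generating tuple $\bar{a}$ lies in its orbit. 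Since $\alpha$ is $\Pi_1$, the orbit of $\bar{a}$ is $\Sigma_2$-definable, and the $\mathrm{d-}\Sigma_\alpha$ characterization delivers a $\mathrm{d-}\Sigma_2$ Scott sentence for $G$ — concretely, $\exists\bar{x}\bar{y}\,\alpha$ (which is $\Sigma_2$) conjoined with the $\Pi_2$ sentence $\forall\bar{x}\bar{y}\,(\alpha\to\text{``}\bar{x}\bar{y}\text{ generates''})$.

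\textbf{Main obstacle.} The delicate point is engineering condition (iii): encoding ``the index of $\langle\bar{x}\rangle$ in the free quotient has no prime divisor'' by a $\Pi_1$ formula that is also true of the standard tuple $\bar a$ in $G$. The trick is that spanning modulo $p$ can be checked with finitely many boundedly-large coefficient vectors, so the non-spanning alternative is existential and its negation is $\Pi_1$. By contrast the torsion part is easy because that subgroup is finite, and the ``no extra relations'' part is handled simply by including the (negative) atomic type as a conjunct.
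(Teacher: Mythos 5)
Your overall architecture is the standard (and correct) one: decompose $G \cong \mathbb{Z}^n \oplus \mathbb{Z}/d_1 \oplus \cdots \oplus \mathbb{Z}/d_k$, fix the standard generating tuple $\bar{a}$, show its orbit is $\Pi_1$-definable, and then assemble the $\mathrm{d-}\Sigma_2$ sentence as $\exists \bar{x}\bar{y}\,\alpha \wedge \forall \bar{x}\bar{y}(\alpha \to \text{``}\bar{x}\bar{y}\text{ generates''})$; this is exactly conditions (4)/(5) of Theorem \ref{thm:char}, and your conjuncts (i) and (ii) are fine ($\Pi_1$ because the atomic type is an infinitary quantifier-free conjunction and the torsion subgroup is finite). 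The genuine gap is in (iii), which you yourself flag as the delicate point. As you formalize it, ``$\bar{x}$ spans modulo $p$'' is $\forall z \bigdoublevee_{\bar{c} \in \{0,\dots,p-1\}^n} \text{``} z - \sum_j c_j x_j \in pG + T\text{''}$, and membership in $pG$ requires an existential witness ($\exists w\, pw = \cdot$), so this formula is $\Pi_2$, not $\Pi_1$; correspondingly your justification that ``the non-spanning alternative is existential and its negation is $\Pi_1$'' is wrong --- non-spanning in this rendering is $\exists z$ followed by a $\Pi_1$ matrix (``$p$ does not divide $\cdot$'' is universal), hence $\Sigma_2$. With $\alpha$ only $\Pi_2$, the sentence $\exists \bar{x}\bar{y}\,\alpha$ is $\Sigma_3$ and you have recovered only Theorem \ref{thm:fg-upper-bound}, not the $\mathrm{d-}\Sigma_2$ bound. (A smaller imprecision: for primes $p$ dividing some $d_j$, the free coordinates do not span $G/pG$ even for the standard tuple, so the condition must in any case be taken modulo the torsion subgroup or with $\bar{y}$ included.)

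The repair is close by, and is the actual trick: once (i) and (ii) hold, $H = \langle \bar{x},\bar{y}\rangle$ has finite index in $G$, and ``$p$ does not divide $[G:H]$'' is equivalent to ``$G/H$ has no element of order $p$,'' which \emph{is} the negation of a genuinely $\Sigma_1$ condition. Concretely, replace (iii) by the $\Pi_1$ conjunction, over all primes $p$, all $\bar{c} \in \mathbb{Z}^n$ with some $c_j$ not divisible by $p$, and all $\bar{r} \in \prod_j \{0,\dots,d_j-1\}$, of $\forall z\,\bigl(pz \neq \sum_j c_j x_j + \sum_j r_j y_j\bigr)$. This holds of the standard tuple, since the free coordinates of $pz$ are all divisible by $p$. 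Conversely, if a tuple satisfies (i), (ii) and these conjuncts but $p \mid [G:H]$, pick $z \notin H$ with $pz \in H$, say $pz = \sum_j c_j x_j + \sum_j r_j y_j$; if $p$ divided every $c_j$ then $z - \sum_j (c_j/p)x_j$ would be torsion, hence in $\langle \bar{y}\rangle$ by (ii), forcing $z \in H$ --- so some $c_j$ is not divisible by $p$ and one of the new conjuncts fails. With this substitution the rest of your verification that the solution set of $\alpha$ is exactly the orbit of $\bar{a}$ goes through, $\alpha$ is honestly $\Pi_1$, and the assembled sentence is $\mathrm{d-}\Sigma_2$; this non-divisibility (purity) formulation is in essence what Knight and Saraph do.
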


\begin{theorem}[Knight and Saraph \cite{KnightSaraph}]
	The infinite dihedral group has Scott complexity $\mathrm{d-}\Sigma_2$.
\end{theorem}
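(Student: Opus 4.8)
I would establish the two matching bounds separately: a $\mathrm{d-}\Sigma_2$ Scott sentence from above, and the non-existence of a $\Pi_2$ Scott sentence from below (which rules out every strictly simpler Wadge degree), leaving only $\mathrm{d-}\Sigma_2$ and $\Sigma_2$, the latter being excluded on general grounds. Throughout I use the presentation $D_\infty = \langle a,b \mid a^2 = b^2 = 1\rangle \cong \mathbb{Z}/2 * \mathbb{Z}/2$, and I write $r = ab$ for a generator of the characteristic index-$2$ subgroup of ``rotations'' $\langle r\rangle \cong \mathbb{Z}$, recalling the basic fact that a countable group is isomorphic to $D_\infty$ exactly when it is generated by two involutions whose product has infinite order (the natural surjection from $\mathbb{Z}/2 * \mathbb{Z}/2$ has kernel a normal subgroup meeting $\langle r\rangle$ trivially, hence trivial). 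For the upper bound I would invoke the characterization of $\mathrm{d-}\Sigma_\alpha$ Scott sentences of Alvir, Greenberg, Harrison-Trainor, and Turetsky stated above: it suffices to exhibit a tuple $\bar c\in D_\infty$ such that $(D_\infty,\bar c)$ has a $\Pi_2$ Scott sentence and the automorphism orbit of $\bar c$ in $D_\infty$ is $\Sigma_2$-definable. I would take $\bar c = (a,b)$. Since $(a,b)$ generates, the finitely generated structure $(D_\infty,a,b)$ has a $\Pi_2$ Scott sentence, namely the conjunction of the ($\Pi_1$) quantifier-free type of $(a,b)$ — which records $a^2 = b^2 = 1$ together with $w(a,b)\neq 1$ for every word $w$ nontrivial in $\mathbb{Z}/2 * \mathbb{Z}/2$ (the word problem here is decidable) — with the ($\Pi_2$) sentence $\forall x \bigvee_{w} x = w(a,b)$; any model of this conjunction is generated by a pair realizing the same quantifier-free type and hence is isomorphic to $(D_\infty,a,b)$ via $w(a,b)\mapsto w(a',b')$.

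\textbf{The orbit of the generating pair.} It then remains to check that the orbit of $(a,b)$ in $D_\infty$ is $\Sigma_2$-definable; I would show it is in fact $\Pi_1$-definable, equal to
\[ \{(x,y) : x^2 = 1 \wedge y^2 = 1 \wedge xy \neq 1 \wedge \textstyle\bigwedge_{n\geq 2}\forall z\,(z^n \neq xy)\}. \]
The verification is a short case analysis: among elements squaring to $1$ only the identity and the reflections occur; the product of two reflections is a rotation, and a nontrivial rotation $r^m$ has an $n$th root iff $n\mid m$, so the last conjunct forces $xy = r^{\pm 1}$ (it also excludes $xy$ being a reflection, since every reflection is its own cube); such a pair generates all of $D_\infty$, and conversely any two reflections $r^i s, r^j s$ with $i-j = \pm 1$ are carried to $(rs,s)$ by the automorphism $r\mapsto r^{i-j}$, $s\mapsto r^j s$. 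Since $\Pi_1 \subseteq \Sigma_2$, the cited criterion applies and $D_\infty$ has a $\mathrm{d-}\Sigma_2$ Scott sentence. (One could instead unwind this into an explicit conjunction of a $\Sigma_2$ and a $\Pi_2$ sentence, but citing the theorem is cleaner.)

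\textbf{Lower bound.} To show $D_\infty$ has no $\Pi_2$ Scott sentence, by Montalb\'an's Theorem~\ref{thm:montalban} it suffices to exhibit one automorphism orbit that is not $\Sigma_1$-definable without parameters; I claim the orbit $\{r,r^{-1}\}$ of $r$ works. The key point is that $D_\infty$ embeds into itself as the index-$2$ subgroup $\langle r^2, s\rangle$ (again isomorphic to $D_\infty$, with $r^2$ in the role of the rotation), and this embedding sends $r$ to $r^2$. If $\varphi(x)$ were a $\Sigma_1$ formula with $D_\infty \models \varphi(r)$, choose a disjunct $\exists \bar y\,\theta(x,\bar y)$ with $\theta$ finitary quantifier-free and a witness $\bar g$ with $D_\infty \models \theta(r,\bar g)$; applying the embedding and using that quantifier-free formulas are absolute between a structure and its substructures yields $D_\infty \models \theta(r^2,\bar g')$ for the image tuple, hence $D_\infty \models \varphi(r^2)$ — contradicting $r^2 \notin \{r,r^{-1}\}$. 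So the orbit of $r$ is not $\Sigma_1$-definable, $D_\infty$ has no $\Pi_2$ Scott sentence, and a fortiori no $\mathrm{d-}\Sigma_1$, $\Sigma_1$, $\Pi_1$, or $\Sigma_0$ Scott sentence.

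\textbf{Conclusion and main obstacle.} Combining: $D_\infty$ has a $\mathrm{d-}\Sigma_2$ Scott sentence, so its Scott complexity is at most $\mathrm{d-}\Sigma_2$; it is not $\Pi_2$ or any strictly simpler degree; and by the classification of possible Scott complexities stated above, $\Sigma_2$ never occurs as a Scott complexity. Hence the Scott complexity of $D_\infty$ is exactly $\mathrm{d-}\Sigma_2$. I expect the main obstacle to be the orbit computation of the second paragraph — pinning down precisely which pairs $(x,y)$ are automorphic to $(a,b)$ and confirming that this set is genuinely $\Pi_1$ (a less careful formula lands at $\Sigma_2$ or $\Sigma_3$ and misses the $\mathrm{d-}\Sigma_2$ bound), since this is where the arithmetic of roots in $D_\infty$ enters; the self-embedding $r\mapsto r^2$ is the one non-routine ingredient in the lower bound.
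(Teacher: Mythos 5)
Your proposal is correct; note that the survey only states this result of Knight and Saraph without reproducing a proof, so there is no in-paper argument to compare against, but your route is exactly the one suggested by the paper's general machinery. The upper bound is an instance of the criterion for finitely generated structures (a generating tuple whose orbit is $\Pi_1$-definable, as in Theorem~\ref{thm:char}, or equivalently the Alvir--Greenberg--Harrison-Trainor--Turetsky characterization you cite), and your orbit computation checks out: a pair $(x,y)$ with $x^2=y^2=1$, $xy\neq 1$, and $xy$ having no $n$th root for $n\geq 2$ forces $x,y$ to be reflections with $xy=r^{\pm1}$ (the root condition kills both $x=1$ or $y=1$, since a reflection is its own cube, and kills $xy=r^m$ with $|m|\geq 2$), and all such pairs are automorphic to $(a,b)$. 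The lower bound via Theorem~\ref{thm:montalban} is also sound: the self-embedding $r\mapsto r^2$, $s\mapsto s$ preserves quantifier-free formulas, so any parameter-free $\Sigma_1$ formula true of $r$ is true of $r^2\notin\{r,r^{-1}\}$, whence no orbit-defining $\Sigma_1$ formula and no $\Pi_2$ (hence no $\mathrm{d}$-$\Sigma_1$, $\Sigma_1$, $\Pi_1$) Scott sentence; appealing to the classification of possible Scott complexities to dispose of $\Sigma_2$ is legitimate and is how one pins the complexity at exactly $\mathrm{d}$-$\Sigma_2$. The only blemishes are cosmetic (e.g.\ your automorphism carries $(rs,s)$ to $(r^is,r^js)$ rather than the reverse, which is harmless since automorphisms are invertible).
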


\noindent Next, Ho \cite{Ho} showed that polycyclic groups (which include nilpotent groups and abelian groups) and certain solvable groups also all have computable $\mathrm{d-}\Sigma^0_2$ Scott sentence.

\begin{theorem}[Ho \cite{Ho}]
	Every finitely generated polycyclic group has a $\mathrm{d-}\Sigma_2$ Scott sentence.
\end{theorem}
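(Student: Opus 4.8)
The plan is to use the fact (Theorem~\ref{thm:fg-upper-bound}) that a finitely generated structure already has a $\Sigma_3$ Scott sentence, and then improve this to $\mathrm{d-}\Sigma_2$ by exploiting the special algebraic features of polycyclic groups. First I would recall that a polycyclic group $G$ admits a finite subnormal series $1 = G_0 \triangleleft G_1 \triangleleft \cdots \triangleleft G_k = G$ with each quotient $G_{i+1}/G_i$ cyclic; fixing a generating tuple $\bar g = (g_1,\ldots,g_n)$ compatible with this series gives a normal form in which every element is a product $g_1^{e_1}\cdots g_n^{e_n}$ with the exponents constrained by the (finitely many) power relations and conjugation relations of a polycyclic presentation. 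The key structural point is that polycyclic groups are finitely presented, Noetherian (every subgroup is finitely generated), and, crucially, each finitely generated subgroup is closed in the profinite topology in a sense strong enough that membership and non-membership in $\langle \bar g \rangle$ can be captured with low quantifier complexity.

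The main work is to produce the $\mathrm{d-}\Sigma_2$ Scott sentence explicitly. Following the pattern used for finitely generated abelian groups and the dihedral group, I would write the Scott sentence as a conjunction $\varphi \wedge \psi$ where $\varphi$ is $\Sigma_2$ and $\psi$ is $\Pi_2$. The $\Sigma_2$ part $\varphi$ asserts: ``there exists a tuple $\bar x = (x_1,\ldots,x_n)$ satisfying the finite presentation of $G$ (a quantifier-free, in fact atomic, condition since a polycyclic presentation has finitely many relators) and such that every element of the group lies in $\langle \bar x\rangle$.'' The quantifier ``every element lies in $\langle\bar x\rangle$'' is the statement $\forall y\ \bigdoublevee_{\text{words }w} y = w(\bar x)$, which is $\Pi_1$ over $\bar x$, so $\varphi$ is $\exists \bar x\,(\text{atomic} \wedge \Pi_1) $, i.e.\ $\Sigma_2$. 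The $\Pi_2$ part $\psi$ then has to rule out the extra models of $\varphi$ --- these are quotients of $G$ and groups with the same generators-and-relators but additional collapsing. Here I would use the Noetherian/Hopfian property: $G$ is Hopfian, so any surjection $G \to G$ is an isomorphism, but more to the point, for every nontrivial normal subgroup we must forbid the corresponding quotient. The cleanest route is: $\psi$ says ``for every tuple $\bar x$ satisfying the presentation and generating the group, and for every word $w$, if $w(\bar g) \neq 1$ in $G$ then $w(\bar x) \neq 1$.'' Since ``$\bar x$ generates'' is $\Pi_1$ as above and the rest is quantifier-free, $\psi$ is $\forall \bar x\,(\Pi_1 \to \text{q.f.})$, which is $\forall\bar x\,(\Sigma_1 \vee \text{q.f.})$, hence $\Pi_2$. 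One then checks by a back-and-forth / normal-form argument that $\varphi \wedge \psi$ is a Scott sentence: any countable model has a generating tuple satisfying exactly the relations holding in $G$, and normal forms then give an isomorphism.

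The hard part will be verifying that the conjunct ensuring ``no extra relations hold'' can genuinely be packaged at the $\Pi_1$-over-parameters level --- i.e.\ that to pin down $G$ among finitely generated groups satisfying the presentation, it suffices to negate each individual word that is nontrivial in $G$, rather than needing a genuinely $\Pi_2$ statement about infinitely many generated subgroups or cosets. This is where polycyclicity (as opposed to arbitrary finite presentability) is essential: the word problem for $G$ is decidable and, more importantly, the collection of nontrivial words is enumerated in a way that, together with the finite polycyclic presentation and the normal form, forces any model of $\varphi \wedge \psi$ to have its multiplication table determined. I would structure the argument so that the lower-complexity input is precisely the solvability of the word problem plus the finite presentation; the residual obstacle is a careful normal-form bookkeeping showing that these data determine $G$ up to isomorphism via an explicit back-and-forth built on the polycyclic series, analogous to but more intricate than the finitely generated abelian case.
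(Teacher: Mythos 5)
Your proposed sentence does not have the complexity you claim, and the miscount occurs exactly at the point where the real difficulty of Ho's theorem lives. The formula ``$\forall y\, \bigdoublevee_{w} y = w(\bar x)$'' expressing that $\bar x$ generates is not $\Pi_1$: the infinite disjunction of atomic formulas is $\Sigma_1$ (it is an existential quantifier over words in disguise), so ``$\bar x$ generates'' is $\Pi_2$. Hence your $\varphi$ is $\exists \bar x(\text{q.f.} \wedge \Pi_2)$, i.e.\ $\Sigma_3$ --- it is essentially the standard Scott sentence of Theorem \ref{thm:fg-upper-bound} --- and your $\psi$, whose hypothesis is that $\Pi_2$ statement, is $\Pi_3$, so $\varphi \wedge \psi$ is only a $\Sigma_3 \wedge \Pi_3$ Scott sentence (which every finitely presented Hopfian group has, with no use of polycyclicity at all). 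A quick sanity check that the counting cannot be repaired as stated: if ``generates'' were $\Pi_1$, then every finitely generated structure would have a $\Sigma_2$ Scott sentence by the usual argument, contradicting the fact that $\mathbb{Z}$ has no $\Sigma_2$ Scott sentence and that there are finitely generated groups of Scott complexity $\Sigma_3$.

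What is actually needed, by Theorem \ref{thm:char}, is to show that some (equivalently every) generating tuple of a polycyclic group is defined by a genuine $\Pi_1$ formula, or equivalently that $G$ does not contain a proper $\Sigma_1$-elementary subgroup isomorphic to itself; this is where the polycyclic structure (Hirsch length, the subnormal series with cyclic quotients, the maximal condition on subgroups) must be used, and your proposal never engages with it --- Hopficity only rules out extra relations among the images of a fixed generating tuple, it does not replace the $\Pi_2$ statement ``$\bar x$ generates'' by a $\Pi_1$ one. Your appeal to decidability of the word problem is also beside the point: that is relevant to \emph{computable} Scott sentences, not to the boldface $\mathrm{d-}\Sigma_2$ claim. (For comparison, the survey only cites Ho for this theorem; Ho's proof proceeds through exactly the kind of $\Pi_1$-definability/no-self-embedding analysis sketched above rather than through a direct two-conjunct sentence of the shape you wrote.)
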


\noindent At the time, every finitely generated group that had been checked had a $\mathrm{d-}\Sigma_2$ Scott sentence, and so Knight and Saraph asked whether every finitely generated group has a $\mathrm{d-}\Sigma_2$ Scott sentence. Harrison-Trainor and Ho \cite{HTHo18} answered this question by showing that there is a finitely generated group with no $\mathrm{d-}\Sigma_2$ Scott sentence and hence with Scott complexity $\Sigma_3$.

\begin{theorem}[Harrison-Trainor and Ho \cite{HTHo18}]
	There is a finitely-generated computable group with Scott complexity $\Sigma_3$.
\end{theorem}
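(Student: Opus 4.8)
By Theorem~\ref{thm:fg-upper-bound}, every finitely generated structure --- in particular every finitely generated group --- has a $\Sigma_3$ Scott sentence, so by the Lopez-Escobar theorem $\Iso(\mc{A})$ is always $\bfSigma^0_3$ for such $\mc{A}$. The plan is therefore to build a finitely generated computable group $G$ for which $\Iso(G)$ is $\bfSigma^0_3$-hard under Wadge reducibility; by Definition~\ref{def:scott-complexity} this pins the Scott complexity of $G$ to exactly $\Sigma_3$, and in particular shows that $G$ has no $\mathrm{d-}\Sigma_2$ and no $\Pi_3$ Scott sentence, since a $\bfSigma^0_3$-complete set lies in neither $\mathrm{d-}\bfSigma^0_2$ nor $\bfPi^0_3$. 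Equivalently, in the language of index sets, one exhibits an oracle $X$ such that $\mc{I}^Y_G$ is $\Sigma^Y_3$ $m$-complete for every $Y \geq_T X$.

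To obtain the hardness, fix a $\Sigma^0_3$-complete set $S$ and write $n \in S \iff \exists i\,\forall j\,\exists k\; R(n,i,j,k)$ with $R$ computable. I would construct, uniformly computably in $n$ (equivalently, continuously in an input $x \in 2^\omega$), finitely generated groups $G_n$ together with a fixed target group $G$ so that $G_n \cong G$ if and only if $n \in S$. The group $G$ should be generated by two elements $a,b$ and should carry a uniformly definable family of ``gadgets'' --- subgroups, or elements of prescribed orders, built by iterated HNN extensions and amalgamated free products in the style of the Higman--Neumann--Neumann embedding theorem --- whose joint behaviour is exactly what an isomorphism onto $G$ must verify. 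In $G_n$ the relators controlling the gadgets are enumerated so that the $j$th gadget ``closes up correctly'' precisely when $\exists k\, R(n,i,j,k)$ holds for a guessed value $i$, and the construction is arranged (by a priority-style interleaving over the guesses $i$) so that $G_n \cong G$ exactly when some guess succeeds for all $j$, i.e.\ exactly when $n \in S$. Keeping every $G_n$ (and $G$) recursively presented and genuinely \emph{computable} as a structure is routine bookkeeping, but must be checked.

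The substantive point --- and the reason the existence of such a $G$ is not automatic --- is that one must design $G$ so that the automorphism orbit of \emph{every} generating tuple $\bar g$ is genuinely $\bfPi^0_2$ and cannot be compressed to $\bfSigma^0_2$. Indeed, the orbit of a generating tuple is always $\{\bar h : \bar h \text{ has the same quantifier-free type as } \bar g,\ \langle \bar h\rangle = G\}$; the first clause is $\Pi_1$, a harmless infinite conjunction of atomic and negated-atomic formulas, so by the characterization of $\mathrm{d-}\Sigma_\alpha$ Scott sentences recalled above, the only way $G$ could acquire a $\mathrm{d-}\Sigma_2$ Scott sentence would be for ``$\langle \bar h\rangle = G$'' to admit a $\Sigma_2$ certificate on same-type tuples --- for instance a finite set of words witnessing that $\bar h$ generates. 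This is exactly what happens for $F_n$, and for finitely generated abelian, polycyclic, and dihedral groups, where generation \emph{does} have such a certificate, yielding the $\mathrm{d-}\Sigma_2$ Scott sentences of Theorem~\ref{thm:free-gp} and the results following it. So $G$ must instead be built so that for each generating tuple there are enough same-type tuples $\bar h$ generating a \emph{proper} subgroup (necessarily isomorphic to its image in $G$) to defeat every such certificate, while not accidentally simplifying $G$'s isomorphism type. One must also rule out the use of non-generating parameter tuples $\bar c$ with $(G,\bar c)$ having a $\Pi_2$ Scott sentence; for finitely generated structures this reduces to the generating case, but requires care.

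I expect the main obstacle to be precisely this reconciliation. Finitely generated groups are so rigid that a single generating tuple nearly determines everything; manufacturing, inside one computable $2$-generated group, a robust supply of same-quantifier-free-type but non-generating tuples --- enough to make orbit-definability $\bfPi^0_2$-hard uniformly over \emph{all} generating tuples --- while simultaneously threading a $\Sigma^0_3$-complete set through the isomorphism type, is where the group-theoretic ingenuity is concentrated. Verifying that the construction attains the $\bfSigma^0_3$ \emph{lower} bound, i.e.\ that the encoding admits no shortcut, is the delicate part; the matching upper bound is handed to us by Theorem~\ref{thm:fg-upper-bound}.
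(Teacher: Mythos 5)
Your proposal correctly sets up the target (the $\Sigma_3$ upper bound from Theorem~\ref{thm:fg-upper-bound}, and the need to rule out $\mathrm{d-}\Sigma_2$ and $\Pi_3$ Scott sentences), and it correctly identifies where the difficulty lives: for every generating tuple, generation must not admit a $\Sigma_2$ certificate among tuples of the same quantifier-free type. But at exactly that point the proposal stops being a proof. The entire content of the theorem is the construction of a specific finitely generated computable group with this property, and your sketch defers it to unspecified ``gadgets,'' iterated HNN extensions, and a ``priority-style interleaving,'' with the admission that making every generating tuple's orbit non-$\Sigma_2$-definable while keeping the group finitely generated and computable ``is where the group-theoretic ingenuity is concentrated.'' No candidate group is produced and no verification of the key property is attempted, so the lower bound --- the substance of the statement --- is missing rather than proved.

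The route in the paper is also structurally different, and lighter, than the one you outline. Harrison-Trainor and Ho do not encode a $\Sigma^0_3$-complete set into the isomorphism type at all. They use the characterization in Theorem~\ref{thm:char}: for a finitely generated structure, having a $\mathrm{d-}\Sigma_2$ Scott sentence is equivalent to having a $\Pi_3$ Scott sentence, and equivalent to \emph{not} containing a copy of itself as a proper $\Sigma_1$-elementary substructure (condition (6), the semantic condition singled out as the useful one). So it suffices to build a finitely generated computable group $G$ with a proper $\Sigma_1$-elementary subgroup isomorphic to $G$; then $G$ has no $\mathrm{d-}\Sigma_2$ and hence no $\Pi_3$ Scott sentence, and since every Scott complexity strictly below $\Sigma_3$ other than $\Pi_3$ yields a $\mathrm{d-}\Sigma_2$ Scott sentence (compare Theorem~\ref{thm:akm}), the $\Sigma_3$ upper bound pins the complexity exactly. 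In particular, the $\bfSigma^0_3$-hardness of $\Iso(G)$ that you plan to engineer by hand comes for free from Wadge's lemma once the soft non-existence results are in place; your plan front-loads a reduction, an index-set relativization, and a priority argument that the actual proof never needs, while omitting the one algebraic construction that does the work. If you want to salvage your outline, replace the coding machinery with an explicit finitely generated computable group admitting a proper $\Sigma_1$-elementary self-embedding and verify that property directly.
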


\noindent Thus the upper bound given by Theorem \ref{thm:fg-upper-bound} is best possible.

The proof of this last theorem relied on a characterizations using a number of different equivalent conditions on when a finitely generated structure has a $\mathrm{d-}\Sigma_2$ Scott sentence.
\begin{theorem}[A. Miller \cite{AMiller}, Harrison-Trainor and Ho \cite{HTHo18}, Alvir, Knight, and McCoy \cite{AlvirKnightMcCoy}]\label{thm:char}
	Let $\mc{A}$ be a finitely generated structure. The following are equivalent:
	\begin{enumerate}
		\item $\mc{A}$ has a $\mathrm{d-}\Sigma_2$ Scott sentence.
		\item $\mc{A}$ has a $\Pi_3$ Scott sentence.
		\item $\mc{A}$ is the only model of its $\Sigma_2$ theory.
		\item some generating tuple of $\mc{A}$ is defined by a $\Pi_1$ formula.
		\item every generating tuple of $\mc{A}$ is defined by a $\Pi_1$ formula.
		\item $\mc{A}$ does not contain a copy of itself as a proper $\Sigma_1$-elementary substructure.
	\end{enumerate}
\end{theorem}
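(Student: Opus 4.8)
The plan is to establish the cycle of implications $(4)\Rightarrow(1)\Rightarrow(2)\Rightarrow(3)\Rightarrow(6)\Rightarrow(5)\Rightarrow(4)$, using throughout that a finitely generated structure $\mc{A}$ with generating tuple $\bar{a}$ is determined up to isomorphism by the $\Sigma_1$-type of $\bar{a}$ together with the "negative" information, and that the canonical Scott-sentence machinery from Section 2 can be run with the relevant defining formulas at the level $\Pi_1$ or $\Sigma_2$. First, for $(4)\Rightarrow(1)$: suppose $\bar{a}$ generates $\mc{A}$ and is defined by a $\Pi_1$ formula $\pi(\bar{x})$. Since $\bar{a}$ generates, every element of $\mc{A}$ is a term in $\bar{a}$, so every automorphism orbit is $\emptyset$-definable by a $\Sigma_1$ formula (substituting terms in $\bar{x}$ into $\pi$ and existentially quantifying $\bar{x}$), in fact uniformly. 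One then writes a Scott sentence of the form $\exists \bar{x}\,[\,\pi(\bar{x}) \wedge \psi(\bar{x})\,]$ where $\psi$ is a $\Pi_2$ sentence-with-parameters-$\bar{x}$ asserting the full back-and-forth data; because $\bar{a}$ generates, the "forth" clauses collapse to $\Pi_1$ statements about terms and the whole thing assembles as a $\mathrm{d}\text{-}\Sigma_2$ sentence (a $\Sigma_2$ sentence conjoined with a $\Pi_2$ sentence). Alternatively one invokes Theorem~\ref{thm:akm}-style bookkeeping, but the direct construction is cleanest here.

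Next, $(1)\Rightarrow(2)$ is immediate from the Hasse diagram of Scott complexities in Section~3: every $\mathrm{d}\text{-}\Sigma_2$ sentence is equivalent to (indeed literally is a conjunct inside) a $\Pi_3$ sentence, so a $\mathrm{d}\text{-}\Sigma_2$ Scott sentence can be padded to a $\Pi_3$ one. For $(2)\Rightarrow(3)$: if $\mc{A}$ has a $\Pi_3$ Scott sentence $\varphi = \bigdoublewedge_i \forall \bar{y}_i\, \theta_i$ with each $\theta_i$ a $\Sigma_2$ sentence, then any model of the $\Sigma_2$-theory of $\mc{A}$ satisfies each $\Sigma_2$ consequence, hence satisfies $\varphi$ (the outer $\forall$'s cause no trouble since $\forall \bar{y}\,\theta$ being in the $\Sigma_2$-theory is not automatic — here is the subtlety: one must instead argue that any $\mc{B}$ with the same $\Sigma_2$-theory as $\mc{A}$ is $\equiv_2$ to $\mc{A}$, and then a finitely generated structure is rigidified by $\equiv_2$-equivalence to a generator, via a back-and-forth argument that only needs two levels because terms in a generating tuple witness existentials; I will spell this out). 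Then $(3)\Rightarrow(6)$: if $\mc{A}$ contained a proper $\Sigma_1$-elementary substructure $\mc{A}' \cong \mc{A}$, then $\mc{A}'$ and $\mc{A}$ have the same $\Sigma_1$-type realized by the generating tuple and hence — after checking that $\Sigma_1$-elementarity between finitely generated structures upgrades to $\Sigma_2$-theory agreement — $\mc{A}'$ is another model of the $\Sigma_2$-theory of $\mc{A}$, contradicting (3) unless $\mc{A}' = \mc{A}$.

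The implications $(6)\Rightarrow(5)$ and $(5)\Rightarrow(4)$ are where the self-embedding hypothesis gets converted into a definability statement, and I expect $(6)\Rightarrow(5)$ to be the main obstacle. For $(5)\Rightarrow(4)$ there is nothing to do: (5) says every generating tuple is $\Pi_1$-defined, so in particular some one is. For $(6)\Rightarrow(5)$: fix a generating tuple $\bar{a}$; I want to show its orbit is $\Pi_1$-definable, equivalently (by finite generation, $\Sigma_1$-definability of the orbit is automatic as above, so the orbit is $\Sigma_1$, and I must show it is also $\Pi_1$, i.e. its complement is $\Sigma_1$) that any tuple $\bar{b}$ satisfying the $\Sigma_1$-type of $\bar{a}$ is automorphic to $\bar{a}$. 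Given such a $\bar{b}$, the subtructure it generates, $\langle \bar{b}\rangle \le \mc{A}$, receives a surjection from $\mc{A}$ sending $\bar{a}\mapsto \bar{b}$ (since $\bar{b}$ satisfies all the atomic consequences) which is an isomorphism onto $\langle\bar{b}\rangle$ precisely when $\bar{b}$ satisfies no more atomic-negation facts than forced — and matching the full $\Sigma_1$-type forces $\langle\bar{b}\rangle$ to be a $\Sigma_1$-elementary copy of $\mc{A}$ inside $\mc{A}$; by (6) it is not proper, so $\langle\bar{b}\rangle = \mc{A}$ and the surjection $\bar{a}\mapsto\bar{b}$ is an automorphism. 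The delicate point throughout is verifying the passage "same $\Sigma_1$-type of a generating tuple $\Rightarrow$ $\Sigma_1$-elementary embedding": one must check that $\Sigma_1$ facts about arbitrary elements reduce to $\Sigma_1$ facts about the generators because arbitrary elements are terms in the generators, and that existential witnesses can be taken inside the generated substructure — this is exactly where finite generation is essential and where a non-finitely-generated structure would break the argument. I would isolate this as a preliminary lemma and then the six implications fall out quickly.
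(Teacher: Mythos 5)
Your cycle does not close: the links $(2)\Rightarrow(3)$ and $(3)\Rightarrow(6)$ are broken as written. (The survey itself gives no proof of this theorem, so I am judging your argument on its merits against the cited literature.) For $(3)\Rightarrow(6)$: a proper $\Sigma_1$-elementary substructure $\mc{A}'\cong\mc{A}$ is \emph{isomorphic} to $\mc{A}$, and (3) can only mean ``only model up to isomorphism,'' so $\mc{A}'$ being a model of the $\Sigma_2$-theory is no contradiction at all; ``unless $\mc{A}'=\mc{A}$'' misreads (3). The actual content of this implication is a construction you never perform: iterate the proper $\Sigma_1$-elementary self-embedding and take the union $\mc{B}$ of the resulting chain; $\mc{B}$ satisfies every $\Sigma_2$ sentence true in $\mc{A}$ (witnesses keep their $\Pi_1$ facts along $\Sigma_1$-elementary maps), yet $\mc{B}$ is not finitely generated (a finite generating set would lie in some stage), so $\mc{B}\not\cong\mc{A}$. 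For $(2)\Rightarrow(3)$: your bridging lemma, that a finitely generated structure is ``rigidified by $\equiv_2$-equivalence,'' is false at the level of structures --- having the same $\Sigma_2$-theory \emph{is} $\equiv_2$, and the limit $\mc{B}$ just described has the same $\Sigma_2$-theory as $\mc{A}$ without being isomorphic to it whenever (6) fails (e.g.\ the Harrison-Trainor--Ho group); so the lemma is equivalent to (3) itself and cannot be invoked. What is true is the tuple-level fact that if $\bar b\in\mc{B}$ realizes every $\Pi_2$ formula true of a generating tuple $\bar a$ (in particular the quantifier-free type and ``every element is a term in $\bar x$''), then $\bar a\mapsto\bar b$ is an isomorphism; but producing such a $\bar b$ from ``$\mc{B}$ models the $\Sigma_2$-theory'' is exactly where the $\Pi_3$ Scott sentence (or condition (6)) must be used, and your sketch never uses hypothesis (2) at all --- a red flag, since the implication fails without finite generation and fails for finitely generated structures of Scott complexity $\Sigma_3$.

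There are also local errors. In $(6)\Rightarrow(5)$ you work with the $\Sigma_1$-type of $\bar a$, but you need the $\Pi_1$ (universal) type: in $\mathbb{Z}$ the element $2$ satisfies every $\Sigma_1$ formula true of $1$ (push existential facts forward along the embedding $x\mapsto 2x$) yet is not automorphic to $1$, and $\mathbb{Z}$ satisfies (6); likewise your claim that the orbit of a generating tuple is automatically $\Sigma_1$-definable is false ($\{1,-1\}$ is not $\Sigma_1$-definable in $\mathbb{Z}$, since any $\Sigma_1$ formula true of $1$ is true of $2$), so the ``show the complement is $\Sigma_1$'' framing is off. The correct route is direct: if $\bar b$ realizes the conjunction $\Psi_{\bar a}$ of all universal formulas true of $\bar a$, then $\bar a\mapsto\bar b$ is an embedding whose image $\langle\bar b\rangle\cong\mc{A}$ is $\Sigma_1$-elementary in $\mc{A}$, so by (6) $\bar b$ generates and is automorphic to $\bar a$; thus $\Psi_{\bar a}$ itself is the desired $\Pi_1$ definition. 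Finally, in $(4)\Rightarrow(1)$ the sentence $\exists\bar x\,[\pi(\bar x)\wedge\psi(\bar x)]$ with $\psi$ of complexity $\Pi_2$ is $\Sigma_3$, not $\mathrm{d-}\Sigma_2$; the correct assembly is $(\exists\bar x\,\pi(\bar x))\wedge\forall\bar x(\pi(\bar x)\rightarrow\psi(\bar x))$, which works precisely because $\pi$ defines the orbit exactly. (Your $(1)\Rightarrow(2)$ and $(5)\Rightarrow(4)$ are fine.)
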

\noindent (2) is due to A. Miller \cite{AMiller}, (3) is due to Harrison-Trainor and Ho \cite{HTHo21}, (4) and (5) are due to Alvir, Knight, and McCoy \cite{AlvirKnightMcCoy}, and (6) is due to Harrison-Trainor and Ho \cite{HTHo18}. It is condition (6) which seems to be the most useful for constructing examples, as it has a more semantic flavour. Alvir, Knight, and McCoy also have a nice characterization of when a structure has a \textit{computable} $\mathrm{d-}\Sigma_2$ Scott sentence.
\begin{theorem}[Alvir, Knight, and McCoy \cite{AlvirKnightMcCoy}]\label{thm:akmchar}
	Let $\mc{A}$ be a finitely generated structure. The following are equivalent:
	\begin{enumerate}
		\item $\mc{A}$ has a computable $\mathrm{d-}\Sigma_2$ Scott sentence.
		\item some generating tuple of $\mc{A}$ is defined by a computable $\Pi_1$ formula.
		\item every generating tuple of $\mc{A}$ is defined by a computable $\Pi_1$ formula.
	\end{enumerate}
\end{theorem}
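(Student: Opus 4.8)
The plan is to run the cycle $(3)\Rightarrow(1)\Rightarrow(2)\Rightarrow(3)$, in which $(2)\Rightarrow(3)$ is an easy transfer between generating tuples, $(3)\Rightarrow(1)$ is a direct construction, and $(1)\Rightarrow(2)$ is an effective refinement of the corresponding part of Theorem~\ref{thm:char}. Throughout we may assume $\mc A$ is computable, since otherwise all three conditions fail; in particular the atomic diagram $D(\bar a)$ of any tuple $\bar a$ is computable, and the set of terms of the language is computable. For $(2)\Rightarrow(3)$: if the generating tuple $\bar a$ is defined by a computable $\Pi_1$ formula $\varphi(\bar x)$ and $\bar b$ is any generating tuple, choose terms with $\bar b=\bar t(\bar a)$ and $\bar a=\bar s(\bar b)$; then $\varphi(\bar s(\bar x))\wedge\bigl(\bar x=\bar t(\bar s(\bar x))\bigr)$ is again computable $\Pi_1$ — substituting terms into a computable $\Pi_1$ formula preserves the complexity, and the added conjunct is quantifier-free — and it defines the orbit of $\bar b$: if $\mc A\models\varphi(\bar s(\bar c))$ then some automorphism sends $\bar a\mapsto\bar s(\bar c)$, hence $\bar b=\bar t(\bar a)\mapsto\bar t(\bar s(\bar c))=\bar c$. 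Conversely $(3)\Rightarrow(2)$ is immediate, since a finitely generated structure has a generating tuple.

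For $(3)\Rightarrow(1)$, let $\varphi(\bar x)$ be a computable $\Pi_1$ formula defining the orbit of a generating tuple $\bar g$, and let $\gamma(\bar x):=\forall y\bigvee_t y=t(\bar x)$ (over all terms $t$) express that $\bar x$ generates. Put
\[\sigma_\Sigma:=\exists\bar x\,\varphi(\bar x),\qquad \sigma_\Pi:=\forall\bar x\Bigl(\varphi(\bar x)\to\bigl(\gamma(\bar x)\wedge\bigwedge D(\bar g)(\bar x)\bigr)\Bigr).\]
Then $\sigma_\Sigma$ is computable $\Sigma_2$, and $\sigma_\Pi$ is computable $\Pi_2$: $\neg\varphi$ is computable $\Sigma_1$, $\gamma$ is computable $\Pi_2$, a disjunction of computable $\Pi_2$ formulas is again computable $\Pi_2$ (using $\forall\bar u\,\alpha\vee\forall\bar v\,\beta\equiv\forall\bar u\,\bar v(\alpha\vee\beta)$ after renaming), and an outer universal quantifier is absorbed. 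Thus $\sigma_\Sigma\wedge\sigma_\Pi$ is computable $\mathrm{d-}\Sigma_2$; it holds of $\mc A$, and any countable $\mc B$ satisfying it has a witness $\bar b$ of $\sigma_\Sigma$, which by $\sigma_\Pi$ generates $\mc B$ and realizes $D(\bar g)$, so $t(\bar g)\mapsto t(\bar b)$ is an isomorphism $\mc A\to\mc B$.

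The substance is $(1)\Rightarrow(2)$. Given a computable $\mathrm{d-}\Sigma_2$ Scott sentence $\sigma=\sigma_\Sigma\wedge\sigma_\Pi$: since $\sigma$ is in particular a $\mathrm{d-}\Sigma_2$ Scott sentence, Theorem~\ref{thm:char} applies, so $\mc A$ has no proper $\Sigma_1$-elementary substructure isomorphic to itself, and the orbit of any generating tuple $\bar g$ equals $\{\bar c:\bar c\text{ and }\bar g\text{ have the same }\Pi_1\text{-type}\}$ — that is, it is defined by the $\Pi_1$-type of $\bar g$, which a priori need not be c.e. Write $\sigma_\Sigma=\bigvee_{i\in I}\exists\bar u_i\,\psi_i$ with $I$ c.e.\ and each $\psi_i$ computable $\Pi_1$, fix $i_0\in I$ and a witness $\bar d$ with $\mc A\models\psi_{i_0}(\bar d)$, and terms $\bar s$ with $\bar d=\bar s(\bar g)$; applying automorphisms, $\mc A\models\psi_{i_0}(\bar s(\bar g'))$ for every generating tuple $\bar g'$ in the orbit of $\bar g$. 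The proposed computable $\Pi_1$ definition is $\delta(\bar x):=\psi_{i_0}(\bar s(\bar x))\wedge\bigwedge D(\bar g)(\bar x)$, possibly augmented by further c.e.-many term-substitution instances of the $\psi_i$; note $\mc A\models\delta(\bar g)$. The task is to show $\delta$ isolates the orbit, i.e.\ that $\mc A\models\delta(\bar c)$ forces $\bar c$ to generate $\mc A$. From $\delta$ one gets that $\bar c$ has the atomic type of $\bar g$, so $\langle\bar c\rangle\cong\mc A$; and passing to a countable $\Sigma_2$-elementary substructure $\mc A_0\preceq_{\Sigma_2}\mc A$ containing $\bar s(\bar c)$, the formula $\psi_{i_0}$ descends to $\mc A_0$ since it is $\Pi_1$, so $\mc A_0\models\sigma_\Sigma$, while $\sigma_\Pi$ is $\Pi_2$ and hence absolute, so $\mc A_0\models\sigma$ and thus $\mc A_0\cong\mc A$; by the exclusion of proper $\Sigma_1$-elementary self-copies, $\mc A_0=\mc A$. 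It remains to upgrade ``$\bar c$ is in no proper $\Sigma_2$-elementary substructure'' to ``$\langle\bar c\rangle=\mc A$'' using finite generation of $\mc A$.

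This last upgrade is the main obstacle. A finitely generated structure can contain proper isomorphic substructures (for instance $2\mathbb Z\leq\mathbb Z$), so a $\psi_{i_0}$-witness, or a tuple satisfying the naive $\delta$, need not generate $\mc A$; the only thing blocking this pathology is Theorem~\ref{thm:char}'s exclusion of proper $\Sigma_1$-\emph{elementary} copies, and exploiting it forces a genuine effectivity analysis. The difficulty is that the canonical $\Pi_1$ definition of a generating tuple is its full $\Pi_1$-type, which in general is only $\Pi^0_2$, so one must verify that the c.e.\ fragment of this type extractable from the disjuncts $\psi_i$ of the given computable $\sigma_\Sigma$, together with the atomic diagram, already separates $\bar g$ from every non-generating tuple of the same atomic type — equivalently, that $\delta$ (with at most c.e.-many extra conjuncts) suffices. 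The remaining directions $(2)\Leftrightarrow(3)$ and $(3)\Rightarrow(1)$ are routine once one keeps careful track of the computable hierarchy of infinitary formulas.
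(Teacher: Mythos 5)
Your directions $(2)\Rightarrow(3)$ and $(3)\Rightarrow(1)$ are essentially right: the term-substitution transfer between generating tuples is correct, and the sentence $\exists\bar x\,\varphi(\bar x)\;\wedge\;\forall\bar x(\varphi(\bar x)\to(\gamma(\bar x)\wedge\bigdoublewedge D(\bar g)(\bar x)))$ is indeed a computable $\mathrm{d-}\Sigma_2$ Scott sentence, with the complexity bookkeeping as you describe (one side remark: your opening claim that if $\mc{A}$ is not computable then ``all three conditions fail'' is not justified and is really a standing hypothesis of the theorem — $D(\bar g)$ must be c.e.\ for your $(3)\Rightarrow(1)$ to go through). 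But the theorem's content is $(1)\Rightarrow(2)$, and there you have an acknowledged, genuine gap. The candidate $\delta(\bar x)=\psi_{i_0}(\bar s(\bar x))\wedge\bigdoublewedge D(\bar g)(\bar x)$ is the right kind of object, but the argument you sketch to verify it cannot work as stated: ``$\sigma_\Pi$ is $\Pi_2$ and hence absolute'' is false for passage to substructures, and passing to a $\Sigma_2$-elementary substructure of the already countable $\mc{A}$ tells you nothing about $\langle\bar c\rangle$, which need not be even $\Sigma_1$-elementary in $\mc{A}$ — that is precisely the point at issue, and you correctly flag that you cannot close it. Moreover, a one-step verification of $\delta$ is genuinely obstructed: from $\mc{A}\models\psi_{i_0}(\bar s(\bar c))$ and the self-embedding $h$ with $h(\bar g)=\bar c$ you cannot conclude $\mc{A}\models\psi_{i_0}(\bar s(h(\bar c)))$, since $\Pi_1$ formulas only persist downward, so the hypothesis does not iterate along $h$.

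The missing idea is to prove the contrapositive by a chain construction in which the embeddings are chosen stage by stage against \emph{composed} term-substituted instances of the chosen disjunct, rather than fixed in advance. Suppose no computable $\Pi_1$ formula defines the orbit of $\bar g$, and let $\varphi\wedge\psi$ be a purported computable $\mathrm{d-}\Sigma_2$ Scott sentence with disjunct witness $\bar d=\bar p(\bar g)\models\psi_{i_0}$. Recursively define computable $\Pi_1$ formulas $\theta_m(\bar x):=\psi_{i_0}(\bar p\,\bar r_1\cdots\bar r_{m-1}(\bar x))\wedge\bigdoublewedge D(\bar g)(\bar x)$, where $\bar r_k$ are terms with $\bar s_k=\bar r_k(\bar g)$ for the tuple $\bar s_k$ chosen at stage $k$. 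By induction each $\theta_m$ is true of $\bar g$, so by hypothesis it does not define the orbit, and any realizer outside the orbit is automatically non-generating (a generating realizer of $D(\bar g)$ would give an automorphism); choose such an $\bar s_m$. Using the embeddings $\bar g\mapsto\bar s_m$ one assembles a strictly increasing $\omega$-chain of copies of $\mc{A}$ whose union $\mc{C}$ satisfies $\psi$ (infinitary $\Pi_2$ sentences are preserved under unions of chains), satisfies $\varphi$ via the bottom-copy witness $\bar d$ (the conditions $\mc{A}\models\theta_m(\bar s_m)$ are exactly what is needed for $\psi_{i_0}$ to hold of that witness in every level of the chain), and is not finitely generated, hence $\mc{C}\not\cong\mc{A}$ — contradicting that $\varphi\wedge\psi$ is a Scott sentence. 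This is the step your proposal identifies but does not supply; without it, the equivalence is not established. (The survey states this theorem with a citation to Alvir–Knight–McCoy and gives no proof, so the comparison here is with the argument the theorem requires rather than with a proof in the text.)
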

\noindent There are indeed computable structures with a $\mathrm{d-}\Sigma_2$ Scott sentence, but no computable $\mathrm{d-}\Sigma_2$ Scott sentence; for example Theorem 2.8 of \cite{HTfp} gives an example of a computable finitely generated principal ideal domain which, as a module over itself, has a $\mathrm{d-}\Sigma_2$ Scott sentence but no computable $\mathrm{d-}\Sigma_2$ Scott sentence.

These general tools give us a way to study Scott complexity in other classes of finitely generated structures. The known result are shown in the table below. Those results which have not yet been cited are due to Harrison-Trainor and Ho \cite{HTHo18} and Harrison-Trainor \cite{HTfp}.

\medskip

\noindent \begin{tabular}{c|c}
	Some structure has no $\mathrm{d-}\Sigma_2$ Scott sentence & Every structure has a $\mathrm{d-}\Sigma_2$ Scott sentence \\\hline
	Rings & Commutative rings \\
	Modules & Modules over Noetherian rings \\
	& Fields \\
	& Vector spaces \\
	Groups & Abelian groups \\
	& Free groups \\
	& Torsion-free hyperbolic groups
\end{tabular}

\medskip

\noindent One important class of structures for which we do not know the answer is finitely presented groups.

\begin{question}[Question 1.7 of \cite{HTHo18} and Question 1 of \cite{AlvirKnightMcCoy}]
	Does every finitely presented group have a $\mathrm{d-}\Sigma_2$ Scott sentence?
\end{question}

\noindent The best progress towards answering this question is in \cite{HTfp}. There, Harrison-Trainor proves the following purely group-theoretic characterization of the finitely presented groups with a $\mathrm{d-}\Sigma_2$ Scott sentence:

\begin{theorem}[Harrison-Trainor \cite{HTfp}]\label{thm:fin-pres-group}
	Let $G$ be a finitely presented group. Then the following are equivalent:
	\begin{enumerate}
		\item $G$ does not have a $\mathrm{d-}\Sigma_2$ Scott sentence,
		\item $G$ contains a proper subgroup $H \cong G$ with the property that for every finite set of non-identity elements $a_1,\ldots,a_n \in G$, there is a normal subgroup $H' \subseteq G$ such that $G = H' \rtimes H$ and $a_1,\ldots,a_k \notin H'$.
	\end{enumerate} 
\end{theorem}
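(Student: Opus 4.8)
The plan is to reduce Theorem~\ref{thm:fin-pres-group} to the equivalence of conditions (1) and (6) in Theorem~\ref{thm:char}: a finitely generated structure fails to have a $\mathrm{d-}\Sigma_2$ Scott sentence exactly when it contains a copy of itself as a proper $\Sigma_1$-elementary substructure. So it suffices to show that, for a finitely presented group $G$, the existence of a proper subgroup $H \cong G$ that is a $\Sigma_1$-elementary substructure of $G$ is equivalent to condition (2). Throughout, fix a finite presentation $G = \langle \bar g \mid r_1,\dots,r_p\rangle$ with $\bar g = (g_1,\dots,g_m)$. The conceptual heart of the argument is a translation: a subgroup $H \leq G$ is a $\Sigma_1$-elementary substructure if and only if, for every finite list of non-identity elements of $G$ that could appear as the ``negative part'' of a witnessed existential formula, $G$ retracts onto $H$ with none of those elements in the kernel; and the data of a retraction $G \to H$ with kernel $H'$ is precisely the data of a semidirect decomposition $G = H' \rtimes H$. (A retraction preserves equations but not inequations, which is exactly why the clause about avoiding the $a_i$ appears in (2).)

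$(2)\Rightarrow(1)$. Suppose $H \leq G$ is proper, $H \cong G$, and satisfies the property in (2). A $\Sigma_1$ formula is a disjunction of existential formulas $\exists \bar y\,\psi(\bar x,\bar y)$ with $\psi$ quantifier-free, so it suffices to handle one such disjunct, and the only nontrivial point is that if it holds in $G$ at parameters from $H$ then it holds in $H$ (the reverse implication is automatic because quantifier-free formulas are absolute between a substructure and the ambient structure). So let $\exists \bar y\,\psi(\bar h,\bar y)$ hold in $G$ with $\bar h \in H$, witnessed by $\bar b \in G$; passing to a disjunct after putting $\psi$ in disjunctive normal form, we may take $\psi$ to be a conjunction of equations $v_j(\bar h,\bar b)=1$ and inequations $w_i(\bar h,\bar b)\neq 1$. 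The elements $a_i := w_i(\bar h,\bar b)$ are non-identity, so (2) supplies $H'\trianglelefteq G$ with $G = H' \rtimes H$ and each $a_i\notin H'$; let $\pi\colon G\to H$ be the associated retraction (projection along $H'$), which fixes $H$ pointwise. Then $\pi(\bar b)\in H$, and $v_j(\bar h,\pi(\bar b)) = \pi(v_j(\bar h,\bar b)) = 1$ while $w_i(\bar h,\pi(\bar b)) = \pi(a_i)\neq 1$, so $\pi(\bar b)$ witnesses $\exists\bar y\,\psi(\bar h,\bar y)$ in $H$. Hence $H$ is a proper copy of $G$ that is a $\Sigma_1$-elementary substructure, and Theorem~\ref{thm:char} gives (1).

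$(1)\Rightarrow(2)$. By Theorem~\ref{thm:char} there is a proper $H\leq G$ with $H\cong G$ that is a $\Sigma_1$-elementary substructure; I claim this $H$ witnesses (2). Fix an isomorphism $\phi\colon G\to H$, put $\bar h = \phi(\bar g)$ (a generating tuple of $H$), and write $h_k = u_k(\bar g)$ for words $u_k$. Given non-identity $a_1,\dots,a_n\in G$, write $a_i = w_i(\bar g)$ and consider the existential formula
\[ \varphi(\bar x) := \exists\bar y\,\Bigl( \textstyle\bigwedge_{j=1}^{p} r_j(\bar y)=1 \ \wedge\ \bigwedge_{k=1}^{m} u_k(\bar y)=x_k \ \wedge\ \bigwedge_{i=1}^{n} w_i(\bar y)\neq 1 \Bigr). \]
Taking $\bar y=\bar g$ shows $G\models\varphi(\bar h)$, so $\Sigma_1$-elementarity gives $\bar t\in H$ with $r_j(\bar t)=1$, $u_k(\bar t)=h_k$, and $w_i(\bar t)\neq 1$. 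Since $G$ is finitely presented, the universal property of the presentation yields a homomorphism $\pi\colon G\to G$ with $\pi(\bar g)=\bar t$. Now $\pi(G)=\langle\bar t\rangle\subseteq H$, and $\bar h\subseteq\langle\bar t\rangle$ because $u_k(\bar t)=h_k$, so $\pi(G)=H$; also $\pi(h_k)=\pi(u_k(\bar g))=u_k(\bar t)=h_k$, so $\pi$ fixes $H$ pointwise and is thus a retraction of $G$ onto $H$. Setting $H':=\ker\pi$ we get $H'\trianglelefteq G$ and $G=H'\rtimes H$, and $\pi(a_i)=w_i(\bar t)\neq 1$ forces $a_i\notin H'$. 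Since the $a_i$ were arbitrary, $H$ witnesses (2).

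The step I expect to be the main obstacle is the formula $\varphi$ in the $(1)\Rightarrow(2)$ direction, and more precisely seeing why finite presentability (not just finite generation) is essential: it is exactly finite presentability that lets the clause $\bigwedge_{j} r_j(\bar y)=1$ be absorbed into a single existential formula, so that the witness tuple $\bar t$ returned by $\Sigma_1$-elementarity automatically defines a homomorphism $G\to G$; for a merely finitely generated group, ``$\bar t$ satisfies all relations of $\bar g$'' is a $\Pi_1$-type condition, not a $\Sigma_1$ one. One must also be attentive to the quantifier bookkeeping throughout — keeping track of negated atomic subformulas of $\Sigma_1$ formulas, since these are what dictate the ``$a_i\notin H'$'' requirement, and reducing to single disjuncts and conjunctions of literals before invoking the semidirect decomposition. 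The remaining verifications (that $\pi$ is a retraction onto $H$, that $G=H'\rtimes H$, that existential formulas transfer upward from a substructure) are routine.
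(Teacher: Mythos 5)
Your proposal is correct, and it follows essentially the same route as the cited proof in \cite{HTfp} (which the survey only states): reduce to condition (6) of Theorem \ref{thm:char} and translate $\Sigma_1$-elementarity of a proper copy $H\cong G$ into the retraction/semidirect-product condition, using finite presentability (von Dyck) to turn a witness tuple satisfying the relators into a retraction $G\to H$ avoiding the prescribed elements in its kernel. The quantifier bookkeeping and the identification of where finite presentability (as opposed to finite generation) enters are exactly the right points, so nothing further is needed.
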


\noindent One can immediately see from this that any finitely presented Hopfian group has a $\mathrm{d-}\Sigma_2$ Scott sentence. (A Hopfian group is one which is not isomorphic to any of its proper quotients.) This includes cases such as finitely generated abelian groups and free groups for which we already knew this, as well as a few new examples such as finitely presented residually finite groups and torsion-free hyperbolic groups. We can also show that the Baumslag-Solitar groups $B(1,n)$, which were originally constructed specifically as examples of non-Hopfian finitely generated groups, have $\mathrm{d-}\Sigma_2$ Scott sentences.

Paolini \cite{Paolini} gave a sufficient condition, a weakening of Hopfianity, for an arbitrary finitely presented structure to have a $\mathrm{d-}\Sigma_2$ Scott sentence. We say that $\mc{A}$ is quasi-Hopfian if there is a finite generating set $\bar{a}$ of $\mc{A}$ such that whenever $f\colon \mc{A} \to \mc{A}$ is a surjective homomorphism of $\mc{A}$ which is injective on $\bar{a}$, $f$ is injective.

\begin{theorem}[Paolini \cite{Paolini}]
	Let $\mc{A}$ be a finitely presented structure. If $\mc{A}$ is \textit{quasi-Hopfian}, then $\mc{A}$ has a $\mathrm{d-}\Sigma_2$ Scott sentence.
\end{theorem}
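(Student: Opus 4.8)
The plan is to use the characterization in Theorem~\ref{thm:char}, specifically the equivalence of conditions (1) and (6): it suffices to show that $\mc{A}$ does not contain a copy of itself as a proper $\Sigma_1$-elementary substructure, and then (1) supplies the desired $\mathrm{d-}\Sigma_2$ Scott sentence. First I would arrange for a single finite generating tuple to serve both roles. If $\bar{a}_0$ witnesses a finite presentation of $\mc{A}$ and $\bar{a}_1$ is the finite generating tuple witnessing quasi-Hopfianity, then $\bar{a}_1$ is a tuple of terms in $\bar{a}_0$, so $\mc{A}$ is also finitely presented on the enlarged tuple $\bar{a} := \bar{a}_0 \bar{a}_1$ (keep the old relations and add the equations expressing $\bar{a}_1$ in terms of $\bar{a}_0$), and it remains quasi-Hopfian via $\bar{a}$. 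Fix such an $\bar{a}$ together with a finite conjunction of atomic formulas $\rho(\bar{x})$ presenting $\mc{A}$ on $\bar{a}$: $\mc{A} \models \rho(\bar{a})$, and whenever $\mc{C} = \langle \bar{c} \rangle$ satisfies $\rho(\bar{c})$ there is a surjective homomorphism $\mc{A} \to \mc{C}$ sending $\bar{a} \mapsto \bar{c}$.

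Suppose for contradiction that $\mc{B} \subsetneq \mc{A}$ is a proper $\Sigma_1$-elementary substructure with an isomorphism $g \colon \mc{A} \to \mc{B}$, and set $\bar{b} := g(\bar{a}) \in \mc{B}$, so $\langle \bar{b} \rangle = \mc{B}$. Since $\mc{A} = \langle \bar{a} \rangle$, write $\bar{b} = \bar{t}(\bar{a})$ for a tuple of terms $\bar{t}$. The heart of the argument is to reflect a generating tuple for $\mc{A}$ down into $\mc{B}$ using the $\Sigma_1$ formula
\[
	\chi(\bar{z}) \;:=\; \exists \bar{y}\, \bigl( \rho(\bar{y}) \wedge \delta(\bar{y}) \wedge \bar{z} = \bar{t}(\bar{y}) \bigr),
\]
where $\delta(\bar{y})$ is the finite quantifier-free formula recording exactly which pairs of coordinates of $\bar{y}$ are equal, chosen so that $\mc{A} \models \delta(\bar{a})$; everything under the quantifier is finitary and quantifier-free, so $\chi$ really is $\Sigma_1$. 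Now $\mc{A} \models \chi(\bar{b})$ with witness $\bar{y} = \bar{a}$, and $\bar{b} \in \mc{B}$, so $\Sigma_1$-elementarity gives a witness $\bar{a}' \in \mc{B}$ with $\mc{B} \models \rho(\bar{a}') \wedge \delta(\bar{a}')$ and $\bar{t}(\bar{a}') = \bar{b}$; these being quantifier-free, they also hold in $\mc{A}$. Since $\bar{b} = \bar{t}(\bar{a}') \in \langle \bar{a}' \rangle \subseteq \mc{B} = \langle \bar{b} \rangle$, we have $\langle \bar{a}' \rangle = \mc{B}$, so the presentation yields a surjective homomorphism $p \colon \mc{A} \to \mc{B}$ with $p(\bar{a}) = \bar{a}'$. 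Put $q := g^{-1} \circ p \colon \mc{A} \to \mc{A}$. Then $q$ is a surjective homomorphism, and it is injective on $\bar{a}$: $g^{-1}$ is injective and $\bar{a}'$ has the same equality pattern as $\bar{a}$ because it satisfies $\delta$. By quasi-Hopfianity $q$ is injective, hence an automorphism of $\mc{A}$. But a short computation using $\bar{t}(\bar{a}') = \bar{b} = g(\bar{a})$ gives $q(\bar{b}) = g^{-1}(\bar{t}(\bar{a}')) = g^{-1}(\bar{b}) = \bar{a}$, so $q$ carries the proper substructure $\mc{B} = \langle \bar{b} \rangle$ onto $\langle \bar{a} \rangle = \mc{A}$; this contradicts injectivity of $q$, since an injection sends proper subsets to proper subsets. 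Hence no such $\mc{B}$ exists, and $\mc{A}$ has a $\mathrm{d-}\Sigma_2$ Scott sentence.

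The main obstacle, and the place where the hypotheses are used, is getting the reflection step exactly right. Finite presentability is precisely what forces $\chi$ to be $\Sigma_1$: for a merely finitely generated structure one would have to require $\bar{y}$ to satisfy the full positive atomic diagram of $\bar{a}$, an infinite conjunction, which makes the corresponding formula $\Sigma_2$ and lands us at a level where we have no elementarity. Likewise, the passage from $\mc{B}$ being a substructure to $\mc{B} \preceq_{\Sigma_1} \mc{A}$ is exactly what produces the witness $\bar{a}'$ inside $\mc{B}$, so the ``hard'' direction of $\Sigma_1$-elementarity is essential and the ``easy'' direction is not used at all. The remaining points to check are routine: that finite presentability is inherited under the initial enlargement of the generating tuple, that ``substructure generated by a set'' is absolute between $\mc{B}$ and $\mc{A}$, and that the universal property of $\langle \bar{a} \mid \rho \rangle$ genuinely applies to $\bar{a}'$, which it does once one knows $\langle \bar{a}' \rangle = \mc{B}$ and $\mc{A} \models \rho(\bar{a}')$.
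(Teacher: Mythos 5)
Your proof is correct, but note that the survey states this theorem without proof (it is cited from Paolini), so there is no in-paper argument to match; judged on its own, your derivation goes through. You reduce to condition (6) of Theorem \ref{thm:char} (no proper $\Sigma_1$-elementary self-embedded copy), and the reflection step is exactly right: finite presentability makes the formula $\chi$ genuinely $\Sigma_1$, the hard direction of $\Sigma_1$-elementarity pulls a tuple $\bar{a}'$ satisfying the defining relations and the equality type of $\bar{a}$ down into $\mc{B}$ with $\bar{t}(\bar{a}')=\bar{b}$, the universal property of the presentation gives the surjection $p\colon\mc{A}\to\mc{B}$, and quasi-Hopfianity applied to $q=g^{-1}\circ p$ yields the contradiction with properness (the merging of the presentation tuple and the quasi-Hopfian witness into one tuple is also fine, and in fact you only need $q$ injective on the original witness $\bar{a}_1$, which is contained in $\bar{a}$). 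Two small points: $\delta$ must record the \emph{in}equalities $y_i\neq y_j$ for distinct entries of $\bar{a}$ (that negative part is what forces $\bar{a}'$ to have no extra collapses and hence makes $q$ injective on $\bar{a}$); and the clause ``hence an automorphism'' is unneeded (and in a relational language a bijective endomorphism need not be one), but your contradiction only uses injectivity, so nothing breaks. For comparison with the literature: Paolini's own treatment is aimed at the computable refinements as well, and so is organized around the generating-tuple criteria, i.e., conditions (4)/(5) of Theorem \ref{thm:char} and their effective analogue Theorem \ref{thm:akmchar} (a generating tuple with a ($\mathrm{computable}$) $\Pi_1$-definable orbit), whereas your route through (6) is shorter for the boldface statement but does not by itself give computable $\mathrm{d-}\Sigma_2$ Scott sentences; the combinatorial core --- reflecting the finitely many relations together with the equality type of the generators and then invoking quasi-Hopfianity --- is the same in both approaches.
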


\noindent Paolini also gives a condition on the automorphism group of a quasi-Hopfian computable structure which is sufficient to have a computable $\mathrm{d-}\Sigma_2$ Scott sentence. Paolini applies this to show that the free projective plane of rank 4, which is quasi-Hopfian but not Hopfian, has a computable $\textrm{d-}\Sigma_2$ Scott sentence. He also considers Coxeter groups of finite rank; these are Hopfian, and hence have $\mathrm{d-}\Sigma_2$ Scott sentences, but it takes more work to show that these Scott sentences are computable.

\medskip

The complexity of saying that a structure is finitely generated is $\Sigma_3$. When a finitely generated structure has Scott complexity $\Sigma_3$, is this just because it is $\Sigma_3$ to say that it is finitely generated? Or is the complexity of picking out the structure from among other finitely generated structures actually $\Sigma_3$? We can ask the analogous question whenever the complexity of a structure in a class $\mc{K}$ is the same or less than the complexity of $\mc{K}$. For such cases, we would like to separate the complexity of describing the class of structure from the complexity of describing the structure inside of the class.

\begin{definition}
	Let $\mc{A}$ be a structure in a class $\mc{K}$. We say that $\varphi$ is a \textit{quasi Scott sentence for $\mc{A}$ within $\mc{K}$} if, for countable $\mc{B} \in \mc{K}$,
	\[ \mc{B} \models \varphi \Longleftrightarrow \mc{B} \cong \mc{A}.\]
	The \textit{Scott complexity of $\mc{A}$ within $\mc{K}$} is $\Gamma \in \{\Sigma_\alpha,\Pi_\alpha,\mathrm{d-}\Sigma_\alpha\}$ if $\Gamma$ is the least complexity of a quasi Scott sentence for $\mc{A}$ within $\mc{K}$.
\end{definition}

\noindent To prove an upper bound, we just need to exhibit a quasi Scott sentence. We again prove lower bounds using a reduction. For example, to show that $\mc{A}$ has Scott complexity at least $\Sigma_\alpha$ within $\mc{K}$, we give a Wadge reduction $f$ from a $\bfSigma^0_\alpha$-complete set $S$ to $\Iso(\mc{A})$ with the additional property that $f(x) \in \mc{K}$ for all $x$.

For finitely generated structures, we have the following interesting result:

\begin{theorem}[Harrison-Trainor and Ho \cite{HTHo21}]
	Every finitely generated structure has a $\Pi_3$ quasi-Scott sentence within the class of finitely generated structures.
\end{theorem}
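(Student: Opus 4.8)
The plan is to produce, for each finitely generated structure $\mc{A}$, a $\Pi_3$ sentence $\varphi$ whose only finitely generated model is $\mc{A}$. Crucially, one should not hope to make $\varphi$ a genuine Scott sentence: by the examples already discussed, some finitely generated structures have Scott complexity $\Sigma_3$, and the extra models witnessing this are never finitely generated. So $\varphi$ only needs to be correct on finitely generated structures, and on the rest it may behave trivially.

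The key step is a characterization. For each generating tuple $\bar c$ of $\mc{A}$, let $\mathrm{At}_{\bar c}(\bar x)$ denote the (infinitary, quantifier-free) atomic diagram of $\bar c$, i.e.\ the conjunction of all atomic and negated atomic formulas $\theta(\bar x)$ with $\mc{A}\models\theta(\bar c)$. I claim that, for a \emph{finitely generated} structure $\mc{B}$, we have $\mc{B}\cong\mc{A}$ if and only if every generating tuple $\bar b$ of $\mc{B}$ satisfies $\mathrm{At}_{\bar c}$ for some generating tuple $\bar c$ of $\mc{A}$ with $|\bar c|=|\bar b|$. For the forward direction, an isomorphism sends a generating tuple of $\mc{B}$ to a generating tuple of $\mc{A}$ with the same atomic diagram. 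For the backward direction, $\mc{B}$ is finitely generated, so it has at least one generating tuple $\bar b$; by hypothesis $\mc{B}\models\mathrm{At}_{\bar c}(\bar b)$ for some generating tuple $\bar c$ of $\mc{A}$ of the same length, and since $\mathrm{At}_{\bar c}$ pins down the quantifier-free type completely, the map $t(\bar b)\mapsto t(\bar c)$, over all terms $t$, is well defined and injective (the relevant equalities and inequalities are recorded in $\mathrm{At}_{\bar c}$), surjective (because $\bar c$ generates $\mc{A}$), and preserves all relations and their complements, hence is an isomorphism.

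Next I would turn this into a $\Pi_3$ sentence. Let $\mathrm{Gen}_k(\bar x):=\forall y\,\bigvee_t\,(y=t(\bar x))$, the disjunction being over the countably many terms $t$ in $k$ variables, express that the $k$-tuple $\bar x$ generates the structure; since $y=t(\bar x)$ is $\Sigma_0$ and an infinite disjunction of $\Sigma_0$ formulas is $\Sigma_1$, $\mathrm{Gen}_k$ is $\Pi_2$ and $\neg\mathrm{Gen}_k$ is $\Sigma_2$. Each $\mathrm{At}_{\bar c}$ is a countable conjunction of $\Sigma_0$ formulas, hence $\Pi_1$, so a disjunction of such (equivalently, a disjunction over the countably many atomic diagrams of generating $k$-tuples of $\mc{A}$) is $\Sigma_2$. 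Therefore
\[
\varphi\ :=\ \bigwedge_{k\geq 1}\ \forall\bar x\ \Bigl(\mathrm{Gen}_k(\bar x)\ \longrightarrow\ \bigvee_{\bar c}\mathrm{At}_{\bar c}(\bar x)\Bigr),
\]
where the inner disjunction ranges over generating $k$-tuples $\bar c$ of $\mc{A}$, has a $\Sigma_2$ matrix, so each conjunct is $\Pi_3$ and $\varphi$ itself is $\Pi_3$. If $\mc{A}$ has no generating tuple of length $k$ the $k$-th disjunction is empty and that conjunct simply says that no $k$-tuple generates the structure, which is harmless and indeed is needed to rule out finitely generated $\mc{B}$ requiring too few generators.

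Finally, $\mc{A}\models\varphi$ since any generating $k$-tuple $\bar c$ of $\mc{A}$ satisfies $\mathrm{At}_{\bar c}(\bar c)$, a disjunct of the $k$-th conjunct; and if $\mc{B}$ is finitely generated with $\mc{B}\models\varphi$, then every generating tuple of $\mc{B}$ satisfies some $\mathrm{At}_{\bar c}$, so $\mc{B}\cong\mc{A}$ by the characterization, while the converse is immediate from isomorphism-invariance. Thus $\varphi$ is a $\Pi_3$ quasi-Scott sentence for $\mc{A}$ within the class of finitely generated structures. I expect the main obstacle to be getting the syntactic bookkeeping exactly right — in particular, recognizing that ``$\bar x$ generates the structure'' is only $\Pi_2$, which is precisely what keeps $\varphi$ within $\Pi_3$ — together with the correct handling of the various possible numbers of generators (handled by conjoining over all $k$), and the observation that a structure which is not finitely generated satisfies $\varphi$ vacuously, so that $\varphi$ need not be a Scott sentence and there is no conflict with the existence of finitely generated structures of Scott complexity $\Sigma_3$.
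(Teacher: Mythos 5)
Your proof is correct, and it is essentially the standard argument for this result (the survey states the theorem without proof, citing \cite{HTHo21}, and the sentence used there is the same one you construct): a $\Pi_3$ conjunction over $k$ of statements ``every generating $k$-tuple realizes the quantifier-free type of some generating $k$-tuple of $\mc{A}$,'' with the isomorphism recovered from the term-map $t(\bar b) \mapsto t(\bar c)$. Your complexity bookkeeping (generation being $\Pi_2$, the matrix $\Sigma_2$, hence each conjunct $\Pi_3$) and your handling of lengths $k$ for which $\mc{A}$ has no generating tuple are both right.
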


\noindent Recall that this means that for each finitely generated structure $\mc{A}$, there is a $\Pi_3$ sentence $\varphi$ such that $\mc{A}$ is the only finitely generated model of $\varphi$. In \cite{HTHo21}, Harrison-Trainor and Ho prove that there are finitely generated groups with no $\mathrm{d-}\Sigma_2$ quasi Scott sentence within the class of finitely generated structures. Thus the analogue for quasi Scott sentences of Theorem \ref{thm:akm} is not true: if a structure has a $\Sigma_3$ and a $\Pi_3$ quasi Scott sentence within the class of finitely generated structure, it is not necessarily true that it has a $\mathrm{d-}\Sigma_2$ quasi Scott sentence. There are two questions that are still open about quasi Scott sentences:

\begin{question}[Question 5.7 of \cite{HTHo18}]
	Give a characterization of the finitely generated structures which have a $\mathrm{d-}\Sigma_2$ quasi Scott sentence within the class of finitely generated structures.
\end{question}

\begin{question}[See Section 5.3 of \cite{HTHo21}]
	Is there a finitely generated structure with a $\mathrm{d-}\Sigma_2$ quasi Scott sentence within the class of finitely generated structures, but no $\mathrm{d-}\Sigma_2$ Scott sentence?
\end{question}

Recall from Theorem \ref{thm:free-gp} that the free group $F_\infty$ on infinitely many generators has Scott complexity $\Pi_4$. McCoy and Walbaum \cite{McCoyWallbaum} show that the class of free groups is axiomatizable by a $\Pi_4$ sentence. Within this class, $F_\infty$ is simpler to describe. ($F_1$ and $F_2$ also become simpler to describe within this class.)

\begin{theorem}[\cite{CHKLMMMQW}, \cite{McCoyWallbaum}]\label{thm:free-gp2}{\ }\\
	Let $\mc{FG}$ be the class of free groups.
	\begin{enumerate}
		\item The free group $F_1 \cong \mathbb{Z}$ on one generator has Scott complexity $\Pi_1$ within $\mc{FG}$.
		\item The free group $F_2$ on two generators has Scott complexity $\Pi_2$ within $\mc{FG}$.
		\item For $n > 2$, the free group $F_n$ on $n$ generators has Scott complexity $\mathrm{d-}\Sigma^0_2$ within $\mc{FG}$.
		\item The free group $F_\infty$ on infinitely many generators has Scott complexity $\Pi_3$ within $\mc{FG}$.
	\end{enumerate}
\end{theorem}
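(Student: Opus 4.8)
The plan is to prove, for each of the four cases, a quasi-Scott sentence within $\mc{FG}$ of the stated complexity (the upper bound) together with a Wadge reduction into $\mc{FG}$ from a complete set of the dual pointclass (the lower bound, which by the discussion preceding the theorem shows optimality). Throughout, the governing fact is that a free group is classified up to isomorphism by its rank and that $F_m/[F_m,F_m]\cong\mathbb{Z}^m$; hence for $F\in\mc{FG}$, ``$\operatorname{rank}(F)\ge n$'' is equivalent to ``there are $x_1,\dots,x_n\in F$ whose images in $F/[F,F]$ are $\mathbb{Z}$-linearly independent''. I would begin by pinning down the logical complexity of this condition.

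For the upper bounds, first note that ``$w$ is a product of commutators'' is a $\Sigma_1$ formula, namely the countable disjunction over $m$ of $\exists\, a_1\, b_1\cdots a_m\, b_m\,\bigl(w=[a_1,b_1]\cdots[a_m,b_m]\bigr)$. Hence ``$x_1,\dots,x_n$ are independent modulo $[F,F]$'' — i.e.\ $x_1^{k_1}\cdots x_n^{k_n}$ is not a product of commutators for any nonzero $(k_1,\dots,k_n)\in\mathbb{Z}^n$ — is $\Pi_1$, so ``$\operatorname{rank}(F)\ge n$'' is $\Sigma_2$ and its negation ``$\operatorname{rank}(F)\le n-1$'' is $\Pi_2$. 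The upper bounds now follow. For $F_1$: ``$F$ is abelian'' is $\Pi_1$ and within $\mc{FG}$ holds only of $\mathbb{Z}$. For $F_\infty$: $\bigdoublewedge_n(\operatorname{rank}(F)\ge n)$ is a countable conjunction of $\Sigma_2$ sentences, hence $\Pi_3$, and within $\mc{FG}$ holds only of $F_\infty$. For $F_n$ with $n\ge 3$: the conjunction of the $\Sigma_2$ sentence ``$\operatorname{rank}(F)\ge n$'' with the $\Pi_2$ sentence ``$\operatorname{rank}(F)\le n$'' is $\mathrm{d-}\Sigma_2$ and within $\mc{FG}$ isolates $F_n$. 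The case $F_2$ is where the argument gains: ``$\operatorname{rank}(F)\ge 2$'' may be replaced by the $\Sigma_1$ (hence, semantically, $\Pi_2$) sentence ``$F$ is nonabelian'', so $(\text{nonabelian})\wedge(\operatorname{rank}(F)\le 2)$ is $\Pi_2$ and within $\mc{FG}$ isolates $F_2$. This economy, available only for $n=2$, is exactly why $F_2$ is strictly simpler than the $F_n$ with $n\ge 3$.

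For the lower bounds I would construct, continuously in the input, a computable free group $G_x$ whose rank encodes a complete set. For $F_1$, reduce the closed-complete set $\{x:x\equiv 0\}$ by starting from $\langle a\rangle$ and permanently adjoining a second free generator the first time $x$ takes a nonzero value, so that $G_x\cong F_1$ exactly when $x\equiv 0$ and $G_x\cong F_2$ otherwise; since the complement of $\Iso(F_1)$ within $\mc{FG}$ is the open set ``nonabelian'', this shows $\Iso(F_1)$ is exactly $\bfPi^0_1$. For $F_\infty$, reduce a $\bfPi^0_3$-complete set such as $\{x\in\omega^\omega:\lim_s x(s)=\infty\}$ by a layered construction in which layer $i$ carries a free generator that is refreshed while $x(s)\ge i$ and discarded when $x(s)<i$, arranged so that the final rank of $G_x$ equals $\liminf_s x(s)$ (with value $\infty$ permitted); then $G_x\cong F_\infty$ iff the input lies in the complete set. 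Suitable variants of the layered construction, capping the count of surviving generators and keeping the bottom layers permanently alive, are then used to witness that $\Iso(F_2)$ is $\bfPi^0_2$-hard within $\mc{FG}$ and that $\Iso(F_n)$ is both $\bfSigma^0_2$- and $\bfPi^0_2$-hard — hence $\mathrm{d-}\bfSigma^0_2$-hard — for $n\ge 3$. Together with the upper bounds this determines all four Scott complexities.

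The main obstacle is the bookkeeping in these lower-bound constructions. A free generator, once its freeness has been exposed, cannot later be made dependent within a continuous construction on $\Mod(\mc{L})$, so ``discarding'' a generator must be realized indirectly — introducing fresh elements and only belatedly committing the collapsing equations — and one must verify both that each $G_x$ is genuinely free (not merely locally free) of the intended rank and that $x\mapsto G_x$ is continuous. Calibrating the constructions so that they land inside $\mc{FG}$ and hit precisely the target pointclass — in particular obtaining $\bfPi^0_2$-hardness but \emph{not} $\bfSigma^0_2$-hardness for $F_2$, in contrast with the full $\mathrm{d-}\bfSigma^0_2$-hardness for the $F_n$ with $n\ge 3$ — is the crux, and it mirrors the asymmetry already visible in the upper-bound sentences.
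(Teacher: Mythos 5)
The survey itself states this theorem without proof, citing \cite{CHKLMMMQW} and \cite{McCoyWallbaum}, so you are being measured against the standard arguments from those papers. Your upper-bound half is correct and is essentially that standard route: since a free group is determined by its rank and rank is read off from the abelianization, ``$x_1,\dots,x_n$ are independent modulo products of commutators'' is $\Pi_1$, so ``$\operatorname{rank}\geq n$'' is $\Sigma_2$, ``$\operatorname{rank}\leq n$'' is $\Pi_2$, and your observation that within $\mc{FG}$ the statement ``$\operatorname{rank}\geq 2$'' collapses to the $\Sigma_1$ (hence $\Pi_2$) sentence ``nonabelian'' is exactly what separates $F_2$ from $F_n$, $n\geq 3$. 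One caveat for part (1): you need the convention that $\mc{FG}$ excludes the trivial group (the free group of rank $0$), since $\Pi_1$ sentences are preserved under substructures and the trivial group embeds in $\mathbb{Z}$, so no $\Pi_1$ sentence can separate them; the theorem as stated forces this convention, but you should say it. Also, the worry about ``obtaining $\bfPi^0_2$-hardness but not $\bfSigma^0_2$-hardness for $F_2$'' is misplaced: the $\Pi_2$ quasi Scott sentence already rules out $\bfSigma^0_2$-hardness, so nothing needs to be calibrated there.

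The genuine gap is that the hardness half is left as a plan, and the two facts that carry all the weight are never supplied. (i) Your constructions require the rank to go \emph{down} as well as up, and you flag this but do not discharge it. What makes ``discarding'' possible is that at any finite stage only finitely many atomic facts, in particular finitely many inequations, have been committed, together with the group-theoretic lemma that for any finite set of nontrivial elements of $F(b_1,\dots,b_m)$ there is a homomorphism to $F(b_1,\dots,b_v)$ (any $v\geq 2$) fixing $b_1,\dots,b_v$ and killing none of them (choose sufficiently generic images for the discarded generators; this is where full residual freeness of free groups, or a ping-pong/genericity argument, must be invoked, resting ultimately on the fact that $F_2$ contains free subgroups of every countable rank). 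Note also that once noncommuting elements are exposed the rank can never again drop below $2$, so your $\liminf$ construction for $F_\infty$ needs the trivial normalization $\liminf\geq 2$. (ii) You must prove the limit group is free of the intended rank; a direct limit of free groups need not be free, and saying ``genuinely free, not merely locally free'' names the problem without solving it. The standard fixes are either to arrange that in each non-maximal outcome every auxiliary generator is eventually identified with a word in the persistent basis elements, among which no relation is ever committed (so the limit is literally free on the persistent basis), or to quote Takahasi's theorem that an ascending union of free groups of uniformly bounded rank is free; and the lower bound $\operatorname{rank}\geq r$ in the $\liminf$ construction needs the remark that any product of commutators in the limit already lies in $[G_s,G_s]$ for some finite stage $s$, so the persistent generators stay independent in the abelianization. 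With (i) and (ii) made explicit your reductions do work and match the published proofs in spirit; without them the lower bounds are not yet proofs.
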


Quasi Scott sentences are also useful for talking about very simple structures, such as the 1-dimensional $\mathbb{Q}$-vector space; then Theorem \ref{thm:vector} below says that the Scott complexity of $\mc{V}$ is $\Pi_2$. However even just saying that $\mc{V}$ is a vector space is already $\Pi_2$; perhaps all the complexity is in saying that $\mc{V}$ is a vector space, and it is simpler than that to say that a vector space is 1-dimensional? However Calvert, Harizanov, Knight, and Miller \cite{CalvertHarizanovKnightMiller06} show that the 1-dimensional $\mathbb{Q}$-vector space has Scott complexity $\Pi_2$ within the class of $\mathbb{Q}$-vector spaces.

Quasi Scott sentences within the class of finitely generated structures are similar to \textit{quasi-finite axiomatizability}. A structure is quasi-finitely axiomatizable if there is a sentence $\varphi$ of finitary first-order logic such that the structure is the only finitely generated structure satisfying $\varphi$, that is, if it has a finitary quasi Scott sentence within the class of finitely generated structures. See the survey \cite{Nies}.

\section{Theories}

Given an $\mc{L}_{\omega_1 \omega}$ sentence $\varphi$ we can think of $\varphi$ as a theory defining the class $\{\mc{A} : \mc{A} \models \varphi\}$. This generalizes the notion of an elementary first-order theory, as given such a theory $T$, the conjunction $\bigwedge T$ of all the sentences in $T$ is an $\mc{L}_{\omega_1 \omega}$ sentence.

\subsection{Vaught's conjecture}

As previously mentioned, we give only the briefest overview of Vaught's conjecture and its connections with Scott sentences.

A central question in model theory is given a theory $T$, how many countable models can $T$ have? The continuum hypothesis would imply that if $T$ has infinitely many countable models, then it can only have countably many or continuum many countable models. In descriptive set theory, one often finds (e.g., for analytic sets) that even without the continuum hypothesis, one can prove that naturally occurring sets must have cardinality countable or continuum. Vaught's conjecture \cite{Vaught} asks whether this is true for the number of countable models of a theory.

\begin{conjecture}[Vaught]
	The number of countable models of an elementary first-order theory is either countable or continuum.
\end{conjecture}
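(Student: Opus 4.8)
\emph{Remark on this ``proof''.} The statement above is Vaught's conjecture, one of the oldest open problems in model theory; no proof is known, so what follows is a sketch of the standard reduction and lines of attack rather than a genuine proof plan.

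The first step is the reduction supplied by Morley's theorem \cite{Morley70}, quoted in the introduction: the number of non-isomorphic countable models of a countable first-order theory $T$ is at most $\aleph_1$ or exactly $2^{\aleph_0}$. Hence Vaught's conjecture is equivalent to ruling out theories with exactly $\aleph_1$ countable models, and in particular follows outright from CH; all of the content lies in the case $\aleph_1 < 2^{\aleph_0}$. It is worth recalling where Morley's argument, and any refinement of it, locates the difficulty. Running the Scott analysis on $T$, let $\lambda$ be the supremum of the Scott ranks of the countable models of $T$. If $\lambda < \omega_1$ then every countable model of $T$ is pinned down inside a single countable fragment $\mathcal{L}_A \subseteq \mathcal{L}_{\omega_1\omega}$, and a downward Löwenheim--Skolem / Lopez--Escobar count forces $\leq \aleph_1$ or $2^{\aleph_0}$ models; and if $\lambda = \omega_1$ one can split a tree of pairwise-incompatible back-and-forth types and read off $2^{\aleph_0}$ models directly. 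So one shows that a counterexample $T$ must realize unboundedly many Scott ranks below $\omega_1$, and the whole problem is then to manufacture a perfect set of models from that data.

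A more flexible framework is the topological Vaught conjecture. The space $\Mod(\mathcal{L})$ of countable structures is Polish, $\Iso(\mathcal{A})$ is the orbit of $\mathcal{A}$ under the continuous logic action of $S_\infty$, and one wants: every orbit equivalence relation of this action --- or, more ambitiously, every analytic equivalence relation --- has either countably many classes or a perfect set of pairwise inequivalent points. Burgess's theorem already gives this trichotomy with ``$\aleph_1$'' in place of the dichotomy, so once again one must exclude exactly $\aleph_1$ classes. This has been carried out under extra structural hypotheses: $\omega$-stable theories (Shelah--Harrington--Makkai), linear orders (Rubin) and trees (Steel), $o$-minimal theories (Mayer), theories whose isomorphism relation is Borel of bounded potential complexity, and various classes of groups and modules; the sharpest general reductions tie a hypothetical counterexample to the existence of a model of high Scott rank whose automorphism orbits are ``thin'' in a precise descriptive-set-theoretic sense.

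The line I would actually pursue is to keep the Scott-analysis invariant front and centre: assume $T$ is a counterexample, so necessarily $\lambda = \omega_1$; then try to build a perfect tree of countable models that are pairwise $\not\equiv_\infty$ by recursion through the back-and-forth levels, splitting a node whenever a genuinely new $\equiv_\infty$-type becomes available as $\alpha$ increases. The obstruction --- and the reason the conjecture is open --- is precisely the behaviour at \emph{limit} Scott ranks: there is no known mechanism to force the $\aleph_1$-many $\alpha$-back-and-forth types to refine into $2^{\aleph_0}$-many $\equiv_\infty$-types as $\alpha$ climbs through the limit stages (the $R$-versus-$\SR$ discrepancy noted after Definition \ref{def:asym-sr} is a small local instance of the same phenomenon). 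Converting $\aleph_1$ countable models into a perfect set of them, with no further hypothesis on $T$, is exactly the hard part, and is what remains to be done.
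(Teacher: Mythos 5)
This statement is Vaught's conjecture itself: the paper states it as a \emph{conjecture} and offers no proof, since none is known. You correctly recognize this and refrain from claiming a proof, so there is no argument to verify; your text is a survey of the standard reductions (Morley's theorem, the Scott analysis, Silver's theorem, the topological reformulation), and as such it is broadly consistent with what the paper itself says in the Vaught's conjecture subsection.

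One substantive slip in your sketch is worth flagging. In your second paragraph you assert that if the supremum $\lambda$ of the Scott ranks of countable models of $T$ is $\omega_1$, then ``one can split a tree of pairwise-incompatible back-and-forth types and read off $2^{\aleph_0}$ models directly.'' That is false as stated, and in fact it is essentially the open content of the conjecture: by the paper's characterization of scattered sentences, a counterexample to Vaught's conjecture would be scattered (no perfect set of models) and yet, having uncountably many models with only countably many of each Scott rank, would realize cofinally many Scott ranks below $\omega_1$. So unboundedness of Scott rank alone cannot yield a perfect set of models; Harrington's theorem quoted in the paper (arbitrarily large Scott ranks below $\omega_2$ for a counterexample) points the same way. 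You do acknowledge exactly this obstruction in your final paragraph, so the issue is an internal inconsistency rather than a misunderstanding, but the dichotomy you describe (bounded rank gives few-or-continuum, unbounded rank gives continuum) is not a correct dichotomy. Also, in the bounded case the Scott analysis plus Silver's theorem gives countably many or continuum many models, not merely ``$\leq \aleph_1$ or $2^{\aleph_0}$''; the $\aleph_1$ possibility that Morley's theorem leaves open arises precisely from the unbounded case.
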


\noindent It is natural to extend this to theories given by sentences of $\mc{L}_{\omega_1 \omega}$.

Morley was able to eliminate almost all of the other possibilities.

\begin{theorem}[Morley \cite{Morley70}]
	The number of countable models of an elementary first-order theory is either $\aleph_0$, $\aleph_1$, or continuum.
\end{theorem}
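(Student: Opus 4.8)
The plan is to run the Scott analysis uniformly across all countable models of $T$ at once: stratify $\Mod(T)$ by Scott rank so that isomorphism becomes a Borel equivalence relation on each stratum, and then apply a perfect-set dichotomy level by level. First I would reduce to the case that $T$ is a \emph{complete} first-order theory in a countable language. Identifying a complete theory with its characteristic function on the (countable) set of $\mc{L}$-sentences, the collection of complete consistent first-order completions of $T$ is a closed subspace of Cantor space, so by Cantor--Bendixson it is either countable or of size $2^{\aleph_0}$. In the latter case models of distinct completions are non-isomorphic, since they disagree on a sentence, so $T$ already has $2^{\aleph_0}$ pairwise non-isomorphic countable models and we are done; in the former case it suffices to bound the number of countable models of each of countably many complete theories. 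So assume $T$ is complete, and let $X = \Mod(T) \subseteq \Mod(\mc{L})$, which is a Borel subset of a Polish space, closed under isomorphism.

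Second, I would stratify $X$ by Scott rank. For $\alpha < \omega_1$ let $X_\alpha$ be the set of $\mc{A} \in X$ such that the back-and-forth relation $\equiv_\alpha$ on $\mc{A}$ already coincides with $\equiv_\infty$ (equivalently, with $\equiv_{\alpha+1}$). This is an $\mc{L}_{\omega_1\omega}$-expressible property of $\mc{A}$ --- it says that a certain transfinite recursion has stabilized --- so by the Lopez--Escobar theorem each $X_\alpha$ is Borel; the sets $X_\alpha$ increase with $\alpha$, and $\bigcup_{\alpha < \omega_1} X_\alpha = X$ because every countable structure has countable Scott rank. The key observation is that for $\mc{A}, \mc{B} \in X_\alpha$ one has $\mc{A} \cong \mc{B}$ if and only if $\mc{A} \equiv_\beta \mc{B}$ for a fixed countable $\beta$ depending only on $\alpha$ (roughly $\beta = \alpha + 2$, via the canonical $\Pi_{\alpha+2}$ Scott sentence and a Nadel-style argument as in the proof of Theorem \ref{thm:comp-bf}), and that for each fixed countable $\beta$ the relation $\equiv_\beta$ is Borel on $\Mod(\mc{L}) \times \Mod(\mc{L})$, being assembled by a recursion of length $\beta$ from clopen relations. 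Hence the restriction of $\cong$ to each $X_\alpha$ is a Borel equivalence relation.

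Third, I would invoke a perfect-set dichotomy: a Borel equivalence relation on a Polish space with uncountably many classes admits a perfect set of pairwise inequivalent elements, hence has exactly $2^{\aleph_0}$ classes (Silver's theorem; alternatively Burgess's theorem for analytic equivalence relations applied directly to $\cong$ on $X$). Applying this to each $X_\alpha$: each stratum carries either at most $\aleph_0$ or exactly $2^{\aleph_0}$ isomorphism types. If some $X_\alpha$ has $2^{\aleph_0}$ types, then $T$ has $2^{\aleph_0}$ countable models; otherwise every $X_\alpha$ has at most $\aleph_0$ types, and since $X = \bigcup_{\alpha < \omega_1} X_\alpha$ is an increasing union of $\aleph_1$-many Borel pieces, $T$ has at most $\aleph_1 \cdot \aleph_0 = \aleph_1$ countable models. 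Combined with the trivial bound of $2^{\aleph_0}$ on the total number of countable structures, the number of countable models of $T$ is $\le \aleph_1$ or exactly $2^{\aleph_0}$; separating the finite and $\aleph_0$ cases out of the ``$\le \aleph_1$'' alternative yields the stated trichotomy.

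I expect the genuinely hard ingredient to be the perfect-set dichotomy on each stratum $X_\alpha$ (Silver's theorem), whose standard proof runs through effective descriptive set theory (Gandy--Harrington forcing); everything else is bookkeeping around the Scott analysis already developed. If one wants an argument that does not cite Silver as a black box, the same crux can be met by hand: if uncountably many of the canonical $\Pi_{\alpha+2}$ Scott sentences are realized in $X$, then one builds, by a Cantor scheme along the tree of finite approximations to a structure and splitting at each stage on $\equiv_\alpha$-type, a perfect family of pairwise non-isomorphic countable models of $T$ --- which is exactly how the Scott analysis feeds into Morley's theorem.
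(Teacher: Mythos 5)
Your proposal is correct and follows essentially the same route as the paper's argument: the Scott analysis provides a level-by-level stratification of $\Mod(T)$ along the back-and-forth relations, Silver's dichotomy gives countably many or continuum many classes at each level, and summing over the $\omega_1$ levels yields at most $\aleph_1$ models unless some level already forces $2^{\aleph_0}$. Your minor repackagings (reducing to complete theories, applying Silver to $\cong$ restricted to each stratum rather than to $\equiv_\alpha$ on all models) are harmless variants of the same idea.
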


\noindent The standard proof of this theorem rests on what is called the Scott analysis together with Silver's theorem. The Scott analysis looks at the back-and-forth relations on all of the models of the theory, or, as in \cite{MarkerMTBook}, the analysis as in Definition \ref{def:scott-analysis}. This gives a level-by-level splitting of the models of the theory, e.g., at level $\alpha$ one splits the models into $\equiv_\alpha$-classes. Silver's theorem says that co-analytic equivalence relations have either countably many or continuum many equivalence classes \cite{Silver80}. So at each level $\alpha$ there are either countably many or continuum many equivalence classes. If the theory has fewer continuum-many models, at each level $\alpha$ there must only be countably many equivalence classes of models. Then there are $\omega_1$ levels, hence at most $\aleph_1$ countable models.

Of course Vaught's conjecture is automatically true if the continuum hypothesis is. Thus Vaught's conjecture is often stated in a way that is independent of the continuum hypothesis. We say a sentence $\varphi$ of $\mc{L}_{\omega_1 \omega}$ is \textit{scattered} if it does not have a perfect set of countable models (in the space $\Mod(\mc{L})$). Then one can state Vaught's conjecture as saying that any scattered theory has countably many countable models. Using the Scott analysis, one can show:

\begin{theorem}
	The following are equivalent:
	\begin{enumerate}
		\item $\varphi$ is scattered,
		\item for each countable $\alpha$, there are only countably many $\equiv_\alpha$-equivalence classes of models of $\varphi$,
		\item for each countable $\alpha$, there are only countably many models of $\varphi$ of Scott rank less than $\alpha$.
	\end{enumerate}
\end{theorem}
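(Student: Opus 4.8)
The plan is to prove the cycle $(1)\Rightarrow(2)\Rightarrow(3)\Rightarrow(1)$; the first two implications are soft, while $(3)\Rightarrow(1)$ is where the Scott analysis genuinely enters. Two observations organize everything. First, for each fixed countable $\alpha$ the relation $\equiv_\alpha$ is a Borel equivalence relation on the Polish space $\Mod(\mc{L})$: it is defined by an $\mc{L}_{\omega_1\omega}$-formula of complexity roughly $\Pi_{\alpha\cdot 2}$, as the formulas $\varphi_{\bar{a},\alpha}$ of Section 4 exhibit, so Silver's theorem applies to it on the Borel set $\Mod(\varphi)$. Second, a model of $\varphi$ of Scott rank $<\alpha$ is determined up to isomorphism by its $\equiv_\alpha$-class: if $\mc{A}\models\varphi$ has a $\Pi_{\beta+1}$ Scott sentence with $\beta+1\le\alpha$ and $\mc{B}\equiv_\alpha\mc{A}$, then, since every $\Sigma_{\beta+1}$ sentence is equivalent to a $\Sigma_\alpha$ sentence and hence $\equiv_\alpha$ refines $\equiv_{\beta+1}$, $\mc{B}$ satisfies the same $\Pi_{\beta+1}$ sentences as $\mc{A}$, in particular $\mc{A}$'s Scott sentence, so $\mc{B}\cong\mc{A}$.

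For $(1)\Rightarrow(2)$ I argue contrapositively. If for some $\alpha$ the models of $\varphi$ fall into uncountably many $\equiv_\alpha$-classes, then by Silver's theorem there is a perfect set of models of $\varphi$ that are pairwise $\not\equiv_\alpha$; since isomorphic structures are $\equiv_\alpha$-equivalent, this perfect set consists of pairwise non-isomorphic models, so $\varphi$ is not scattered. For $(2)\Rightarrow(3)$, fix $\alpha$. By the second observation above, sending a model of $\varphi$ of Scott rank $<\alpha$ to its $\equiv_\alpha$-class is well defined and injective on isomorphism types, so the isomorphism types of such models inject into the set of $\equiv_\alpha$-classes of models of $\varphi$, which is countable by $(2)$.

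The implication $(3)\Rightarrow(1)$ is the main obstacle. Arguing contrapositively, suppose $\varphi$ is not scattered and fix a perfect set $P$ of pairwise non-isomorphic models of $\varphi$; write $\mc{A}_x$ for the model coded by $x\in P$. It suffices to produce a single $\beta<\omega_1$ such that uncountably many of the $\mc{A}_x$ have Scott rank $<\beta$, for then $P$ exhibits uncountably many isomorphism types of models of $\varphi$ of Scott rank $<\beta$, contradicting $(3)$. The soft half of this is a boundedness step: the map $(x,y)\mapsto$ the least $\gamma$ with $\mc{A}_x\not\equiv_\gamma\mc{A}_y$ is an ordinal rank of $\Pi^1_1$ type on the Borel (hence $\Sigma^1_1$) set $\{(x,y)\in P\times P : x\ne y\}$ — the relation $\mc{A}_x\cong\mc{A}_y$ being analytic and trivial on $P$ — so $\Sigma^1_1$-boundedness yields $\alpha<\omega_1$ with $\mc{A}_x\not\equiv_\alpha\mc{A}_y$ for all distinct $x,y\in P$.

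The hard half — upgrading a uniform bound on the levels at which members of $P$ are told apart to a uniform bound on their Scott ranks — is exactly the content of the Scott analysis as carried out in the proof of Morley's theorem: one reflects the pair $(\varphi,P)$ into a countable admissible set, using that $\Mod(\varphi)$ lives in a countable fragment, and reads the bound on Scott ranks off that set's ordinal height. I would invoke this rather than reprove it; see \cite{MarkerBook} and \cite{Morley70}. This last step is where any honest argument must do real work, since the Scott rank of a structure can exceed the level at which it is distinguished from the others, so the boundedness in the previous paragraph does not by itself control it; it is the admissibility and reflection machinery of the Scott analysis that closes the gap, and this is precisely why the theorem is phrased in terms of scatteredness rather than cardinality.
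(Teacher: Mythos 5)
Your implications $(1)\Rightarrow(2)$ (Silver's theorem applied to the Borel relation $\equiv_\alpha$ on $\Mod(\varphi)$) and $(2)\Rightarrow(3)$ (a model of Scott rank $<\alpha$ is pinned down by its $\equiv_\alpha$-class via its $\Pi_\alpha$ Scott sentence) are correct and complete; note the paper itself states this theorem without proof, so the comparison is to the standard argument. The genuine gap is in $(3)\Rightarrow(1)$, and it is exactly the step you chose to invoke rather than prove. What you need is: given a perfect set $P$ of pairwise non-isomorphic models of $\varphi$, there is a single $\beta<\omega_1$ such that uncountably many $\mc{A}_x$, $x\in P$, have Scott rank $<\beta$. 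This is not ``the content of the Scott analysis as carried out in the proof of Morley's theorem'': Morley's argument counts $\equiv_\alpha$-classes level by level and never bounds the Scott ranks of the members of a perfect set, and no such bound on \emph{all} members is available --- a perfect set coded by a single real $p$ can consist of pairwise non-isomorphic structures whose Scott ranks are cofinal in $\omega_1$ (pad low-level codes for $x$ with high-rank components), so ``reflecting the pair $(\varphi,P)$ into a countable admissible set and reading the bound off its height'' cannot work as stated: the individual $x\in P$ need not belong to that admissible set, and its height does not control $\mc{A}_x$. Nor does counting alone close the gap: under CH it is consistent with cardinal arithmetic that $\varphi$ has continuum many models while only countably many lie below each rank level, so your boundedness step (which bounds the level at which members of $P$ are \emph{distinguished}, and is in fact not needed for this direction, since the $\mc{A}_x$ are already pairwise non-isomorphic) does not by itself produce the required $\beta$.

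The step is true, but it needs its own argument, and the tools are already in the paper's Section 4 rather than in Morley's proof. Fix a real $p$ coding $\varphi$ and the continuous injection $x\mapsto \mc{A}_x$; then each $\mc{A}_x$ is $(x\oplus p)$-computable, so by Nadel's bound (Corollary 4.7 of the paper, relativized) its Scott rank is at most $\omega_1^{x\oplus p}+1$. Since $\{x: \omega_1^{x\oplus p}=\omega_1^p\}$ is comeager (and conull), it has size continuum, so continuum many pairwise non-isomorphic models of $\varphi$ have Scott rank $<\omega_1^p+2$, giving $\lnot(3)$. Alternatively one can argue metamathematically: ``$\varphi$ is not scattered'' is $\Sigma^1_2$, so by Shoenfield it persists to a ccc extension adding $\aleph_2$ Cohen reals; there the counting argument forces some level $\beta<\omega_1$ to carry uncountably many models of rank $<\beta$, Silver upgrades this to a perfect set of pairwise non-isomorphic such models (isomorphism on the Borel set of rank-$<\beta$ models coincides with $\equiv_\beta$, hence is Borel), and this $\Sigma^1_2$ statement with a ground-model code for $\beta$ pulls back to $V$. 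Either of these closes the gap; as written, your proof asserts the crucial lemma with an inaccurate attribution.
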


The Scott analysis, though not necessarily Scott rank or complexity, plays a central role in a large body of results about what a counterexample to Vaught's conjecture would have to look like. Often, these results are about the uncountable models of the theory. One example mentioning Scott rank explicitly is:


\begin{theorem}[Harrington]
	A counterexample to Vaught’s Conjecture has models	of arbitrarily large Scott rank below $\omega_2$.
\end{theorem}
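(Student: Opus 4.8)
The plan is to unwind the hypothesis, settle the base case at $\omega_1$, and then climb one admissible ordinal at a time. First I would recall what is being assumed: a counterexample to Vaught's conjecture is a scattered sentence $\varphi$ of $\mc{L}_{\omega_1\omega}$ with uncountably many countable models, so by Morley's theorem it has exactly $\aleph_1$ countable models. It is known that such a $\varphi$ has no model of power $\geq \aleph_2$, and a structure of power $\leq \aleph_1$ has Scott rank below $\omega_2$; so the content of the statement is that the Scott ranks of the models of $\varphi$ are \emph{cofinal} in $\omega_2$. The base case is immediate from the characterization of scatteredness recorded above: for each countable $\alpha$ there are only countably many countable models of $\varphi$ of Scott rank $< \alpha$, so since there are $\aleph_1$ of them, the Scott ranks of the countable models of $\varphi$ are cofinal in $\omega_1$.

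The engine of the argument is an ascending lemma: if $\delta$ is an admissible ordinal (with $\varphi$ and a presentation of $\delta$ lying in the corresponding admissible set) and $\varphi$ has models of Scott rank cofinal in $\delta$, then $\varphi$ has a model of Scott rank $\geq \delta$. The mechanism is compactness. For each $\gamma < \delta$, the assertion ``$\mc{A} \models \varphi$ and $\SR(\mc{A}) \geq \gamma$'' is expressible by an infinitary sentence $\sigma_\gamma$ built, in the style of Section~2, from the back-and-forth relations $\leq_\gamma$ and the formulas $\psi_{\bar a}$: $\sigma_\gamma$ posits the existence of tuples witnessing that the $\Pi_\gamma$ formulas do not yet pin down the automorphism orbits. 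By the cofinality hypothesis, every subset of $\varphi \cup \{\sigma_\gamma : \gamma < \delta\}$ lying in the admissible set has a model, so an application of Barwise-style compactness over the admissible set coding $L_\delta$ together with $\varphi$ yields a model, which then has Scott rank at least $\delta$. Since the admissible ordinals are cofinal in $\omega_2$, iterating the lemma produces models of Scott rank cofinal in $\omega_2$ --- provided one can get the iteration started and keep it going, that is, manufacture at each stage models of Scott rank cofinal in the \emph{next} admissible, which requires running the compactness argument with a carefully chosen family of constraints rather than a single extra sentence and exploiting the structure of the model of large rank already obtained.

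The main obstacle is exactly this use of Barwise compactness. In its classical form it is available only for \emph{countable} admissible sets, whereas here the relevant admissible sets $L_\delta$ with $\omega_1 \leq \delta < \omega_2$ have power $\aleph_1$. One must therefore invoke the extensions of Barwise compactness to uncountable admissible sets that are $\Sigma_1$-compact or ``self-definable'' in the sense of Barwise and Kunen --- and check that the admissible sets one works over have the requisite definability --- or else route through an absoluteness argument, proving the conclusion inside a fine-structured inner model where the Continuum Hypothesis and a definable well-ordering of the reals make the relevant compactness go through and then transferring the conclusion back, having verified that scatteredness (which is absolute) and the hypotheses on Scott ranks survive the passage. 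Making one of these routes work, and organizing the transfinite recursion so that the ascending lemma really can be applied cofinally often below $\omega_2$, is where essentially all the work lies; the remainder is bookkeeping with the back-and-forth relations and ranks of Section~2.
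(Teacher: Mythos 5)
There is a genuine gap here: what you have written is a plan whose central step is precisely the content of the theorem, and that step is neither carried out nor obviously carriable-out as stated. Your ``ascending lemma'' rests on compactness over admissible sets $L_\delta$ with $\omega_1 \le \delta < \omega_2$, but Barwise compactness is a theorem about \emph{countable} admissible sets; for uncountable ones it fails in general, and you only gesture at the Barwise--Kunen $\Sigma_1$-compact/self-definable case or an unspecified absoluteness detour without verifying that the admissible sets arising here have the needed property. There is also an expressibility problem you pass over: for $\gamma \ge \omega_1$ the sentence $\sigma_\gamma$ asserting $\SR(\mc{A}) \ge \gamma$ is not a sentence of $\mc{L}_{\omega_1 \omega}$ at all, so the whole argument must be run in an admissible fragment of $\mc{L}_{\infty \omega}$ with a theory of size $\aleph_1$, which is exactly the regime where the classical compactness theorem is unavailable. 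Finally, the recursion is underspecified to the point of circularity: to apply the lemma at an admissible $\delta$ you need models of Scott rank cofinal in $\delta$, and producing these at each stage (especially past limits) is exactly what you defer with ``provided one can get the iteration started and keep it going.'' By your own account ``essentially all the work'' lies in these steps, so the proposal does not constitute a proof. Two smaller points: the claim that a counterexample has no models of power $\ge \aleph_2$ is false for first-order counterexamples (upward L\"owenheim--Skolem) and is in any case not needed, since the theorem only asserts the ranks are cofinal in $\omega_2$; the base case (ranks of countable models cofinal in $\omega_1$) is indeed immediate from scatteredness and is the only part that is complete.

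For comparison, note that the survey itself gives no proof to measure against: Harrington's argument was never published, and the article cites the recent independent proofs of Baldwin--Friedman--Koerwien--Laskowski, Knight--Montalb\'an--Schweber, and Larson. Those published arguments do not proceed by a direct Barwise-compactness climb through uncountable admissibles of the kind you sketch; they route through forcing-and-absoluteness or other model-theoretic machinery, and handling the absoluteness of being a counterexample (equivalently, of having models of unbounded Scott rank) is where the real work is done. If you want to pursue your outline, the honest next step is to either prove the ascending lemma for the specific admissible sets involved or replace it with a collapse-and-absoluteness argument in which the statement ``$\varphi$ has a model of Scott rank $\ge \alpha$'' is pulled back from a generic extension in which $\alpha$ has been made countable.
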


\noindent In this theorem, we need the Scott rank for uncountable structures, e.g., by extending the Scott ranks based on back-and-forth relations. Harrington's proof was never published, but recently new proofs have independently been given by Baldwin, Friedman, Koerwien, and Laskowski \cite{BaldwinFriedmanKoerwienLaskowski}, Knight, Montalb\'an, and Schweber \cite{KnightMontalbanSchweber}, and Larson \cite{Larson}.

Montalb\'an showed that there are computability-theoretic ways of stating Vaught's conjecture; the proof also uses an analysis of the back-and-forth relations.

\begin{theorem}[ZFC+PD; Montalb\'an \cite{Montalban13}]
	Let $\varphi$ be an $\mc{L}_{\omega_1 \omega}$ sentence with uncountably many countable
	models. The following are equivalent:
	\begin{enumerate}
		\item[(V1)] $\varphi$ is a counterexample to Vaught’s conjecture.
		\item[(V2)] $\varphi$ satisfies hyperarithmetic-is-recursive on a cone, i.e., for all sufficiently powerful oracles $X$, any hyperarithmetic-in-$X$ model of $\varphi$ has an $X$-computable copy.
		\item[(V3)] There is an oracle relative to which,
		\[ \{ \text{Sp}(\mc{A}) \; : \; \mc{A} \models \varphi \} = \{\{X \in 2^\omega \; : \; \omega_1^X \geq \alpha\} : \alpha \in \omega_1\}.\]
	\end{enumerate}
\end{theorem}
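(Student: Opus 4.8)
The plan is to establish the two equivalences $(V1)\Leftrightarrow(V2)$ and $(V1)\Leftrightarrow(V3)$, in every case reasoning through the \emph{Scott analysis of $\varphi$}: the back-and-forth relations $\equiv_\alpha$ on the class of all countable models of $\varphi$, together with the dichotomy recorded in the preceding theorem, that $\varphi$ is scattered if and only if for each countable $\alpha$ there are only countably many $\equiv_\alpha$-classes of models of $\varphi$, equivalently only countably many models of $\varphi$ of Scott rank below $\alpha$. Every assertion ``on a cone'' will be obtained from Martin's cone theorem applied to the (projective, hence under PD determined) set of oracles in question; this is the only place the determinacy hypothesis enters. Throughout, recall that $\varphi$ being a counterexample to Vaught's conjecture means $\varphi$ is scattered yet has uncountably many countable models, so its models realize cofinally many Scott ranks below $\omega_1$.

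For $(V1)\Rightarrow(V2)$, assume $\varphi$ is a counterexample, fix a cone whose base codes $\varphi$ and the auxiliary data of its analysis, and let $X$ lie on the cone. The argument has two sub-steps. First, a relativized $\Sigma^1_1$-bounding: because $\varphi$ is scattered, any model $\mc{M}\models\varphi$ that is hyperarithmetic in $X$ has Scott rank below $\omega_1^X$ --- this is exactly where scatteredness is used, since for a non-scattered $\varphi$ the low-complexity models could have unbounded rank. Second, using that there are only countably many $\equiv_\alpha$-classes of models for each $\alpha$, one shows that uniformly in $X$ and in $\alpha<\omega_1^X$ the base oracle together with $X$ produces an $X$-computable list of $X$-computable structures representing all the $\equiv_\alpha$-classes of models of $\varphi$; since a model of Scott rank $\beta<\omega_1^X$ is determined up to isomorphism by its $\equiv_{\beta+1}$-class, it must be isomorphic to one of these. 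Combining the two sub-steps: every hyperarithmetic-in-$X$ model of $\varphi$ has an $X$-computable copy, so $\varphi$ satisfies hyperarithmetic-is-recursive on the cone.

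For $(V2)\Rightarrow(V1)$ and $(V3)\Rightarrow(V1)$ I argue the contrapositive. If $\varphi$ is not a counterexample then, having uncountably many countable models, it is not scattered, hence has a perfect set of pairwise non-isomorphic countable models, so on the cone one may take an effectively perfect such set relative to the oracle. Fix any oracle $X$ on the cone. Since $X^{(\omega)}$ is hyperarithmetic in $X$ (being $\Delta^0_{\omega+1}(X)$), a tree-forcing construction inside the perfect set relative to $X$ produces a model $\mc{M}\models\varphi$ whose code is Turing-equivalent to $X^{(\omega)}$; as the members are pairwise non-isomorphic, the isomorphism type of $\mc{M}$ recovers its code, so $\mc{M}$ is hyperarithmetic in $X$ but has no $X$-computable copy (such a copy would compute $X^{(\omega)}>_T X$), refuting $(V2)$. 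Varying this coding over Turing-incomparable reals realized in the perfect set yields models of $\varphi$ whose spectra are pairwise $\subseteq$-incomparable Turing cones, contradicting the requirement in $(V3)$ that $\{\text{Sp}(\mc{A}) : \mc{A}\models\varphi\}$ be the $\subseteq$-chain $\{\{Y:\omega_1^Y\geq\alpha\}:\alpha<\omega_1\}$.

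It remains to treat $(V1)\Rightarrow(V3)$ and to flag the hard step. Granting $(V1)$ and the already-proved $(V2)$ on a cone: for a model $\mc{M}\models\varphi$ and $X$ on the cone, $X\in\text{Sp}(\mc{M})$ iff $\mc{M}$ has an $X$-computable copy iff $\mc{M}$ is hyperarithmetic in $X$, and the fine structure of the Scott analysis of the scattered $\varphi$ --- essentially the two sub-steps above run in reverse --- forces this set to equal $\{X:\omega_1^X\geq\alpha_{\mc{M}}\}$ for an ordinal $\alpha_{\mc{M}}$ attached to the Scott rank of $\mc{M}$, while the cofinality of the Scott ranks of models of $\varphi$ below $\omega_1$ guarantees every $\alpha<\omega_1$ is realized. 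I expect the main obstacle to be the second sub-step of the $(V1)\Rightarrow(V2)$ argument, re-used in $(V1)\Rightarrow(V3)$: extracting from scatteredness a sufficiently effective, cone-uniform parametrization of the $\equiv_\alpha$-classes of models of $\varphi$ so that a model of $X$-computable Scott rank genuinely acquires an $X$-computable copy. This is the technical core of \cite{Montalban13}, combining a careful analysis of the back-and-forth relations on the models of $\varphi$ with a relativized effective version of the canonical-Scott-sentence construction; by contrast the $\Sigma^1_1$-bounding and the tree-forcing and coding arguments are comparatively routine once that framework is in place.
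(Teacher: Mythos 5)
The survey states this theorem without proof (it is quoted from \cite{Montalban13}), so there is no in-paper argument to compare against; judged on its own terms, your sketch has genuine gaps in both directions. The most concrete failure is in your contrapositive for $(V2)\Rightarrow(V1)$ and $(V3)\Rightarrow(V1)$. From a perfect set $\{\mc{M}_P\}$ of pairwise non-isomorphic models and an $X$-computable copy $\mc{N}$ of $\mc{M}_P$, you cannot conclude that $X$ computes $P$: the branch $P$ is only \emph{determined} by the isomorphism type, and recovering it from $\mc{N}$ requires deciding an isomorphism, a $\Sigma^1_1$ condition. All you get is that $P$ is $\Delta^1_1(X)$ (it is the unique branch whose model is isomorphic to $\mc{N}$, giving both $\Sigma^1_1(X)$ and $\Pi^1_1(X)$ definitions). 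Since you deliberately chose $P \equiv_T X^{(\omega)}$, which is already hyperarithmetic in $X$, no contradiction with $(V2)$ results; the argument as written proves nothing. Likewise, the spectra of the branch-models are upward-closed sets but are not Turing cones in general, so "pairwise $\subseteq$-incomparable Turing cones" is unjustified and your refutation of $(V3)$ collapses with it. These "easy" directions are in fact a substantive part of \cite{Montalban13}; the naive coding-into-a-perfect-set argument is exactly what does not work, because copies only recover coded information hyperarithmetically.

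The forward direction has a different problem: your "second sub-step" --- that on a cone the oracle $X$ computes, uniformly in $\alpha<\omega_1^X$, a list of $X$-computable representatives of all $\equiv_\alpha$-classes of models of $\varphi$ --- is essentially the theorem itself (it is $(V3)$ restricted to bounded rank), and scatteredness gives no route to it: countability of the set of classes imposes no computability bound on representatives, and the proposed mechanism (a relativized effective canonical-Scott-sentence construction) cannot suffice, since in general a structure that is hyperarithmetic in $X$ and of very low Scott rank need not have any $X$-computable copy (e.g.\ a rank-$3$ structure coding $X''$). What must actually be proved is that models of a counterexample cannot code such extra information --- that each model's spectrum is of the form $\{Y : \omega_1^Y \geq \alpha\}$ --- and this is where the determinacy hypothesis, Martin's cone theorem applied to each of the $\aleph_1$-many models' (projective) spectra, and the $\aleph_1$-listing of models interact in an essential way, not merely to license "on a cone" phrasing. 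So while your framing via scatteredness, the Nadel-type bound $\mathrm{SR}(\mc{M})\leq\omega_1^X+1$ for $\mc{M}$ hyperarithmetic in $X$, and the cone theorem correctly identifies the landscape, the sketch is missing the ideas that make either implication go through.
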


As an example of an open question relating Scott rank and Vaught's conjecture, we have:

\begin{question}[Montalb\'an, Berkeley Vaught's Conjecture Workshop]
A \textit{minimal theory $T$} has the property that for any $\mc{L}_{\omega_1 \omega}$-sentence $\varphi$, exactly one of
$T \wedge \varphi$ or $T \wedge \neg \varphi$ has models of arbitrarily high Scott rank. Does the existence of
a minimal theory imply the existence of a counterexample to Vaught’s conjecture?
\end{question}

\subsection{Scott spectra}

The results of this section were proved using the notion of Scott rank defined using symmetric back-and-forth relations (as in Definition \ref{def:bfsym}) except with arbitrary tuples, replacing (2) of that definition with:
\begin{enumerate}
	\item[(2)] For $\alpha > 0$, $\bar{a} \sim_\alpha \bar{b}$ if for each $\beta < \alpha$ and $\bar{d}$ there is $\bar{c}$ such that $\bar{a} \bar{c} \sim_\beta \bar{b} \bar{d}$, and for all $\bar{c}$ there is $\bar{d}$ such that $\bar{a} \bar{c} \sim_\beta \bar{b} \bar{d}$.
\end{enumerate}
It is likely that the same or similar results can be proved for any of the definitions of Scott rank.

Let $\varphi$ be a $\Pi_2$ sentence which we think of as a theory. Many natural theories are of this form, e.g.\ most algebraic classes of interest such as algebraically closed fields or torsion groups. Montalb\'an asked whether there must be a model of $\varphi$ of somewhat low Scott complexity, say Scott rank 2 or less. One might expect that it would be possible to construct a model of any $\Pi_2$ theory without introducing any complicated types, resulting in a structure of low Scott rank. This turns out not to be the case; surprisingly, there are $\Pi_2$ Scott sentences all of whose models are very complicated.

\begin{theorem}[Harrison-Trainor \cite{HT18}]\label{thm:pi-two}
	Fix $\alpha < \omega_1$. There is a $\Pi_2$ sentence $T$ whose models all have Scott rank $\alpha$.
\end{theorem}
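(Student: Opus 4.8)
The plan is to construct, by transfinite recursion on $\alpha$, a relational vocabulary $\tau$ --- whose intended structures are labeled rooted forests --- a distinguished $\tau$-structure $\mc{A}$ of Scott rank exactly $\alpha$, and a $\Pi_2$ sentence $T$ with $\mc{A} \models T$, arranged so that \emph{every} countable model of $T$ again has Scott rank exactly $\alpha$. The governing design principle is that $\tau$ must carry no predicate recording how far along in the recursion an element sits: such a label would let a $\Pi_2$ sentence pin the structure down completely, but it would also collapse the Scott rank to $1$. So the rank information has to remain implicit, and the real content of the argument is that a handful of purely \emph{local} ($\forall\exists$) conditions nevertheless force every model --- even a wildly ill-founded one --- to have the intended rank.

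First I would isolate the combinatorial building blocks. For each $\beta \le \alpha$ the recursion produces a homogeneous well-founded labeled tree $\mc{A}_\beta$ whose Scott rank is controlled by $\beta$, together with a \emph{companion} $\mc{B}_\beta$ with $\mc{A}_\beta \equiv_\beta \mc{B}_\beta$ but $\mc{A}_\beta \not\cong \mc{B}_\beta$; such companions come from the standard Ash--Knight theory of pairs of structures realizing the back-and-forth relations. The extra demands baked into the recursion are that $\mc{B}_\beta$ satisfies exactly the same local axioms as $\mc{A}_\beta$, and that this is robust under nesting: replacing an $\mc{A}_\gamma$-block by $\mc{B}_\gamma$ anywhere inside a larger block preserves the local axioms and never raises the relevant back-and-forth level above $\alpha$. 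The master structure $\mc{A}$ is then assembled homogeneously, hanging below each node ``at level $\beta$'' infinitely many copies of every permitted block $\mc{A}_\gamma$ with $\gamma < \beta$; a now-routine Scott-analysis computation (as in Ash--Knight) gives that $\mc{A}$ has Scott rank $\alpha$, and one records the finitely many local configurations that occur.

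The sentence $T$ is the conjunction of: the $\Pi_1$ statement that the structure is a labeled forest of the prescribed shape; for each permitted local configuration $c$, the $\Sigma_1$ statement ``$c$ is realized'' and the $\forall\exists$ statement ``every node admitting $c$ has a child realizing each child-type demanded by $c$''; and a homogeneity axiom ``whenever a node has one child of a given local type it has infinitely many'' (a countable conjunction of $\forall\exists$ statements). Each conjunct is $\Pi_2$, hence $T$ is $\Pi_2$, and $\mc{A} \models T$ by construction. Crucially $T$ does \emph{not} assert well-foundedness --- which it could not, that being $\Pi^1_1$ --- so $T$ has ill-founded models, and the whole point is that those still have Scott rank $\alpha$. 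For the upper bound (Scott rank $\le \alpha$) in a model $\mc{B} \models T$ I would exhibit a back-and-forth system witnessing $\bar a \equiv_\alpha \bar b \Rightarrow \bar a \equiv_\infty \bar b$: match tuples whose ``$\alpha$-truncated block data'' agree, and run the back-and-forth using the homogeneity axioms (to supply interchangeable witnesses) together with the companion property (every sub-block of $\mc{B}$ is $\equiv_\gamma$-equivalent to $\mc{A}_\gamma$ for the relevant $\gamma < \alpha$, so any ill-founded substitution is invisible below level $\alpha$). For the lower bound (Scott rank $\ge \alpha$), the realization axioms force, for each $\beta < \alpha$ (cofinally many if $\alpha$ is a limit, the predecessor if $\alpha$ is a successor), two nodes whose configurations the block machinery makes $\equiv_\beta$-equivalent but which differ by a genuine structural feature obstructing any automorphism, so the back-and-forth relations on $\mc{B}$ do not stabilize before $\alpha$.

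The main obstacle is the upper bound together with the simultaneous design of the companions: within one transfinite recursion one must produce $\mc{B}_\beta$ that (a) satisfies precisely the same local axioms as $\mc{A}_\beta$, (b) is $\equiv_\beta$ but not isomorphic to $\mc{A}_\beta$, and (c) stays harmless under arbitrary nested substitution, so that no model of $T$ --- however pathological its ill-founded part --- contains a tuple with a genuinely $\Pi_{\alpha+1}$-hard automorphism orbit. Making one recursion deliver all three properties at once, and then verifying that the finitely many local axioms one actually writes down already exclude every mechanism for pushing the Scott rank above $\alpha$, is the delicate step; the rest is an elaboration of the usual back-and-forth bookkeeping of the Scott analysis.
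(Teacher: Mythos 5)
Your plan does not match how this result is actually obtained. The survey states Theorem \ref{thm:pi-two} without proof, citing \cite{HT18}, where it comes out of the Scott-spectra machinery described in the same section: one constructs structures from \emph{arbitrary} linear orders --- ill-founded ones included --- in such a way that the Scott rank of every model produced is computed as a function of the well-founded part or collapse of the order; applying this to the $\Sigma^1_1$ class of orders of type $\alpha$ realizes the singleton spectrum $\{\alpha\}$ (the constructive direction of Theorem \ref{thm:main-result2}), and a separate reduction (Theorem \ref{thm:all-pi-two}) converts the axiomatization into a $\Pi_2$ sentence with the same spectrum. The essential feature of that route is that unintended, pathological models are never excluded or rendered ``invisible'': their Scott ranks are explicitly computed and shown to land on the target value anyway.

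Your proposal has a genuine gap precisely at the point where all of the difficulty sits. The sentence $T$ you describe consists of purely local $\forall\exists$ constraints on a labeled forest: which finitary child-configurations occur, that each is realized, and that realized child-types are duplicated infinitely often. Such axioms constrain only one-step, finitary data, so they are satisfied by many subtrees other than your intended blocks: non-isomorphic well-founded trees of rank far above $\alpha$, or ill-founded trees, can obey every local constraint, and nothing you write rules out a model of $T$ containing below a single node two such subtrees that are $\leq_\alpha$-equivalent but not automorphic (Scott rank $> \alpha$), nor a maximally homogeneous model in which any two nodes of the same label have isomorphic subtrees (Scott rank far below $\alpha$). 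The bridging claim you defer to the recursion --- that in \emph{every} model of $T$ every sub-block is $\equiv_\gamma$-equivalent to the intended $\mc{A}_\gamma$ --- is, for each $\gamma$, a condition of complexity roughly $\Pi_{\gamma \cdot 2}$; that a single $\Pi_2$ sentence nevertheless controls the back-and-forth structure of all of its models is exactly the content of the theorem, and it cannot be assumed as a property of a construction one has not exhibited. Ash--Knight pairs and homogeneity axioms supply no mechanism for it, since ``same local type'' is far weaker than ``same $\equiv_\alpha$-type.'' I would also flag your guiding design principle: a predicate recording the level of a node does \emph{not} collapse Scott rank to $1$ (the orbit of a node still encodes the isomorphism type of the subtree below it, which level labels do not trivialize), and discarding all explicit ordinal bookkeeping is part of what leaves your axioms too weak; the known constructions keep such bookkeeping, in the form of a linear order whose well-founded collapse governs the rank of every model, intended or not.
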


More generally, we can consider the collection of Scott ranks of all of the models of $\varphi$.

\begin{definition}
	Let $\varphi$ be an $\mc{L}_{\omega_1 \omega}$ sentence. The \textit{Scott spectrum} of $\varphi$ is the collection \[\{ \SR(\mc{A}) : \text{$\mc{A}$ is countable and $\mc{A} \models \varphi$}\}\] of Scott ranks of countable models of $\varphi$.
\end{definition}

\noindent There are many natural questions about Scott spectra. For example, what kind of gaps can one have---for which $\alpha < \beta$ can a theory have models of Scott rank $\alpha$ and $\beta$, but no models of Scott rank in between? Is there any relation between the complexity of the theory and the possible Scott spectrum? The following result of Sacks says that the Scott spectrum of a class is either bounded, or the class contains models of high Scott rank.

\begin{theorem}[Sacks \cite{Sacks83}]
	Let $\varphi$ be an $\mc{L}_{\omega_1 \omega}$ sentence. Suppose that the Scott rank of $\mc{M}$ is $\leq \omega_1^{\mc{M},\varphi}$ for all countable models $\mc{M}$ of $\varphi$. Then there is $\alpha < \omega_1^\varphi$ such that the Scott rank of every model of $\varphi$ is at most $\alpha$.
\end{theorem}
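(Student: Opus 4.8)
The plan is to argue the contrapositive via the Barwise compactness theorem over the admissible set $\mathbb{A} = \mathrm{HYP}(\varphi)$, whose ordinal is $\omega_1^{\varphi}$. First I would reformulate the hypothesis: by the relativization of Nadel's theorem (the result stated just before Theorem~\ref{thm:comp-formulas-cat}, with oracle $\mc{M}\oplus\varphi$), the assumption ``$\SR(\mc{M}) \leq \omega_1^{\mc{M},\varphi}$ for every countable $\mc{M}\models\varphi$'' says precisely that no model of $\varphi$ has Scott rank $\omega_1^{\mc{M},\varphi}+1$ --- equivalently, every countable model of $\varphi$ has a Scott sentence recursive in (a copy of) itself together with $\varphi$, i.e.\ no model has high Scott rank relative to $\varphi$. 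So the goal becomes: if the Scott ranks of countable models of $\varphi$ are not bounded by any $\alpha<\omega_1^{\varphi}$, then some countable $\mc{N}\models\varphi$ has Scott rank $\omega_1^{\mc{N},\varphi}+1$.

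Assume the negation; after an easy reduction one may assume the Scott ranks of models of $\varphi$ are cofinal in $\omega_1^{\varphi}$ (if instead a single model already has Scott rank $\geq\omega_1^{\varphi}$ the argument is only simpler). For each $\alpha<\omega_1^{\varphi}=\omega_1^{\mathbb{A}}$, the back-and-forth relations $\sim_\beta$ with $\beta\leq\alpha$, and the infinitary formulas defining them, are $\mathbb{A}$-recursive; so in the language $\mc{L}\cup\{\bar{c}\}$ (with $\bar{c}$ a tuple of new constants) one writes an $\mathbb{A}$-recursive theory $T$ consisting of $\varphi$ together with, for each $\alpha<\omega_1^{\varphi}$, a sentence $\theta_\alpha$ asserting that the back-and-forth hierarchy on tuples extending $\bar{c}$ has not stabilized below level $\alpha$ (so any model of $\theta_\alpha$ interprets $\bar{c}$ as a tuple of $\sr$-rank at least $\alpha$). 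By admissibility, any subtheory $T_0\in\mathbb{A}$ of $T$ mentions $\theta_\alpha$ only for $\alpha$ below some $\alpha_0<\omega_1^{\varphi}$; since by assumption there is a genuine model of $\varphi$ containing a tuple of $\sr$-rank $\geq\alpha_0$, $T_0$ is consistent. Barwise compactness then produces $(\mc{N},\bar{c})\models T$, whence $\mc{N}\models\varphi$ and $\sr(\bar{c}^{\mc{N}})\geq\omega_1^{\varphi}$, so $\SR(\mc{N})\geq\omega_1^{\varphi}+1$.

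The main obstacle, and the reason the theorem is not immediate from this, is that $\SR(\mc{N})\geq\omega_1^{\varphi}+1$ contradicts the hypothesis only when $\omega_1^{\mc{N},\varphi}=\omega_1^{\varphi}$, i.e.\ only when $\mc{N}$ adds no new ordinals over $\mathbb{A}$, whereas a general Barwise-compactness model need not. To enforce this I would expand $T$ by a Harrison-order gadget: additional symbols and $\mathbb{A}$-recursive axioms forcing the model to interpret a linear order that is a pseudo-well-order pinned at $\omega_1^{\varphi}$ --- a well-founded initial segment of every order type $<\omega_1^{\varphi}$ but no descending sequence recursive in the model together with $\varphi$ --- exactly as in the Kleene--Brouwer construction of the Harrison order relative to $\varphi$ described earlier. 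The real work is to verify that this enlarged theory is still $\mathbb{A}$-consistent (its $\mathbb{A}$-sized fragments are satisfied inside suitable initial segments of the Harrison order relative to $\varphi$ fused with the high-rank models furnished by the failure of the conclusion) and that the resulting $\mc{N}$ genuinely has $\omega_1^{\mc{N},\varphi}=\omega_1^{\varphi}$; the general bound $\SR(\mc{N})\leq\omega_1^{\mc{N}}+1\leq\omega_1^{\varphi}+1$ then forces $\SR(\mc{N})=\omega_1^{\varphi}+1=\omega_1^{\mc{N},\varphi}+1$, the value forbidden by the hypothesis. This is Sacks's higher-recursion-theoretic argument; an essentially equivalent packaging applies $\Sigma^1_1(\varphi)$-boundedness to the appropriately coded set of Scott ranks of models of $\varphi$.
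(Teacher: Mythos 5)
This theorem is quoted in the paper from Sacks without any proof, so there is nothing in the text to match your sketch against; judged on its own terms, the sketch has a genuine gap at its central step. The sentences $\theta_\alpha$ you want to put into the Barwise theory do not exist in the admissible fragment: ``the back-and-forth hierarchy at $\bar{c}$ has not stabilized below $\alpha$'', i.e.\ $\sr(\bar{c})\geq\alpha$, unavoidably contains the clause ``$\bar{c}\sim_\beta \bar{b}$ but $\bar{c}\not\equiv_\infty\bar{b}$'', and non-automorphism ($\not\equiv_\infty$) is not uniformly expressible by any formula of $\mc{L}_{\omega_1\omega}$, let alone $\mathbb{A}$-recursively. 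What you can write down are the canonical formulas defining $\sim_\beta$ and sentences such as ``there exist $\bar{x},\bar{y}$ with $\bar{x}\sim_\beta\bar{y}$ but $\bar{x}\not\sim_{\beta+1}\bar{y}$''; with these in place of $\theta_\alpha$, the Barwise model $\mc{N}$ (even granted $\omega_1^{\mc{N},\varphi}=\omega_1^{\varphi}$) only acquires, for each $\alpha<\omega_1^{\varphi}$, \emph{some} pair separated above level $\alpha$, which gives $\SR(\mc{N})\geq\omega_1^{\mc{N},\varphi}$ --- and that does \emph{not} contradict the hypothesis, which is $\leq\omega_1^{\mc{M},\varphi}$, not $<$. (Your opening ``equivalently'' already conflates $\SR(\mc{M})\leq\omega_1^{\mc{M},\varphi}$ with the strictly stronger ``$\mc{M}$ has an $(\mc{M}\oplus\varphi)$-computable Scott sentence''; the whole difficulty of Sacks's theorem sits in this sup-versus-sup$+1$ distinction.) To violate the hypothesis you need a single pair equivalent at \emph{every} level below $\omega_1^{\mc{N},\varphi}$ yet non-automorphic, and this configuration cannot be forced by axioms at all: if constants $c,d$ are required to satisfy the $\sim_\alpha$-equivalences for all $\alpha<\omega_1^{\varphi}$ in a model with $\omega_1^{\mc{N},\varphi}=\omega_1^{\varphi}$, then by Nadel's theorem (Corollary \ref{cor:comp-bf}, relativized) $c$ and $d$ are automorphic. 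So the non-automorphism must be produced by some non-axiomatic device, and the sketch never supplies one; this is precisely where the hypothesis about \emph{all} models of $\varphi$ has to be used.

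The $\omega_1$-pinning step is also not achievable by the mechanism you name. ``No descending sequence recursive (or hyperarithmetic) in the model together with $\varphi$'' is not an axiom one can add: the pseudo-well-order you describe is essentially the Harrison order relative to $\varphi$, whose Scott sentence has complexity $\Pi_{\omega_1^{\varphi}+2}$ and so lies outside the fragment over which Barwise compactness is applied. Inside the fragment you can force the gadget to have well-founded initial segments of every type $<\omega_1^{\varphi}$, which yields $\omega_1^{\mc{N},\varphi}\geq\omega_1^{\varphi}$; the inequality you actually need, $\omega_1^{\mc{N},\varphi}\leq\omega_1^{\varphi}$, must come from a refinement of the model-existence theorem itself (e.g.\ arranging the Henkin construction so that the models form a nonempty $\Sigma^1_1(\varphi)$ set and invoking the Gandy basis theorem), not from the theory. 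Likewise, the closing remark that one may instead apply $\Sigma^1_1(\varphi)$-boundedness to ``the appropriately coded set of Scott ranks'' hides the same obstruction: ``$\SR(\mc{M})\geq\alpha$'' has a $\Pi^1_1$ non-automorphism clause, so that set is not obviously $\Sigma^1_1$, and converting it into something to which boundedness applies is the real content of the proof rather than a packaging detail.
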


Harrison-Trainor \cite{HT18} gave a complete descriptive set-theoretic classification of the possible Scott spectra. To give this characterization, we need a few definitions.

\begin{definition}
	Let $(L,\leq)$ be a linear order. The \textit{well-founded part} $\wf(L)$ of $L$ is the largest initial segment of $L$ which is well-founded. The \textit{well-founded collapse} of $L$, $\wfc(L)$, is the order type of $L$ after we collapse the non-well-founded part $L \setminus \wf(L)$ to a single element.
\end{definition}
\noindent We can identify $\alpha \in \wf(L)$ with the ordinal which is the order type of $\{\beta \in L : \beta < \alpha\}$. We can also identify $\wf(L)$ with its order type. If $L$ is well-founded, with order type $\alpha$, then $\wfc(L) = \wf(L) = \alpha$. If $L$ is not well-founded, $\wfc(L) = \wf(L)+1$. We extend these operations to collections of linear orders by applying to operation to each order in the collection.

Then we get the following characterization of the Scott spectra:

\begin{theorem}[Harrison-Trainor \cite{HT18}; ZFC + PD]\label{thm:main-result2}
	The Scott spectra of $\mc{L}_{\omega_1 \omega}$-sentences are exactly the sets of the form:
	\begin{enumerate}
		\item $\wf(\mc{C})$,
		\item $\wfc(\mc{C})$, or
		\item $\wf(\mc{C}) \cup \wfc(\mc{C})$
	\end{enumerate}
	where $\mc{C}$ is a $\bfSigma^1_1$ class of linear orders of $\omega$.
\end{theorem}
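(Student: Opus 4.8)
The argument naturally splits into a \emph{necessity} direction (every Scott spectrum has one of the three listed forms) and a \emph{sufficiency} direction (every set of one of those forms is the Scott spectrum of some sentence), and the sufficiency direction is where essentially all the work lies.

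\textbf{Necessity.} The plan is to attach to each countable structure $\mc{A}$ a linear order $L_{\mc{A}}$, obtained from $\mc{A}$ in a Borel way (with a fixed real parameter coding the language), such that $\SR(\mc{A}) \in \{\wf(L_{\mc{A}}), \wfc(L_{\mc{A}})\}$. Concretely one runs the Scott analysis: build the tree $\mc{T}_{\mc{A}}$ of finite ``descending back-and-forth configurations'' --- sequences recording, along the inductive clause defining $\sim_\alpha$, witnesses that pairs of tuples fail to be $\equiv_\infty$ at smaller and smaller levels --- and read off $L_{\mc{A}}$ from $\mc{T}_{\mc{A}}$ (a Kleene--Brouwer-style order, adjusted so that its well-founded part has order type exactly $\SR(\mc{A})$ when $\mc{A}$ has Scott rank below $\omega_1^{\mc{A}}$, and so that $L_{\mc{A}}$ is an ill-founded order of ``Harrison type'' with well-founded part $\omega_1^{\mc{A}}$ otherwise). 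The choice between $\wf(L_{\mc{A}})$ and $\wfc(L_{\mc{A}})$ --- i.e.\ whether the ``$+1$'' of the $\SR$-versus-$R$ distinction occurs --- is governed by a Borel condition on $\mc{A}$. Since $\{\mc{A} : \mc{A} \models \varphi\}$ is Borel by the Lopez--Escobar theorem, the class $\{L_{\mc{A}} : \mc{A} \models \varphi\}$ and its pieces under this Borel partition are $\bfSigma^1_1$, and the Scott spectrum of $\varphi$ is $\wf(\mc{C}_0) \cup \wfc(\mc{C}_1)$ for $\bfSigma^1_1$ classes $\mc{C}_0, \mc{C}_1$. The remaining step is a trichotomy: depending on whether $\varphi$ has high-Scott-rank models only of ``limit type'' ($\SR(\mc{A}) = \omega_1^{\mc{A}}$), only of ``successor type'' ($\SR(\mc{A}) = \omega_1^{\mc{A}}+1$), or of both, this union is rewritten as $\wf(\mc{C})$, $\wfc(\mc{C})$, or $\wf(\mc{C}) \cup \wfc(\mc{C})$ for a single $\bfSigma^1_1$ class $\mc{C}$. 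This is exactly where one must be careful: the operation ``pass to the well-founded part of $L$'' is $\bfSigma^1_1$-friendly (ill-foundedness beyond a point is witnessed existentially) whereas ``be well-founded of a given type'' is $\bfPi^1_1$, so $\wf(\mc{C})$ is in general \emph{not} of the form $\wfc(\mc{C}')$; this asymmetry is the reason there are genuinely three cases, and showing that in the mixed case both $\omega_1^{\mc{A}}$ and $\omega_1^{\mc{A}}+1$ actually land in the spectrum (one would use a refinement of Sacks's boundedness theorem) is the delicate point.

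\textbf{Sufficiency.} Fix a $\bfSigma^1_1$ class $\mc{C}$; by the normal form for $\bfSigma^1_1$ sets fix a real parameter and a tree $T$ with $L \in \mc{C}$ iff the section tree $T^{(L)}$ is ill-founded. The plan is to describe, by a single $\mc{L}_{\omega_1 \omega}$ sentence, a class of structures each of which decodes to a triple: a linear order $L$, a branch $f$ through $T^{(L)}$ certifying $L \in \mc{C}$, and a ``rank gadget'' built over $L$ --- for the $\wfc(\mc{C})$ case, essentially the homogenised tree of finite strictly descending sequences through $L$, whose tree rank is $\wf(L)$ when $L$ is well-founded and which is ill-founded with ``collapse rank'' $\wf(L)+1$ otherwise --- arranged so that the back-and-forth analysis of the whole structure stabilises exactly at $\wfc(L)$. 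Because the certificate $f$ is carried inside the structure, the class is Borel, so Lopez--Escobar gives an honest sentence $\varphi_{\mc{C}}$; and a model decoding to a given $L$ exists precisely when $L \in \mc{C}$, so the Scott spectrum of $\varphi_{\mc{C}}$ is exactly $\wfc(\mc{C})$. For the $\wf(\mc{C})$ case one alters the gadget (e.g.\ by a shift of the rank labels, or by interleaving two copies of $L$) so that on ill-founded $L$ it forces Scott rank $\wf(L)$ rather than $\wf(L)+1$; for the union case one permits both gadget variants inside $\varphi_{\mc{C}}$. In every case there are two quantitative claims to verify for each structure $\mc{A}$ in the class: the upper bound, that $\mc{A}$ has a Scott sentence of the advertised complexity (obtained by writing down, level by level up the gadget, formulas defining the automorphism orbits), and the matching lower bound, that $\mc{A}$ has no simpler Scott sentence (obtained, just as in the argument that the Harrison linear order has no $\Sigma_{\omega_1^{CK}+2}$ Scott sentence, by showing the orbit of a generic gadget element is not definable below the target level, using that $L$ really has the order type, or well-founded part, that the decoding claims).

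\textbf{Main obstacle.} The heart of the proof is the gadget construction in the sufficiency direction: one must engineer a single gadget that is at once (i) rank-faithful --- the Scott rank of the ambient structure is pinned to exactly $\wf(L)$ or $\wfc(L)$, with the ill-founded portions of $L$ leaking neither too much complexity (no unwanted jump up to $\omega_1^{\mc{A}}$) nor too little --- and (ii) uniform --- ``being a structure of this form carrying an honest witness $f$'' is Borel, so that a genuine $\mc{L}_{\omega_1 \omega}$ sentence results. Managing the asymmetry between $\wf$ (a $\bfSigma^1_1$-type operation) and $\wfc$ (which requires certifying well-foundedness, a $\bfPi^1_1$-type notion) is what both forces the three-case shape of the answer and makes the classification in the necessity direction and the bookkeeping in the sufficiency direction genuinely subtle. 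Projective determinacy enters only to make the $\bfSigma^1_1$-level combinatorics of linear orders and the Wadge-style lower-bound comparisons behave uniformly; the parts of the argument that stay at the Borel level --- including the $\Pi_2$-theory special case in Theorem~\ref{thm:pi-two} --- go through in ZFC alone.
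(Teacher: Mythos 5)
The survey you were given states this theorem without proof (it is quoted from \cite{HT18}), so the comparison has to be with the argument that paper actually carries out; measured against that, your outline has the right global shape but is missing the content that constitutes the theorem. On the sufficiency side, you correctly see that the $\bfSigma^1_1$ witness must be absorbed into the structures so that Lopez--Escobar applies, and that a gadget built from descending sequences through $L$ is the right sort of object, but the crucial tension you yourself flag is never resolved, and your proposed architecture does not resolve it. If a model ``decodes to the triple $(L,f,\text{gadget})$'', you must say how $L$ and $f$ are carried. If they are carried rigidly (say, a sort naming a presentation of $L$ and the branch $f$), that sort contributes little rank, but then your sentence has to tie the gadget sort to the coded order; saying ``the gadget is isomorphic to the gadget of the coded $L$'' is a $\bfSigma^1_1$ condition whose Borelness is exactly the kind of uniform bound you cannot assume, so one needs instead explicit rank-comparison relations or homogeneity axioms, together with a genuine back-and-forth computation showing these do not themselves raise the rank. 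If instead $L$ is present only up to isomorphism through the gadget, the gadget forgets the presentation on which the tree $T^{(L)}$ and the branch $f$ live, and ``$L \in \mc{C}$'' is no longer expressible. Negotiating this, and pinning the rank at exactly $\wf(L)$ in the $\wf(\mc{C})$ case --- the limit-type analogue of the Knight--Millar Scott-rank-$\omega_1^{CK}$ construction, which historically was the hard case and which you dispose of with ``a shift of the rank labels'' --- is the actual work of \cite{HT18}; none of it appears in the sketch, so the two quantitative claims you list (upper and lower bounds on the Scott rank of each model) are requirements restated, not verified.

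The necessity direction has a concrete gap as well. Even granting a Borel assignment $\mc{A} \mapsto L_{\mc{A}}$ with $\SR(\mc{A})$ equal to $\wf(L_{\mc{A}})$ or $\wfc(L_{\mc{A}})$ (this needs an argument, since Kleene--Brouwer order types overshoot tree ranks, but it is the plausible part), what you obtain is $\wf(\mc{C}_0) \cup \wfc(\mc{C}_1)$ for two $\bfSigma^1_1$ classes, and the passage to a \emph{single} $\mc{C}$ in one of the three listed forms is precisely what is not addressed: if you simply set $\mc{C} = \mc{C}_0 \cup \mc{C}_1$ and use the form $\wf(\mc{C}) \cup \wfc(\mc{C})$, every ill-founded $L \in \mc{C}_0$ now also contributes $\wf(L)+1$ and every ill-founded $L \in \mc{C}_1$ contributes $\wf(L)$, ordinals which need not be Scott ranks of any model of $\varphi$. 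Your proposed trichotomy (``high-Scott-rank models only of limit type, only of successor type, or both'') does not repair this, because ill-founded orders in these classes also account for spectrum points far below $\omega_1^{\mc{A}}$, coming from models that are not of high Scott rank; and the remark that in the mixed case one must ``show that both $\omega_1^{\mc{A}}$ and $\omega_1^{\mc{A}}+1$ actually land in the spectrum'' is not a step of any proof --- in the mixed case that is the hypothesis. Finally, the stated role of projective determinacy (``to make the $\bfSigma^1_1$-level combinatorics and the Wadge-style lower-bound comparisons behave uniformly'') is too vague to check and misplaces it: as the survey notes, the construction direction is pure ZFC, and determinacy is needed in this classification direction, where membership of an ordinal in the spectrum is a $\bfSigma^1_2$-in-the-codes condition; your sketch never identifies a concrete point where it is invoked.
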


\noindent The more difficult and most useful part of this characterization is to show that each such collection of ordinals is the Scott spectrum of a theory. This direction is provable within ZFC and does not use the set-theoretic assumption of projective determinacy. One can use this direction to construct interesting examples of Scott spectra, such as the collection of successor ordinals, or the collection of admissible ordinals.

Harrison-Trainor also proved that $\Pi_2$ theories are enough to get all of the Scott spectra.

\begin{theorem}[Harrison-Trainor \cite{HT18}]\label{thm:all-pi-two}
	Each Scott spectrum is the Scott spectrum of a $\Pi^{\infi}_2$ sentence.
\end{theorem}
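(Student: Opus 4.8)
The plan is to combine the characterization of Theorem \ref{thm:main-result2} with a careful reading of the ZFC (constructive) half of that theorem, checking that the sentences it produces can be taken to be $\Pi^{\infi}_2$, i.e., infinitary $\Pi_2$ sentences of $\mc{L}_{\omega_1 \omega}$. By Theorem \ref{thm:main-result2}, every Scott spectrum has one of the three forms $\wf(\mc{C})$, $\wfc(\mc{C})$, or $\wf(\mc{C}) \cup \wfc(\mc{C})$ for some $\bfSigma^1_1$ class $\mc{C}$ of linear orders on $\omega$, so it suffices to realize each of these three kinds of set by a $\Pi^{\infi}_2$ sentence.

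The first step is to replace $\mc{C}$ by combinatorial data. Since $\mc{C}$ is $\bfSigma^1_1$, there is a tree $U$ (in a suitable product space) such that a linear order $L$, coded by its atomic diagram, lies in $\mc{C}$ if and only if an auxiliary tree $U_L$ obtained uniformly from $L$ is ill-founded. For appropriate $L$ I would build a structure $\mc{M}_L$ carrying, on separate sorts: a copy of $L$; the tree $U_L$ together with relations coding its node-by-node structure; and a ``rank gadget'', a homogeneous tree in the spirit of Theorem \ref{thm:knight-millar} whose occurring ordinal node-ranks are forced, by $\forall\exists$ extension and duplication axioms on its successor relation, to coincide exactly with the well-founded tree-ranks of the tree of finite descending sequences through $L$ --- so that $\SR(\mc{M}_L) = \wf(L)$, where $\wf(L)$ is identified with its order type as in the definitions above. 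Coherence axioms tie the three sorts together and are again $\forall\exists$. The one delicate conjunct is the one forcing $U_L$ to carry an infinite branch, so that $\mc{M}_L$ exists precisely when $L \in \mc{C}$; this is implemented by the customary device of an internally coded branch whose ``no last level'' condition is $\forall\exists$, taking the usual care that nonstandard witnesses still yield genuine branches. The conjunction of all these axioms is then a single $\Pi^{\infi}_2$ sentence $\varphi_{\mc{C}}$.

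The three variants are obtained from three variants of the gadget, not by adding any well-foundedness axiom (which would be $\Pi^1_1$ and destroy the complexity bound): one in which $L \in \mc{C}$ yields a model and $L \notin \mc{C}$ yields none, producing Scott spectrum $\wf(\mc{C})$; a dual version, whose models have Scott rank reading off $\wfc(L)$, producing $\wfc(\mc{C})$; and one admitting both families, producing the union. In each case one verifies that the isomorphism type of a model is governed exactly by $L$ and the relevant well-foundedness data, so that no unintended Scott ranks enter the spectrum.

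The main obstacle is the tension between keeping every axiom $\forall\exists$ and still pinning the Scott rank of $\mc{M}_L$ to the intended ordinal uniformly in $L$. The conditions that usually force a homogeneous tree to have Scott rank equal to the supremum of its ordinal ranks --- thinness and homogeneity, as in Theorem \ref{thm:knight-millar} --- are naturally stated by quantifying over ranks and are not visibly $\Pi_2$; the real work is to find $\forall\exists$ surrogates (extension, saturation and duplication conditions on the successor relation of the gadget) that have the same effect on the standard back-and-forth relations, and then to carry the back-and-forth computation of $\SR(\mc{M}_L)$ through with these surrogates in place. By comparison, the reduction through Theorem \ref{thm:main-result2} and the split into three cases should be routine.
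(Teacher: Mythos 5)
The survey contains no proof of Theorem \ref{thm:all-pi-two} (it is cited from \cite{HT18}), so your proposal has to stand on its own, and as written it has a genuine gap: the step carrying all of the content is deferred rather than carried out. A $\Pi^{\infi}_2$ sentence is a conjunction of $\forall\exists$ axioms, and such axioms never pin down an isomorphism type; the Scott spectrum is computed over \emph{all} countable models, so the heart of the matter is to design the gadget so that every model --- including the unintended ones that any $\forall\exists$ theory admits --- has Scott rank exactly $\wf(L)$ (respectively $\wfc(L)$) for some $L \in \mc{C}$ readable off the model. Your outline simply asserts this (``one verifies that the isomorphism type of a model is governed exactly by $L$''), but that verification \emph{is} the theorem. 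The same applies to the rank gadget: thinness and homogeneity in the sense of Theorem \ref{thm:knight-millar} quantify over ordinal tree ranks and are not visibly $\Pi_2$-expressible, and you explicitly leave the ``$\forall\exists$ surrogates'' and the resulting back-and-forth computation of $\SR(\mc{M}_L)$ as ``the real work''. In addition, the internally coded branch through $U_L$ is genuinely new structure: naming it changes the automorphism group and hence the back-and-forth relations, so one must also argue that the witness sort neither raises nor lowers the Scott rank of the models and does not contribute unintended ranks of its own; your sketch does not address this. What you have is a plausible architecture together with an accurate identification of where the difficulty lies, not a proof.

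There is also a foundational mismatch. Your first step invokes the direction of Theorem \ref{thm:main-result2} asserting that every Scott spectrum has the form $\wf(\mc{C})$, $\wfc(\mc{C})$, or $\wf(\mc{C}) \cup \wfc(\mc{C})$ for a $\bfSigma^1_1$ class $\mc{C}$; the survey flags that theorem as ZFC+PD and notes that only the constructive direction is provable in ZFC. Theorem \ref{thm:all-pi-two} carries no determinacy hypothesis, so even with the gadget constructions completed, your route would only establish the statement under PD. To match the theorem as stated you would either need to express the Scott spectrum of an arbitrary sentence in the required $\bfSigma^1_1$ form without PD, or transform the given sentence directly into a $\Pi^{\infi}_2$ one --- and note that the obvious direct transformation, Morleyization, is $\Pi_2$ but does not preserve Scott spectra, since expanding a model by relations naming its definable sets can lower its Scott rank. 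In \cite{HT18} the $\Pi^{\infi}_2$ bound is tied to the construction machinery itself (compare Theorem \ref{thm:pi-two}), not obtained by first classifying the given spectrum, and any complete writeup along your lines has to confront this point.
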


\noindent For example, for every pair of ordinals $\alpha < \beta$, $\{\alpha,\beta\}$ is a Scott spectrum.

\subsection{Scott height}

Of the countably many computable $\mc{L}_{\omega_1 \omega}$ sentences, some of them have a Scott spectrum which is unbounded below $\omega_1$, and others are bounded below $\omega_1$. Since there are only countably many computable sentences, of all those that are bounded below $\omega_1$, there must be some countable ordinal which is the least common upper bound for all of them. Using $\mc{L}^{\comp}_{\omega_1 \omega}$ for the computable $\mc{L}_{\omega_1 \omega}$ sentences, we define:

\begin{definition}
	The Scott height $\sh(\mc{L}^{\comp}_{\omega_1 \omega})$ of $\mc{L}^{\comp}_{\omega_1 \omega}$ is the least ordinal $\alpha < \omega_1$ such that if $\varphi$ is a computable $\mc{L}_{\omega_1 \omega}$ sentence whose Scott spectrum is bounded below $\omega_1$, then the Scott spectrum of $\varphi$ is bounded below $\alpha$.
\end{definition}

\noindent The Scott height is a way of measuring how far up the ordinals the computable sentences can ``reach''.

Sacks and Marker made significant progress towards computing the Scott height of $\mc{L}^{\comp}_{\omega_1 \omega}$. Sacks \cite{Sacks83} showed that $\sh(\mc{L}^{\comp}_{\omega_1,\omega}) \leq \delta^1_2$, where $\delta^1_2$ is the least ordinal which has no $\Delta^1_2$ presentation. Marker \cite{Marker90} was able to resolve this question for pseudo-elementary classes.

\begin{definition}
	A class $\mc{C}$ of structures in a language $\mc{L}$ is an $\mc{L}_{\omega_1 \omega}$-pseudo-elementary class ($PC_{\mc{L}_{\omega_1 \omega}}$-class) if there is an $\mc{L}_{\omega_1 \omega}$-sentence $\varphi$ in an expanded language $\mc{L}' \supseteq \mc{L}$ such that the structures in $\mc{C}$ are the reducts to $\mc{L}$ of the models of $\varphi$. $\mc{C}$ is a computable $PC_{\mc{L}_{\omega_1 \omega}}$-class if $\varphi$ is a computable sentence.
\end{definition}

\noindent We can define the Scott height of $PC_{\mc{L}_{\omega_1 \omega}}$ in a similar way to the Scott height of $\mc{L}^{\comp}_{\omega_1 \omega}$, except that now we consider all $\mc{L}_{\omega_1 \omega}$-pseudo-elementary classes which are the reducts of the models of a computable sentence. Marker \cite{Marker90} showed that $\sh(PC^{\comp}_{\mc{L}_{\omega_1 \omega}}) = \delta^1_2$. Finally, Harrison-Trainor \cite{HT18} showed that every Scott spectrum of a pseudo-elementary $\mc{L}_{\omega_1 \omega}$-class is the Scott spectrum of an $\mc{L}_{\omega_1 \omega}$-sentence.

\begin{theorem}[Harrison-Trainor \cite{HT18}]\label{thm:main-result-pseudo}
	Every Scott spectrum of a pseudo-elementary $\mc{L}_{\omega_1 \omega}$-class is the Scott spectrum of an $\mc{L}_{\omega_1 \omega}$-sentence. Moreover, every Scott spectrum of a computable pseudo-elementary $\mc{L}_{\omega_1 \omega}$-class is the Scott spectrum of a computable $\mc{L}_{\omega_1 \omega}$-sentence.
\end{theorem}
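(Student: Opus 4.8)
The plan is to reduce to Theorem~\ref{thm:main-result2}. Recall that the determinacy-free direction of that theorem takes an arbitrary $\bfSigma^1_1$ class $\mc{D}$ of linear orders on $\omega$ and produces an $\mc{L}_{\omega_1 \omega}$-sentence whose Scott spectrum is $\wf(\mc{D})$, and likewise one whose spectrum is $\wfc(\mc{D})$ and one whose spectrum is $\wf(\mc{D})\cup\wfc(\mc{D})$; moreover the construction is uniform, so that a lightface $\Sigma^1_1$ class yields a \emph{computable} sentence. Hence it suffices, given a pseudo-elementary class $\mc{C}$, to exhibit a $\bfSigma^1_1$ class $\mc{D}_{\mc{C}}$ of linear orders with
\[ \{\SR(\mc{M}) : \mc{M}\in\mc{C}\text{ countable}\}\ \in\ \{\wf(\mc{D}_{\mc{C}}),\; \wfc(\mc{D}_{\mc{C}}),\; \wf(\mc{D}_{\mc{C}})\cup\wfc(\mc{D}_{\mc{C}})\}, \]
and with $\mc{D}_{\mc{C}}$ lightface $\Sigma^1_1$ when $\mc{C}$ is a computable $\PC_{\mc{L}_{\omega_1 \omega}}$-class.

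The class $\mc{D}_{\mc{C}}$ is obtained by copying the construction of the auxiliary class of linear orders attached to a single sentence in the proof of Theorem~\ref{thm:main-result2}. For a sentence $\varphi$, that class consists of the linear orders $L$ on $\omega$ admitting a witnessed Scott analysis of some countable $\mc{M}\models\varphi$ laid out along $L$: one existentially quantifies over $\mc{M}$, over an assignment of the finitary symmetric back-and-forth data of $\mc{M}$ to the levels of $L$, and over automorphisms of $\mc{M}$ witnessing $\equiv_\infty$, subject to arithmetic coherence conditions arranged so that a well-founded $L$ forces $\SR(\mc{M})$ to equal the ordinal read off $L$, and an ill-founded $L$ forces $\mc{M}$ to have high Scott rank with $\wfc(L)$ recording whether this is $\omega_1^{\mc{M}}$ or $\omega_1^{\mc{M}}+1$. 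Pulling all real-existential quantifiers to the front, this is a single existential quantifier over an arithmetic matrix (the satisfaction clause $\mc{M}\models\varphi$ is Borel and so absorbed), hence $\bfSigma^1_1$, and lightface $\Sigma^1_1$ when $\varphi$ is computable.

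For a pseudo-elementary class $\mc{C}$, write $\mc{C}$ as the class of reducts to $\mc{L}$ of the countable models of a sentence $\varphi$ in an expanded language $\mc{L}'\supseteq\mc{L}$. To form $\mc{D}_{\mc{C}}$, replace the clause ``$\mc{M}\models\varphi$'' in the construction above by ``$\mc{M}$ is the reduct to $\mc{L}$ of some countable $\mc{M}'\models\varphi$'': that is, additionally existentially quantify over the $\mc{L}'$-structure $\mc{M}'$ and demand, as a computable condition on the pair, that its reduct to $\mc{L}$ is $\mc{M}$, while the rest of the matrix --- the back-and-forth analysis along $L$ --- still speaks only of $\mc{M}$. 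Adjoining this hidden existential real coordinate leaves the class $\bfSigma^1_1$, and lightface $\Sigma^1_1$ for computable $\varphi$, since ``$\mc{M}'\models\varphi$'' is then $\Delta^1_1(\mc{M}')$ and absorbed into the leading existential. Because a countable $\mc{L}$-structure lies in $\mc{C}$ exactly when it is such a reduct, the verification that $\wf(\mc{D}_{\mc{C}})$, $\wfc(\mc{D}_{\mc{C}})$, or $\wf(\mc{D}_{\mc{C}})\cup\wfc(\mc{D}_{\mc{C}})$ is the Scott spectrum of $\mc{C}$ is word-for-word the same as for a single sentence. Applying the uniform, ZFC construction of Theorem~\ref{thm:main-result2} to $\mc{D}_{\mc{C}}$ then yields the desired $\mc{L}_{\omega_1 \omega}$-sentence, computable in the ``Moreover'' case.

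The genuinely new step here --- passing from a sentence to a pseudo-elementary class --- is essentially free, since $\bfSigma^1_1$-ness (and lightface $\Sigma^1_1$-ness) is unaffected by adjoining a hidden existential coordinate and the reduct operation is computable. The main obstacle is instead entirely internal to Theorem~\ref{thm:main-result2}: getting the off-by-one bookkeeping right in the correspondence between ``$L$ carries a witnessed Scott analysis of $\mc{M}$'' and ``$\SR(\mc{M})$ is the ordinal read off $L$'' --- and hence tracking which of the three forms $\wf$, $\wfc$, $\wf\cup\wfc$ one lands in, depending on which high-Scott-rank models $\mc{C}$ contains --- and checking that the construction producing a sentence from a $\bfSigma^1_1$ class of linear orders is uniform enough to send lightface $\Sigma^1_1$ classes to computable sentences. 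Both of these are already required for, and supplied by, Theorem~\ref{thm:main-result2} and the results surrounding it, so this theorem follows by quoting that machinery with ``model of $\varphi$'' replaced throughout by ``reduct of a model of $\varphi$''.
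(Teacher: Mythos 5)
The survey states this theorem without proof (it is cited to \cite{HT18}), so there is no in-paper argument to compare against line by line; but the placement of the surrounding results already shows that your reduction cannot work as written. Your plan has two steps: (a) attach to the pseudo-elementary class $\mc{C}$ a $\bfSigma^1_1$ class $\mc{D}_{\mc{C}}$ of linear orders such that the Scott spectrum of $\mc{C}$ is one of $\wf(\mc{D}_{\mc{C}})$, $\wfc(\mc{D}_{\mc{C}})$, $\wf(\mc{D}_{\mc{C}})\cup\wfc(\mc{D}_{\mc{C}})$; (b) apply the ZFC construction direction of Theorem \ref{thm:main-result2}. Step (a) is exactly the hard direction of Theorem \ref{thm:main-result2}---the direction the survey explicitly flags as ZFC+PD---relativized to pseudo-elementary classes. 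Since every sentence-class is itself a pseudo-elementary class (take $\mc{L}'=\mc{L}$), a ZFC proof of your step (a) would be a ZFC proof of that PD direction, so it is not ``essentially free''; as written you are either silently importing PD (which weakens the theorem, and in particular cannot support the ZFC computation of Scott height in Corollary \ref{thm:sh}) or appealing ``word-for-word'' to a proof that appears nowhere in this paper and that you have not supplied. Moreover, the mechanism you sketch fails at the decisive point: along an ill-founded $L$, a coherent assignment of back-and-forth data is only a pseudo-hierarchy. Its fake levels lie above the entire well-founded part, so stabilization data there gives an inclusion of equivalence relations in the wrong direction and yields no upper bound of the form $\SR(\mc{M})\leq\wf(L)+1$; hence $\wfc(\mc{D}_{\mc{C}})$ may contain ordinals $\wf(L)+1$ that are not Scott ranks of any member of $\mc{C}$, and exactness is lost. (Note also that $\wfc(L)$ is simply the countable ordinal $\wf(L)+1$, so it cannot ``record whether the rank is $\omega_1^{\mc{M}}$ or $\omega_1^{\mc{M}}+1$.'')

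The sound observation in your write-up is that an extra existential quantifier over an expansion preserves $\bfSigma^1_1$-ness (and lightface $\Sigma^1_1$-ness when $\varphi$ is computable); that is indeed why pseudo-elementarity is a cheap hypothesis wherever one is already quantifying over reals. But the content of the theorem, as a ZFC result with an effective ``Moreover'' clause, has to be a construction that converts the defining sentence $\varphi$ of the expanded language directly into an $\mc{L}_{\omega_1\omega}$-sentence whose countable models realize exactly the Scott ranks of the reducts; it cannot be obtained by first proving, in ZFC, that the spectrum of a pseudo-elementary class has one of the three linear-order forms, since that statement subsumes the PD direction of Theorem \ref{thm:main-result2}. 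Your ``Moreover'' clause inherits the same defect: you justify lightfaceness of $\mc{D}_{\mc{C}}$ by describing an arithmetic coherence matrix for a construction you never actually specify, and the determinacy-based argument you propose to copy need not have any such effective form. To repair the proof you would need either a genuinely ZFC, effective construction of a class $\mc{D}_{\mc{C}}$ with the exactness property (which, by the remark above, amounts to eliminating PD from the hard direction of Theorem \ref{thm:main-result2} and is not available), or to abandon the detour through spectra of linear orders and build the sentence directly from $\varphi$, which is what the argument in \cite{HT18} must and does do.
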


Combined, all of these results allow us to compute the Scott height.

\begin{corollary}[Sacks \cite{Sacks83}, Marker \cite{Marker90}, Harrison-Trainor \cite{HT18}]\label{thm:sh}
	$\sh(\mc{L}^{\comp}_{\omega_1,\omega}) = \delta^1_2$.
\end{corollary}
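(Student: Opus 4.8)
The plan is to derive the two inequalities $\sh(\mc{L}^{\comp}_{\omega_1,\omega}) \le \delta^1_2$ and $\sh(\mc{L}^{\comp}_{\omega_1,\omega}) \ge \delta^1_2$ separately, each by quoting one of the three ingredients already assembled: Sacks' upper bound, Marker's exact computation $\sh(PC^{\comp}_{\mc{L}_{\omega_1 \omega}}) = \delta^1_2$, and the reduction of pseudo-elementary Scott spectra to sentence Scott spectra (Theorem \ref{thm:main-result-pseudo}).

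For the upper bound, I would first observe that any computable $\mc{L}_{\omega_1\omega}$-sentence $\varphi$ in a language $\mc{L}$ is in particular a computable $PC_{\mc{L}_{\omega_1\omega}}$-class: take the ``expanded'' language $\mc{L}' = \mc{L}$ and the same sentence $\varphi$, so that the reducts to $\mc{L}$ of the models of $\varphi$ are simply the models of $\varphi$; hence the Scott spectrum of $\varphi$ as a sentence coincides with that of this $PC_{\mc{L}_{\omega_1\omega}}$-class. Consequently the collection of Scott spectra of bounded computable sentences is a sub-collection of the Scott spectra of bounded computable pseudo-elementary classes, and therefore $\sh(\mc{L}^{\comp}_{\omega_1,\omega}) \le \sh(PC^{\comp}_{\mc{L}_{\omega_1 \omega}}) = \delta^1_2$ by Marker's theorem. (Alternatively, this inequality is exactly Sacks' estimate $\sh(\mc{L}^{\comp}_{\omega_1,\omega}) \le \delta^1_2$, so either route closes this half.)

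For the lower bound, I would again use that, by Marker's theorem, $\delta^1_2 = \sh(PC^{\comp}_{\mc{L}_{\omega_1 \omega}})$ is the \emph{least} ordinal bounding the Scott spectra of all bounded computable pseudo-elementary classes; so for every $\alpha < \delta^1_2$ there is a computable pseudo-elementary class $\mc{C}$ whose Scott spectrum is bounded below $\omega_1$ but not bounded below $\alpha$ (it contains a model of Scott rank $\ge \alpha$). By Theorem \ref{thm:main-result-pseudo}, that same Scott spectrum is realized as the Scott spectrum of some computable $\mc{L}_{\omega_1\omega}$-sentence $\psi$; then $\psi$ is a computable sentence whose Scott spectrum is bounded below $\omega_1$ but not below $\alpha$. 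Since $\alpha < \delta^1_2$ was arbitrary, no ordinal below $\delta^1_2$ can serve as the bound in the definition of $\sh(\mc{L}^{\comp}_{\omega_1,\omega})$, whence $\sh(\mc{L}^{\comp}_{\omega_1,\omega}) \ge \delta^1_2$. Combining the two bounds gives $\sh(\mc{L}^{\comp}_{\omega_1,\omega}) = \delta^1_2$.

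I do not expect a genuine obstacle at this stage: all the mathematical content has already been extracted into the three cited results, and the argument above only performs the bookkeeping needed to pinch the two sides together. The one point worth double-checking is precisely that this bookkeeping is consistent — namely that the trivial inclusion of sentences into pseudo-elementary classes points in the direction giving the upper bound, while the ``sentence realizing a pseudo-elementary Scott spectrum'' direction of Theorem \ref{thm:main-result-pseudo} points in the direction giving the lower bound, so that together they force equality rather than merely an inequality. Beyond that, the serious work lies entirely in the proofs of Theorem \ref{thm:main-result-pseudo} and of Marker's theorem, which we are taking as given.
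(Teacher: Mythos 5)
Your proposal is correct and is exactly the argument the paper intends: the corollary is stated as the combination of Sacks' upper bound, Marker's computation of $\sh(PC^{\comp}_{\mc{L}_{\omega_1\omega}})$, and Theorem \ref{thm:main-result-pseudo}, and your two-inequality bookkeeping (sentences are trivially computable $PC$-classes for the upper bound; transfer a bounded-but-tall pseudo-elementary Scott spectrum to a computable sentence for the lower bound) is the intended way to pinch them together.
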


Marker also studies Scott heights for other types of classes. We say that a class is co-pseudo-elementary $\mc{L}_{\omega_1 \omega}$, or $c\mc{PC}_{\mc{L}_{\omega_1 \omega}}$, if it is the complement of a pseudo-elementary class. Then Marker \cite{Marker90} shows that:

\begin{theorem}[Marker \cite{Marker90}]
	We have:
	\[ \sh(c\mc{PC}_{\mc{L}_{\omega_1 \omega}}) = \gamma^1_2.\]
\end{theorem}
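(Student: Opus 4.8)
The plan is to recast the statement as a boundedness problem in effective descriptive set theory, paralleling the proof that $\sh(PC^{\comp}_{\mc{L}_{\omega_1 \omega}}) = \delta^1_2$, and then to extract exactly the strength that complementation supplies. The first step is the translation: by the effective form of the Lopez-Escobar correspondence, a co-pseudo-elementary class arising from a computable $\mc{L}_{\omega_1 \omega}$-sentence $\psi$ in an expanded computable language $\mc{L}' \supseteq \mc{L}$ is the set of $\mc{A} \in \Mod(\mc{L})$ having no expansion to $\mc{L}'$ that models $\psi$; since $\psi$ defines a hyperarithmetic subclass of $\Mod(\mc{L}')$, this class is lightface $\Pi^1_1$, and conversely every lightface $\Pi^1_1$ subclass of $\Mod(\mc{L})$ is of this form up to the usual coding. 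Thus the theorem amounts to the assertion that, as $\mc{C}$ ranges over lightface $\Pi^1_1$ classes of structures whose Scott spectrum is bounded below $\omega_1$, the supremum of those spectra is $\gamma^1_2$ --- the recursion-theoretic ordinal that plays, at the $\Pi^1_1$/$\Sigma^1_2$ level, the role that $\omega_1^{CK}$ plays for $\Sigma^1_1$ and $\delta^1_2$ plays for $\Delta^1_2$.

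For the upper bound, fix such a $\mc{C}$ with Scott spectrum contained in some $\alpha^* < \omega_1$, and analyze, uniformly in a real $W$ coding a countable ordinal, the complexity of the predicate ``$\mc{C}$ has a countable model $\mc{A}$ with $\SR(\mc{A}) \geq |W|$''. One existentially quantifies over the real $\mc{A}$, asserts $\mc{A} \in \mc{C}$ (a $\Pi^1_1$ condition on $\mc{A}$), and asserts $\SR(\mc{A}) \geq |W|$; having a presentation of $|W|$ in hand, the latter is expressed by the existence of tuples that are $\sim_\beta$-equivalent for all $\beta$ below $|W|$ (computed by effective transfinite recursion along $W$, hence $\Delta^1_1$ in $\mc{A} \oplus W$) yet not automorphic, which is $\Pi^1_1$ in $\mc{A}$ by the characterization of $\equiv_\infty$ via automorphisms. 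The matrix is therefore $\Pi^1_1$ in $\mc{A} \oplus W$, and the leading real quantifier makes the whole predicate $\Sigma^1_2(W)$, uniformly. Boundedness of the Scott spectrum says precisely that the corresponding $\Sigma^1_2$ subset of the set of codes of well-orders has order types bounded below $\omega_1$; the boundedness theorem appropriate to this pointclass then caps that supremum below $\gamma^1_2$. The delicate point --- what distinguishes the co-pseudo-elementary case from the pseudo-elementary one, where a sharper analysis collapses to $\delta^1_2$ --- is that the universal quantifier over expansions built into $\mc{C}$ genuinely forces this level of complexity and prevents the model quantifier from being absorbed the way it can be for the genuinely $\Sigma^1_1$ classes that are reducts of hyperarithmetic classes.

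For the lower bound I would show that for cofinally many $\beta < \gamma^1_2$ there is a lightface $\Pi^1_1$ class of structures with Scott spectrum bounded below $\omega_1$ and containing an ordinal at least $\beta$; by the translation above, any such class is co-pseudo-elementary from a computable sentence. The natural candidates come from $\Pi^1_1$-definable families of pseudo-well-orders: one takes a $\Pi^1_1$ class $\mc{D}$ of linear orders (or trees) whose well-founded parts $\wf(L)$, as the internal parameters of the $\Pi^1_1$ definition vary, stay bounded below $\omega_1$ but are cofinal up to $\gamma^1_2$ --- the existence of such families being essentially the defining property of $\gamma^1_2$ --- and then codes each $L \in \mc{D}$ into a structure, exactly as in the construction of the Harrison linear order earlier in the paper, so that the Scott rank of the coded structure is comparable to $\wf(L)$. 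The resulting $\Pi^1_1$ class of coded structures then has bounded Scott spectrum reaching $\beta$. Alternatively, one can route this through the constructive (ZFC) half of Harrison-Trainor's Scott-spectrum machinery (Theorems \ref{thm:main-result2}, \ref{thm:all-pi-two}, \ref{thm:main-result-pseudo}), extended so that the sentence it outputs is recognized as co-pseudo-elementary and the $\bfSigma^1_1$ classes of linear orders in the characterization are allowed to be $\bfPi^1_1$.

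The main obstacle is matching the two bounds: pinning down the exact pointclass governing ``$\mc{C}$ has a model of Scott rank $\geq \beta$'' for co-pseudo-elementary $\mc{C}$, and proving the boundedness theorem that attaches $\gamma^1_2$ to it --- rather than landing on the weaker $\delta^1_2$ that would result if the model quantifier could be collapsed, or on something too large. Equivalently, on the construction side, the heart of the matter is exhibiting $\Pi^1_1$ families of pseudo-well-orders whose well-founded parts are bounded below $\omega_1$ yet cofinal in $\gamma^1_2$, and not beyond; this is precisely the additional reach that complementation buys over the pseudo-elementary case (Scott height $\delta^1_2$) of Marker's theorem. The coding of linear orders into structures of controlled Scott rank, by contrast, is routine given the Harrison-order template already developed in the paper.
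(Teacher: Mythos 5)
The survey states this result of Marker without proof, so there is no internal argument to compare against; judging your sketch on its own terms, the two halves are not equally solid. The upper bound is essentially right, though it is cleaner (and safer) to phrase it dually: for a computable co-pseudo-elementary class $\mc{C}$ (equivalently, via the effective interpolation/Lopez--Escobar correspondence you invoke, a lightface $\Pi^1_1$ isomorphism-closed class), the set of codes $V$ of ordinals with ``every model of $\mc{C}$ has $\SR \leq |V|$'' is a $\Pi^1_2$ class of ordinals, since ``$\mc{A} \notin \mc{C}$'' is $\Sigma^1_1$ and ``$\SR(\mc{A}) \leq |V|$'' is $\Sigma^1_1(\mc{A} \oplus V)$; if the spectrum is bounded below $\omega_1$ this class is nonempty, so by the very definition of $\gamma^1_2$ it contains an ordinal below $\gamma^1_2$. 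Note that your appeal to ``the boundedness theorem appropriate to this pointclass'' should be read this way: there is no unconditional $\Sigma^1_2$-boundedness theorem (the set of well-order codes is itself $\Sigma^1_2$), and the statement you need --- a $\Sigma^1_2$ set of well-order codes whose order types are bounded below $\omega_1$ is bounded below $\gamma^1_2$ --- is just the definitional property of $\gamma^1_2$ applied to the $\Pi^1_2$ class of upper bounds. Also, the converse half of your translation needs isomorphism-closure and is a genuine (if standard) effective theorem, not a formality; it is actually used in your lower bound.

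The genuine gap is the lower bound. Your claim that the existence of lightface $\Pi^1_1$ families of pseudo-well-orders whose well-founded parts are bounded below $\omega_1$ yet cofinal in $\gamma^1_2$ is ``essentially the defining property of $\gamma^1_2$'' is not correct as stated: the defining property concerns $\Pi^1_2$ classes of well-orders, and the extra universal real quantifier cannot simply be discarded. Converting a nonempty $\Pi^1_2$ class of ordinals with large least element into a $\Pi^1_1$ (hence co-PC) class of structures whose Scott spectrum is still bounded below $\omega_1$ but reaches comparably high is exactly the content that separates $\gamma^1_2$ from $\delta^1_2$; a naive coding fails because the condition ``the order coded in $\mc{A}$ lies in the given $\Pi^1_2$ class'' remains $\Pi^1_2$ in $\mc{A}$, not $\Pi^1_1$, and your fallback --- ``extending'' the Scott-spectrum machinery of Theorems \ref{thm:main-result2}--\ref{thm:main-result-pseudo} from $\bfSigma^1_1$ to $\bfPi^1_1$ classes of linear orders --- is precisely the theorem to be proved rather than an available black box (as it stands, that machinery is what yields the smaller bound $\delta^1_2$ in Corollary \ref{thm:sh}). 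A correct argument has to exploit what co-pseudo-elementarity genuinely buys, namely the ability to impose $\Pi^1_1$ conditions such as well-foundedness of definable pieces of the structure, and to show that this reaches cofinally in $\gamma^1_2$; you flag this matching problem yourself as the ``main obstacle,'' which means the core of both the recursion-theoretic identity and the construction is left unproven. The coding of linear orders into structures of controlled Scott rank is indeed routine; the rest of the lower bound is not.
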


\noindent Here, $\gamma^1_2$ is the least ordinal such that any non-empty $\Pi^1_2$ class of ordinals contains an ordinal below $\gamma^1_2$. (By a $\Pi^1_2$ class of ordinals, we mean a $\Pi^1_2$ class of presentations of linear orders on domain $\omega$.) We have $\gamma^1_2 > \delta^1_2$.

\bibliography{References}

\newcommand{\etalchar}[1]{$^{#1}$}
\begin{thebibliography}{BYDNT17}

\bibitem[AGHTT]{AlvirGreenbergHTTuretsky}
Rachel Alvir, Noam Greenberg, Matthew Harrison-Trainor, and Dan Turetsky.
\newblock Scott complexity of countable structures.
\newblock preprint.

\bibitem[AK00]{AshKnight00}
C.~J. Ash and J.~Knight.
\newblock {\em Computable structures and the hyperarithmetical hierarchy},
  volume 144 of {\em Studies in Logic and the Foundations of Mathematics}.
\newblock North-Holland Publishing Co., Amsterdam, 2000.

\bibitem[AKMC20]{AlvirKnightMcCoy}
Rachael Alvir, Julia~F. Knight, and Charles McCoy~CSC.
\newblock Complexity of {S}cott sentences.
\newblock {\em Fund. Math.}, 251(2):109--129, 2020.

\bibitem[AKMS89]{AshKnightManasseSlaman}
Chris Ash, Julia Knight, Mark Manasse, and Theodore Slaman.
\newblock Generic copies of countable structures.
\newblock {\em Ann. Pure Appl. Logic}, 42(3):195--205, 1989.

\bibitem[Bec17]{Becker17}
Howard Becker.
\newblock Strange structures from computable model theory.
\newblock {\em Notre Dame J. Form. Log.}, 58(1):97--105, 2017.

\bibitem[Bec20]{Becker20}
Howard Becker.
\newblock Assigning an isomorphism type to a hyperdegree.
\newblock {\em J. Symb. Log.}, 85(1):325--337, 2020.

\bibitem[BFKL16]{BaldwinFriedmanKoerwienLaskowski}
John~T. Baldwin, Sy~D. Friedman, Martin Koerwien, and Michael~C. Laskowski.
\newblock Three red herrings around {V}aught's conjecture.
\newblock {\em Trans. Amer. Math. Soc.}, 368(5):3673--3694, 2016.

\bibitem[BYDNT17]{YaacovDouchaNiesTsankov}
Ita\"{\i} Ben~Yaacov, Michal Doucha, Andr\'{e} Nies, and Todor Tsankov.
\newblock Metric {S}cott analysis.
\newblock {\em Adv. Math.}, 318:46--87, 2017.

\bibitem[Cal04]{Calvert04}
Wesley Calvert.
\newblock The isomorphism problem for classes of computable fields.
\newblock {\em Arch. Math. Logic}, 43(3):327--336, 2004.

\bibitem[CC20]{ChanChen}
William Chan and Ruiyuan Chen.
\newblock Bounds on continuous {S}cott rank.
\newblock {\em Proc. Amer. Math. Soc.}, 148(8):3591--3605, 2020.

\bibitem[CFG{\etalchar{+}}07]{CFGKKMP}
W.~Calvert, E.~Fokina, S.~S. Goncharov, J.~F. Knight, O.~Kudinov, A.~S.
  Morozov, and V.~Puzarenko.
\newblock Index sets for classes of high rank structures.
\newblock {\em J. Symbolic Logic}, 72(4):1418--1432, 2007.

\bibitem[CGKS99]{CholakGoncharovKhoussainov}
Peter Cholak, Sergey Goncharov, Bakhadyr Khoussainov, and Richard~A. Shore.
\newblock Computably categorical structures and expansions by constants.
\newblock {\em J. Symbolic Logic}, 64(1):13--37, 1999.

\bibitem[Cha]{Chan}
William Chan.
\newblock Bounds on scott ranks of some polish metric spaces.
\newblock preprint.

\bibitem[Cha17]{Chan17}
William Chan.
\newblock The countable admissible ordinal equivalence relation.
\newblock {\em Ann. Pure Appl. Logic}, 168(6):1224--1246, 2017.

\bibitem[Chi90]{Chisholm}
John Chisholm.
\newblock Effective model theory vs. recursive model theory.
\newblock {\em J. Symbolic Logic}, 55(3):1168--1191, 1990.

\bibitem[CHK{\etalchar{+}}12]{CHKLMMMQW}
J.~Carson, V.~Harizanov, J.~Knight, K.~Lange, C.~McCoy, A.~Morozov, S.~Quinn,
  C.~Safranski, and J.~Wallbaum.
\newblock Describing free groups.
\newblock {\em Trans. Amer. Math. Soc.}, 364(11):5715--5728, 2012.

\bibitem[CHKM06]{CalvertHarizanovKnightMiller06}
W.~Calvert, V.~S. Harizanov, J.~F. Knight, and S.~Miller.
\newblock Index sets of computable models.
\newblock {\em Algebra Logika}, 45(5):538--574, 631--632, 2006.

\bibitem[CK06]{CalvertKnight06}
Wesley Calvert and Julia~F. Knight.
\newblock Classification from a computable viewpoint.
\newblock {\em Bull. Symbolic Logic}, 12(2):191--218, 2006.

\bibitem[CKM06]{CalvertKnightMillar06}
Wesley Calvert, Julia~F. Knight, and Jessica Millar.
\newblock Computable trees of {S}cott rank {$\omega_1^{CK}$}, and computable
  approximation.
\newblock {\em J. Symbolic Logic}, 71(1):283--298, 2006.

\bibitem[DKL{\etalchar{+}}15]{DKLLMT}
Rodney~G. Downey, Asher~M. Kach, Steffen Lempp, Andrew E.~M. Lewis-Pye, Antonio
  Montalb\'{a}n, and Daniel~D. Turetsky.
\newblock The complexity of computable categoricity.
\newblock {\em Adv. Math.}, 268:423--466, 2015.

\bibitem[Dou14]{Doucha}
Michal Doucha.
\newblock Scott rank of {P}olish metric spaces.
\newblock {\em Ann. Pure Appl. Logic}, 165(12):1919--1929, 2014.

\bibitem[Dou17]{DouchaErratum}
Michal Doucha.
\newblock Erratum to: ``{S}cott rank of {P}olish metric spaces'' [{A}nn. {P}ure
  {A}ppl. {L}ogic 165 (12) (2014) 1919--1929] [ {MR}3256743].
\newblock {\em Ann. Pure Appl. Logic}, 168(7):1490, 2017.

\bibitem[Fre08]{Freer08}
Cameron~Eric Freer.
\newblock {\em Models with high {S}cott rank}.
\newblock ProQuest LLC, Ann Arbor, MI, 2008.
\newblock Thesis (Ph.D.)--Harvard University.

\bibitem[Gao07]{Gao}
Su~Gao.
\newblock Complexity ranks of countable models.
\newblock {\em Notre Dame J. Formal Logic}, 48(1):33--48, 2007.

\bibitem[GD80]{GoncharovDzgoev80}
S.~S. Gon\v{c}arov and V.~D. Dzgoev.
\newblock Autostability of models.
\newblock {\em Algebra i Logika}, 19(1):45--58, 132, 1980.

\bibitem[Gon77a]{Goncharov77}
S.~S. Gon\v{c}arov.
\newblock The number of nonautoequivalent constructivizations.
\newblock {\em Algebra i Logika}, 16(3):257--282, 377, 1977.

\bibitem[Gon77b]{Goncharov}
S.~S. Gon\v{c}arov.
\newblock The number of nonautoequivalent constructivizations.
\newblock {\em Algebra i Logika}, 16(3):257--282, 377, 1977.

\bibitem[Har68]{Harrison68}
Joseph Harrison.
\newblock Recursive pseudo-well-orderings.
\newblock {\em Trans. Amer. Math. Soc.}, 131:526--543, 1968.

\bibitem[HKS03]{HirschfeldtKhoussainovShore}
Denis~R. Hirschfeldt, Bakhadyr Khoussainov, and Richard~A. Shore.
\newblock A computably categorical structure whose expansion by a constant has
  infinite computable dimension.
\newblock {\em J. Symbolic Logic}, 68(4):1199--1241, 2003.

\bibitem[Ho17]{Ho}
Meng-Che Ho.
\newblock Describing groups.
\newblock {\em Proc. Amer. Math. Soc.}, 145(5):2223--2239, 2017.

\bibitem[HT]{HTfp}
Matthew Harrison-Trainor.
\newblock Describing finitely presented algebraic structures.
\newblock preprint.

\bibitem[HT18]{HT18}
Matthew Harrison-Trainor.
\newblock Scott ranks of models of a theory.
\newblock {\em Adv. Math.}, 330:109--147, 2018.

\bibitem[HTH18]{HTHo18}
Matthew Harrison-Trainor and Meng-Che Ho.
\newblock On optimal {S}cott sentences of finitely generated algebraic
  structures.
\newblock {\em Proc. Amer. Math. Soc.}, 146(10):4473--4485, 2018.

\bibitem[HTH21]{HTHo21}
Matthew Harrison-Trainor and Meng-Che Ho.
\newblock Finitely generated groups are universal among finitely generated
  structures.
\newblock {\em Ann. Pure Appl. Logic}, 172(1):102855, 2021.

\bibitem[HTIK18]{HTIgusaKnight18}
Matthew Harrison-Trainor, Gregory Igusa, and Julia~F. Knight.
\newblock Some new computable structures of high rank.
\newblock {\em Proc. Amer. Math. Soc.}, 146(7):3097--3109, 2018.

\bibitem[Kle55]{Kleene55}
S.~C. Kleene.
\newblock On the forms of the predicates in the theory of constructive
  ordinals. {II}.
\newblock {\em Amer. J. Math.}, 77:405--428, 1955.

\bibitem[KM10]{KnightMillar}
J.~F. Knight and J.~Millar.
\newblock Computable structures of rank {$\omega^{\rm CK}_1$}.
\newblock {\em J. Math. Log.}, 10(1-2):31--43, 2010.

\bibitem[KM14]{KnightMcCoy14}
J.~F. Knight and C.~McCoy.
\newblock Index sets and {S}cott sentences.
\newblock {\em Arch. Math. Logic}, 53(5-6):519--524, 2014.

\bibitem[KMS16]{KnightMontalbanSchweber}
Julia Knight, Antonio Montalb\'{a}n, and Noah Schweber.
\newblock Computable structures in generic extensions.
\newblock {\em J. Symb. Log.}, 81(3):814--832, 2016.

\bibitem[KS18]{KnightSaraph}
Julia~F. Knight and Vikram Saraph.
\newblock Scott sentences for certain groups.
\newblock {\em Arch. Math. Logic}, 57(3-4):453--472, 2018.

\bibitem[Lar17]{Larson}
Paul~B. Larson.
\newblock Scott processes.
\newblock In {\em Beyond first order model theory}, pages 23--76. CRC Press,
  Boca Raton, FL, 2017.

\bibitem[LE65]{LE}
E.~G.~K. Lopez-Escobar.
\newblock An interpolation theorem for denumerably long formulas.
\newblock {\em Fund. Math.}, 57:253--272, 1965.

\bibitem[Mak81]{Makkai}
M.~Makkai.
\newblock An example concerning {S}cott heights.
\newblock {\em J. Symbolic Logic}, 46(2):301--318, 1981.

\bibitem[Mar88]{Marker88}
David Marker.
\newblock An analytic equivalence relation not arising from a {P}olish group
  action.
\newblock {\em Fund. Math.}, 130(3):225--228, 1988.

\bibitem[Mar90]{Marker90}
David Marker.
\newblock Bounds on {S}cott rank for various nonelementary classes.
\newblock {\em Arch. Math. Logic}, 30(2):73--82, 1990.

\bibitem[Mar02]{MarkerMTBook}
David Marker.
\newblock {\em Model theory}, volume 217 of {\em Graduate Texts in
  Mathematics}.
\newblock Springer-Verlag, New York, 2002.
\newblock An introduction.

\bibitem[Mar16]{MarkerBook}
David Marker.
\newblock {\em Lectures on infinitary model theory}, volume~46 of {\em Lecture
  Notes in Logic}.
\newblock Association for Symbolic Logic, Chicago, IL; Cambridge University
  Press, Cambridge, 2016.

\bibitem[Mil78]{DMiller}
Douglas~E. Miller.
\newblock The invariant {$\Pi ^{0}_{\alpha }$} separation principle.
\newblock {\em Trans. Amer. Math. Soc.}, 242:185--204, 1978.

\bibitem[Mil83]{AMiller}
Arnold~W. Miller.
\newblock On the {B}orel classification of the isomorphism class of a countable
  model.
\newblock {\em Notre Dame J. Formal Logic}, 24(1):22--34, 1983.

\bibitem[MN13]{MelnikovNies}
Alexander~G. Melnikov and Andr\'{e} Nies.
\newblock The classification problem for compact computable metric spaces.
\newblock In {\em The nature of computation}, volume 7921 of {\em Lecture Notes
  in Comput. Sci.}, pages 320--328. Springer, Heidelberg, 2013.

\bibitem[Mon]{MontalbanBook2}
Antonio Montalb\'an.
\newblock Computable structure theory: Beyond the arithmetic.
\newblock draft.

\bibitem[Mon13]{Montalban13}
Antonio Montalb\'{a}n.
\newblock A computability theoretic equivalent to {V}aught's conjecture.
\newblock {\em Adv. Math.}, 235:56--73, 2013.

\bibitem[Mon15]{Montalban15}
Antonio Montalb\'{a}n.
\newblock A robuster {S}cott rank.
\newblock {\em Proc. Amer. Math. Soc.}, 143(12):5427--5436, 2015.

\bibitem[Mon17]{Montalban17}
Antonio Montalb\'{a}n.
\newblock Effectively existentially-atomic structures.
\newblock pages 221--237, 2017.

\bibitem[Mor70]{Morley70}
Michael Morley.
\newblock The number of countable models.
\newblock {\em J. Symbolic Logic}, 35:14--18, 1970.

\bibitem[MS08]{MillarSacks08}
J.~Millar and G.~E. Sacks.
\newblock Atomic models higher up.
\newblock {\em Ann. Pure Appl. Logic}, 155(3):225--241, 2008.

\bibitem[MW12]{McCoyWallbaum}
Charles McCoy and John Wallbaum.
\newblock Describing free groups, {P}art {II}: {$\Pi^0_4$} hardness and no
  {$\Sigma_2^0$} basis.
\newblock {\em Trans. Amer. Math. Soc.}, 364(11):5729--5734, 2012.

\bibitem[Nad74]{Nadel}
Mark Nadel.
\newblock Scott sentences and admissible sets.
\newblock {\em Ann. Math. Logic}, 7:267--294, 1974.

\bibitem[Nie07]{Nies}
Andr\'{e} Nies.
\newblock Describing groups.
\newblock {\em Bull. Symbolic Logic}, 13(3):305--339, 2007.

\bibitem[Pao]{Paolini}
Gianluca Paolini.
\newblock Computable {S}cott sentences for quasi-{H}opfian finitely presented
  structures.
\newblock preprint.

\bibitem[Sac83]{Sacks83}
Gerald~E. Sacks.
\newblock On the number of countable models.
\newblock In {\em Southeast {A}sian conference on logic ({S}ingapore, 1981)},
  volume 111 of {\em Stud. Logic Found. Math.}, pages 185--195. North-Holland,
  Amsterdam, 1983.

\bibitem[Sac90]{SacksBook}
Gerald~E. Sacks.
\newblock {\em Higher recursion theory}.
\newblock Perspectives in Mathematical Logic. Springer-Verlag, Berlin, 1990.

\bibitem[Sac07]{Sacks07}
Gerald~E. Sacks.
\newblock Bounds on weak scattering.
\newblock {\em Notre Dame J. Formal Logic}, 48(1):5--31, 2007.

\bibitem[Sco65]{Scott65}
Dana Scott.
\newblock Logic with denumerably long formulas and finite strings of
  quantifiers.
\newblock In {\em Theory of {M}odels ({P}roc. 1963 {I}nternat. {S}ympos.
  {B}erkeley)}, pages 329--341. North-Holland, Amsterdam, 1965.

\bibitem[Sil80]{Silver80}
Jack~H. Silver.
\newblock Counting the number of equivalence classes of {B}orel and coanalytic
  equivalence relations.
\newblock {\em Ann. Math. Logic}, 18(1):1--28, 1980.

\bibitem[Tur]{Turetsky}
Dan Turetsky.
\newblock Coding in the automorphism group of a computably categorical
  structure.
\newblock preprint.

\bibitem[Vau61]{Vaught}
R.~L. Vaught.
\newblock Denumerable models of complete theories.
\newblock In {\em Infinitistic {M}ethods ({P}roc. {S}ympos. {F}oundations of
  {M}ath., {W}arsaw, 1959)}, pages 303--321. Pergamon, Oxford; Pa\'{n}stwowe
  Wydawnictwo Naukowe, Warsaw, 1961.

\bibitem[Wad83]{Wadge}
William~Wilfred Wadge.
\newblock {\em Reducibility and determinateness on the {B}aire space}.
\newblock ProQuest LLC, Ann Arbor, MI, 1983.
\newblock Thesis (Ph.D.)--University of California, Berkeley.

\end{thebibliography}
\bibliographystyle{alpha}

\end{document}